\let\cite\shortcite
\let\citeA\shortciteA
\newcommand{\com}[2]{\left(\begin{array}{c}#1 \\ #2\end{array}\right)}
\newcommand{\lb}{\left(}
\newcommand{\rb}{\right)}
\newcommand{\eps}{\epsilon}
\newcommand{\td}{\tilde}
\newcommand{\E}{\mathbb{E}}
\newcommand{\R}{\mathbb{R}}
\newcommand{\ep}{\epsilon}
\newcommand{\norm}[2]{\left\|#2\right\|_{#1}}
\newcommand{\lnorm}{|\!|}
\newcommand{\pd}[2]{\frac{\partial #1}{\partial #2}}
\newcommand{\kbigo}[2]{O_{L^{#1}}\lb #2\rb}
\DeclareMathOperator*{\argmin}{arg\,min}
\DeclareMathOperator*{\tr}{\ensuremath{tr}}
\DeclareMathOperator*{\Var}{\ensuremath{Var}}
\DeclareMathOperator*{\Cov}{\ensuremath{Cov}}
\DeclareMathOperator*{\diag}{\ensuremath{diag}}
\DeclareMathOperator*{\sign}{\ensuremath{sign}}
\DeclareMathOperator*{\tdist}{\ensuremath{t}}
\DeclareMathOperator*{\vect}{\ensuremath{vec}}
\DeclareMathOperator*{\spanvec}{\ensuremath{span}}
\DeclareMathOperator*{\rank}{\ensuremath{rank}}
\newcommand{\exps}[1]{\exp\left\{#1\right\}}
\newcommand{\polyLog}{\mathrm{polyLog(n)}}
\newcommand{\betanull}{\beta^{*}}
\newcommand{\lse}{\hat{\beta}^{LS}}
\newcommand{\mest}{\hat{\beta}}
\newcommand{\rmax}{\max_{i}}
\newcommand{\cmax}{\max_{j\in J_{n}}}
\newcommand{\cmin}{\min_{j\in J_{n}}}
\newcommand{\m}[1]{\mathbb{#1}}
\newcommand{\fd}{\frac{\partial\hat{\beta}}{\partial\epsilon^{T}}}
\newcommand{\Fd}[1]{\frac{\partial\hat{\beta}_{#1}}{\partial\epsilon^{T}}}
\newcommand{\lra}{\Longrightarrow}
\newcommand{\wm}{X^{T}DX}
\newcommand{\sd}[1]{\frac{\partial\hat{\beta}}{\partial\epsilon_{#1}\partial\epsilon^{T}}}
\newcommand{\Sd}[2]{\frac{\partial\hat{\beta}_{#2}}{\partial\epsilon_{#1}\partial\epsilon^{T}}}
\newcommand{\SD}[1]{\frac{\partial\hat{\beta}_{#1}}{\partial\epsilon\partial\epsilon^{T}}}
\newcommand{\wmj}{X_{[j]}^{T}D_{[j]}X_{[j]}}
\newcommand{\lammax}{\lambda_{+}}
\newcommand{\lammin}{\lambda_{-}}
\newcommand{\tendsto}{\rightarrow}
\definecolor{magenta}{RGB}{255,75,150}
\newcommand{\betaHat}{\hat{\beta}(\rho)}
\newcommand{\id}{\text{Id}}
\newcommand{\SOPI}{second-order Poincar\'e inequality }
\newcommand{\trsp}{^{T}}
\newcommand{\Rp}{\R^{p}}
\newcommand{\Rpminus}{\R^{p-1}}
\newcommand{\betaHatj}{\hat{\beta}_{j}(\rho)}
\newtheorem*{theorem*}{Theorem}
\newtheorem{theorem}{Theorem}[section]
\newtheorem{proposition}[theorem]{Proposition}
\newtheorem{lemma}[theorem]{Lemma}
\newtheorem{corollary}[theorem]{Corollary}
\newtheorem{remark}[theorem]{Remark}
\newtheorem{definition}[theorem]{Definition}
\begin{document}
\title{Asymptotics For High Dimensional Regression $M$-Estimates: Fixed Design Results}
\author{Lihua Lei\thanks{\textbf{contact}: lihua.lei@berkeley.edu. Support from Grant FRG DMS-1160319 is gratefully acknowledged.}, Peter J. Bickel\thanks{Support from Grant FRG DMS-1160319 is gratefully acknowledged.}, and Noureddine El Karoui\thanks{
    Support from Grant NSF DMS-1510172 is gratefully acknowledged. \\
\textit{AMS 2010 MSC:} Primary: 62J99, Secondary: 62E20;\\
\textbf{Keywords:} M-estimation, robust regression, high-dimensional statistics, second order Poincar\'e inequality, leave-one-out analysis.}}
\affil{Department of Statistics, University of California, Berkeley}
\date{\today}
\maketitle
\begin{abstract}
We investigate the asymptotic distributions of coordinates of regression M-estimates in the moderate $p/n$ regime, where the number of covariates $p$ grows proportionally with the sample size $n$. Under appropriate regularity conditions, we establish the coordinate-wise asymptotic normality of regression M-estimates assuming a fixed-design matrix. Our proof is based on the \SOPI \cite{sopi} and leave-one-out analysis \cite{elkaroui11}. Some relevant examples are indicated to show that our regularity conditions are satisfied by a broad class of design matrices. We also show a counterexample, namely the ANOVA-type design, to emphasize that the technical assumptions are not just artifacts of the proof. Finally, the numerical experiments confirm and complement our theoretical results.
\end{abstract}
\section{Introduction}\label{sec:intro}
High-dimensional statistics has a long history \cite{huber73, wachter76,wachter78} with considerable  renewed interest over the last two decades. In many applications, the researcher collects data which can be represented as a matrix, called a design matrix and denoted by $X\in\R^{n\times p}$, as well as a response vector $y\in \R^{n}$ and aims to study the connection between $X$ and $y$. The linear model is among the most popular models as a starting point of data analysis in various fields. A linear model assumes that 
\begin{equation}\label{eq:linearmodel}
y = X\betanull + \eps,
\end{equation}
where 
$\betanull\in\R^{p}$ is the coefficient vector which measures the marginal contribution of each predictor and $\eps$ is a random vector which captures the unobserved errors. 

The aim of this article is to provide valid inferential results for features of $\betanull$. For example, a researcher might be interested in testing whether a given predictor has a negligible effect on the response, or equivalently whether $\betanull_{j} = 0$ for some $j$.  Similarly, linear contrasts of $\betanull$ such as $\betanull_{1} - \betanull_{2}$ might be of interest in the case of the group comparison problem in which the first two predictors represent the same feature but are collected from two different groups.

An M-estimator, defined as 
\begin{equation}\label{eq:betaHat}
\betaHat = \argmin_{\beta\in\R^{p}}\frac{1}{n}\sum_{i=1}^{n}\rho(y_{i} - x_{i}^{T}\beta)
\end{equation}
where $\rho$ denotes a loss function, is among the most popular estimators used in practice \cite{relles68,huber73}. In particular, if $\rho(x) = \frac{1}{2}x^{2}$, $\betaHat$ is the famous Least Square Estimator (LSE). We intend to explore the distribution of $\betaHat$, based on which we can achieve the inferential goals mentioned above.

 The most well-studied approach is the asymptotic analysis, which assumes that the scale of the problem grows to infinity and use the limiting result as an approximation. In regression problems, the scale parameter of a problem is the sample size $n$ and the number of predictors $p$. The classical approach is to fix $p$ and let $n$ grow to infinity. It has been shown \cite{relles68, yohai72, huber72, huber73} that $\betaHat$ is consistent in terms of $L_{2}$ norm and asymptotically normal in this regime. The asymptotic variance can be then approximated by the bootstrap \cite{bickel81}. Later on, the studies are extended to the regime in which both $n$ and $p$ grow to infinity but $p / n$ converges to $0$ \cite{yohai79, portnoy84, portnoy85, portnoy86, portnoy87, mammen89}. The consistency, in terms of the $L_{2}$ norm, the asymptotic normality and the validity of the bootstrap still hold in this regime. Based on these results, we can construct a 95\% confidence interval for $\beta_{0j}$ simply as $\betaHatj\pm 1.96 \sqrt{\widehat{\Var}(\betaHatj)}$ where $\widehat{\Var}(\betaHatj)$ is calculated by bootstrap. Similarly we can calculate p-values for the hypothesis testing procedure.

We ask whether the inferential results developed under the low-dimensional assumptions and the software built on top of them can be relied on for moderate and high-dimensional analysis? Concretely, if in a study $n=50$ and $p=40$, can the software built upon the assumption that $p/n\simeq 0$ be relied on when $p/n=.8$? Results in random matrix theory \cite{mp67} already offer an answer in the negative side for many PCA-related questions in multivariate statistics. The case of regression is more subtle: For instance for least-squares, standard degrees of freedom adjustments effectively take care of many dimensionality-related problems. But this nice property does not extend to more general regression M-estimates. 

Once these questions are raised, it becomes very natural to analyze the behavior and performance of statistical methods in the regime where $p/n$ is fixed. Indeed, it will help us to keep track of the inherent statistical difficulty of the problem when assessing the variability of our estimates. In other words, we assume in the current paper that $p / n\rightarrow \kappa > 0$ while let $n$ grows to infinity. Due to identifiability issues, it is impossible to make inference on $\betanull$ if $p > n$ without further structural or distributional assumptions. We discuss this point in details in Section \ref{subsec:Identifiability}. Thus we consider the regime where $p / n\rightarrow \kappa \in (0, 1)$. We call it the moderate $p/n$ regime. This regime is  also the natural regime in random matrix theory \cite{mp67,wachter78,johnstone01,bai10}. It has been shown that the asymptotic results derived in this regime sometimes provide an extremely accurate approximation\sout{s} to finite sample distributions of estimators at least in certain cases \cite{johnstone01} where $n$ and $p$ are both small.

\subsection{Qualitatively Different Behavior of Moderate $p/n$ Regime}\label{subsec:qualitative}
First, $\betaHat$ is no longer consistent in terms of $L_{2}$ norm and the risk $\E \|\betaHat - \betanull\|^{2}$ tends to a non-vanishing quantity determined by $\kappa$, the loss function $\rho$ and the error distribution through a complicated system of non-linear equations \cite{elkaroui11, elkaroui13, elkaroui15, bean11}. This $L_2$-inconsistency prohibits the use of standard perturbation-analytic techniques to assess the behavior of the estimator.   It also leads to qualitatively different behaviors for the residuals in moderate dimensions; in contrast to the low-dimensional case, they cannot be relied on to give accurate information about the distribution of the errors. However, this seemingly negative result does not exclude the possibility of inference since $\betaHat$ is still consistent in terms of $L_{2+\nu}$ norms for any $\nu > 0$ and in particular in $L_{\infty}$ norm. Thus, we can  at least hope to perform inference on each coordinate.

Second, classical optimality results do not hold in this regime. In the regime $p / n\rightarrow 0$, the maximum likelihood estimator is shown to be optimal \cite{huber64,huber72,bickel15}. In other words, if the error distribution is known then the M-estimator associated with the loss $\rho(\cdot) = -\log f_{\eps}(\cdot)$ is asymptotically efficient, provided the design is of appropriate type,  where $f_{\eps}(\cdot)$ is the density of entries of $\eps$. However, in the moderate $p/n$ regime, it has been shown that the optimal loss is no longer the log-likehood but an other function with a complicated but explicit form \cite{bean13}, at least for certain designs. The suboptimality of maximum likelihood estimators suggests that classical techniques fail to provide valid intuition in the moderate $p/n$ regime.

Third, the joint asymptotic normality of $\betaHat$, as a $p$-dimensional random vector, may be violated for a fixed design matrix $X$. This has been proved for least-squares by \citeA{huber73} in his pioneering work. For general M-estimators, this negative result is a simple consequence of the results of \citeA {elkaroui11}: They exhibit an ANOVA design (see below) where even marginal fluctuations are not Gaussian. By contrast, for random design, they show that $\betaHat$ is jointly asymptotically normal when the design matrix is elliptical with general covariance by using the non-asymptotic stochastic representation for $\betaHat$ as well as elementary properties of vectors uniformly distributed on the uniform sphere in $\mathbb{R}^p$; See section 2.2.3 of \citeA{elkaroui11} or the supplementary material of \citeA{bean13} for details. This does not contradict \citeA{huber73}'s negative result in that it takes the randomness from both $X$ and $\eps$ into account while \citeA{huber73}'s result only takes the randomness from $\eps$ into account. Later, \citeA{elkaroui15} shows that each coordinate of $\betaHat$ is asymptotically normal for a broader class of random designs. This is also an elementary consequence of the analysis in \citeA{elkaroui13}. However, to the best of our knowledge, beyond the ANOVA situation mentioned above, there are no distributional results for fixed design matrices. This is the topic of this article.

Last but not least, bootstrap inference fails in this moderate-dimensional regime. This has been shown by \citeA{bickel83} for least-squares and residual bootstrap in their influential work. 
Recently, \citeA{elkaroui15boot} studied the results to general M-estimators and showed that all commonly used bootstrapping schemes, including pairs-bootstrap, residual bootstrap and jackknife, fail to provide a consistent variance estimator and hence valid inferential statements. These latter results even apply to the marginal distributions of the coordinates of $\betaHat$. Moreover, there is no simple, design independent, modification to achieve consistency \cite{elkaroui15boot}. 

\subsection{Our Contributions}
In summary, the behavior of the estimators we consider in this paper is completely different in the moderate $p/n$ regime from its counterpart in the low-dimensional regime. As discussed in the next section, moving one step further in the moderate $p/n$ regime is interesting from both the practical and theoretical perspectives. The main contribution of this article is to establish coordinate-wise asymptotic normality of $\betaHat$ for certain \emph{fixed design matrices} $X$ in this regime under technical assumptions. The following theorem informally states our main result.

\begin{theorem*}[Informal Version of Theorem \ref{thm:main} in Section \ref{sec:main}]
  Under appropriate conditions on the design matrix $X$, the distribution of $\eps$ and the loss function $\rho$, as $p/n\tendsto \kappa \in (0,1)$, while $n\tendsto \infty$,
\[\max_{1\le j\le p}d_{\mathrm{TV}}\lb\mathcal{L}\lb\frac{\betaHatj - \E \betaHatj}{\sqrt{\Var(\betaHatj)}}\rb, N(0, 1)\rb = o(1)\]
where $d_{\mathrm{TV}}(\cdot, \cdot)$ is the total variation distance and $\mathcal{L}(\cdot)$ denotes the law.
\end{theorem*}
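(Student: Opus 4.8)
The plan is to fix a coordinate $j\in\{1,\dots,p\}$, view $\betaHatj$ as a twice-differentiable function $g_j(\epsilon)$ of the error vector through the stationarity condition, and apply the \SOPI of \cite{sopi} to $g_j$. Writing $\psi=\rho'$ and $r_i=\epsilon_i-x_i^{T}(\mest-\betanull)$ for the residuals, the estimator solves $\sum_{i=1}^{n}\psi(r_i)x_i=0$; when $\rho\in C^{2}$ with $\psi'>0$, the implicit function theorem makes $\epsilon\mapsto\mest$ a $C^{2}$ map, so $g_j$ is an admissible input. Because the errors need not be Gaussian, I would work under the paper's hypotheses on the law of $\epsilon$ (Gaussian being the model case, with the analysis extending to errors for which a second-order Poincar\'e-type inequality is available, e.g.\ smooth pushforwards of a Gaussian). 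The output is a bound of the schematic form
\[
d_{\mathrm{TV}}\!\lb\mathcal{L}\!\lb\frac{\betaHatj-\E\betaHatj}{\sqrt{\Var(\betaHatj)}}\rb,N(0,1)\rb\;\lesssim\;\frac{1}{\Var(\betaHatj)}\,\lb\E\lVert\nabla g_j\rVert^{4}\rb^{1/4}\lb\E\lVert\nabla^{2} g_j\rVert_{\mathrm{op}}^{4}\rb^{1/4},
\]
so the whole problem reduces to controlling the right-hand side uniformly in $j$.

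Second, I would make the two derivatives explicit by differentiating the stationarity equation. One differentiation gives $\fd=(\wm)^{-1}X^{T}D$ with $D=\diag(\psi'(r_1),\dots,\psi'(r_n))$, hence $\Fd{j}=e_j^{T}(\wm)^{-1}X^{T}D$ and
\[
\lVert\nabla g_j\rVert^{2}\;=\;\Fd{j}\lb\Fd{j}\rb^{T}\;=\;\grad{D^{2}}.
\]
A second differentiation brings in the quantities $\psi''(r_i)$ and nested copies of $(\wm)^{-1}$, so the Hessian $\partial^{2}\betaHatj/\partial\epsilon\,\partial\epsilon^{T}$ is a finite sum of terms each sandwiched between $(\wm)^{-1}$ factors; from it only a fourth-moment bound on the operator norm is needed.

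Third --- the crux --- is to bound $\E\lVert\nabla g_j\rVert^{4}$ and $\E\lVert\nabla^{2}g_j\rVert_{\mathrm{op}}^{4}$, and above all to lower-bound $\Var(\betaHatj)$, uniformly over $j$. The obstruction is that $D$, $\wm$ and the $r_i$ depend on $\epsilon$ through $\mest$ in a genuinely nonlinear, self-referential way. Here I would run the leave-one-out analysis of \cite{elkaroui11}: introduce $\mest_{[i]}$, the estimator omitting observation $i$, show $\lVert\mest-\mest_{[i]}\rVert=O_{P}(n^{-1/2})$ from the regularity of $\rho$, and exploit that $\mest_{[i]}$ is independent of $\epsilon_i$. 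This decouples the residuals, giving concentration of the $\psi'(r_i)$ and of $\wm/n$; combined with the design hypotheses (eigenvalues of $X^{T}X/n$ bounded away from $0$ and $\infty$, plus the delocalization/incoherence conditions that the ANOVA design violates) one obtains that $\wm/n$ is well-conditioned with overwhelming probability, that $\grad{D^{2}}=\Theta(n^{-1})$, and that $\lVert\nabla^{2}g_j\rVert_{\mathrm{op}}=O(n^{-1})$ with all moments. For the variance, the same expansion shows $\betaHatj$ is well-approximated by a linear statistic in $\epsilon$ of variance $\Theta(n^{-1})$, supplying the matching lower bound. Feeding these orders into the displayed inequality makes its right-hand side $O(n^{-1/2})$, up to logarithmic factors.

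Finally, to pass to $\max_{1\le j\le p}$ with $p=\Theta(n)$: the per-coordinate estimates rest on high-probability events of probability $1-o(n^{-1})$ (from the concentration accompanying the Poincar\'e hypothesis) together with deterministic consequences of the design assumptions, so a union bound over the $p$ coordinates preserves the $o(1)$ rate. I expect the genuinely hard parts to be (i) the fourth-moment bound on $\lVert\nabla^{2}g_j\rVert_{\mathrm{op}}$, because of the nested $(\wm)^{-1}$ factors and the circular dependence of $D$ on $\epsilon$, and (ii) the uniform lower bound on $\Var(\betaHatj)$, since a coordinate with anomalously small variance would blow up the normalization --- precisely the pathology the ANOVA counterexample displays and that the design conditions must exclude.
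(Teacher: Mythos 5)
Your outline reproduces the paper's skeleton: apply Chatterjee's \SOPI to $g_j(\eps)=\hat{\beta}_j$, compute $\Fd{j}=e_j^{T}(\wm)^{-1}X^{T}D$ and the Hessian $G^{T}\diag(e_j^{T}(\wm)^{-1}X^{T}\td{D})G$ by implicit differentiation, and reduce everything to fourth-moment bounds on the gradient/Hessian plus a uniform lower bound on $\Var(\hat{\beta}_j)$. However, the crux is exactly where your sketch goes soft, and in a way that matters. First, the decoupling you invoke is leave-one-\emph{observation}-out ($\hat{\beta}_{[i]}$ independent of $\eps_i$), but what the argument actually needs, and what the paper uses, is leave-one-\emph{predictor}-out: replacing $D$ by $D_{[j]}$ (residuals from the fit without column $j$) so that the coefficient vectors $h_{j,0}$, $h_{j,1,i}$ and the matrix $Q_j=\Cov(h_{j,0})$ no longer involve $X_j$, at which point assumptions \textbf{A}4 ($X_j^{T}Q_jX_j\gtrsim \tr(Q_j)$) and \textbf{A}5 ($\E\Delta_C^8$ bounded) can be applied. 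Concentration of $\psi'(R_i)$ and of $\wm/n$, which is what observation-wise leave-one-out buys you, does not by itself deliver either the $\ell_\infty$ bound $M_j=\E\|e_j^{T}(\wm)^{-1}X^{T}D^{1/2}\|_\infty=O(\polyLog/n)$ or the lower bound on $\Var(\hat{\beta}_j)$; the latter goes through the approximation $\hat{\beta}_j\approx b_j=N_j/(\sqrt{n}\,\xi_j)$, a Poincar\'e bound on $\Var(\xi_j)$, and the nontrivial leverage/conditional-variance argument showing $\tr(Q_j)=\Omega(n/\polyLog)$ (using $\tr(I-X_{[j]}(X_{[j]}^{T}X_{[j]})^{-1}X_{[j]}^{T})=n-p+1$), none of which is visible in your step three. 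Second, your claim that $\|\nabla^2 g_j\|_{\mathrm{op}}=O(n^{-1})$ ``with all moments'', yielding an $O(n^{-1/2})$ rate, is unsubstantiated and is precisely the bottleneck: the circular dependence of $D$ on $\eps$ only allows the paper to control the \emph{first} moment of the relevant sup-norm (after the $D\to D_{[j]}$ swap and deterministic perturbation bounds), and higher moments are obtained by interpolating against the deterministic $\ell_2$ bound $(nK_0\lammin)^{-1/2}$, which is why the final rate is $O(\polyLog\, n^{-1/8})$, not $n^{-1/2}$.

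Third, the closing union-bound step is a category error for the statement being proved. In the fixed-design theorem each $d_{\mathrm{TV}}$ is a deterministic functional of $\mathcal{L}(\eps)$; there is no high-probability event in $\eps$ over which to union bound, and conditioning on such an event would change the law you are comparing to the Gaussian. Uniformity in $j$ must instead come from the bounds on $M_j$ and $\Var(\hat{\beta}_j)$ being uniform consequences of the assumptions --- which the paper arranges by building the maximum over $j\in J_n$ and $i\le n$ directly into $\Delta_C$ and into \textbf{A}3--\textbf{A}5 (union bounds appear only later, over the randomness of $Z$, when verifying the assumptions for realizations of random designs). So while your high-level plan matches the paper, the mechanism that makes it work in moderate dimensions --- the predictor-wise leave-one-out decomposition feeding assumptions \textbf{A}4/\textbf{A}5, the first-moment-plus-interpolation treatment of the sup-norms, and the $b_j$-based variance lower bound --- is missing, and those are the parts that carry the proof.
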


It is worth mentioning that the above result can be extended to finite dimensional linear contrasts of $\hat{\beta}$. 
For instance, one might be interested in making inference on $\betanull_{1} - \betanull_{2}$ in the problems involving the group comparison. The above result can be extended to give the asymptotic normality of $\hat{\beta}_{1} - \hat{\beta}_{2}$.

Besides the main result, we have several other contributions. First, we use a new approach to establish asymptotic normality. Our main technique is based on the \SOPI (SOPI), developed by \citeA{sopi} to derive, among many other results, the fluctuation behavior of linear spectral statistics of random matrices. In contrast to classical approaches such as the Lindeberg-Feller central limit theorem, the \SOPI is capable of dealing with nonlinear and potentially implicit functions of independent random variables. Moreover, we use different expansions for $\betaHat$ and residuals based on double leave-one-out ideas introduced in \citeA{elkaroui11}, in contrast to the classical perturbation-analytic expansions. See aforementioned paper and follow-ups. An informal interpretation of the results of \citeA{sopi} is that if the Hessian of the nonlinear function of random variables under consideration is sufficiently small, this function acts almost linearly and hence a standard central limit theorem holds. 

Second, to the best of our knowledge this is the first inferential result for fixed  (non ANOVA-like) design in the moderate $p/n$ regime. Fixed designs arise naturally from an experimental design or a conditional inference perspective. That is, inference is ideally carried out without assuming randomness in predictors; see Section \ref{subsec:random_fix} for more details. We clarify the regularity conditions for coordinate-wise asymptotic normality of $\betaHat$ explicitly, which are checkable for LSE and also checkable for general M-estimators if the error distribution is known. We also prove that these conditions are satisfied with by a broad class of designs. 

The ANOVA-like design described in Section \ref{subsubsec:counterexample} exhibits a situation where the distribution of $\betaHatj$ is not going to be asymptotically normal. As such the results of Theorem \ref{thm:main} below are somewhat surprising.

For complete inference, we need both the asymptotic normality and the asymptotic bias and variance. Under suitable symmetry conditions on the loss function and the error distribution, it can be shown that $\betaHat$ is unbiased (see Section \ref{subsubsec:DisAssump} for details) and thus it is left to derive the asymptotic variance. As discussed at the end of Section \ref{subsec:qualitative}, classical approaches, e.g. bootstrap, fail in this regime. For least-squares, classical results continue to hold and we discuss it in section \ref{sec:lse} for the sake of completeness. However, for M-estimators, there is no closed-form result. We briefly touch upon the variance estimation in Section \ref{subsubsec:FurtherDis}. The derivation for general situations is beyond the scope of this paper and left to the future research. 

\subsection{Outline of Paper}
The rest of the paper is organized as follows: In Section \ref{sec:long_intro}, we clarify details which are mentioned in the current section. In Section \ref{sec:main}, we state the main result (Theorem \ref{thm:main}) formally and explain the technical assumptions. Then we show several examples of random designs which satisfy the assumptions with high probability. In Section 4, we introduce our main technical tool, \SOPI \cite{sopi}, and apply it on M-estimators as the first step to prove Theorem \ref{thm:main}. Since the rest of the proof of Theorem \ref{thm:main} is complicated and lengthy, we illustrate the main ideas in Appendix \ref{app:heuristic}. The rigorous proof is left to Appendix \ref{app:main}. In Section \ref{sec:lse}, we provide reminders about the theory of least-squares estimation for the sake of completeness, by taking advantage of its explicit form. In Section \ref{sec:numerical}, we display the numerical results. The proof of other results are stated in Appendix \ref{app:others} and more numerical experiments are presented in Appendix \ref{app:numerical}. 

\section{More Details on Background}\label{sec:long_intro}
\subsection{Moderate $p/n$ Regime: a more informative type of asymptotics?}
In Section \ref{sec:intro}, we mentioned that the ratio $p/n$ measures the difficulty of statistical inference. The moderate $p/n$ regime provides an approximation of finite sample properties with the difficulties fixed at the same level as the original problem. Intuitively, this regime should capture more variation in finite sample problems and provide a more accurate approximation. We will illustrate this via simulation. 

Consider a study involving 50 participants and $40$ variables; we can either use the asymptotics in which $p$ is fixed to be $40$, $n$ grows to infinity or $p / n$ is fixed to be $0.8$, and $n$ grows to infinity to perform approximate inference. Current software rely on low-dimensional asymptotics for inferential tasks, but there is no evidence that they yield more accurate inferential statements than the ones we would have obtained using moderate dimensional asymptotics. In fact, numerical evidence \cite{johnstone01,elkaroui13pnas,bean13} show that the reverse is true.

We exhibit a further numerical simulation showing that. Consider a case that $n = 50$, $\eps$ has i.i.d. entries  and $X$ is one realization of a matrix generated  with i.i.d. gaussian (mean 0, variance 1) entries. For $\kappa \in \{0.1, 0.2, \ldots, 0.9\}$ and different error distributions, we use the Kolmogorov-Smirnov (KS) statistics to quantify the distance between the finite sample distribution and two types of asymptotic approximation of the distribution of $\hat{\beta}_{1}(\rho)$.  

Specifically, we use the Huber loss function $\rho_{\mathrm{Huber},k}$ with default parameter $k = 1.345$ \cite{huber11}, i.e.
\[\rho_{\mathrm{Huber},k}(x) = \left\{
  \begin{array}{ll}
    \frac{1}{2}x^{2} & |x|\le k\\
    k(|x| - \frac{1}{2}k) & |x| > k
  \end{array}
\right.\]
Specifically, we generate three design matrices $X^{(0)}$, $X^{(1)}$ and $X^{(2)}$: $X^{(0)}$ for small sample case with a sample size $n = 50$ and a dimension $p = n\kappa$; $X^{(1)}$ for low-dimensional asymptotics ($p$ fixed) with a sample size $n = 1000$ and a dimension $p = 50\kappa$; and $X^{(2)}$ for moderate-dimensional asymptotics ($p / n$ fixed) with a sample size $n = 1000$ and a dimension $p = n\kappa$. Each of them is generated as one realization of an i.i.d. standard gaussian design and then treated as fixed across $K=100$ repetitions. For each design matrix, vectors $\eps$ of appropriate length are generated with i.i.d. entries. The entry has either a standard normal distribution, or a $t_3$-distribution, or a standard Cauchy distribution, i.e. $t_1$. Then we use $\eps$ as the response, or equivalently assume $\betanull = 0$, and obtain the M-estimators $\hat{\beta}^{(0)}, \hat{\beta}^{(1)}, \hat{\beta}^{(2)}$. Repeating this procedure for $K = 100$ times results in $K$ replications in three cases. Then we extract the first coordinate of each estimator, denoted by $\{\hat{\beta}^{(0)}_{k, 1}\}_{k=1}^{K}, \{\hat{\beta}^{(1)}_{k, 1}\}_{k=1}^{K}, \{\hat{\beta}^{(2)}_{k, 1}\}_{k=1}^{K}$. Then the two-sample Kolmogorov-Smirnov statistics can be obtained by 
\[\mathrm{KS}_{1} = \sqrt{\frac{n}{2}}\max_{x}|\hat{F}_{n}^{(0)}(x) - \hat{F}_{n}^{(1)}(x)|, \quad \mathrm{KS}_{2} = \sqrt{\frac{n}{2}}\max_{x}|\hat{F}_{n}^{(0)}(x) - \hat{F}_{n}^{(2)}(x)|,\]
where $\hat{F}_{n}^{(r)}$ is the empirical distribution of $\{\hat{\beta}^{(r)}_{k, 1}\}_{k=1}^{K}$. We can then compare the accuracy of two asymptotic regimes by comparing $\mathrm{KS}_{1}$ and $\mathrm{KS}_{2}$. The smaller the value of $\mathrm{KS}_i$, the better the approximation. 

Figure \ref{fig:asym_toy} displays the results for these error distributions. We see that for gaussian errors and even $\tdist_3$ errors, the $p/n$-fixed/moderate-dimensional approximation is uniformly more accurate than the widely used $p$-fixed/low-dimensional approximation. For Cauchy errors, the low-dimensional approximation performs better than the moderate-dimensional one when $p / n$ is small but worsens when the ratio is large especially when $p / n$ is close to 1. Moreover, when $p / n$ grows, the two approximations have qualitatively different behaviors: the $p$-fixed approximation becomes less and less accurate while the $p / n$-fixed approximation does not suffer much deterioration when $p / n$ grows. The qualitative and quantitative differences of these two approximations reveal the practical importance of exploring the $p / n$-fixed asymptotic regime. (See also \citeA{johnstone01}.)

\begin{figure}
  \centering
  \includegraphics[width = 5.5in, height = 2.2in]{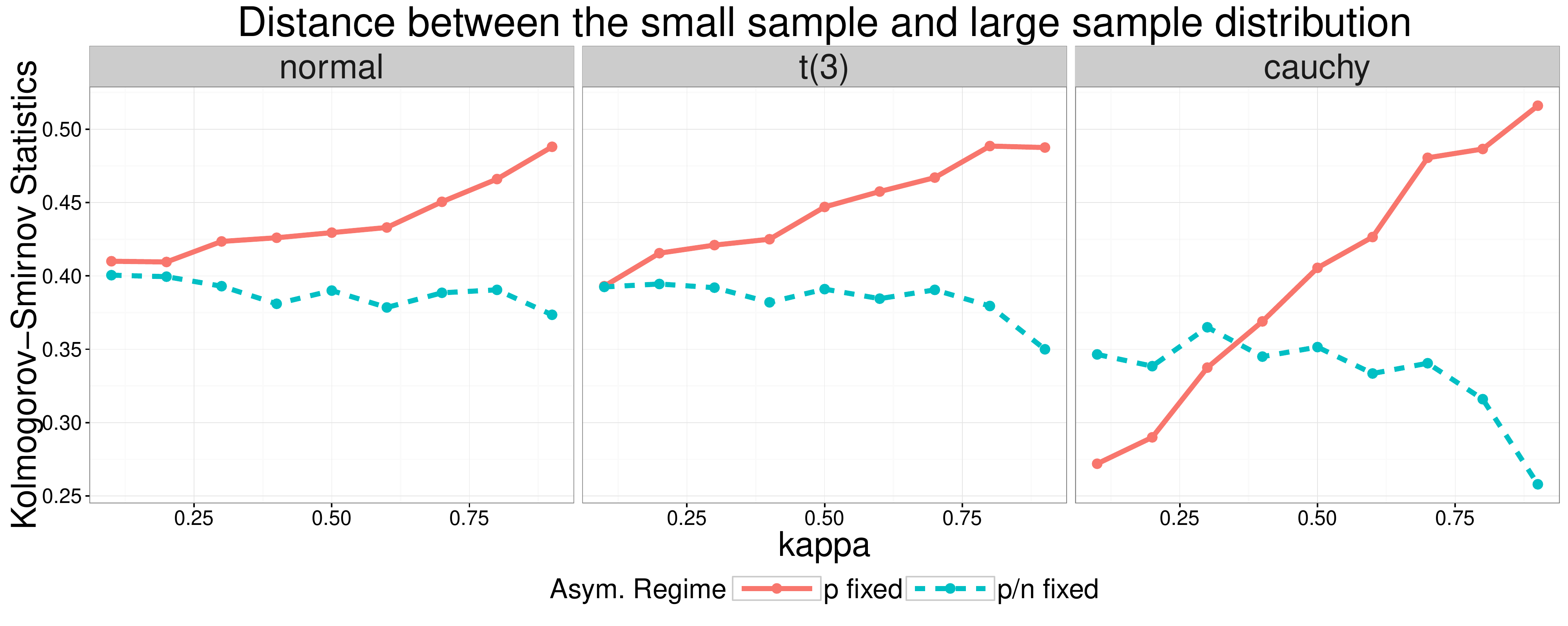}
  \caption{Axpproximation accuracy of $p$-fixed asymptotics and $p / n$-fixed asymptotics: each column represents an error distribution; the x-axis represents the ratio $\kappa$ of the dimension and the sample size and the y-axis represents the Kolmogorov-Smirnov statistic; the red solid line corresponds to $p$-fixed approximation and the blue dashed line corresponds to $p / n$-fixed approximation.}\label{fig:asym_toy}
\end{figure}

\subsection{Random vs fixed design?}\label{subsec:random_fix}
As discussed in Section \ref{sec:intro}.1, assuming a fixed design or a random design could lead to qualitatively different inferential results. 

In the random design setting, $X$ is considered as being generated from a super population. For example, the rows of $X$ can be regarded as an i.i.d. sample from a distribution known, or partially known, to the researcher. In situations where one uses techniques such as cross-validation \cite{StoneMCV1974}, pairs bootstrap in regression \cite{EfronBook82} or sample splitting \cite{wasserman09}, the researcher effectively assumes exchangeability of the data $(x_i^{T},y_i)_{i=1}^n$. Naturally, this is only compatible with an assumption of random design. Given the extremely widespread use of these techniques in contemporary machine learning and statistics, one could argue that the random design setting is the one under which most of modern statistics is carried out, especially for prediction problems. Furthermore, working under a random design assumption forces the researcher to take into account two sources of randomness as opposed to only one in the fixed design case. Hence working under a random design assumption should yield conservative confidence intervals for $\betanull_j$.

In other words, in settings where the researcher collects data without control over the values of the predictors, the random design assumption is arguably the more natural one of the two. 

However, it has now been understood for almost a decade that common random design assumptions in high-dimension (e.g. $x_i=\Sigma^{1/2} z_i$ where $z_{i,j}$'s are  i.i.d with mean 0 and variance 1 and a few moments and $\Sigma$ ``well behaved") suffer from considerable geometric limitations, which have substantial impacts on the performance of the estimators considered in this paper \cite{elkaroui11}. As such, confidence statements derived from that kind of analysis can be relied on only after performing a few graphical tests on the data (see \citeA{nekMarkoRiskPub2010}). These geometric limitations are simple consequences of the concentration of measure phenomenon \cite{ledoux01}. 

On the other hand, in the fixed design setting, $X$ is considered a fixed matrix. In this case, the inference only takes the randomness of $\eps$ into consideration. This perspective is popular in several situations. The first one is the experimental design. The goal is to study the effect of a set of factors, which can be controlled by the experimenter, on the response. In contrast to the observational study, the experimenter can design the experimental condition ahead of time based on the inference target. For instance, a one-way ANOVA design encodes the covariates into binary variables (see Section \ref{subsubsec:counterexample} for details) and it is fixed prior to the experiment. Other examples include two-way ANOVA designs, factorial designs, Latin-square designs, etc. \cite{scheffe99}.

Another situation which is concerned with fixed design is the survey sampling where the inference is carried out conditioning on the data \cite{cochran77}. Generally, in order to avoid unrealistic assumptions, making inference conditioning on the design matrix $X$ is necessary. Suppose the linear model (\ref{eq:linearmodel}) is true and identifiable (see Section \ref{subsec:Identifiability} for details), then all information of $\betanull$ is contained in the conditional distribution $\mathcal{L}(y | X)$ and hence the information in the marginal distribution $\mathcal{L}(X)$ is redundant. The conditional inference framework is more robust to the data generating procedure due to the irrelevance of $\mathcal{L}(X)$. 

Also, results based on fixed design assumptions may be preferable from a theoretical point of view in the sense that they could potentially be used to establish corresponding results for certain classes of random designs. Specifically, given a marginal distribution $\mathcal{L}(X)$, one only has to prove that $\mathcal{X}$ satisfies the assumptions for fixed design with high probability. 


In conclusion, fixed and random design assumptions play complementary roles in moderate-dimensional settings. We focus on the least understood of the two, the fixed design case, in this paper.

\subsection{Modeling and Identification of Parameters}\label{subsec:Identifiability}

\noindent  The problem of identifiability is especially important in the fixed design case. Define $\beta^{*}(\rho)$ in the population as 
\begin{equation}\label{eq:id}
\beta^{*}(\rho) = \argmin_{\beta\in \R^{p}}\frac{1}{n}\sum_{i=1}^{n}\E\rho(y_{i} - x_{i}^{T}\beta).
\end{equation}
One may ask whether $\beta^{*}(\rho) = \beta^{*}$ regardless of $\rho$ in the fixed design case. We provide an affirmative answer in the following proposition by assuming that $\eps_{i}$ has a symmetric distribution around $0$ and $\rho$ is even.

\begin{proposition}\label{prop:mest_id_fix_sym}
  Suppose $X$ has a full column rank and $\eps_{i}\stackrel{d}{=}-\eps_{i}$ for all $i$. Further assume $\rho$ is an even convex function such that for any $i = 1, 2, \ldots$ and $\alpha \not = 0$,
\begin{equation}\label{eq:symmetric_rho}
\frac{1}{2}\lb\E \rho(\eps_{i} - \alpha) + \E\rho(\eps_{i} + \alpha)\rb > \E \rho(\eps_{i}).
\end{equation}
Then $\beta^{*}(\rho) = \betanull$ regardless of the choice of $\rho$.
\end{proposition}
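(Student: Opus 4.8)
The plan is to reparametrize around the truth and reduce the claim to a coordinate-free statement about the map $\gamma\mapsto\frac1n\sum_i\E\rho(\eps_i-x_i^T\gamma)$. Writing $y_i=x_i^T\betanull+\eps_i$ and setting $\gamma=\beta-\betanull$, the objective in \eqref{eq:id} becomes, up to this affine change of variables, $G(\gamma):=\frac1n\sum_{i=1}^n\E\rho(\eps_i-x_i^T\gamma)$, so it suffices to show that $\gamma=0$ is the \emph{unique} minimizer of $G$, which is exactly the assertion $\beta^{*}(\rho)=\betanull$.

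First I would record the symmetry of the per-observation profile $h_i(\alpha):=\E\rho(\eps_i-\alpha)$. Since $\eps_i\stackrel{d}{=}-\eps_i$ and $\rho$ is even, $h_i(-\alpha)=\E\rho(\eps_i+\alpha)=\E\rho(-\eps_i+\alpha)=\E\rho(\eps_i-\alpha)=h_i(\alpha)$, so $h_i$ is an even function; it is also convex, being the expectation of a convex function composed with an affine map (and finite, since $\E\rho(\eps_i)<\infty$ is implicitly needed for \eqref{eq:id} to be meaningful). Under this symmetry the hypothesis \eqref{eq:symmetric_rho}, which reads $\tfrac12(h_i(\alpha)+h_i(-\alpha))>h_i(0)$, collapses to the clean statement that each $h_i$ has a strict global minimum at the origin: $h_i(\alpha)\ge h_i(0)$ for every $\alpha$, with equality only if $\alpha=0$.

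Next I would assemble the global conclusion. Specializing to $\alpha=x_i^T\gamma$ gives $h_i(x_i^T\gamma)\ge h_i(0)$ for each $i$, hence $G(\gamma)=\frac1n\sum_i h_i(x_i^T\gamma)\ge\frac1n\sum_i h_i(0)=G(0)$. Because this bound holds termwise, $G(\gamma)=G(0)$ forces $h_i(x_i^T\gamma)=h_i(0)$ for every $i$, and by the strictness just established this means $x_i^T\gamma=0$ for all $i$, i.e.\ $X\gamma=0$. Since $X$ has full column rank, $\gamma=0$; therefore $0$ is the unique minimizer of $G$ and $\beta^{*}(\rho)=\betanull$, independently of $\rho$.

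I do not expect a genuine obstacle here: convexity of $\rho$ is used only to keep the population problem well-posed, and the real content is the two-line symmetrization that turns \eqref{eq:symmetric_rho} into a strict separation at the origin. The only points needing mild care are (i) integrability — one should assume $\E|\rho(\eps_i-x_i^T\beta)|<\infty$ for the relevant $\beta$ so the $h_i$ and the argmin are defined (or note the statement is vacuous otherwise) — and (ii) checking that the strict inequality really propagates through the average, which it does precisely because every summand is individually minimized, uniquely, at $0$ whenever $x_i^T\gamma\neq 0$.
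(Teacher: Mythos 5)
Your proposal is correct and follows essentially the same route as the paper's proof: both use the symmetry of $\eps_{i}$ together with the evenness of $\rho$ to show each $H_{i}(\alpha)=\E\rho(\eps_{i}-\alpha)$ is strictly minimized at $\alpha=0$ via condition \eqref{eq:symmetric_rho}, then sum termwise and invoke full column rank of $X$ to force $X(\beta^{*}(\rho)-\betanull)=0$ and hence $\beta^{*}(\rho)=\betanull$. Your remarks on integrability and the inessential role of convexity are sensible but do not change the argument.
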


The proof is left to Appendix \ref{app:others}. It is worth mentioning that Proposition \ref{prop:mest_id_fix_sym} only requires the marginals of $\eps$ to be symmetric but does not impose any constraint on the dependence structure of $\eps$. Further, if $\rho$ is strongly convex, then for all $\alpha \not = 0$,
\[\frac{1}{2}\lb\rho(x - \alpha) + \rho(x + \alpha)\rb > \rho(x).\]
As a consequence, the condition \eqref{eq:symmetric_rho} is satisfied provided that $\eps_{i}$ is non-zero with positive probability. 

If $\eps$ is asymmetric, we may still be able to identify $\betanull$ if $\eps_{i}$ are i.i.d. random variables. In contrast to the last case, we should incorporate an intercept term as a shift towards the centroid of $\rho$. More precisely, we define $\alpha^{*}(\rho)$ and $\beta^{*}(\rho)$ as 
\[(\alpha^{*}(\rho), \beta^{*}(\rho)) = \argmin_{\alpha\in \R, \beta\in \R^{p}}\frac{1}{n}\sum_{i=1}^{n}\E \rho(y_{i} - \alpha - x_{i}^{T}\beta).\]

\begin{proposition}\label{prop:mest_id_fix}
Suppose $(\textbf{1} ,  X)$ is of full column rank and $\eps_{i}$ are i.i.d. such that $\E \rho(\eps_{1} - \alpha)$ as a function of $\alpha$ has a unique minimizer $\alpha(\rho)$. Then $\beta^{*}(\rho)$ is uniquely defined with $\beta^{*}(\rho) = \beta^{*}$ and $\alpha^{*}(\rho) = \alpha(\rho)$.
\end{proposition}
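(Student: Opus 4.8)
The plan is to reduce the $(p+1)$-dimensional population minimization to a single one-dimensional problem by a change of variables, using the i.i.d.\ structure of $\eps$, and then to invoke the full-column-rank hypothesis to pin down the unique minimizer. First I would substitute the model $y_i = x_i^{T}\betanull + \eps_i$ into the population objective $\frac1n\sum_{i=1}^{n}\E\rho(y_i - \alpha - x_i^{T}\beta)$, noting that $y_i - \alpha - x_i^{T}\beta = \eps_i - \bigl(\alpha + x_i^{T}(\beta-\betanull)\bigr)$. Writing $\gamma := \beta-\betanull$ and $\delta_i := \alpha + x_i^{T}\gamma$, and setting $g(\delta) := \E\rho(\eps_1 - \delta)$ --- a single function of $\delta$, independent of $i$ precisely because the $\eps_i$ are i.i.d., and real-valued on a nonempty set by the hypothesis that it possesses a unique minimizer $\alpha(\rho)$ --- the objective becomes $\frac1n\sum_{i=1}^{n} g(\delta_i)$.

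Next, since $g$ attains its minimum uniquely at $\alpha(\rho)$, every term obeys $g(\delta_i) \ge g(\alpha(\rho))$, hence $\frac1n\sum_i g(\delta_i) \ge g(\alpha(\rho))$, and this bound is attained by choosing $\gamma = 0$ and $\alpha = \alpha(\rho)$ (which makes each $\delta_i$ equal to $\alpha(\rho)$); so a minimizer exists and the optimal value is $g(\alpha(\rho))$. For uniqueness I would argue that any minimizing $(\alpha,\beta)$ must make the average equal to $g(\alpha(\rho))$, which --- because $g(\delta) > g(\alpha(\rho))$ strictly whenever $\delta \ne \alpha(\rho)$ --- forces $\delta_i = \alpha(\rho)$ for every $i$, i.e.\ $(\alpha - \alpha(\rho))\mathbf{1} + X\gamma = 0$ in $\R^{n}$. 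Since $(\mathbf{1}, X)$ has full column rank, the linear map $(c,\gamma)\mapsto c\mathbf{1} + X\gamma$ is injective, so $\gamma = 0$ and $\alpha = \alpha(\rho)$; equivalently $\beta^{*}(\rho) = \betanull$ and $\alpha^{*}(\rho) = \alpha(\rho)$, which is the claim.

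I do not expect a serious obstacle here: the argument is short. The only delicate step is the passage from ``the average of the $g(\delta_i)$ equals its smallest possible value'' to ``each $g(\delta_i)$ equals that value,'' and this is exactly where both ingredients of the hypothesis are genuinely used --- the \emph{uniqueness} of the one-dimensional minimizer of $g$ (without it, asymmetric errors can destroy identifiability, cf.\ the discussion preceding this proposition) and the full-rank assumption on $(\mathbf{1}, X)$ (without it, $\mathbf{1}$ could lie in the column span of $X$ and $\alpha$ would be confounded with an intercept-like direction of $\beta$). It is also worth recording that no convexity of $\rho$ beyond what is implicit in the hypothesis is needed, and that it is the i.i.d.\ assumption --- stronger than the symmetry of marginals used in Proposition \ref{prop:mest_id_fix_sym} --- that collapses the $n$ separate one-dimensional problems into the single function $g$.
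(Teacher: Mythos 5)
Your proposal is correct and follows essentially the same route as the paper's proof: bound the population objective below term by term using the unique one-dimensional minimizer $\alpha(\rho)$ of $\E\rho(\eps_{1}-\alpha)$, observe that equality forces $\alpha + x_{i}^{T}(\beta-\betanull)\equiv\alpha(\rho)$ for all $i$, and then invoke the full column rank of $(\mathbf{1},X)$ to conclude $\beta=\betanull$ and $\alpha=\alpha(\rho)$. Nothing is missing.
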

The proof is left to Appendix \ref{app:others}. For example, let $\rho(z) = |z|$. Then the minimizer of $\E \rho(\eps_{1} - a)$ is a median of $\eps_{1}$, and is unique if $\eps_{1}$ has a positive density. It is worth pointing out that incorporating an intercept term is essential for identifying $\betanull$. For instance, in the least-square case, $\beta^{*}(\rho)$ no longer equals to $\betanull$ if $\E\eps_{i}\not= 0$. Proposition \ref{prop:mest_id_fix} entails that the intercept term guarantees $\beta^{*}(\rho) = \betanull$, although the intercept term itself depends on the choice of $\rho$ unless more conditions are imposed. 

If $\eps_{i}$'s are neither symmetric nor i.i.d., then $\betanull$ cannot be identified by the previous criteria because $\betanull(\rho)$ depends on $\rho$. Nonetheless, from a modeling perspective, it is popular and reasonable to assume that $\eps_{i}$'s are symmetric or i.i.d. in many situations. Therefore, Proposition \ref{prop:mest_id_fix_sym} and Proposition \ref{prop:mest_id_fix} justify the use of M-estimators in those cases and M-estimators derived from different loss functions can be compared because they are estimating the same parameter.

\section{Main Results}\label{sec:main}

\subsection{Notation and Assumptions}\label{subsec:notation}
Let $x_{i}^{T}\in \R^{1\times p}$ denote the $i$-th row of $X$ and $X_{j}\in \R^{n\times 1}$ denote the $j$-th column of X. Throughout the paper we will denote by $X_{ij}\in \R$ the $(i, j)$-th entry of $X$, 
by $X_{[j]}\in \R^{n\times (p-1)}$ the design matrix $X$ after removing the $j$-th column, 
and by $x_{i, [j]}^{T}\in \R^{1\times (p-1)}$ the vector $x_{i}^{T}$ after removing $j$-th entry. The M-estimator $\betaHat$ associated with the loss function $\rho$ is defined as
\begin{equation}\label{eq:betaHat_general}
\betaHat = \argmin_{\beta\in\Rp}\frac{1}{n}\sum_{k=1}^{n}\rho(y_{k} - x_{k}^{T}\beta) = \argmin_{\beta\in\Rp}\frac{1}{n}\sum_{k=1}^{n}\rho(\eps_{k} - x_{k}^{T}(\beta - \betanull)) 
\end{equation}
We define $\psi = \rho'$ to be the first derivative of $\rho$. We will write $\betaHat$ simply $\hat{\beta}$ when no confusion can arise.

When the original design matrix $X$ does not contain an intercept term, we can simply replace $X$ by $(\textbf{1} ,  X)$ and augment $\beta$ into a $(p + 1)$-dimensional vector $(\alpha, \beta^{T})^{T}$. Although being a special case, we will discuss the question of intercept in Section \ref{subsubsec:RemarkTheorem1} due to its important role in practice.


~\\
\noindent \textbf{Equivariance and reduction to the null case}\\
Notice that our target quantity $\frac{\hat{\beta}_{j} - \E \hat{\beta}_{j}}{\sqrt{\Var(\hat{\beta}_{j})}}$ is invariant to the choice of $\betanull$, provided that $\betanull$ is identifiable as discussed in Section \ref{subsec:Identifiability}, we can assume $\betanull = 0$ without loss of generality. In this case, we assume in particular that the design matrix $X$ has full column rank. Then $y_{k} = \eps_{k}$ and 
\[\hat{\beta} = \argmin_{\beta\in\Rp}\frac{1}{n}\sum_{k=1}^{n}\rho(\eps_{k} - x_{k}^{T}\beta).\]
 Similarly we define the leave-$j$-th-predictor-out version as
\[\hat{\beta}_{[j]} = \argmin_{\beta\in\Rpminus}\frac{1}{n}\sum_{k=1}^{n}\rho(\eps_{k} - x_{k, [j]}^{T}\beta).\]
Based on these notations we define the full residuals $R_{k}$ as
\[R_{k} = \eps_{k} - x_{k}^{T}\hat{\beta}, \quad k = 1, 2, \ldots, n\]
and the leave-$j$-th-predictor-out residual as
\[r_{k, [j]} = \eps_{k} - x_{k, [j]}^{T}\hat{\beta}_{[j]}, \quad k = 1,2,\ldots, n, \quad j = 1, \ldots, p.\]
Three $n\times n$ diagonal matrices are defined as
\begin{equation}\label{eq:defOfDiagMatrices}
D = \diag(\psi'(R_{k}))_{k=1}^{n}, \quad \td{D} = \diag(\psi''(R_{k}))_{k=1}^{n}, \quad D_{[j]} = \diag(\psi'(r_{k, [j]}))_{k=1}^{n}.
\end{equation}
We say a random variable $Z$ is $\sigma^{2}$-sub-gaussian if for any $\lambda\in \R$,
\[\E e^{\lambda Z}\le e^{\frac{\lambda^{2}\sigma^{2}}{2}}.\]

In addition, we use $J_{n}\subset \{1, \ldots, p\}$ to represent the indices of parameters which are of interest. Intuitively, more entries in $J_{n}$ would require more stringent conditions for the asymptotic normality. 

Finally, we adopt Landau's notation ($O(\cdot), o(\cdot), O_{p}(\cdot), o_{p}(\cdot)$). In addition, we say $a_{n} = \Omega(b_{n})$ if $b_{n} = O(a_{n})$ and similarly, we say $a_{n} = \Omega_{p}(b_{n})$ if $b_{n} = O_{p}(a_{n})$. To simplify the logarithm factors, we use the symbol $\polyLog$ to denote any factor that can be upper bounded by $(\log n)^{\gamma}$ for some $\gamma > 0$. Similarly, we use $\frac{1}{\polyLog}$ to denote any factor that can be lower bounded by $\frac{1}{(\log n)^{\gamma'}}$ for some $\gamma' > 0$.

\subsection{Technical Assumptions and main result}
Before stating the assumptions, we need to define several quantities of interest. Let
\[\lammax = \lambda_{\max}\lb\frac{X^{T}X}{n}\rb, \quad \lammin = \lambda_{\min}\lb\frac{X^{T}X}{n}\rb
\]
 be the largest (resp. smallest) eigenvalue of the  matrix $\frac{X^{T}X}{n}$.
Let $e_i \in \mathbb{R}^n$ be the $i$-th canonical basis vector and
\[
h_{j, 0} \triangleq (\psi(r_{1, [j]}), \ldots, \psi(r_{n, [j]}))^{T}, \quad h_{j, 1, i} \triangleq (I - D_{[j]}X_{[j]}(X_{[j]}^{T}D_{[j]}X_{[j]})^{-1}X_{[j]}^{T})e_{i}.\]
Finally, let 
\begin{align*}
\Delta_{C} &= \max\left\{\max_{j\in J_{n}}\frac{|h_{j, 0}^{T}X_{j}|}{\lnorm h_{j, 0}\lnorm_{2} }, \max_{i\le n, j\in J_{n}}\frac{|h_{j, 1, i}^{T}X_{j}|}{\lnorm h_{j, 1, i}\lnorm_{2} }\right\}, \\
Q_{j} &= \Cov(h_{j, 0})
\end{align*}  
Based on the quantities defined above, we state our technical assumptions on the design matrix $X$ followed by the main result. A detailed explanation of the assumptions follows. 
\begin{enumerate}[\textbf{A}1]
\item $\rho(0) = \psi(0) = 0$ and there exists positive numbers $K_{0} = \Omega\lb\frac{1}{\polyLog}\rb$, $K_{1}, K_{2} = O\lb \polyLog\rb$, such that for any $x\in\m{R}$,
\[K_{0} \le \psi'(x)\le K_{1}, \quad \bigg|\frac{d}{dx}(\sqrt{\psi'}(x))\bigg| = \frac{|\psi''(x)|}{\sqrt{\psi'(x)}}\le K_{2};\]
\item $\ep_{i} = u_{i}(W_{i})$ where $(W_{1}, \ldots, W_{n})\sim N(0, I_{n\times n})$ and $u_{i}$ are smooth functions with $\|u'_{i}\|_{\infty}\le c_{1}$ and $\|u''_{i}\|_{\infty}\le c_{2}$ for some $c_{1}, c_{2} = O(\polyLog)$. Moreover, assume $\min_{i}\Var(\eps_{i}) = \Omega\lb\frac{1}{\polyLog}\rb$.
\item $\lammax = O(\polyLog)$ and $\lammin = \Omega\lb\frac{1}{\polyLog}\rb$;
\item $\cmin\frac{X_{j}^{T}Q_{j}X_{j}}{\tr(Q_{j})} = \Omega\lb\frac{1}{\polyLog}\rb$;
\item $\E\Delta_{C}^{8} = O\lb\polyLog\rb$.
\end{enumerate}

\begin{theorem}\label{thm:main}
  Under assumptions $\textbf{A}1-\textbf{A}5$, as $p/n\rightarrow \kappa$ for some $\kappa\in (0, 1)$, while $n\rightarrow \infty$,
\[\max_{j \in J_n}d_{\mathrm{TV}}\lb\mathcal{L}\lb\frac{\hat{\beta}_{j} - \E \hat{\beta}_{j}}{\sqrt{\Var(\hat{\beta}_{j})}}\rb, N(0, 1)\rb = o(1),\]
where $d_{\mathrm{TV}}(P, Q) = \sup_{A}|P(A) - Q(A)|$ is the total variation distance.
\end{theorem}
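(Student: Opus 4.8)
The plan is to apply the \SOPI (SOPI) of \citeA{sopi} to the map $W\mapsto\hat{\beta}_{j}$, regarded as a function of the underlying Gaussian vector $W=(W_{1},\dots,W_{n})\sim N(0,I_{n})$ through $\eps_{i}=u_{i}(W_{i})$ (Assumption~\textbf{A}2); by the equivariance reduction in Section~\ref{subsec:notation} we may take $\betanull=0$. Since $\rho$ is strongly convex on $\R$ (the bound $\psi'\ge K_{0}>0$ in \textbf{A}1) and $X$ has full column rank, for every $W$ the objective in \eqref{eq:betaHat_general} has a unique minimizer, and applying the implicit function theorem to the stationarity equation $\frac1n\sum_{k}\psi(\eps_{k}-x_{k}^{T}\hat{\beta})x_{k}=0$ shows that $W\mapsto\hat{\beta}(W)$ is globally $C^{2}$, with $\partial\hat{\beta}/\partial\eps^{T}=(X^{T}DX)^{-1}X^{T}D$ and a second derivative that is a finite sum of terms built from $(X^{T}DX)^{-1}$, $X$, $D$ and $\td{D}$. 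By the quantitative SOPI bound this gives
\[
d_{\mathrm{TV}}\!\lb\mathcal{L}\!\lb\tfrac{\hat{\beta}_{j}-\E\hat{\beta}_{j}}{\sqrt{\Var(\hat{\beta}_{j})}}\rb,N(0,1)\rb\le\frac{C}{\Var(\hat{\beta}_{j})}\lb\E\|\nabla_{W}\hat{\beta}_{j}\|_{2}^{4}\rb^{1/4}\lb\E\|\nabla_{W}^{2}\hat{\beta}_{j}\|_{\mathrm{op}}^{4}\rb^{1/4},
\]
so the theorem reduces to three bounds, each uniform over $j\in J_{n}$: (i) a gradient bound, (ii) a Hessian operator-norm bound, and (iii) a variance lower bound.

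Bound (i) holds surely. Using $\partial\hat{\beta}/\partial\eps^{T}=(X^{T}DX)^{-1}X^{T}D$ and $D^{2}\preceq K_{1}D$ one has $\|\partial\hat{\beta}_{j}/\partial\eps\|_{2}^{2}=e_{j}^{T}(X^{T}DX)^{-1}X^{T}D^{2}X(X^{T}DX)^{-1}e_{j}\le K_{1}e_{j}^{T}(X^{T}DX)^{-1}e_{j}\le K_{1}/(K_{0}n\lammin)$, so the chain rule with $\|u_{i}'\|_{\infty}\le c_{1}$ gives $\|\nabla_{W}\hat{\beta}_{j}\|_{2}^{2}\le c_{1}^{2}K_{1}/(K_{0}n\lammin)=O(\polyLog/n)$ by \textbf{A}1 and \textbf{A}3; hence $(\E\|\nabla_{W}\hat{\beta}_{j}\|_{2}^{4})^{1/4}=O(\sqrt{\polyLog/n})$.

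Bound (ii) is the crux. Expanding the second derivative of the implicit map directly produces resolvent products $(X^{T}DX)^{-1}X^{T}(\cdots)X(X^{T}DX)^{-1}$, whose naive operator-norm bounds cost a spurious factor $n$; entrywise and even Frobenius-norm control are not enough either. The remedy is the double leave-one-out device of \citeA{elkaroui11}: deleting the $i$-th observation decouples the residual $R_{i}$ from the relevant resolvent, so that each differentiation in $\eps_{i}$ contributes only an $O(n^{-1/2})$ factor, while deleting the $j$-th predictor exhibits the Hessian as a nearly-diagonal term of size $O(n^{-1})$ plus a term whose operator norm is again $O(n^{-1})$, using the bounded operator norm of the weighted projection $D_{[j]}X_{[j]}(X_{[j]}^{T}D_{[j]}X_{[j]})^{-1}X_{[j]}^{T}$ and the control of the contractions $h_{j,1,i}^{T}X_{j}$. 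Keeping $\td{D}$ under control via $|\psi''|/\sqrt{\psi'}\le K_{2}$ (\textbf{A}1) and the resolvents via \textbf{A}3, one obtains a bound of the shape $\|\nabla_{W}^{2}\hat{\beta}_{j}\|_{\mathrm{op}}\le\polyLog\cdot(1+\Delta_{C}^{2})/n$, so that \textbf{A}5 (the eighth moment gives exactly the slack needed after raising to the fourth power) yields $(\E\|\nabla_{W}^{2}\hat{\beta}_{j}\|_{\mathrm{op}}^{4})^{1/4}=O(\polyLog/n)$. The detailed accounting of these cross terms is, I expect, the main technical obstacle, which is why the heuristics are isolated in Appendix~\ref{app:heuristic} and the full argument in Appendix~\ref{app:main}.

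Bound (iii) uses the leave-$j$-th-predictor-out representation. A one-step Newton expansion of $\hat{\beta}$ around the zero-padded $\hat{\beta}_{[j]}$, combined with the stationarity identity $X_{[j]}^{T}h_{j,0}=0$, yields
\[
\hat{\beta}_{j}=\frac{h_{j,0}^{T}X_{j}}{X_{j}^{T}D_{[j]}X_{j}-X_{j}^{T}D_{[j]}X_{[j]}(X_{[j]}^{T}D_{[j]}X_{[j]})^{-1}X_{[j]}^{T}D_{[j]}X_{j}}+(\text{lower order}),
\]
where the denominator lies between $K_{0}n\lammin$ and $K_{1}n\lammax$ by \textbf{A}1 and \textbf{A}3 and, being an average of bounded quantities, concentrates around its mean; thus, up to negligible terms, $\Var(\hat{\beta}_{j})$ is comparable to $\Var(h_{j,0}^{T}X_{j})/n^{2}=X_{j}^{T}Q_{j}X_{j}/n^{2}$ and it remains to bound $X_{j}^{T}Q_{j}X_{j}$ from below. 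Now $\tr(Q_{j})=\sum_{k}\Var(\psi(r_{k,[j]}))=\Omega(n/\polyLog)$: a leave-one-observation-out argument transfers the lower bound $\Var(\eps_{k})=\Omega(1/\polyLog)$ from \textbf{A}2 to $\Var(r_{k,[j]})$, and since $\psi'\ge K_{0}$ makes $\psi^{-1}$ Lipschitz, $\Var(\psi(r_{k,[j]}))\ge K_{0}^{2}\Var(r_{k,[j]})$. Combining with \textbf{A}4, $X_{j}^{T}Q_{j}X_{j}=\Omega(n/\polyLog)$, whence $\Var(\hat{\beta}_{j})=\Omega(1/(n\,\polyLog))$, uniformly over $j\in J_{n}$. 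Substituting (i)--(iii) into the SOPI inequality gives $d_{\mathrm{TV}}=O(\polyLog/\sqrt{n})=o(1)$, and since $|J_{n}|\le p\le n$ and every $\polyLog$ above is uniform over $j\in J_{n}$ (through the $\max/\min$ over $J_{n}$ in \textbf{A}4--\textbf{A}5), the same estimate holds for $\max_{j\in J_{n}}$, completing the proof.
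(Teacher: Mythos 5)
Your overall architecture is the same as the paper's: reduce to $\betanull=0$, view $\hat{\beta}_{j}$ as a $C^{2}$ function of $\eps$ (hence of $W$), compute $\partial\hat{\beta}/\partial\eps^{T}=(\wm)^{-1}X^{T}D$ and the Hessian by implicit differentiation, apply Chatterjee's inequality, control the Hessian through leave-one-predictor-out quantities and $\Delta_{C}$, and obtain the variance lower bound through the approximation $\hat{\beta}_{j}\approx b_{j}=N_{j}/(\sqrt{n}\,\xi_{j})$, the concentration of $\xi_{j}$, assumption \textbf{A}4, and the bound $\tr(Q_{j})=\Omega(n/\polyLog)$. Your part (i) and part (iii) match the paper's Lemmas \ref{lem:beta_deriv_bound} and \ref{lem:heuristic} in substance.

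The genuine gap is in your part (ii). You assert a surely-valid bound $\|\nabla_{W}^{2}\hat{\beta}_{j}\|_{\mathrm{op}}\le\polyLog\cdot(1+\Delta_{C}^{2})/n$ and claim that the eighth moment in \textbf{A}5 is ``exactly the slack needed'' for the fourth power. This is not substantiated, and the accounting does not close with \textbf{A}5 alone: the Hessian is $G^{T}\diag\lb e_{j}^{T}(\wm)^{-1}X^{T}\td{D}\rb G$, and to express the $\ell_{\infty}$ norm of $e_{j}^{T}(\wm)^{-1}X^{T}\td D$ through $\Delta_{C}$ you must replace $D$ by $D_{[j]}$; the resulting error terms (as in Proposition \ref{prop:deterministic}) are of the form $n^{-1}\Delta_{C}^{3}\cure$ and $n^{-1}\Delta_{C}^{2}\sqrt{\cure}$, so a fourth-moment bound on the Hessian operator norm of order $n^{-4}$ would require control of $\E\,\Delta_{C}^{12}\cure^{4}$, i.e.\ twelfth-or-higher moments of $\Delta_{C}$, which \textbf{A}5 does not provide. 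The paper avoids exactly this: it only bounds the \emph{first} moment $M_{j}=\E\|e_{j}^{T}(\wm)^{-1}X^{T}D^{1/2}\|_{\infty}=O(\polyLog/n)$ (where Cauchy--Schwarz and H\"older bring in at most $\E\Delta_{C}^{8}$), and then interpolates, $\E\|v\|_{\infty}^{m}\le\E\bigl(\|v\|_{\infty}\|v\|_{2}^{m-1}\bigr)\le M_{j}(nK_{0}\lammin)^{-(m-1)/2}$, accepting the weaker bound $\kappa_{2j}=O(\polyLog\, n^{-5/8})$; the final rate is then $O(\polyLog\, n^{-1/8})$, not the $O(\polyLog\, n^{-1/2})$ you claim. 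A second, smaller omission: your displayed SOPI bound drops the $c_{1}c_{2}\kappa_{0}$ term of Proposition \ref{prop:sopi}; it must be bounded as well (the paper does so through the same $M_{j}$ interpolation, and the naive $\ell_{2}$ bound on $\kappa_{0}$ is insufficient, as the remark after Lemma \ref{lem:tv_bound} explains). So either supply a proof of a fourth-moment Hessian bound under stronger moment assumptions on $\Delta_{C}$, or switch to the paper's first-moment/interpolation device; as written, step (ii) does not follow from \textbf{A}1--\textbf{A}5.
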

We provide several examples where our assumptions hold in Section \ref{subsec:examples}. We also provide an example where the asymptotic normality does not hold in Section \ref{subsubsec:counterexample}. This shows that our assumptions are not just artifacts of the proof technique we developed, but that there are (probably many) situations where asymptotic normality will not hold, even coordinate-wise. 
\subsubsection{Discussion of Assumptions}\label{subsubsec:DisAssump}
Now we discuss assumptions \textbf{A}1 - \textbf{A}5. Assumption \textbf{A}1 implies the boundedness of the first-order and the second-order derivatives of $\psi$. The upper bounds are satisfied by most loss functions including the $L_{2}$ loss, the smoothed $L_{1}$ loss, the smoothed Huber loss, etc. The non-zero lower bound $K_{0}$ implies the strong convexity of $\rho$ and is required for technical reasons. It can be removed by considering first a ridge-penalized M-estimator and taking appropriate limits as in \citeA{elkaroui13, elkaroui15}. In addition, in this paper we consider the smooth loss functions and the results can be extended to non-smooth case via approximation.

Assumption \textbf{A}2 was proposed in \citeA{sopi} when deriving the second-order Poincar\'{e} inequality discussed in Section \ref{sec:heuristic}.1. It means that the results apply to non-Gaussian distributions, such as the uniform distribution on $[0, 1]$ by taking $u_{i} = \Phi$, the cumulative distribution function of standard normal distribution. Through the gaussian concentration \cite{ledoux01}, we see that \textbf{A}2 implies that $\eps_{i}$ are $c_{1}^{2}$-sub-gaussian. Thus \textbf{A}2 controls the tail behavior of $\eps_{i}$. The boundedness of $u'_{i}$ and $u''_{i}$ are required only for the direct application of Chatterjee's results. In fact, a look at his proof suggests that one can obtain a similar result to his Second-Order Poincar\'e inequality involving moment bounds on $u'_{i}(W_i)$ and $u''_{i}(W_i)$. This would be a way to weaken our assumptions to permit to have the heavy-tailed distributions expected in robustness studies. Since we are considering strongly convex loss-functions, it is not completely unnatural to restrict our attention to light-tailed errors. Furthermore, efficiency - and not only robustness - questions are one of the main reasons to consider these estimators in the moderate-dimensional context. The potential gains in efficiency obtained by considering regression M-estimates \cite{bean13} apply in the light-tailed context, which further justify our interest in this theoretical setup. 

Assumption \textbf{A}3 is completely checkable since it only depends on $X$. It controls the singularity of the design matrix. Under \textbf{A}1 and \textbf{A}3, it can be shown that the objective function is strongly convex with curvature (the smallest eigenvalue of the Hessian matrix) lower bounded by $\Omega\lb\frac{1}{\polyLog}\rb$ everywhere.

Assumption \textbf{A}4 is controlling the left tail of quadratic forms. It is fundamentally connected to aspects of the concentration of measure phenomenon \cite{ledoux01}. This condition is proposed and emphasized under the random design setting by \citeA{elkaroui13pnas}. Essentially, it means that for a matrix $Q_{j}$ ,which does not depend on $X_{j}$, the quadratic form $X_{j}^{T}Q_{j}X_{j}$ should have the same order as $\tr(Q_{j})$. 

Assumption \textbf{A}5 is proposed by \citeA{elkaroui13} under the random design settings. It is motivated by leave-one-predictor-out analysis. Note that $\Delta_{C}$ is the maximum of linear contrasts of $X_{j}$, whose coefficients do not depend on $X_{j}$. It is easily checked for design matrix $X$ which is a realization of a random matrix with i.i.d sub-gaussian entries for instance.

\begin{remark}\label{remark:unbiase}
In certain applications, it is reasonable to make the following additional assumption:
\begin{enumerate}[\textbf{A}1]
\setcounter{enumi}{5}
\item $\rho$ is an even function and $\eps_{i}$'s have symmetric distributions.
\end{enumerate}
Although assumption \textbf{A}6 is not necessary to Theorem \ref{thm:main}, it can simplify the result. Under assumption \textbf{A}6, when $X$ is full rank, we have, if $\stackrel{d}{=}$ denotes equality in distribution,
\begin{align*}
\hat{\beta} - \beta^{*} &= \argmin_{\eta\in \Rp} \frac{1}{n}\sum_{i=1}^{n}\rho(\eps_{i} - x_{i}^{T}\eta) = \argmin_{\eta\in \Rp} \frac{1}{n}\sum_{i=1}^{n}\rho(-\eps_{i} + x_{i}^{T}\eta)\\ 
&\stackrel{d}{=}\argmin_{\eta\in \Rp} \frac{1}{n}\sum_{i=1}^{n}\rho(\eps_{i} + x_{i}^{T}\eta) = \beta^{*} - \hat{\beta}.
\end{align*}
This implies that $\hat{\beta}$ is an unbiased estimator, provided it has a mean, which is the case here. Unbiasedness is useful in practice, since then Theorem \ref{thm:main} reads
\[\max_{j\in J_n}d_{\mathrm{TV}}\lb\mathcal{L}\lb\frac{\hat{\beta}_{j} - \beta_{j}^{*}}{\sqrt{\Var(\hat{\beta}_{j})}}\rb, N(0, 1)\rb = o(1)\;.\]
For inference, we only need to estimate the asymptotic variance. 
\end{remark}

\subsubsection{An important remark concerning Theorem \ref{thm:main}}\label{subsubsec:RemarkTheorem1}

When $J_{n}$ is a subset of $\{1, \ldots, p\}$, the coefficients in $J_{n}^{c}$ become nuisance parameters. Heuristically, in order for identifying $\betanull_{J_{n}}$, one only needs the subspaces $\spanvec(X_{J_{n}})$ and $\spanvec(X_{J_{n}^{c}})$ to be distinguished and $X_{J_{n}}$ has a full column rank. Here $X_{J_{n}}$ denotes the sub-matrix of $X$ with columns in $J_{n}$. Formally, let 
\[\hat{\Sigma}_{J_{n}} = \frac{1}{n}X_{J_{n}}^{T}(I - X_{J_{n}^{c}}(X_{J_{n}^{c}}^{T}X_{J_{n}^{c}})^{-}X_{J_{n}^{c}}^{T})X_{J_{n}}\]
where $A^{-}$ denotes the generalized inverse of $A$, and 
\[\td{\lambda}_{+} = \lambda_{\max}\lb \hat{\Sigma}_{J_{n}}\rb, \quad \td{\lambda}_{-} = \lambda_{\min}\lb \hat{\Sigma}_{J_{n}}\rb.\]
Then $\hat{\Sigma}_{J_{n}}$ characterizes the behavior of $X_{J_{n}}$ after removing the effect of $X_{J_{n}^{c}}$. In particular, we can modify the assumption \textbf{A}3 by 
\begin{enumerate}[\textbf{A}3*]
\setcounter{enumi}{2}  
  \item $\td{\lambda}_{+} = O(\polyLog)$ and $\td{\lambda}_{-} = \Omega\lb\frac{1}{\polyLog}\rb$.
\end{enumerate}
Then we are able to derive a stronger result in the case where $|J_{n}| < p$ than Theorem \ref{thm:main} as follows.
\begin{corollary}\label{cor:transform}
  Under assumptions \textbf{A}1-2, \textbf{A}4-5 and \textbf{A}3*, as $p/n\rightarrow \kappa$ for some $\kappa\in (0, 1)$,
\[\max_{j \in J_n}d_{\mathrm{TV}}\lb\mathcal{L}\lb\frac{\hat{\beta}_{j} - \E \hat{\beta}_{j}}{\sqrt{\Var(\hat{\beta}_{j})}}\rb, N(0, 1)\rb = o(1).\]
\end{corollary}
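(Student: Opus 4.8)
\textbf{Proof proposal for Corollary \ref{cor:transform}.}

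The plan is to reduce Corollary \ref{cor:transform} to Theorem \ref{thm:main} by an equivariance/change-of-variables argument that replaces the full design $X$ by a reduced design in which the nuisance columns $X_{J_n^c}$ have been projected out, so that the effective Gram matrix is $\hat\Sigma_{J_n}$ rather than $X^TX/n$. Concretely, write the coordinates of $\hat\beta$ in $J_n$ and $J_n^c$ as $\hat\beta_{J_n}$ and $\hat\beta_{J_n^c}$. First I would show that, for each fixed $j\in J_n$, the quantity $\hat\beta_j$ can be represented as (a coordinate of) the M-estimator in a lower-dimensional regression problem whose design matrix is $\tilde X_{J_n} := (I - X_{J_n^c}(X_{J_n^c}^TX_{J_n^c})^- X_{J_n^c}^T)X_{J_n}$, the residualized version of $X_{J_n}$, after also residualizing the response. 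This is the standard Frisch–Waugh–Lovell type decomposition, but it is only exact for least squares; for general $\rho$ it is \emph{not} literally true, so the argument must instead be made at the level of the objective function: partial-minimization of $\frac1n\sum_k\rho(\eps_k - x_{k,J_n}^T\eta_{J_n} - x_{k,J_n^c}^T\eta_{J_n^c})$ over $\eta_{J_n^c}$ defines a convex ``profiled'' loss $\bar\rho_n(\cdot\,;\eta_{J_n})$, and $\hat\beta_{J_n}$ minimizes $\sum_k \bar\rho_n$. The key realization is that the leave-one-predictor-out quantities $h_{j,0}$, $h_{j,1,i}$, $Q_j$, $\Delta_C$ appearing in \textbf{A}1--\textbf{A}5 only ever involve $X_{[j]}$ through the projection $I - D_{[j]}X_{[j]}(X_{[j]}^TD_{[j]}X_{[j]})^{-1}X_{[j]}^T$ and through $\psi$ evaluated at leave-one-predictor-out residuals; and leaving out predictor $j\in J_n$ while keeping all of $J_n^c$ produces residuals that are unchanged whether or not one has first profiled out $J_n^c$.

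The key steps, in order, would be: (1) Establish the equivalence of the two optimization problems at the level of minimizers: $\hat\beta_j$ for $j\in J_n$ is unchanged if one replaces the regression of $y$ on $X$ by the regression of $y$ on $(X_{J_n^c}, X_{J_n})$ and notes that the $J_n$-block of the solution solves the first-order conditions $X_{J_n}^T \psi(\text{residuals}) = 0$ together with $X_{J_n^c}^T\psi(\text{residuals})=0$; since $\psi(\text{residuals})$ is orthogonal to $\spanvec(X_{J_n^c})$, one may replace $X_{J_n}$ by $\tilde X_{J_n}$ in the $J_n$-equations without changing them. (2) Verify that under \textbf{A}3* the reduced design $\tilde X_{J_n}$ (augmented, if necessary, by a basis for $\spanvec(X_{J_n^c})$ so the total still has full rank $p$) satisfies a version of \textbf{A}3, with $\lambda_\pm$ replaced by $\tilde\lambda_\pm$ on the $J_n$-block; the $J_n^c$-block can be handled because those coordinates are nuisance and never enter the conclusion. (3) Check that \textbf{A}1, \textbf{A}2, \textbf{A}4, \textbf{A}5 are insensitive to this reparametrization: \textbf{A}1 and \textbf{A}2 do not involve $X$ at all; \textbf{A}4 and \textbf{A}5 involve only columns $X_j$ with $j\in J_n$ and the leave-$j$-out projectors, and one argues that the relevant $h_{j,0}, h_{j,1,i}, Q_j$ computed in the profiled problem agree (or agree up to $\polyLog$ factors controlled by $\tilde\lambda_\pm$) with those in the original problem. (4) Apply Theorem \ref{thm:main} to the reduced problem and read off the conclusion for $j\in J_n$, noting that $\hat\beta_j$, $\E\hat\beta_j$, $\Var(\hat\beta_j)$ are literally the same objects in both formulations.

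The main obstacle will be step (1)/(3): for a nonlinear loss the profiling-out of nuisance coordinates does not commute cleanly with the leave-one-predictor-out device used to define $D_{[j]}$, $r_{k,[j]}$, and hence $Q_j$ and $\Delta_C$. One has to be careful that ``leave out predictor $j\in J_n$, then profile out $J_n^c$'' gives the same diagonal matrix $D_{[j]}$ and the same residuals as ``profile out $J_n^c$, then leave out $j$'' — which is true at the level of minimizers because profiling out $J_n^c$ does not change the fitted values, but requires a short convex-analysis argument rather than a one-line matrix identity. A secondary nuisance is that $\tilde X_{J_n}$ has rank $|J_n|$ but sits in an $n\times|J_n|$ block, so to invoke Theorem \ref{thm:main} verbatim one must either restate it for ``design matrices that are full column rank after removing a fixed nuisance subspace'' or pad $\tilde X_{J_n}$ back out; both are routine but need to be spelled out. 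Once these bookkeeping points are handled, the estimate $\max_{j\in J_n} d_{\mathrm{TV}}(\cdots, N(0,1)) = o(1)$ follows immediately from Theorem \ref{thm:main} applied in the reduced coordinates.
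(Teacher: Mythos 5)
Your proposal is correct and follows essentially the same route as the paper: reparametrize the design so that the $J_n$ columns are residualized against $\spanvec(X_{J_n^c})$ while the nuisance columns are replaced by a well-conditioned basis of their span (so the problem remains a full-rank $p$-dimensional M-estimation problem), observe that $\hat{\beta}_{J_n}$, the leave-$j$-th-predictor-out residuals, and hence $h_{j,0}$, $h_{j,1,i}$, $Q_j$ and $\Delta_C$ are \emph{exactly} unchanged (the orthogonality $X_{[j]}^{T}h_{j,0}=0$ and $X_{[j]}^{T}h_{j,1,i}=0$ makes \textbf{A}4--\textbf{A}5 transfer with no polyLog loss), check that the transformed Gram matrix is block-diagonal with blocks $\hat{\Sigma}_{J_{n}}$ and $I$ so that \textbf{A}3* yields \textbf{A}3, and then apply Theorem \ref{thm:main}; this is precisely the paper's argument, with the non-uniqueness bookkeeping you flag handled there by Propositions \ref{prop:partial_id_resid} and \ref{prop:partial_id_estimator}. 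The one caution is that your profiled-loss detour could not by itself support your step (4), since the profiled objective is no longer of the form $\frac{1}{n}\sum_{k}\rho(y_{k}-x_{k}^{T}\beta)$ to which Theorem \ref{thm:main} applies, but your padding construction sidesteps this and coincides with the paper's choice of column transformation.
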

It can be shown that $\td{\lambda}_{+}\le \lambda_{+}$ and $\td{\lambda}_{-}\ge \lambda_{-}$ and hence the assumption \textbf{A}3* is weaker than \textbf{A}3. It is worth pointing out that the assumption \textbf{A}3* even holds when $X_{J_{n}}^{c}$ does not have full column rank, in which case $\betanull_{J_{n}}$ is still identifiable and $\hat{\beta}_{J_{n}}$ is still well-defined, although $\betanull_{J_{n}^{c}}$ and $\hat{\beta}_{J_{n}^{c}}$ are not; see Appendix \ref{subapp:partial_id} for details. 

\subsection{Examples}\label{subsec:examples}
Throughout this subsection (except subsubsection \ref{subsubsec:counterexample}), we consider the case where $X$ is a realization of a random matrix, denoted by $Z$ (to be distinguished from $X$). We will verify that the assumptions \textbf{A}3-\textbf{A}5 are satisfied with high probability under different regularity conditions on the distribution of $Z$. This is a standard way to justify the conditions for fixed design \cite{portnoy84, portnoy85} in the literature on regression M-estimates.


\subsubsection{Random Design with Independent Entries}
First we consider a random matrix $Z$ with i.i.d. sub-gaussian entries. 

\begin{proposition}\label{prop:subgauss}
  Suppose $Z$ has i.i.d. mean-zero $\sigma^{2}$-sub-gaussian entries with $\Var(Z_{ij})= \tau^{2} > 0$ for some $\sigma = O(\polyLog)$ and $\tau = \Omega\lb\frac{1}{\polyLog}\rb$, then, when $X$ is a realization of $Z$, assumptions \textbf{A}3-\textbf{A}5 for $X$ are satisfied with high probability over $Z$ for $J_{n} = \{1, \ldots, p\}$.
\end{proposition}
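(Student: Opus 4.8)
The plan is to verify \textbf{A}3, \textbf{A}4 and \textbf{A}5 one at a time, each on an event of probability $1-o(1)$ over $Z$, and then to intersect the three events; \textbf{A}1 and \textbf{A}2 are hypotheses on $\rho$ and on $\eps$ and are not in question here. For \textbf{A}3, which is a pure statement about the spectrum of $Z^{T}Z/n$: since $\Var(Z_{ij})=\tau^{2}\le\sigma^{2}$ and the entries are i.i.d.\ $\sigma^{2}$-sub-gaussian, off-the-shelf non-asymptotic bounds for the extreme singular values of sub-gaussian matrices (the two-sided Bai--Yin bound and its finite-$n$ refinements) give that, with probability $\ge 1-2e^{-cn}$, every singular value of $Z/\sqrt n$ lies in $[\tfrac12\tau(1-\sqrt\kappa),\,2\tau(1+\sqrt\kappa)]$. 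Since $\kappa\in(0,1)$ is fixed, $\tau=\Omega(1/\polyLog)$ and $\tau\le\sigma=O(\polyLog)$, this is exactly $\lammin=\Omega(1/\polyLog)$ and $\lammax=O(\polyLog)$; in particular $Z$, and each $Z_{[j]}$, has full column rank on this event.

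For \textbf{A}5 the structural fact to exploit, as in \citeA{elkaroui13}, is that $h_{j,0}$ and $h_{j,1,i}$ are built only from $X_{[j]}$ and $\eps$, hence are independent of the column $Z_{j}$. Conditionally on $(Z_{[j]},\eps)$, each of $h_{j,0}^{T}Z_{j}/\|h_{j,0}\|_{2}$ and $h_{j,1,i}^{T}Z_{j}/\|h_{j,1,i}\|_{2}$ is a unit-norm linear combination of the i.i.d.\ $\sigma^{2}$-sub-gaussian entries of $Z_{j}$, hence $\sigma^{2}$-sub-gaussian. A union bound over the at most $(n+1)p\le 2n^{2}$ such terms gives $P(\Delta_{C}>t)\le 4n^{2}e^{-t^{2}/(2\sigma^{2})}$ for the joint law of $(Z,\eps)$; integrating the tail yields $\E_{Z,\eps}[\Delta_{C}^{8}]=O\big((\sigma^{2}\log n)^{4}\big)=O(\polyLog)$. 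Applying Markov's inequality to the nonnegative random variable $Z\mapsto\E_{\eps}[\Delta_{C}^{8}\mid Z]$, say at $\log n$ times its mean, shows $\E_{\eps}[\Delta_{C}^{8}\mid Z]=O(\polyLog)$ with probability $1-o(1)$ over $Z$, which is \textbf{A}5.

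For \textbf{A}4, note $Q_{j}=\Cov(h_{j,0})$ again depends only on $Z_{[j]}$, so conditionally on $Z_{[j]}$ the Hanson--Wright inequality gives, using $\|Q_{j}\|_{F}^{2}\le\|Q_{j}\|_{\mathrm{op}}\tr(Q_{j})$,
\[
P\Big(\big|Z_{j}^{T}Q_{j}Z_{j}-\tau^{2}\tr(Q_{j})\big|>\tfrac12\tau^{2}\tr(Q_{j})\ \Big|\ Z_{[j]}\Big)\ \le\ 2\exp\!\Big(-c\,\tfrac{\tau^{4}}{\sigma^{4}}\cdot\tfrac{\tr(Q_{j})}{\|Q_{j}\|_{\mathrm{op}}}\Big).
\]
Thus it suffices to control two quantities that are deterministic given $X$: an upper bound $\|Q_{j}\|_{\mathrm{op}}=O(\polyLog)$ and a lower bound $\tr(Q_{j})=\Omega(n/\polyLog)$. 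For the former, differentiating the normal equations $X_{[j]}^{T}\psi(r_{[j]})=0$ gives $\partial r_{[j]}/\partial\eps=I-H_{[j]}$ with $H_{[j]}=D_{[j]}^{-1/2}\Pi D_{[j]}^{1/2}$ for an orthogonal projection $\Pi$, so $\|I-H_{[j]}\|_{\mathrm{op}}\le1+\sqrt{K_{1}/K_{0}}=O(\polyLog)$ by \textbf{A}1; composing with the $K_{1}$-Lipschitz coordinatewise map $\psi(\cdot)$ and with $W_{i}\mapsto u_{i}(W_{i})$ (Lipschitz $c_{1}$), the map $W\mapsto v^{T}h_{j,0}$ is $O(\polyLog)$-Lipschitz for every unit vector $v$, and the Gaussian Poincar\'e inequality gives $\Var(v^{T}h_{j,0})=O(\polyLog)$, i.e.\ $\|Q_{j}\|_{\mathrm{op}}=O(\polyLog)$. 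For the latter, $\tr(Q_{j})=\sum_{i}\Var(\psi(r_{i,[j]}))\ge K_{0}^{2}\sum_{i}\Var(r_{i,[j]})=K_{0}^{2}\tr(\Cov(r_{[j]}))$ since $\psi'\ge K_{0}$; writing $P$ for the orthogonal projection onto $\spanvec(X_{[j]})$, the identity $(I-P)r_{[j]}=(I-P)\eps$ (because $X_{[j]}\hat\beta_{[j]}\in\spanvec(X_{[j]})$) gives $\tr(\Cov(r_{[j]}))\ge\tr\big((I-P)\Cov(\eps)\big)\ge\lambda_{\min}(\Cov(\eps))\,(n-p+1)=\Omega(n/\polyLog)$ by \textbf{A}2. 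Since then $\tau^{4}/\sigma^{4}=\Omega(1/\polyLog)$ and $\tr(Q_{j})/\|Q_{j}\|_{\mathrm{op}}=\Omega(n/\polyLog)$, the right-hand side of the display is $2e^{-\Omega(n/\polyLog)}$, a union bound over $j\le p\le n$ is still $o(1)$, and on the resulting good event $Z_{j}^{T}Q_{j}Z_{j}/\tr(Q_{j})\ge\tfrac12\tau^{2}=\Omega(1/\polyLog)$ for every $j$, which is \textbf{A}4.

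The \textbf{A}3 step is essentially a black box and the \textbf{A}5 step is a routine sub-gaussian maximal inequality, so the real content is in \textbf{A}4, and specifically in pairing the two bounds on $Q_{j}$. The hard part will be the upper bound on $\|Q_{j}\|_{\mathrm{op}}$: this is where the strong convexity built into \textbf{A}1 is indispensable, since it is what keeps the oblique projector $H_{[j]}$ --- hence the residual map $\eps\mapsto r_{[j]}$ --- uniformly Lipschitz, which is precisely what makes the Gaussian-Poincar\'e argument go through; without it the effective rank $\tr(Q_{j})/\|Q_{j}\|_{\mathrm{op}}$ need not be large and Hanson--Wright would not beat the union bound over the $p$ columns. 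Carrying out the differentiability and Lipschitz bookkeeping for $\eps\mapsto\hat\beta_{[j]}$ rigorously, and tracking the leftover $\polyLog$ factors so that this effective rank is genuinely $\Omega(n/\polyLog)$, is the part that requires the most care.
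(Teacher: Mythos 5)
Your proposal is correct and follows the same overall architecture as the paper's proof: a random-matrix bound for \textbf{A}3, a sub-gaussian maximal inequality plus Markov for \textbf{A}5, and, for \textbf{A}4, Hanson--Wright (Proposition \ref{prop:hanson_wright}) conditionally on $Z_{[j]}$ combined with an $O(\polyLog)$ bound on $\|Q_j\|_{\mathrm{op}}$ obtained exactly as in the paper's Lemma \ref{lem:Qj} (Gaussian Poincar\'e applied to the Lipschitz map $\eps\mapsto \alpha^T h_{j,0}$, using that $D_{[j]}^{1/2}G_{[j]}D_{[j]}^{-1/2}$ is a projection). The one genuinely different ingredient is your lower bound $\tr(Q_j)=\Omega(n/\polyLog)$: the paper (Lemma \ref{lem:var_rij}) gets it by conditioning on $\eps_{(i)}$, computing $\partial r_{i,[j]}/\partial\eps_i=e_i^TG_{[j]}e_i$, and running a block-inversion/leverage-score estimate $e_i^T(I-H_j)e_i\ge \frac{K_0}{K_1}e_i^T(I-X_{[j]}(X_{[j]}^TX_{[j]})^{-1}X_{[j]}^T)e_i$ followed by Cauchy--Schwarz, whereas you use $\Var(\psi(r_{i,[j]}))\ge K_0^2\Var(r_{i,[j]})$ together with the projection identity $(I-P)r_{[j]}=(I-P)\eps$ and monotonicity of the trace, giving $\tr(\Cov(r_{[j]}))\ge \min_i\Var(\eps_i)\,(n-p+1)$ directly. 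Your route is shorter, avoids differentiating the residual map a second time, and even yields a slightly better constant ($K_0^2(n-p+1)$ versus the paper's $\tfrac{K_0^4}{K_1^2}\tfrac{(n-p+1)^2}{n}$); the paper's leverage computation, on the other hand, is reused elsewhere (e.g.\ in the proof of Proposition \ref{prop:ellip_relax}), which is what it buys. One small imprecision: for \textbf{A}3, after normalizing to unit variance the entries have sub-gaussian norm of order $\sigma/\tau=O(\polyLog)$, so the off-the-shelf two-sided bounds do not literally give the interval $[\tfrac12\tau(1-\sqrt\kappa),\,2\tau(1+\sqrt\kappa)]$ with those absolute constants; the constants degrade with the sub-gaussian norm, but only by $\polyLog$ factors, so the needed conclusions $\lammax=O(\polyLog)$, $\lammin=\Omega(1/\polyLog)$ still follow (the paper instead invokes Proposition \ref{prop:rmt_bai93}; Proposition \ref{prop:rmt_rudelson09} would serve your non-asymptotic version). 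This does not affect the validity of the argument.
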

In practice, the assumption of identical distribution might be invalid. In fact the assumptions \textbf{A}4, \textbf{A}5 and the first part of \textbf{A}3 ($\lammax = O\lb\polyLog\rb$) are still satisfied with high probability if we only assume the independence between entries and boundedness of certain moments. To control $\lammin$, we rely on \citeA{litvak05} which assumes symmetry of each entry. We obtain the following result based on it.
\begin{proposition}\label{prop:subgauss_noniid}
  Suppose $Z$ has independent $\sigma^{2}$-sub-gaussian entries with 
\[Z_{ij}\stackrel{d}{=}-Z_{ij}, \quad \Var(Z_{ij}) > \tau^{2}\]
for some $\sigma = O\lb\polyLog\rb$ and $\tau = \Omega\lb\frac{1}{\polyLog}\rb$, then, when $X$ is a realization of $Z$, assumptions \textbf{A}3-\textbf{A}5 for $X$ are satisfied with high probability over $Z$ for $J_{n} = \{1, \ldots, p\}$.
\end{proposition}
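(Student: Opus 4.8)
The plan is to inspect the proof of Proposition~\ref{prop:subgauss} and to observe that the only place where the \emph{identical} distribution of the entries of $Z$ is genuinely used is the lower bound on $\lammin$ in \textbf{A}3; every other step --- the upper bound $\lammax = O(\polyLog)$ and the verification of \textbf{A}4 and \textbf{A}5 --- uses only independence, mean zero, a common sub-gaussian bound $\sigma = O(\polyLog)$, and the lower bound $\Var(Z_{ij}) \ge \tau^2$ with $\tau = \Omega(1/\polyLog)$. For $\lammax$ I would quote the standard $\epsilon$-net plus Bernstein estimate, which gives $\|Z\|_{\mathrm{op}} \le C\sigma(\sqrt n + \sqrt p)$ with probability $1 - e^{-cn}$ and hence $\lammax = O(\sigma^2) = O(\polyLog)$, with identical distribution playing no role.

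For \textbf{A}5, fix $j\in J_n$ and condition on $Z_{[j]}$ and $\eps$: the vectors $h_{j,0}$ and $h_{j,1,i}$ are then deterministic, since they depend only on $X_{[j]}$, $D_{[j]}$ and $\eps$, so each contrast $h^{T}X_j/\|h\|_2$ is a unit-weight linear combination of the independent $\sigma^2$-sub-gaussian entries of $X_j$ and is therefore $\sigma^2$-sub-gaussian. A union bound over the $O(n^2)$ contrasts appearing in $\Delta_C$, together with a maximal inequality, gives $\Pr(\Delta_C > t) \le 2n^2 e^{-t^2/2\sigma^2}$ under the joint law of $(Z,\eps)$, whence $\E_{Z,\eps}\Delta_C^8 = O(\sigma^8(\log n)^4) = O(\polyLog)$; Markov's inequality over $Z$ then upgrades this to ``$\E_\eps\Delta_C^8 = O(\polyLog)$ with high probability over $Z$'', which is exactly \textbf{A}5. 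For \textbf{A}4, recall that $Q_j = \Cov(h_{j,0})$ is a covariance over $\eps$, hence a deterministic function of $Z_{[j]}$; conditioning on $Z_{[j]}$ one has $\E[X_j^{T}Q_jX_j \mid Z_{[j]}] = \sum_i (Q_j)_{ii}\Var(X_{ij}) \ge \tau^2\tr(Q_j)$, and the Hanson--Wright inequality controls the lower deviation through $\|Q_j\|_F$ and $\|Q_j\|_{\mathrm{op}}$. On the high-probability event that $X_{[j]}$ is well-behaved (which follows from the \textbf{A}3-type bounds already established for the submatrix), one shows $\tr(Q_j) = \Omega(n/\polyLog)$ using $\psi'\ge K_0$ from \textbf{A}1 together with $\Var(r_{k,[j]}) = \Omega(1/\polyLog)$ coming from the leave-one-observation-out decomposition and \textbf{A}2, and $\|Q_j\|_{\mathrm{op}} = O(\polyLog)$ by viewing $v^{T}\psi(r_{\cdot,[j]})$ as an $O(\polyLog)$-Lipschitz function of the Gaussian vector $W$ and applying the Gaussian Poincar\'e inequality; Hanson--Wright then yields $X_j^{T}Q_jX_j \ge \tfrac{\tau^2}{2}\tr(Q_j)$ off an event of probability $e^{-\Omega(n/\polyLog)}$, and a union bound over $j\in J_n$ closes \textbf{A}4. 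Each of these is the verbatim argument from Proposition~\ref{prop:subgauss} with ``i.i.d.'' replaced by ``independent'' and ``$\Var = \tau^2$'' by ``$\Var \ge \tau^2$''.

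The substantive step, and the reason \citeA{litvak05} and the symmetry assumption $Z_{ij}\stackrel{d}{=}-Z_{ij}$ are needed, is the bound $\lammin = \Omega(1/\polyLog)$. Here the i.i.d.-specific tools used in Proposition~\ref{prop:subgauss} no longer apply, and a naive $\epsilon$-net argument for $\lammin = \min_{\|v\|=1}\|Zv\|^2/n$ fails: for a fixed $v$ the lower-deviation probability furnished by Hanson--Wright is only $e^{-\Omega(n/\polyLog)}$, because the exponent is governed by the ratio $\sigma^2/\tau^2 = O(\polyLog)$, and this cannot absorb the $e^{\Omega(p)}$ cardinality of a net on $S^{p-1}$ when $p/n\to\kappa$ with $\kappa$ a fixed positive constant. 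Instead I would invoke the smallest-singular-value theorem of \citeA{litvak05} for matrices with independent \emph{symmetric} sub-gaussian entries of variance bounded below, whose proof is not net-based and yields $s_{\min}(Z)\ge c\sqrt n$ with probability $1 - e^{-c'n}$ in the regime $p\le \delta n$ for any fixed $\delta < 1$; this regime is met for large $n$ since $p/n\to\kappa<1$. After rescaling $Z\mapsto Z/\tau$ the relevant sub-gaussian parameter becomes $\sigma/\tau = O(\polyLog)$, and the one delicate point is to track that the constants $c,c'$ of \citeA{litvak05} degrade only polynomially (not exponentially) in this parameter, so that $\lammin = s_{\min}(Z)^2/n \ge \tau^2 c^2 = \Omega(1/\polyLog)$ with high probability. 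Combining the four high-probability events --- one each for $\lammax$, $\lammin$, \textbf{A}4 and \textbf{A}5 --- by a union bound gives \textbf{A}3--\textbf{A}5 for $X$ with high probability over $Z$, for $J_n = \{1,\dots,p\}$. I expect the tracking of the $\polyLog$-dependence in \citeA{litvak05} to be the main obstacle, as it is the only ingredient not reducible to the i.i.d. case.
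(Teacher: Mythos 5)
Your proposal is correct and follows essentially the same route as the paper: assumptions \textbf{A}4 and \textbf{A}5 are verified exactly as in Proposition \ref{prop:subgauss} (only independence, sub-gaussianity and the variance lower bound enter), and the smallest eigenvalue is handled via the symmetric-entry result of \citeA{litvak05}, precisely as in the paper's proof. The only cosmetic differences are that the paper bounds $\lammax$ by Lata\l{}a's theorem (Proposition \ref{prop:rmt_latala05}) rather than an $\epsilon$-net argument, and your remark about tracking how the constants of \citeA{litvak05} depend on $\sigma/\tau$ is a legitimate point of care that the paper passes over silently.
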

Under the conditions of Proposition \ref{prop:subgauss_noniid}, we can add an intercept term into the design matrix. Adding an intercept allows us to remove the mean-zero assumption for $Z_{ij}$'s. In fact, suppose $Z_{ij}$ is symmetric with respect to $\mu_{j}$, which is potentially non-zero, for all $i$, then according to section \ref{subsubsec:RemarkTheorem1}, we can replace $Z_{ij}$ by $Z_{ij} - \mu_{j}$ and Proposition \ref{prop:subgauss_intercept} can be then applied.

\begin{proposition}\label{prop:subgauss_intercept}
  Suppose $Z = (\textbf{1}  ,  \td{Z})$ and $\td{Z}\in \R^{n\times (p - 1)}$ has independent $\sigma^{2}$-sub-gaussian entries with 
\[\td{Z}_{ij} - \mu_{j}\stackrel{d}{=}\mu_{j} -\td{Z}_{ij}, \quad \Var(\td{Z}_{ij}) > \tau^{2}\]
for some $\sigma = O\lb\polyLog\rb$, $\tau = \Omega\lb\frac{1}{\polyLog}\rb$ and arbitrary $\mu_{j}$. Then, when $X$ is a realization of $Z$, assumptions \textbf{A}3*, \textbf{A}4 and \textbf{A}5 for $X$ are satisfied with high probability over $Z$ for $J_{n} = \{2, \ldots, p\}$.
\end{proposition}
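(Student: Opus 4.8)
The plan is to reduce first to the centered case $\mu_j\equiv 0$ via an exact‑invariance argument that exploits $J_n=\{2,\ldots,p\}$, then to verify \textbf{A}4 and \textbf{A}5 along the lines of Proposition~\ref{prop:subgauss_noniid}, and finally to establish \textbf{A}3* through a smallest‑singular‑value bound for the column‑centered design; this last step will be the main obstacle. For the reduction, fix $j\in J_n$, so that $X_j$ is a column of $\td Z$ while $X_{[j]}$ still contains the intercept $\textbf{1}$. Set $\td Z'=\td Z-\textbf{1}\mu^{T}$ (the shifts $\td Z_{ij}\mapsto\td Z_{ij}-\mu_j$) and $X'=(\textbf{1},\td Z')$; I would check that every quantity entering \textbf{A}3*, \textbf{A}4, \textbf{A}5 is unchanged when $X$ is replaced by $X'$. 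The matrix $\hat\Sigma_{J_n}=\tfrac1n\td Z^{T}(I-\tfrac1n\textbf{1}\textbf{1}^{T})\td Z$ is annihilated by the shift because $(I-\tfrac1n\textbf{1}\textbf{1}^{T})\textbf{1}=0$. For $j\ge 2$, $\spanvec(X_{[j]})$ is unchanged (the shifted columns differ from the originals by multiples of $\textbf{1}\in\spanvec(X_{[j]})$), hence the leave‑$j$‑th‑predictor‑out fit, the residuals $r_{k,[j]}$, the matrix $D_{[j]}$, and therefore $h_{j,0}$, $h_{j,1,i}$, and $Q_j=\Cov(h_{j,0})$ are unchanged; moreover $X_{[j]}^{T}h_{j,0}=0$ (the first‑order conditions), $X_{[j]}^{T}h_{j,1,i}=0$, and $Q_j\textbf{1}=0$, so $h_{j,0}^{T}X_j$, $h_{j,1,i}^{T}X_j$, and $X_j^{T}Q_jX_j$ — hence $\Delta_C$ and the ratio in \textbf{A}4 — are insensitive to adding multiples of $\textbf{1}$ to $X_j$. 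Thus one may assume $\mu_j=0$, i.e.\ $\td Z$ has independent, mean‑zero, $\sigma^{2}$‑sub‑gaussian entries, symmetric about $0$, with $\Var(\td Z_{ij})>\tau^{2}$, and $X=(\textbf{1},\td Z)$.

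Next, for \textbf{A}4 and \textbf{A}5 I would recycle the argument behind Proposition~\ref{prop:subgauss_noniid}. For $j\in J_n$ the column $X_j=\td Z_{j-1}$ has independent symmetric sub‑gaussian entries and is independent of $(X_{[j]},\eps)$, whereas $h_{j,0}$, $h_{j,1,i}$, and $Q_j$ are measurable with respect to $(X_{[j]},\eps)$. Conditioning on the latter, $h^{T}X_j$ is $\sigma^{2}\|h\|_2^{2}$‑sub‑gaussian for $h\in\{h_{j,0},h_{j,1,i}\}$ (Hoeffding) and $X_j^{T}Q_jX_j$ concentrates around $\tr(Q_j)$ (Hanson--Wright); a union bound over $j\in J_n$ and $i\le n$, together with $K_0\le\psi'\le K_1$ from \textbf{A}1 (keeping $\|h_{j,1,i}\|_2$ and $\tr(Q_j)$ of the right order) and the conditioning of the relevant Gram matrices — $\lambda_{\max}(\tfrac1nX^{T}X)=O(\polyLog)$ is the usual sub‑gaussian operator‑norm bound plus $\|\tfrac1n\textbf{1}\textbf{1}^{T}\|=1$, and the matching lower bounds come from \textbf{A}3* below — should yield $\E\Delta_C^{8}=O(\polyLog)$ and $\cmin\frac{X_j^{T}Q_jX_j}{\tr(Q_j)}=\Omega\lb\frac1{\polyLog}\rb$. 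The one structural novelty relative to Proposition~\ref{prop:subgauss_noniid}, that the fixed column $\textbf{1}$ sits inside $X_{[j]}$, is immaterial, since that argument uses only the independence of $X_j$ from the remaining data and \textbf{A}3*‑type conditioning.

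The hard part will be \textbf{A}3*. Writing $I-\tfrac1n\textbf{1}\textbf{1}^{T}=VV^{T}$ with $V\in\R^{n\times(n-1)}$ orthonormal and spanning $\textbf{1}^{\perp}$, one has $\hat\Sigma_{J_n}=\tfrac1n(V^{T}\td Z)^{T}(V^{T}\td Z)$, so $\td\lambda_+=\tfrac1n\sigma_{\max}(V^{T}\td Z)^{2}$ and $\td\lambda_-=\tfrac1n\sigma_{\min}(V^{T}\td Z)^{2}$. The upper bound is immediate: $\td\lambda_+\le\tfrac1n\sigma_{\max}(\td Z)^{2}=\lambda_{\max}(\tfrac1n\td Z^{T}\td Z)=O(\polyLog)$ with high probability, by the standard sub‑gaussian operator‑norm bound. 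For the lower bound I would set $W=V^{T}\td Z\in\R^{(n-1)\times(p-1)}$: it is tall, $(p-1)/(n-1)\to\kappa<1$, its columns $W_b=V^{T}\td Z_b$ are independent, symmetric, each $\sigma^{2}$‑sub‑gaussian, and each has covariance $V^{T}\diag\lb(\Var(\td Z_{kb}))_{k=1}^{n}\rb V\succeq\tau^{2}I_{n-1}$. Then I would invoke a smallest‑singular‑value bound of Litvak--Pajor--Rudelson--Tomczak-Jaegermann type — the same tool used for Proposition~\ref{prop:subgauss_noniid}, but in its version for matrices with independent symmetric sub‑gaussian \emph{columns} whose covariances are uniformly bounded below (the covariance bound supplies the small‑ball/anti‑concentration input; the symmetry enables symmetrization) — to obtain $\sigma_{\min}(W)^{2}\ge c\,\tau^{2}(1-\sqrt\kappa)^{2}\,n$ with high probability, hence $\td\lambda_-=\Omega\lb\frac1{\polyLog}\rb$.

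The obstruction is precisely this last step. One cannot quote the independent‑entries theorem of \citeA{litvak05} verbatim, because the centering $I-\tfrac1n\textbf{1}\textbf{1}^{T}$ couples the entries within each column of $W$; and the cruder routes fail, for the following reasons. An $\varepsilon$‑net estimate over the unit sphere of $\R^{p-1}$, combined with Hanson--Wright for fixed directions, loses a term of order $\sigma^{2}$ that is not recovered by the target $\tau^{2}(1-\sqrt\kappa)^{2}$ once $\kappa$ is bounded away from $0$. The rank‑one identity $\hat\Sigma_{J_n}=\tfrac1n\td Z^{T}\td Z-\bar{\td Z}\,\bar{\td Z}^{T}$ with $\bar{\td Z}=\tfrac1n\td Z^{T}\textbf{1}$ is useless for eigenvalue interlacing because $\|\bar{\td Z}\|_2^{2}=\Theta(\kappa\sigma^{2})$ — each coordinate of $\bar{\td Z}$ is only $O(n^{-1/2})$ but there are $\Theta(n)$ of them — is of the same order as $\lambda_{\min}(\tfrac1n\td Z^{T}\td Z)$ when $\kappa$ is not small. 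So the projected matrix genuinely has to be handled by the heavier machinery. Granting the singular‑value bound, the remaining book‑keeping — high‑probability events, union bounds, and translation into the $\polyLog$ notation — is routine.
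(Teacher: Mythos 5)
Your reduction to the centered case and your handling of \textbf{A}4--\textbf{A}5 are sound and coincide with the paper's: the invariance of $\hat\Sigma_{J_n}$, of the leave-$j$-th-predictor-out residuals, and of $\Delta_C$ and $X_j^TQ_jX_j/\tr(Q_j)$ under $\td{Z}\mapsto \td{Z}-\textbf{1}\mu^T$ (using $\textbf{1}\in\spanvec(X_{[j]})$, $X_{[j]}^Th_{j,0}=X_{[j]}^Th_{j,1,i}=0$, $Q_j\textbf{1}=0$) is exactly the mechanism behind Corollary \ref{cor:transform}, and the paper likewise disposes of \textbf{A}4 and \textbf{A}5 by repeating the proof of Proposition \ref{prop:subgauss_noniid}, since for $j\in J_n$ the column $X_j$ is independent of $(X_{[j]},\eps)$. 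The genuine gap is exactly where you flag it: the lower bound $\td\lambda_-=\Omega\lb\frac{1}{\polyLog}\rb$ is never established. The ``LPRT-type theorem for independent symmetric sub-gaussian columns with covariances bounded below'' that you would invoke is neither stated nor proved, and it is not among the random-matrix facts the paper uses (everything in Appendix \ref{app:mc} requires independent \emph{entries}); moreover, since $\sigma$ and $\tau$ are allowed to differ by $\polyLog$ factors, the net/Hanson--Wright arguments you correctly rule out are precisely the kind of argument such a hoped-for theorem would have to improve upon, so one cannot wave at it as routine. As written, the decisive step of the proposition is an acknowledged conjecture rather than a proof.

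The paper closes this step without ever touching the projected matrix $V^T\td Z$, by a one-line symmetrization you could adopt: let $B_1,\ldots,B_n$ be i.i.d.\ Rademacher signs independent of $Z$ and set $Z^{*}=\diag(B_1,\ldots,B_n)Z$. Then $(Z^{*})^TZ^{*}=Z^TZ$, so all eigenvalue quantities are unchanged, while the $i$-th row of $Z^{*}$ is $(B_i,\,B_i\td z_i^{T})$; by independence of the entries within each row and the symmetry of each $\td Z_{ij}$ about $0$ (after your reduction), the entries of $Z^{*}$ are mutually independent, symmetric, sub-gaussian, with variances bounded below by $\min\{1,\tau^{2}\}$ --- the troublesome deterministic intercept column has become a Rademacher column. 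Hence Proposition \ref{prop:subgauss_noniid} (Latala for $\lammax$ and the independent-entry smallest-singular-value bound of \citeA{litvak05} for $\lammin$) applies verbatim to $Z^{*}$ and yields the full assumption \textbf{A}3 for $Z$ with high probability, which implies \textbf{A}3* since $\td\lambda_+\le\lambda_+$ and $\td\lambda_-\ge\lambda_-$. Note also that this is why your centering reduction is not optional: when $\mu_j\neq 0$ the uncentered columns can be nearly collinear with $\textbf{1}$ and \textbf{A}3 itself can fail, so the sign-flip must be applied after recentering, exactly as your first paragraph arranges.
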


\subsubsection{Dependent Gaussian Design}
To show that our assumptions handle a variety of situations, we now assume that the observations, namely the rows of $Z$, are i.i.d. random vectors with a covariance matrix $\Sigma$. In particular we show that the Gaussian design, i.e. $z_{i}\stackrel{i.i.d.}{\sim}N(0, \Sigma)$, satisfies the assumptions with high probability.
\begin{proposition}\label{prop:depend_gauss}
Suppose $z_{i}\stackrel{i.i.d.}{\sim}N(0, \Sigma)$ with $\lambda_{\max}(\Sigma) = O\lb\polyLog\rb$ and $\lambda_{\min}(\Sigma) = \Omega\lb\frac{1}{\polyLog}\rb$, then, when $X$ is a realization of $Z$, assumptions \textbf{A}3-\textbf{A}5 for $X$ are satisfied with high probability over $Z$ for $J_{n} = \{1, \ldots, p\}$.
\end{proposition}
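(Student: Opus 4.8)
The plan is to verify \textbf{A}3, \textbf{A}4 and \textbf{A}5 separately, the common engine being the conditional structure of a Gaussian column: writing $z_{i,j}\mid z_{i,[j]}$ as a Gaussian and collecting rows gives the exact decomposition $X_j = X_{[j]}b_j + \sigma_{j\cdot[j]}\,g_j$, where $b_j = \Sigma_{[j],[j]}^{-1}\Sigma_{[j],j}$, $\sigma_{j\cdot[j]}^2 = \Sigma_{jj} - \Sigma_{j,[j]}\Sigma_{[j],[j]}^{-1}\Sigma_{[j],j}$, and $g_j\sim N(0,I_n)$ is independent of both $X_{[j]}$ and $\eps$. Since $\Sigma_{[j],[j]}$ is a principal submatrix of $\Sigma$ (Cauchy interlacing), $\lambda_{\min}(\Sigma)\le \sigma_{j\cdot[j]}^2\le\lambda_{\max}(\Sigma)$ and $\|b_j\|\le\lambda_{\max}(\Sigma)/\lambda_{\min}(\Sigma)$, so under the hypotheses $\sigma_{j\cdot[j]}^2$ is bounded between $\Omega(1/\polyLog)$ and $O(\polyLog)$ and $\|b_j\| = O(\polyLog)$. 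For \textbf{A}3 one does not even need this: writing $X = W\Sigma^{1/2}$ with $W$ having i.i.d.\ $N(0,1)$ entries, classical non-asymptotic bounds on the extreme singular values of $W/\sqrt n$ give, with probability tending to $1$, $\sigma_{\min}(W/\sqrt n)$ and $\sigma_{\max}(W/\sqrt n)$ bounded away from $0$ and $\infty$ (this is where $\kappa<1$ enters), whence $\lammin\ge\lambda_{\min}(\Sigma)\,\sigma_{\min}(W/\sqrt n)^2 = \Omega(1/\polyLog)$ and $\lammax\le\lambda_{\max}(\Sigma)\,\sigma_{\max}(W/\sqrt n)^2 = O(\polyLog)$; the same RMT bound applied to $X_{[j]}$ (whose rows are i.i.d.\ $N(0,\Sigma_{[j],[j]})$) gives the analogous control of $\lambda_{\pm}(X_{[j]}^TX_{[j]}/n)$ on a $\sigma(X_{[j]})$-measurable event.

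The crucial structural fact for \textbf{A}5 is that the vectors entering $\Delta_C$ are orthogonal to $\spanvec(X_{[j]})$: the stationarity condition defining $\hat\beta_{[j]}$ is exactly $X_{[j]}^T h_{j,0} = 0$, and $X_{[j]}^T h_{j,1,i} = 0$ is immediate from the definition of $h_{j,1,i}$. Hence $h_{j,0}^T X_j = \sigma_{j\cdot[j]}\,h_{j,0}^T g_j$ and $h_{j,1,i}^T X_j = \sigma_{j\cdot[j]}\,h_{j,1,i}^T g_j$; conditioning on $(X_{[j]},\eps)$ --- on which $h_{j,0}$ and $h_{j,1,i}$ are frozen while $g_j\sim N(0,I_n)$ stays independent --- shows each ratio $h_{j,0}^T X_j/\|h_{j,0}\|$ and $h_{j,1,i}^T X_j/\|h_{j,1,i}\|$ is exactly $N(0,\sigma_{j\cdot[j]}^2)$, hence so is each unconditionally. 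Thus $\Delta_C$ is at most $(\max_j\sigma_{j\cdot[j]})$ times a maximum of $|J_n|(n+1)=O(n^2)$ standard (possibly dependent) Gaussians, whose eighth moment is $O((\log n)^4)=O(\polyLog)$ by a union bound on the Gaussian tail, so $\E_{Z,\eps}[\Delta_C^8]=O(\polyLog)$. Since \textbf{A}5 for a fixed design $X$ is the statement that $\E_\eps[\Delta_C^8]=O(\polyLog)$, Markov's inequality applied to $\E_\eps[\Delta_C^8\mid Z]$ at a threshold that is itself an arbitrarily slowly growing $\polyLog$ delivers it with probability tending to $1$ over $Z$.

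For \textbf{A}4, which I expect to be the main obstacle, the target is $X_j^T Q_j X_j = \Omega(\tr(Q_j)/\polyLog)$ uniformly in $j$. Two deterministic inputs, valid under \textbf{A}1--\textbf{A}3 and hence on the high-probability event above, are needed: $\|Q_j\|_{\mathrm{op}} = O(\polyLog)$, obtained by applying the Gaussian Poincar\'e inequality to the $O(\polyLog)$-Lipschitz map $W\mapsto h_{j,0}$ (the Lipschitz constant coming from composing $W\mapsto\eps$, the $M$-estimator map, and $\psi$, each controlled by $c_1$, $K_0$, $K_1$); and $\tr(Q_j) = \sum_k \Var(\psi(r_{k,[j]}))\ge K_0^2\sum_k\Var(r_{k,[j]}) = \Omega(n/\polyLog)$, i.e.\ the leave-one-predictor-out residuals stay genuinely non-degenerate --- the substantive moderate-dimensional ingredient, which is where the $M$-estimation analysis of the rest of the paper is imported. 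Granting these, condition on $X_{[j]}$: then $Q_j$ is a fixed PSD matrix and $X_j = X_{[j]}b_j+\sigma_{j\cdot[j]}g_j$ with $g_j\sim N(0,I_n)$ independent, so $X_j^T Q_j X_j = a^T Q_j a + 2\sigma_{j\cdot[j]}\,g_j^T Q_j a + \sigma_{j\cdot[j]}^2\, g_j^T Q_j g_j$ with $a = X_{[j]}b_j$. Bounding the linear term by $\tfrac12 a^T Q_j a + C\sigma_{j\cdot[j]}^2\|Q_j\|_{\mathrm{op}}\log n$ (Cauchy--Schwarz and AM--GM on its Gaussian tail) and the quadratic fluctuation by Hanson--Wright together with $\|Q_j\|_F^2\le\|Q_j\|_{\mathrm{op}}\tr(Q_j)$, one gets $X_j^T Q_j X_j \ge \tfrac34\sigma_{j\cdot[j]}^2\tr(Q_j) - C'\sigma_{j\cdot[j]}^2\|Q_j\|_{\mathrm{op}}\log n \ge \tfrac12\sigma_{j\cdot[j]}^2\tr(Q_j)$ with probability $1-n^{-10}$, the last step because $\tr(Q_j)\gg\polyLog\cdot\log n$. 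A union bound over $j\in J_n$ together with $\sigma_{j\cdot[j]}^2=\Omega(1/\polyLog)$ then yields \textbf{A}4. The real difficulty is not any one estimate but making the two deterministic inputs on $Q_j$ precise and tracking every $\polyLog$ and $\sqrt n$ so that no stray dimensional factor survives in the cross term.
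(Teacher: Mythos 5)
Your proposal is correct and follows essentially the same route as the paper: whitening plus random-matrix bounds for \textbf{A}3, the conditional Gaussian decomposition $X_j = X_{[j]}b_j + \sigma_{j\cdot[j]}g_j$ with Hanson--Wright, the Poincar\'e bound on $\|Q_j\|_{\mathrm{op}}$ and the leave-one-out lower bound $\tr(Q_j)=\Omega(n/\polyLog)$ for \textbf{A}4, and Gaussian maxima plus Markov for \textbf{A}5. The only cosmetic difference is that the paper notes $Q_j\mu_j=0$ (a consequence of the same orthogonality $X_{[j]}^T h_{j,0}=0$ you use for \textbf{A}5), so the mean and cross terms in the quadratic form vanish exactly rather than needing your tail-plus-AM--GM bound.
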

This result extends to the matrix-normal design \cite{muirhead82}[Chapter 3], i.e. $(Z_{ij})_{i\le n, j\le p}$ is one realization of  a $np$-dimensional random variable $Z$ with multivariate gaussian distribution 
\[\vect(Z)\triangleq (z_{1}^{T}, z_{2}^{T}, \ldots, z_{n}^{T})\sim N(0, \Lambda\otimes \Sigma),\]
and $\otimes$ is the Kronecker product. It turns out that assumptions $\textbf{A}3-\textbf{A}5$ are satisfied if both $\Lambda$ and $\Sigma$ are well-behaved. 
\begin{proposition}\label{prop:depend_gauss_row}
Suppose $Z$ is matrix-normal with $\vect(Z)\sim N(0, \Lambda\otimes \Sigma)$ and 
\[\lambda_{\max}(\Lambda), \lambda_{\max}(\Sigma) = O\lb\polyLog\rb, \quad \lambda_{\min}(\Lambda), \lambda_{\min}(\Sigma) = \Omega\lb\frac{1}{\polyLog}\rb\;.\]
Then, when $X$ is a realization of $Z$,assumptions \textbf{A}3-\textbf{A}5 for $X$ are satisfied with high probability over $Z$ for $J_{n} = \{1, \ldots, p\}$.
\end{proposition}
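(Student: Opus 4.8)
The plan is to reduce everything to Gaussian concentration through the conditional law of a single column of $X$ given the rest. Since left-multiplying a matrix-normal matrix by $\Lambda^{-1/2}$ decorrelates its rows, I would first write $X=\Lambda^{1/2}\tilde X$, where $\tilde X$ is a realization of a matrix with i.i.d.\ rows $N(0,\Sigma)$. Then $X^{T}X=\tilde X^{T}\Lambda\tilde X$, so $\lambda_{\min}(\Lambda)\lambda_{\min}(\tilde X^{T}\tilde X)\le\lambda_{\min}(X^{T}X)$ and $\lambda_{\max}(X^{T}X)\le\lambda_{\max}(\Lambda)\lambda_{\max}(\tilde X^{T}\tilde X)$; combining the spectral control of $\tilde X^{T}\tilde X/n$ already furnished by Proposition~\ref{prop:depend_gauss} with the hypotheses on $\lambda_{\pm}(\Lambda)$ yields \textbf{A}3 for $X$ with high probability. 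From here on I would work on a high-probability event $\mathcal E$ on which \textbf{A}3 holds (and which is enlarged below to also carry the bounds on $Q_{j}$).

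The core is the conditional distribution of the $j$-th column. Because $X$ is matrix-normal, $X_{j}\mid X_{[j]}\sim N(X_{[j]}c_{j},\ \sigma_{j|[j]}^{2}\Lambda)$, where $c_{j}$ depends only on $\Sigma$ and $\sigma_{j|[j]}^{2}=1/(\Sigma^{-1})_{jj}\in[\lambda_{\min}(\Sigma),\lambda_{\max}(\Sigma)]$; thus $\xi_{j}:=X_{j}-X_{[j]}c_{j}=\sigma_{j|[j]}\Lambda^{1/2}\zeta_{j}$ with $\zeta_{j}\sim N(0,I_{n})$ independent of $X_{[j]}$ (hence of $\eps$). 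The algebraic fact that makes everything work is that each vector paired with $X_{j}$ in $\Delta_{C}$ and $Q_{j}$ lies in $\spanvec(X_{[j]})^{\perp}$: the stationarity condition for $\hat\beta_{[j]}$ reads $X_{[j]}^{T}h_{j,0}=0$, and directly $X_{[j]}^{T}h_{j,1,i}=X_{[j]}^{T}e_{i}-X_{[j]}^{T}e_{i}=0$. Consequently $h_{j,0}^{T}X_{j}=h_{j,0}^{T}\xi_{j}$ and $h_{j,1,i}^{T}X_{j}=h_{j,1,i}^{T}\xi_{j}$, and—since $X_{[j]}^{T}h_{j,0}=0$ for every realization of $\eps$—also $X_{[j]}^{T}Q_{j}=0$, whence $X_{j}^{T}Q_{j}X_{j}=\xi_{j}^{T}Q_{j}\xi_{j}$.

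Given this, \textbf{A}5 is immediate: conditionally on $X_{[j]}$, each ratio $h_{j,0}^{T}X_{j}/\lnorm h_{j,0}\lnorm_{2}=h_{j,0}^{T}\xi_{j}/\lnorm h_{j,0}\lnorm_{2}$ is centered Gaussian with variance $\sigma_{j|[j]}^{2}\,h_{j,0}^{T}\Lambda h_{j,0}/\lnorm h_{j,0}\lnorm_{2}^{2}\le\lambda_{\max}(\Sigma)\lambda_{\max}(\Lambda)=O(\polyLog)$, and likewise for the $h_{j,1,i}$ ratios, so each of the $p+np=O(n^{2})$ quantities inside $\Delta_{C}$ is (unconditionally) sub-Gaussian with a common $O(\polyLog)$ parameter; a standard maximal inequality then gives $\E\Delta_{C}^{8}=O(\polyLog)$. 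For \textbf{A}4, set $M_{j}=\Lambda^{1/2}Q_{j}\Lambda^{1/2}$, so that $X_{j}^{T}Q_{j}X_{j}=\sigma_{j|[j]}^{2}\,\zeta_{j}^{T}M_{j}\zeta_{j}$ with $\zeta_{j}$ independent of the PSD matrix $Q_{j}$; by Gaussian quadratic-form concentration (Hanson–Wright), $\zeta_{j}^{T}M_{j}\zeta_{j}\ge\tfrac12\tr(M_{j})$ except on an event of probability at most $2\exp(-c\,\tr(M_{j})/\norm{\mathrm{op}}{M_{j}})$. Using $\tr(M_{j})=\tr(\Lambda Q_{j})\ge\lambda_{\min}(\Lambda)\tr(Q_{j})$, $\norm{\mathrm{op}}{M_{j}}\le\lambda_{\max}(\Lambda)\norm{\mathrm{op}}{Q_{j}}$, and $\sigma_{j|[j]}^{2}\ge\lambda_{\min}(\Sigma)$, a union bound over $j\le p$ shows that—provided $\norm{\mathrm{op}}{Q_{j}}=O(\polyLog)$ and $\tr(Q_{j})=\Omega(n/\polyLog)$ uniformly in $j$ on $\mathcal E$—one has $X_{j}^{T}Q_{j}X_{j}\ge\tfrac12\lambda_{\min}(\Sigma)\lambda_{\min}(\Lambda)\tr(Q_{j})$ simultaneously for all $j$ with high probability, i.e.\ \textbf{A}4.

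The step I expect to be the real obstacle is exactly this uniform control of $Q_{j}=\Cov(h_{j,0})$ on $\mathcal E$. The upper bound $\norm{\mathrm{op}}{Q_{j}}=\sup_{\lnorm v\lnorm_{2}=1}\Var(v^{T}h_{j,0})=O(\polyLog)$ should follow by writing $\eps=u(W)$ per \textbf{A}2, observing that $\eps\mapsto h_{j,0}$ is Lipschitz with an $O(\polyLog)$ constant on $\mathcal E$ (here one uses $\psi'\le K_{1}$ together with the lower bound on $\lammin$ to control the sensitivity of $\hat\beta_{[j]}$), and applying Gaussian concentration. The lower bound $\tr(Q_{j})=\sum_{i}\Var(\psi(r_{i,[j]}))=\Omega(n/\polyLog)$ is the delicate part: monotonicity of $\psi$ with $\psi'\ge K_{0}$ gives $\Var(\psi(r_{i,[j]}))\ge K_{0}^{2}\Var(r_{i,[j]})$, so it suffices to show $\Var(r_{i,[j]})=\Omega(1/\polyLog)$ for a constant fraction of the indices $i$; establishing this requires the leave-one-predictor-out residual expansions of the type developed in the proof of Theorem~\ref{thm:main}, and is the only ingredient that is not a routine Gaussian computation.
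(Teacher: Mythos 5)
Your outline coincides with the paper's own proof: whiten to reduce \textbf{A}3 to the i.i.d.\ Gaussian case (the paper uses $Z_{*}=\Lambda^{-1/2}Z\Sigma^{-1/2}$ and the sandwich $\lambda_{\min}(\Sigma)\lambda_{\min}(\Lambda)\lambda_{\min}(Z_*^TZ_*/n)\le\lammin$, etc.), condition $X_j$ on $X_{[j]}$ to write $X_j=\mu_j+\sigma_{j}\Lambda^{1/2}\zeta_j$, kill the mean through the orthogonality $X_{[j]}^Th_{j,0}=X_{[j]}^Th_{j,1,i}=0$ (hence $Q_j\mu_j=0$), and then use a sub-Gaussian maximal inequality for \textbf{A}5 and Hanson--Wright applied to $\zeta_j^T\Lambda^{1/2}Q_j\Lambda^{1/2}\zeta_j$ for \textbf{A}4. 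This is exactly the structure of the paper's argument.

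The one ingredient you defer -- uniform control of $Q_j$ -- is where your difficulty assessment is off, in a reassuring direction: both bounds are deterministic consequences of \textbf{A}1--\textbf{A}2 (given the relevant matrices are invertible), not statements that need a high-probability design event $\mathcal{E}$, and neither uses the leave-one-out approximations $R_i\approx r_{i,[j]}$ from the proof of Theorem \ref{thm:main}. The operator-norm bound is Lemma \ref{lem:Qj}: the gradient of $\eps\mapsto\alpha^Th_{j,0}$ involves $G_{[j]}$, and $\|G_{[j]}\|_{\mathrm{op}}\le (K_1/K_0)^{1/2}$ holds for every design because $D_{[j]}^{1/2}G_{[j]}D_{[j]}^{-1/2}$ is a projection (so, contrary to your sketch, no use of $\lammin$ is needed), whence the Gaussian Poincar\'e inequality gives $\|Q_j\|_{\mathrm{op}}\le c_1^2K_3^2K_1/K_0=O(\polyLog)$. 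The trace lower bound is Lemma \ref{lem:var_rij} (restated as Lemma \ref{lem:var_rij_copy}): $\Var(\psi(r_{i,[j]}))\ge K_0^2\Var(r_{i,[j]})$; conditioning on $\eps_{(i)}$ and using $\partial r_{i,[j]}/\partial\eps_i=e_i^TG_{[j]}e_i\ge (K_0/K_1)\,e_i^T(I-X_{[j]}(X_{[j]}^TX_{[j]})^{-1}X_{[j]}^T)e_i$, then averaging over $i$ with Jensen, yields $\tr(Q_j)/n\ge (K_0^4/K_1^2)((n-p+1)/n)^2\min_i\Var(\eps_i)=\Omega(1/\polyLog)$ simultaneously for all $j$ -- no "constant fraction of indices'' argument or residual expansion is required. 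Plugging these two lemmas into your Hanson--Wright step closes the proof, and then your argument is the paper's. (Minor point: for \textbf{A}5 you should condition on $(X_{[j]},\eps)$ jointly, since $h_{j,0}$ and $h_{j,1,i}$ depend on $\eps$; this is harmless because $\zeta_j$ is independent of both.)
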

In order to incorporate an intercept term, we need slightly more stringent condition on $\Lambda$. Instead of assumption \textbf{A}3, we prove that assumption \textbf{A}3* - see subsubsection \ref{subsubsec:RemarkTheorem1} - holds with high probability. 
\begin{proposition}\label{prop:depend_gauss_intercept}
Suppose $Z$ contains an intercept term, i.e. $Z = (\textbf{1} ,  \td{Z})$ and $\td{Z}$ satisfies the conditions of Proposition \ref{prop:depend_gauss_row}. Further assume that 
\begin{equation}\label{eq:Lambdaone}
\frac{\max_{i}|(\Lambda^{-\frac{1}{2}}\textbf{1})_{i}|}{\min_{i}|(\Lambda^{-\frac{1}{2}}\textbf{1})_{i}|} = O\lb\polyLog\rb.
\end{equation}
Then, when $X$ is a realization of $Z$, assumptions \textbf{A}3*, \textbf{A}4 and \textbf{A}5 for $X$ are satisfied with high probability over $Z$ for $J_{n} = \{2, \ldots, p\}$.
\end{proposition}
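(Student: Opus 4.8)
\emph{Overall strategy.} The plan is to adapt the proof of Proposition~\ref{prop:depend_gauss_row}, treating with care the single deterministic column $X_{1}=\textbf{1}$, which is excluded from $J_{n}=\{2,\dots,p\}$. Write $\td{Z}=\Lambda^{1/2}G\Sigma^{1/2}$ with $G\in\R^{n\times(p-1)}$ having i.i.d.\ $N(0,1)$ entries, so that $X=(\textbf{1},\Lambda^{1/2}G\Sigma^{1/2})$. Two features differ from the no-intercept case: (i) the Gram matrix $\tfrac1n X^{T}X$ is not matrix-normal, so one verifies $\textbf{A}3*$ rather than $\textbf{A}3$; and (ii) the leave-$j$-th-predictor-out design $X_{[j]}=(\textbf{1},\td{Z}_{[j]})$ carries the deterministic column $\textbf{1}$, so the leave-one-out estimates of Proposition~\ref{prop:depend_gauss_row} must be re-run after partialling $\textbf{1}$ out. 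I verify $\textbf{A}3*$ directly and argue that $\textbf{A}4$ and $\textbf{A}5$ follow from essentially the same conditioning argument as in the no-intercept case, with \eqref{eq:Lambdaone} entering only in the second layer of the double leave-one-out analysis.

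\emph{Verifying $\textbf{A}3*$.} Since $J_{n}^{c}=\{1\}$ and $X_{J_{n}^{c}}=\textbf{1}$, we have $\hat{\Sigma}_{J_{n}}=\tfrac1n\td{Z}^{T}H\td{Z}$ with $H=I-\tfrac1n\textbf{1}\textbf{1}^{T}$, i.e. $\hat{\Sigma}_{J_{n}}=\tfrac1n\Sigma^{1/2}G^{T}MG\Sigma^{1/2}$ where $M\triangleq\Lambda^{1/2}H\Lambda^{1/2}$ is positive semidefinite, $\rank M=n-1$, and $\ker M=\spanvec(\Lambda^{-1/2}\textbf{1})$. Because $\lambda_{\max}(\Sigma),\lambda_{\max}(\Lambda)=O(\polyLog)$ and $\lambda_{\min}(\Sigma),\lambda_{\min}(\Lambda)=\Omega(1/\polyLog)$, it suffices to two-side $\tfrac1n G^{T}MG$. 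Upper bound: $\td{\lambda}_{+}\le\lambda_{\max}(\Sigma)\,\lambda_{\max}(M)\,\lambda_{\max}(\tfrac1nG^{T}G)$, and $\lambda_{\max}(M)\le\lambda_{\max}(\Lambda)$ while $\lambda_{\max}(\tfrac1nG^{T}G)\le(1+\sqrt\kappa+o(1))^{2}$ with high probability by the Gaussian operator-norm bound, so $\td{\lambda}_{+}=O(\polyLog)$. Lower bound: since $p-1<n-1=\rank M$, project onto $\mathrm{range}(M)=(\Lambda^{-1/2}\textbf{1})^{\perp}$; rotational invariance of $G$ gives $\td{\lambda}_{-}\ge\lambda_{\min}(\Sigma)\,\tfrac1n\lambda^{+}_{\min}(M)\,\sigma_{\min}(G')^{2}$, where $\lambda^{+}_{\min}(M)$ denotes the least nonzero eigenvalue of $M$, $G'\in\R^{(n-1)\times(p-1)}$ is i.i.d.\ standard Gaussian, and $\sigma_{\min}(G')\ge(\sqrt{n-1}-\sqrt{p-1})(1-o(1))=\Omega(\sqrt n)$ with high probability. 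Finally $\lambda^{+}_{\min}(M)\ge\lambda_{\min}(\Lambda)=\Omega(1/\polyLog)$ by Cauchy interlacing applied to the rank-one relation $\Lambda=M+\tfrac1n(\Lambda^{1/2}\textbf{1})(\Lambda^{1/2}\textbf{1})^{T}$. This gives $\textbf{A}3*$, and in fact uses neither \eqref{eq:Lambdaone} nor anything beyond the hypotheses of Proposition~\ref{prop:depend_gauss_row}.

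\emph{Verifying $\textbf{A}4$ and $\textbf{A}5$.} Fix $j\in J_{n}$ and condition on $(X_{[j]},\eps)$ (for $\textbf{A}5$) or on $X_{[j]}$ alone (for $\textbf{A}4$); then $h_{j,0}$, $Q_{j}$ and the $h_{j,1,i}$ become deterministic, while $X_{j}\mid X_{[j]}\sim N(m_{j},\sigma_{j\mid[j]}^{2}\Lambda)$ with $m_{j}\in\spanvec(X_{[j]})$ and $\sigma_{j\mid[j]}^{2}\ge\lambda_{\min}(\Sigma)=\Omega(1/\polyLog)$ ($X_{j}$ being independent of $\eps$). The intercept does no harm at this layer because of two orthogonality relations: the first-order conditions for $\hat{\beta}_{[j]}$ give $X_{[j]}^{T}h_{j,0}=0$ (the intercept row being $\textbf{1}^{T}h_{j,0}=\sum_{k}\psi(r_{k,[j]})=0$), and by construction $X_{[j]}^{T}h_{j,1,i}=0$; in particular both vectors are orthogonal to $m_{j}$. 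Hence, for any $h\in\{h_{j,0}\}\cup\{h_{j,1,i}\}_{i\le n}$, $h^{T}X_{j}=h^{T}\Lambda^{1/2}\xi$ with $\xi\sim N(0,\sigma_{j\mid[j]}^{2}I_{n})$, so $h^{T}X_{j}/\|h\|_{2}$ is conditionally centered Gaussian with variance at most $\sigma_{j\mid[j]}^{2}\lambda_{\max}(\Lambda)=O(\polyLog)$, uniformly over $i$. A maximal inequality over the at most $2n^{2}$ terms in the maximum defining $\Delta_{C}$ then yields $\E\Delta_{C}^{8}=O(\polyLog)$, which is $\textbf{A}5$. For $\textbf{A}4$, $\E[X_{j}^{T}Q_{j}X_{j}\mid X_{[j]}]=m_{j}^{T}Q_{j}m_{j}+\sigma_{j\mid[j]}^{2}\tr(Q_{j}\Lambda)\ge\sigma_{j\mid[j]}^{2}\lambda_{\min}(\Lambda)\tr(Q_{j})=\Omega(\tr(Q_{j})/\polyLog)$, and the Gaussian lower-tail bound used in Proposition~\ref{prop:depend_gauss_row} promotes this to the high-probability statement $\min_{j\in J_{n}}X_{j}^{T}Q_{j}X_{j}/\tr(Q_{j})=\Omega(1/\polyLog)$.

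\emph{Where \eqref{eq:Lambdaone} enters; the main obstacle.} The arguments above dispatch the outer leave-one-predictor-out layer. What remains — and what I expect to be the crux — is the nested leave-one-observation-out layer for the sub-designs $X_{[j]}=(\textbf{1},\td{Z}_{[j]})$, where the analysis of Proposition~\ref{prop:depend_gauss_row} needs $\max_{i\le n}$ of various leverage- and residual-type quantities to be $O(\polyLog)$. After partialling $\textbf{1}$ out, the effective row covariance of $\td{Z}_{[j]}$ degenerates to $H\Lambda H$, whose null direction is exactly $\Lambda^{-1/2}\textbf{1}$; keeping those $\max_{i}$ bounds under control forces that null direction to be delocalized, and \eqref{eq:Lambdaone} is precisely its quantitative form. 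Thus the bulk of the proof is a transcription of the double leave-one-out estimates from the proof of Proposition~\ref{prop:depend_gauss_row} with $\Lambda$ replaced by the degenerate $H\Lambda H$, checking at each use that \eqref{eq:Lambdaone} is what is needed to preserve the $\max_{i}$ bounds; everything else is routine.
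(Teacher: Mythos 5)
Your second and third paragraphs already constitute a complete verification of \textbf{A}3*, \textbf{A}4 and \textbf{A}5, and for \textbf{A}3* you take a genuinely different route from the paper. The paper proves \textbf{A}3* by augmenting the design to $\mathbf{Z}=(c\textbf{1},\td{Z})$ with $c=(\min_{i}|(\Lambda^{-1/2}\textbf{1})_{i}|)^{-1}$, bounding $\td{\lambda}_{\pm}$ by the extreme eigenvalues of $\mathbf{Z}^{T}\mathbf{Z}/n$ (Schur-complement monotonicity), whitening by $\Lambda^{-1/2}$ and $\Sigma^{-1/2}$, and then — exactly as in Proposition \ref{prop:subgauss_intercept} — multiplying the rows by Rademacher signs so that the deterministic column $\nu=c\Lambda^{-1/2}\textbf{1}$ becomes a symmetric sub-gaussian column to which Litvak et al.'s smallest-singular-value bound applies; condition \eqref{eq:Lambdaone} is used there and only there, to guarantee $\min_i|\nu_i|=1$ and $\max_i|\nu_i|=O(\polyLog)$. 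Your argument instead writes $\hat{\Sigma}_{J_n}=\frac1n\Sigma^{1/2}G^{T}MG\Sigma^{1/2}$ with $M=\Lambda^{1/2}(I-\textbf{1}\textbf{1}^{T}/n)\Lambda^{1/2}$, uses rotational invariance of the Gaussian $G$ to reduce to an $(n-1)\times(p-1)$ i.i.d.\ Gaussian matrix, and controls the least nonzero eigenvalue of $M$ by interlacing for the rank-one relation $\Lambda=M+\frac1n(\Lambda^{1/2}\textbf{1})(\Lambda^{1/2}\textbf{1})^{T}$; this is correct, is arguably cleaner, and — as you observe — never uses \eqref{eq:Lambdaone}, which shows that in the exactly Gaussian setting that hypothesis is an artifact of the paper's more general symmetrization technique. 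Your treatment of \textbf{A}4/\textbf{A}5 coincides with the paper's (which simply reuses the proof of Proposition \ref{prop:depend_gauss_row}); one small point to make explicit for \textbf{A}4 is that $Q_{j}m_{j}=0$ (a consequence of $X_{[j]}^{T}h_{j,0}\equiv0$, as in the proof of Corollary \ref{cor:transform}), so that $X_{j}^{T}Q_{j}X_{j}=(X_{j}-m_{j})^{T}Q_{j}(X_{j}-m_{j})$ and the Gaussian cross term vanishes before Hanson--Wright is applied; "promotes this to the high-probability statement" glosses over that step, though you have all the ingredients.

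The genuine problem is your final paragraph. There is no "nested leave-one-observation-out layer" to transcribe: the proposition only asserts that \textbf{A}3*, \textbf{A}4, \textbf{A}5 hold with high probability, and the proof of Proposition \ref{prop:depend_gauss_row} verifies exactly these three conditions by random-matrix bounds, conditional Hanson--Wright, and a Gaussian maximal inequality — it contains no leave-one-observation-out analysis and no $\max_i$ leverage/residual bounds. The double leave-one-out machinery lives entirely inside the proof of Theorem \ref{thm:main} (Appendices A--B) and takes the assumptions as given, so nothing of the kind needs to be re-run "with $\Lambda$ replaced by $H\Lambda H$". As written, you therefore defer "the bulk of the proof" to a step that does not exist, and you attribute the role of \eqref{eq:Lambdaone} to that phantom step rather than to where the paper actually uses it (the \textbf{A}3* lower bound via Litvak's theorem). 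Deleting that paragraph and adding the $Q_{j}m_{j}=0$ remark leaves you with a correct, self-contained proof.
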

When $\Lambda = I$, the condition \eqref{eq:Lambdaone} is satisfied. Another non-trivial example is the exchangeable case where $\Lambda_{ij}$ are all equal for $i\not =j$. In this case, $\textbf{1}$ is an eigenvector of $\Lambda$ and hence it is also an eigenvector of $\Lambda^{-\frac{1}{2}}$. Thus $\Lambda^{-\frac{1}{2}}\textbf{1}$ is a multiple of $\textbf{1}$ and the condition \eqref{eq:Lambdaone} is satisfied.

\subsubsection{Elliptical Design}
Furthermore, we can move from Gaussian-like structure to generalized elliptical models where  $z_i= \zeta_{i}\Sigma^{1/2}\mathcal{Z}_i$ where $\{\zeta_{i}, \mathcal{Z}_{ij}: i = 1,\ldots, n; j = 1,\ldots, p\}$ are independent random variables, $\mathcal{Z}_{ij}$ having for instance mean 0 and variance 1.  
The elliptical family is quite flexible in modeling data. It represents a type of data formed by a common driven factor and independent individual effects. It is widely used in multivariate statistics (\citeA{anderson62,tyler87}) and various fields, including finance \cite{cizek05} and biology \cite{posekany11}. In the context of high-dimensional statistics, this class of model was used to refute universality claims in random matrix theory \cite{nekCorrEllipD}. In robust regression, \citeA{elkaroui11} used elliptical models to show that the limit of $\|\hat{\beta}\|_{2}^{2}$ depends on the distribution of $\zeta_{i}$ and hence the geometry of the predictors. As such, studies limited to Gaussian-like design were shown to be of very limited statistical interest. See also the deep classical inadmissibility results \cite{baranchik73, jurevckova97}. However, as we will show in the next proposition, the common factors $\zeta_{i}$ do not distort the shape of the asymptotic distribution. A similar phenomenon happens in the random design case - see \citeA{elkaroui13pnas,bean13}.
\begin{proposition}\label{prop:ellip}
  Suppose $Z$ is generated from an elliptical model, i.e. 
\[Z_{ij} = \zeta_{i} \mathcal{Z}_{ij},\]
where $\zeta_{i}$ are independent random variables taking values in $[a, b]$ for some $0 < a < b < \infty$ and $\mathcal{Z}_{ij}$ are independent random variables satisfying the conditions of Proposition \ref{prop:subgauss} or Proposition \ref{prop:subgauss_noniid}. Further assume that $\{\zeta_{i}: i = 1, \ldots, n\}$ and $\{\mathcal{Z}_{ij}: i = 1, \ldots, n; j = 1, \ldots, p\}$ are independent. Then, when $X$ is a realization of $Z$, assumptions \textbf{A}3-\textbf{A}5 for $X$ are satisfied with high probability over $Z$ for $J_{n} = \{1, \ldots, p\}$.
\end{proposition}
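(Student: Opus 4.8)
The plan is to reduce each of \textbf{A}3--\textbf{A}5 for $X$ to the corresponding statement for the ``base'' design, using only that the common factors $\zeta_{i}$ are bounded between the fixed constants $a$ and $b$. Write $X = \Gamma\,\mathcal{X}$, where $\Gamma = \diag(\zeta_{1},\ldots,\zeta_{n})$ and $\mathcal{X} = (\mathcal{Z}_{ij})$ is a realization of a matrix satisfying the hypotheses of Proposition \ref{prop:subgauss} or Proposition \ref{prop:subgauss_noniid}; recall that $\{\zeta_{i}\}$, the entries of $\mathcal{X}$, and $\eps$ are mutually independent. Since $a^{2}I\preceq\Gamma^{2}\preceq b^{2}I$ almost surely, $a^{2}\,\mathcal{X}^{T}\mathcal{X}\preceq X^{T}X = \mathcal{X}^{T}\Gamma^{2}\mathcal{X}\preceq b^{2}\,\mathcal{X}^{T}\mathcal{X}$, so $\lammax\le b^{2}\lambda_{\max}(\mathcal{X}^{T}\mathcal{X}/n)$ and $\lammin\ge a^{2}\lambda_{\min}(\mathcal{X}^{T}\mathcal{X}/n)$; as $a,b$ are constants, \textbf{A}3 for $X$ with high probability follows at once from \textbf{A}3 for $\mathcal{X}$.

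For \textbf{A}5 I would exploit that $h_{j,0}$ and $h_{j,1,i}$ are built only from $\eps$ and from $X_{[j]} = \Gamma\,\mathcal{X}_{[j]}$, hence are independent of the column $\mathcal{X}_{j}$. Writing $h_{j,0}^{T}X_{j} = (\Gamma h_{j,0})^{T}\mathcal{X}_{j}$ and using $a\|h_{j,0}\|_{2}\le\|\Gamma h_{j,0}\|_{2}\le b\|h_{j,0}\|_{2}$ (and likewise for $h_{j,1,i}$) gives $\Delta_{C}\le b\,\widetilde{\Delta}_{C}$, where $\widetilde{\Delta}_{C}$ is the same maximum of normalized linear contrasts of $\mathcal{X}_{j}$ but with coefficient vectors $\Gamma h_{j,0},\Gamma h_{j,1,i}$, each independent of $\mathcal{X}_{j}$. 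Conditioning on everything but $\mathcal{X}_{j}$, every such normalized contrast is $\sigma^{2}$-sub-gaussian, so a maximal inequality over the $O(np)$ contrasts gives $\E\widetilde{\Delta}_{C}^{8} = O(\polyLog)$ --- the same computation as in the proof of \textbf{A}5 for Proposition \ref{prop:subgauss} --- and hence $\E\Delta_{C}^{8} = O(\polyLog)$.

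The main obstacle is \textbf{A}4, since $Q_{j} = \Cov(h_{j,0}\mid X)$ is a non-explicit random matrix. The key observation is that $h_{j,0}$ does not involve $X_{j}$, so $Q_{j}$ is $\sigma(\Gamma,\mathcal{X}_{[j]})$-measurable, whence $M_{j}\triangleq\Gamma Q_{j}\Gamma\succeq 0$ is independent of $\mathcal{X}_{j}$, while $X_{j}^{T}Q_{j}X_{j} = \mathcal{X}_{j}^{T}M_{j}\mathcal{X}_{j}$ and $\tr(M_{j}) = \tr(\Gamma^{2}Q_{j})\in [a^{2}\tr(Q_{j}),b^{2}\tr(Q_{j})]$ because $Q_{j}\succeq 0$. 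It then suffices to prove $\mathcal{X}_{j}^{T}M_{j}\mathcal{X}_{j} = \Omega(\tr(M_{j})/\polyLog)$ uniformly over $j\in J_{n}$ with high probability, which is precisely the quadratic-form estimate behind \textbf{A}4 for $\mathcal{X}$: conditionally on $M_{j}$, $\E[\mathcal{X}_{j}^{T}M_{j}\mathcal{X}_{j}\mid M_{j}]\ge\tau^{2}\tr(M_{j})$ (mean-zero independent entries), and Hanson--Wright controls the deviation via $\|M_{j}\|_{\mathrm{op}}$ and $\|M_{j}\|_{F}^{2}\le\|M_{j}\|_{\mathrm{op}}\tr(M_{j})$. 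What remains is to record $\|M_{j}\|_{\mathrm{op}} = O(\polyLog)$ and $\tr(M_{j}) = \Omega(n/\polyLog)$: the former from $\|M_{j}\|_{\mathrm{op}}\le b^{2}\|Q_{j}\|_{\mathrm{op}}$ and the bound $\|Q_{j}\|_{\mathrm{op}} = O(\polyLog)$, valid on the (already verified) event that \textbf{A}3 holds, because under \textbf{A}1--\textbf{A}2 the map $W\mapsto h_{j,0}$ is $\polyLog$-Lipschitz (the leave-$j$-out M-estimator is Lipschitz in $\eps$ by strong convexity, and $\eps = u(W)$ is Lipschitz) and Gaussian Poincar\'e bounds its covariance; the latter from $\tr(Q_{j}) = \sum_{i}\Var(\psi(r_{i,[j]})) = \Omega(n/\polyLog)$, a consequence of $K_{0}>0$ in \textbf{A}1 and $\min_{i}\Var(\eps_{i})=\Omega(1/\polyLog)$ in \textbf{A}2. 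A union bound over $j\in J_{n}$ then completes \textbf{A}4.
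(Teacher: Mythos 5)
Your proposal is correct and follows essentially the same route as the paper's proof: factor $X=\Gamma\mathcal{X}$, get \textbf{A}3 by sandwiching $X^{T}X$ between $a^{2}\mathcal{X}^{T}\mathcal{X}$ and $b^{2}\mathcal{X}^{T}\mathcal{X}$, get \textbf{A}4 by applying Hanson--Wright to $\mathcal{X}_{j}^{T}(\Gamma Q_{j}\Gamma)\mathcal{X}_{j}$ (the paper's $\td{Q}_{j}=\Lambda Q_{j}\Lambda$) together with $\tr(\Gamma Q_{j}\Gamma)\ge a^{2}\tr(Q_{j})$, the Poincar\'e-based bound $\|Q_{j}\|_{\mathrm{op}}=O(\polyLog)$ and $\tr(Q_{j})=\Omega(n/\polyLog)$, and get \textbf{A}5 via the normalized contrasts with coefficients $\Gamma h_{j,0},\Gamma h_{j,1,i}$, which cost at most a factor $b$. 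The only difference is presentational: you spell out the conditioning and the operator-norm/trace estimates that the paper compresses into ``repeating the arguments of Propositions \ref{prop:subgauss} and \ref{prop:subgauss_noniid}.''
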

Thanks to the fact that $\zeta_{i}$ is bounded away from 0 and $\infty$,  the proof of Proposition \ref{prop:ellip} is straightforward, as shown in Appendix \ref{app:others}. However, by a more refined argument and assuming identical distributions $\zeta_{i}$, we can relax this condition. 
\begin{proposition}\label{prop:ellip_relax}
Under the conditions of Proposition \ref{prop:ellip} (except the boundedness of $\zeta_{i}$) and assume $\zeta_{i}$ are i.i.d. samples generated from some distribution $F$, independent of $n$, with
\[P\lb\zeta_{1} \ge t\rb\le c_{1}e^{-c_{2}t^{\alpha}},\]
for some fixed $c_{1}, c_{2}, \alpha > 0$ and $F^{-1}(q) > 0$ for any $q \in (0, 1)$ where $F^{-1}$ is the quantile function of $F$ and is continuous. Then, when $X$ is a realization of $Z$, assumptions \textbf{A}3-\textbf{A}5 for $X$ are satisfied with high probability over $Z$ for $J_{n} = \{1, \ldots, p\}$.
\end{proposition}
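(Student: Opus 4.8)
The plan is to carry over the strategy behind Proposition~\ref{prop:ellip}: verify \textbf{A}3--\textbf{A}5 for $X=\diag(\zeta)\mathcal{Z}$ by reducing to the corresponding statements for the sub-gaussian matrix $\mathcal{Z}=(\mathcal{Z}_{ij})$ with weight vector $\zeta=(\zeta_{1},\ldots,\zeta_{n})$ (Proposition~\ref{prop:subgauss} or~\ref{prop:subgauss_noniid}). The new feature is that $\zeta_{i}$ is no longer confined to a compact interval bounded away from $0$, so the upper and lower tails of $\zeta$ must be handled by different means. For the upper tail, the stretched-exponential bound $P(\zeta_{1}\ge t)\le c_{1}e^{-c_{2}t^{\alpha}}$ and a union bound give $\max_{i}\zeta_{i}=O(\polyLog)$ with high probability and $\E(\max_{i}\zeta_{i})^{k}=O(\polyLog)$ for each fixed $k$. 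Hence $\frac1nX^{T}X\preceq(\max_{i}\zeta_{i}^{2})\frac1n\mathcal{Z}^{T}\mathcal{Z}$, so $\lammax=O(\polyLog)$ (and a fortiori $\td\lambda_{+}=O(\polyLog)$), and the analogous operator-norm and trace upper bounds feeding the proofs of Propositions~\ref{prop:subgauss}/\ref{prop:subgauss_noniid} transfer verbatim. For \textbf{A}5, writing $X_{j}=\diag(\zeta)\mathcal{Z}_{j}$ turns every contrast $h^{T}X_{j}/\|h\|_{2}$ appearing in $\Delta_{C}$ into $\widetilde h^{T}\mathcal{Z}_{j}/\|h\|_{2}$ with $\widetilde h=\diag(\zeta)h$, $\|\widetilde h\|_{2}\le(\max_{i}\zeta_{i})\|h\|_{2}$; since $\widetilde h$ is a function of $(\zeta,\mathcal{Z}_{[j]},\eps)$ it is independent of $\mathcal{Z}_{j}$, so the eighth-moment bound established for $\mathcal{Z}_{j}$ applies conditionally and Hölder's inequality against the polylog moments of $\max_{i}\zeta_{i}$ gives $\E\Delta_{C}^{8}=O(\polyLog)$.

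The lower tail, which controls $\lammin$ and the left tail of the quadratic forms, is where the hypotheses on $F$ enter. Fix $q\in(0,1-\kappa)$ and put $\delta\triangleq F^{-1}(q/2)$, which is a strictly positive constant by the positivity and continuity of $F^{-1}$ on $(0,1)$. The cardinality of $S\triangleq\{i\le n:\zeta_{i}\ge\delta\}$ is a binomial with mean at least $(1-q/2)n$, so a Chernoff bound yields $|S|\ge(1-q)n$ with high probability. On the rows in $S$,
\[\frac1nX^{T}X\;\succeq\;\frac{\delta^{2}}{n}\sum_{i\in S}\mathcal{Z}_{i}\mathcal{Z}_{i}^{T}\;=\;\frac{\delta^{2}|S|}{n}\cdot\frac1{|S|}\mathcal{Z}_{S}^{T}\mathcal{Z}_{S},\]
where $\mathcal{Z}_{S}$ is the $|S|\times p$ submatrix of $\mathcal{Z}$, which inherits the distributional hypotheses (i.i.d., resp.\ independent symmetric sub-gaussian with uniform parameters). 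Since $\zeta$ is independent of $\mathcal{Z}$, conditionally on $\zeta$ the set $S$ is deterministic, so the smallest-singular-value estimate used to prove Propositions~\ref{prop:subgauss}/\ref{prop:subgauss_noniid} applies to $\mathcal{Z}_{S}$; this is exactly where $\kappa<1$ matters, as it lets us pick $q$ so that the aspect ratio $p/|S|\le\kappa/(1-q)$ stays bounded away from $1$, and the estimate gives $\lambda_{\min}\big(\frac1{|S|}\mathcal{Z}_{S}^{T}\mathcal{Z}_{S}\big)=\Omega(1/\polyLog)$ with high probability. Therefore $\lammin=\Omega(1/\polyLog)$, which establishes \textbf{A}3 and hence also \textbf{A}3*.

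For \textbf{A}4 we must lower bound $X_{j}^{T}Q_{j}X_{j}/\tr(Q_{j})$ uniformly over $j\le p$. Because $Q_{j}=\Cov(h_{j,0})$ is a function of $(\zeta,\mathcal{Z}_{[j]},\eps)$ alone, the matrix $\widetilde Q_{j}\triangleq\diag(\zeta)Q_{j}\diag(\zeta)$ is independent of $\mathcal{Z}_{j}$ and $X_{j}^{T}Q_{j}X_{j}=\mathcal{Z}_{j}^{T}\widetilde Q_{j}\mathcal{Z}_{j}$; applying the quadratic-form concentration used in the proof of Proposition~\ref{prop:subgauss} conditionally on $\widetilde Q_{j}$ (for which one uses $\|\widetilde Q_{j}\|_{\mathrm{op}}=O(\polyLog)$, itself a consequence of \textbf{A}1, \textbf{A}3 and a residual expansion) gives $\mathcal{Z}_{j}^{T}\widetilde Q_{j}\mathcal{Z}_{j}\ge\frac1{\polyLog}\tr(\widetilde Q_{j})\ge\frac{\delta^{2}}{\polyLog}\sum_{i\in S}(Q_{j})_{ii}$ with high probability. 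One then invokes the uniform two-sided bound $\frac1{\polyLog}\le(Q_{j})_{ii}=\Var(\psi(r_{i,[j]}))\le\polyLog$: the upper bound from $|\psi(x)|\le K_{1}|x|$ and boundedness of residual moments, the lower bound from a leave-one-observation-out decoupling that makes $r_{i,[j]}$ approximately $\eps_{i}$ plus an independent shift, combined with $\psi'\ge K_{0}$ and $\min_{i}\Var(\eps_{i})=\Omega(1/\polyLog)$. Since $|S|\ge(1-q)n$ this gives $\sum_{i\in S}(Q_{j})_{ii}\ge(1-q)n/\polyLog$ while $\tr(Q_{j})\le n\polyLog$, so $X_{j}^{T}Q_{j}X_{j}/\tr(Q_{j})=\Omega(1/\polyLog)$, which is \textbf{A}4; a union bound over $j\le p$ (and, in \textbf{A}5, over $i\le n$) is harmless since $p\le n$.

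I expect the lower-tail step to be the main obstacle, in two respects. First, one must discard the random, $\zeta$-dependent set of rows with small $\zeta_{i}$ while keeping the remaining submatrix fat enough for the invertibility estimates; this is precisely where both $\kappa<1$ (so a fraction $q<1-\kappa$ can be dropped) and the positivity of $F^{-1}$ on $(0,1)$ (so the threshold $\delta$ is a genuine positive constant) are essential and cannot be dispensed with. Second, in \textbf{A}4 the weighting matrix $Q_{j}$ itself depends on $\zeta$, so the clean trace sandwich $a^{2}\tr(Q_{j})\le\tr(\widetilde Q_{j})\le b^{2}\tr(Q_{j})$ available in the bounded case of Proposition~\ref{prop:ellip} is lost, and one is forced through the uniform lower bound $(Q_{j})_{ii}=\Omega(1/\polyLog)$, whose proof via leave-one-observation-out decoupling is the most delicate ingredient.
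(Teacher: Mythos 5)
Your overall architecture is the same as the paper's: handle the upper tail of $\zeta$ by the stretched-exponential bound so that $\max_i\zeta_i=O(\polyLog)$ with high probability (giving $\lammax=O(\polyLog)$ and, via Cauchy--Schwarz/H\"older against the polylog moments of $\max_i\zeta_i$, the bound $\E\Delta_C^8=O(\polyLog)$ exactly as in the paper's display following \eqref{eq:DeltaCLambda}), and handle the lower tail by discarding the rows with small $\zeta_i$ and applying the sub-gaussian smallest-singular-value estimates to the surviving submatrix of $\mathcal{Z}$. Your trimming step is a legitimate variant: you threshold at $\delta=F^{-1}(q/2)$ and condition on $\zeta$ (so the retained row set is deterministic and independent of $\mathcal{Z}$), whereas the paper keeps the $\lfloor(1-\delta)n\rfloor$ largest $\zeta_i$'s and pays for the randomness of that set with a union bound over all row subsets of that size (affordable since Propositions \ref{prop:rmt_rudelson09}/\ref{prop:rmt_litvak05} give $e^{-cn}$ tails and Stirling controls $\binom{n}{n'}$ for small $\delta$), plus Borel--Cantelli. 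Either route gives \textbf{A}3.

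The genuine gap is in your \textbf{A}4 step, precisely where you flag the difficulty. You reduce, as the paper does, to a uniform lower bound on the diagonal entries $(Q_j)_{ii}=\Var(\psi(r_{i,[j]}))$, but you justify it by saying a ``leave-one-observation-out decoupling makes $r_{i,[j]}$ approximately $\eps_i$ plus an independent shift.'' As stated this is not correct: $r_{i,[j]}=\eps_i-x_{i,[j]}^T\hat\beta_{[j]}$ and $\hat\beta_{[j]}$ depends on $\eps_i$, so the shift is not independent; making the approximation by the genuinely independent quantity $\eps_i-x_{i,[j]}^T\hat\beta_{[j],(i)}$ rigorous and uniform over $i,j$ is itself nontrivial and is not what the paper does. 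The paper instead lower bounds $\Var(r_{i,[j]})$ through the conditional-variance argument of Lemma \ref{lem:var_rij}, using the exact identity $\partial r_{i,[j]}/\partial\eps_i=e_i^TG_{[j]}e_i$ and the Schur-complement computation \eqref{eq:Qjii}, which converts the problem into controlling $e_i^TZ_{[j]}(Z_{(i),[j]}^TZ_{(i),[j]})^{-1}Z_{[j]}^Te_i$ \emph{uniformly over all $i\le n$ and $j\le p$}. In the unbounded-$\zeta$ setting this forces a second round of the trimming argument applied to every leave-one-observation-out matrix $\mathcal{Z}_{(i),[j]}$ simultaneously (the paper's union bound over $i$, $j$, and row subsets in \eqref{eq:mineig}, together with $\zeta_{(1)}\le(\log n)^{2/\alpha}$ and $\zeta_{(\lfloor(1-\delta)n\rfloor)}\to F^{-1}(\delta)>0$), yielding \eqref{eq:Qjii2} and hence $\tr(\Lambda Q_j\Lambda)=\Omega_p(n/\polyLog)$. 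Your conditioning-on-$\zeta$ trick can supply this uniformity as well, but you have neither stated the needed uniform-in-$(i,j)$ singular-value control nor given a valid replacement for the derivative/Schur-complement identity, so the most delicate ingredient of the proposition remains an assertion rather than a proof.
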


\subsubsection{A counterexample}\label{subsubsec:counterexample}
Consider a one-way ANOVA situation. In other words, let the design matrix have exactly 1 non-zero entry per row, whose value is 1. Let $\{k_i\}_{i=1}^n$ be integers in $\{1,\ldots,p\}$. And let $X_{i,j}=1(j=k_i)$. Furthermore, let us constrain $n_{j}=|\{i: k_i=j\}|$ to be such that $1\leq n_{j} \leq 2 \lfloor p/n \rfloor$. Taking for instance $k_i=(i \mod p)$ is an easy way to produce such a matrix. The associated statistical model is just $y_i=\eps_i+\betanull_{k_i}$\;.

It is easy to see that 
$$
\hat{\beta}_j=\argmin_{\beta\in\mathbb{R}} \sum_{i: k_i=j} \rho(y_i-\beta_j)=\argmin_{\beta\in\mathbb{R}} \sum_{i: k_i=j} \rho(\eps_i-(\beta_j-\betanull_j))\;.
$$
This is of course a standard location problem. 
In the moderate-dimensional setting we consider, $n_{j}$ remains finite as $n\tendsto \infty$. So $\hat{\beta}_j$ is a non-linear function of finitely many random variables and will in general not be normally distributed. 

For concreteness, one can take $\rho(x)=|x|$, in which case $\hat{\beta}_j$ is a median of $\{y_i\}_{\{i: k_i=j\}}$. The cdf of $\hat{\beta}_j$ is known exactly by elementary order statistics computations (see \citeA{david81}) and is not that of a Gaussian random variable in general. In fact, the ANOVA design considered here violates the assumption \textbf{A}3 since $\lammin = \min_{j}n_{j} / n = O\lb 1 / n\rb$. Further, we can show that the assumption \textbf{A}5 is also violated, at least in the least-square case; see Section \ref{subsec:an_lse} for details.

\subsection{Comments and discussions}

\subsubsection{Asymptotic Normality in High Dimensions}
In the $p$-fixed regime, the asymptotic distribution is easily defined as the limit of $\mathcal{L}(\mest)$ in terms of weak topology \cite{vandervaart}. However, in regimes where the dimension $p$ grows, the notion of asymptotic distribution is more delicate. a conceptual question arises from the fact that the dimension of the estimator $\hat{\beta}$ changes with $n$ and thus there is no well-defined distribution which can serve as the limit of $\mathcal{L}(\hat{\beta})$, where $\mathcal{L}(\cdot)$ denotes the law.  One remedy is proposed by \citeA{mallows72}. Under this framework, a triangular array $\{W_{n, j}, j = 1, 2,\ldots, p_{n}\}$, with $\E W_{n, j} = 0, \E W_{n, j}^{2} = 1$, is called jointly asymptotically normal if for any deterministic sequence $a_{n}\in \R^{p_{n}}$ with $\|a_{n}\|_{2} = 1$,
\[\mathcal{L}\lb \sum_{j=1}^{p_{n}}a_{n, j}W_{n, j}\rb\rightarrow N(0, 1).\]
When the zero mean and unit variance are not satisfied, it is easy to modify the definition by normalizing random variables.
\begin{definition}[joint asymptotic normality]\label{def:jan_mallows}
~\\

$\{W_{n}: W_{n}\in \R^{p_{n}}\}$ is jointly asymptotically normal if and only if for any sequence $\{a_{n}: a_{n}\in \R^{p_{n}}\}$,
\[\mathcal{L}\lb\frac{a_{n}^{T}(W_{n} - \E W_{n})}{\sqrt{a_{n}^{T}\Cov(W_{n})a_{n}}}\rb\rightarrow N(0, 1).\]
\end{definition}
The above definition of asymptotic normality is strong and appealing but was shown not to hold for least-squares in the moderate $p/n$ regime \cite{huber73}. In fact, \citeA{huber73} shows that $\lse$ is jointly asymtotically normal only if
\[\max_{i}(X(X^{T}X)^{-1}X^{T})_{i,i}\rightarrow 0.\]
When $p / n\rightarrow \kappa \in(0, 1)$, provided $X$ is full rank,
\[\max_{i}(X(X^{T}X)^{-1}X^{T})_{i,i}\ge \frac{1}{n}\tr(X(X^{T}X)^{-1}X^{T}) = \frac{p}{n}\rightarrow \kappa  > 0.\]
In other words, in moderate $p / n$ regime, the asymptotic normality cannot hold for all linear contrasts, even in the case of least-squares.

In applications, however, it is usually not necessary to consider all linear contrasts but instead a small subset of them, e.g. all coordinates or low dimensional linear contrasts such as $\betanull_{1} - \betanull_{2}$. We can naturally modify Definition \ref{def:jan_mallows} and adapt to our needs by imposing constraints on $a_{n}$. A popular concept, which we use in Section \ref{sec:intro} informally, is called coordinate-wise asymptotic normality and defined by restricting $a_{n}$ to be the canonical basis vectors, which have only one non-zero element. An equivalent definition is stated as follows.
\begin{definition}[coordinate-wise asymptotic normal]\label{def: casn_mallows}
~\\

  $\{W_{n}: W_{n}\in \R^{p_{n}}\}$ is coordinate-wise asymptotically normal if and only if for any sequence $\{j_{n}: j_{n}\in \{1, \ldots,  p_{n}\}\}$,
\[\mathcal{L}\lb\frac{W_{n, j_{n}} - \E W_{n, j_{n}}}{\sqrt{\Var(W_{n, j_{n}})}}\rb\rightarrow N(0, 1).\]
\end{definition}
A more convenient way to define the coordinate-wise asymptotic normality is to introduce a metric $d(\cdot, \cdot)$, e.g. Kolmogorov distance and total variation distance, which induces the weak convergence topology. Then $W_{n}$ is coordinate-wise asymptotically normal if and only if
\[\max_{j}d\lb\mathcal{L}\lb\frac{W_{n, j} - \E W_{n, j}}{\sqrt{\Var(W_{n, j})}}\rb, N(0, 1)\rb = o(1).\]

\subsubsection{Discussion about inference and technical assumptions}\label{subsubsec:FurtherDis}

\noindent \textbf{Variance and bias estimation}\\

To complete the inference, we need to compute the bias and variance. As discussed in Remark \ref{remark:unbiase}, the M-estimator is unbiased if the loss function and the error distribution are symmetric. For the variance, it is easy to get a conservative estimate via resampling methods such as Jackknife as a consequence of Efron-Stein's inequality; see \citeA{elkaroui13} and \citeA{elkaroui15boot} for details. Moreover, by the variance decomposition formula,
\[\Var(\hat{\beta}_{j}) = \E \left[\Var(\hat{\beta}_{j} | X)\right] + \Var\left[\E (\hat{\beta}_{j} | X)\right]\ge \E \left[\Var(\hat{\beta}_{j}|X)\right],
\]
the unconditional variance, when $X$ is a random design matrix, is a conservative estimate. The unconditional variance can be calculated by solving a non-linear system; see \citeA{elkaroui13} and \citeA{donoho16}.

However, estimating the exact variance is known to be hard. \citeA{elkaroui15boot} show that the existing resampling schemes, including jacknife, pairs-bootstrap, residual bootstrap, etc., are either too conservative or too anti-conservative when $p/n$ is large. The challenge, as mentioned in \citeA{elkaroui13, elkaroui15boot}, is due to the fact that the residuals $\{R_{i}\}$ do not mimic the behavior of $\{\eps_{i}\}$ and that the resampling methods effectively modifies the geometry of the dataset from the point of view of the statistics of interest. We believe that variance estimation in moderate $p/n$ regime should rely on different methodologies from the ones used in low-dimensional estimation.

~\\
\noindent \textbf{Technical assumptions}\\

On the other hand, we assume that $\rho$ is strongly convex. One remedy would be adding a ridge regularized term as in \citeA{elkaroui13} and the new problem is amenable to analysis with the method we used in this article. However, the regularization term introduces a non-vanishing bias, which is as hard to be derived as the variance. For unregularized M-estimators, the strong convexity is also assumed by other works \cite{elkaroui13, donoho16}. However, we believe that this assumption is unnecessary and can be removed at least for well-behaved design matrices. Another possibility, for errors that have more than 2 moments is to just add a small quadratic term to the loss function, e.g. $\lambda x^2/2$ with a small $\lambda$. Finally, we recall that in many situations, least-squares is actually more efficient than $\ell_1$-regression (see numerical work in \citeA{bean13}) in moderate dimensions. This is for instance the case for double-exponential errors if $p/n$ is greater than .3 or so. As such working with strongly convex loss functions is as problematic for moderate-dimensional regression M-estimates as it would be in the low-dimensional setting.

To explore traditional robustness questions, we will need to weaken the requirements of Assumption \textbf{A}2. This requires substantial work and an extension of the main results of \citeA{sopi}. Because the technical part of the paper is already long, we leave this interesting statistical question to future works.


\section{Proof Sketch}\label{sec:heuristic}

Since the proof of Theorem \ref{thm:main} is somewhat technical, we illustrate the main idea in this section. 

First notice that the M-estimator $\hat{\beta}$ is an implicit function of independent random variables $\eps_{1}, \ldots, \eps_{n}$, which is determined by 
\begin{equation}\label{eq:first_order}
\frac{1}{n}\sum_{i=1}^{n}x_{i}\psi(\eps_{i} - x_{i}\hat{\beta}) = 0.
\end{equation}
The Hessian matrix of the loss function in \eqref{eq:betaHat_general} is $\frac{1}{n}X^{T}DX\succeq D_{0}\lammin I_{p}$ under the notation introduced in section \ref{subsec:notation}. The assumption \textbf{A}3 then implies that the loss function is strongly convex, in which case $\hat{\beta}$ is unique. Then $\hat{\beta}$ can be seen as a non-linear function of $\eps_i$'s. A powerful central limit theorem for this type of statistics is the second-order Poincar\'e inequality (SOPI), developed in \citeA{sopi} and used there to re-prove central limit theorems for linear spectral statistics of large random matrices. We recall one of the main results for the convenience of the reader. 

\begin{proposition}[SOPI; \citeNP{sopi}]\label{prop:sopi}
  Let $\mathscr{W} = (\mathscr{W}_{1}, \ldots, \mathscr{W}_{n}) = (u_{1}(W_{1}), \ldots, u_{n}(W_{n}))$ where $W_{i}\stackrel{i.i.d.}{\sim} N(0, 1)$ and $\|u'_{i}\|_{\infty}\le c_{1}, \|u''_{i}\|_{\infty}\le c_{2}$. Take any $g\in C^{2}(\R^{n})$ and let $\nabla_{i}g$, $\nabla g$ and $\nabla^{2}g$ denote the $i$-th partial derivative, gradient and Hessian of $g$. Let
\[\kappa_{0} = \lb\E \sum_{i=1}^{n}\big|\nabla_{i}g(\mathscr{W})\big|^{4}\rb^{\frac{1}{2}}, \quad \kappa_{1}  = (\E \|\nabla g(\mathscr{W})\|_{2}^{4})^{\frac{1}{4}}, \quad \kappa_{2} = (\E \|\nabla^{2}g(\mathscr{W})\|_{\mathrm{op}}^{4})^{\frac{1}{4}},\]
and $U = g(\mathscr{W})$. If $U$ has finite fourth moment, then
\[d_{\mathrm{TV}}\lb \mathcal{L}\lb\frac{U - \E U}{\sqrt{\Var(U)}}\rb, N(0, 1)\rb\le \frac{2\sqrt{5}(c_{1}c_{2}\kappa_{0} + c_{1}^{3}\kappa_{1}\kappa_{2})}{\Var(U)}.\]
\end{proposition}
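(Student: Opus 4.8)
\noindent\textit{Proof proposal.} The statement is Chatterjee's second-order Poincar\'e inequality, so the plan is to reconstruct his Stein's-method argument. Normalize so that $\E U=0$, write $\sigma^{2}=\Var(U)$, $\tilde U=U/\sigma$, and set $h=g\circ(u_1,\dots,u_n)$, so $U=h(W)$ with $W\sim N(0,I_n)$. Since $d_{\mathrm{TV}}\lb\mathcal{L}(\tilde U),N(0,1)\rb\le\sup_{f}\big|\E[f'(\tilde U)-\tilde U f(\tilde U)]\big|$ over the Stein test functions $f$ for indicator sets (so that $\|f\|_\infty,\|f'\|_\infty=O(1)$, after the standard smoothing), it suffices to bound $\E[f'(\tilde U)-\tilde U f(\tilde U)]$ uniformly in $f$. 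Throughout I would translate derivatives of $h$ into derivatives of $g$ by the chain rule: $\nabla_i h(W)=\nabla_i g(\mathscr{W})\,u_i'(W_i)$ and $\nabla^{2}h(W)=\diag(u'(W))\,\nabla^{2}g(\mathscr{W})\,\diag(u'(W))+\diag\!\lb\nabla_i g(\mathscr{W})\,u_i''(W_i)\rb$. The first Hessian piece, of size $c_1^{2}\|\nabla^{2}g\|_{\mathrm{op}}$, will produce the $c_1^{3}\kappa_1\kappa_2$ term; the diagonal $u''$-piece, of size $c_2\max_i|\nabla_i g|$ with $\max_i|\nabla_i g|\le(\sum_i|\nabla_i g|^4)^{1/4}$, will produce the $c_1c_2\kappa_0$ term.

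The engine is a Gaussian covariance/interpolation identity. With $W'$ an independent copy of $W$ and $W_s=e^{-s}W+\sqrt{1-e^{-2s}}\,W'$ (so $W_s\sim N(0,I_n)$ and $\mathscr{W}_s\stackrel{d}{=}\mathscr{W}$), put
\[
T=\int_{0}^{\infty}e^{-s}\,\E_{W'}\big[\langle\nabla h(W),\nabla h(W_s)\rangle\big]\,ds.
\]
The Mehler/Ornstein--Uhlenbeck representation of the covariance gives $\E[T]=\Var(h(W))=\sigma^{2}$, and Gaussian integration by parts along $s\mapsto W_s$ gives a Stein-type representation $\E[\tilde U f(\tilde U)]=\sigma^{-2}\E[T\,f'(\tilde U)]+\mathcal R(f)$, where the remainder $\mathcal R(f)$ — coming from evaluating $f'$ at $W_s$ rather than at $W$, and from the integration by parts carried through the $u_i$'s — is controlled, using $\|u_i'\|_\infty\le c_1$, $\|u_i''\|_\infty\le c_2$ and the bounds on $f$, by a quantity of order $(c_1^{3}\kappa_1\kappa_2+c_1c_2\kappa_0)/\sigma^{2}$. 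Hence
\[
\big|\E[f'(\tilde U)-\tilde U f(\tilde U)]\big|\le\big|\E[f'(\tilde U)(1-T/\sigma^{2})]\big|+|\mathcal R(f)|\le\|f'\|_\infty\,\frac{\sqrt{\Var(T)}}{\sigma^{2}}+|\mathcal R(f)|.
\]

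It remains to bound $\Var(T)$, which is the crux. I would apply the ordinary Gaussian Poincar\'e inequality to $w\mapsto T(w)$: $\Var(T)\le\E\|\nabla_W T\|^{2}$. Differentiating under the integral, $\nabla_W T$ is a weighted integral of $\nabla^{2}h(W)\,\nabla h(W_s)$ and $e^{-s}\nabla^{2}h(W_s)\,\nabla h(W)$; using Cauchy--Schwarz, the convexity estimate $\lb\int e^{-s}a_s\,ds\rb^{2}\le\int e^{-s}a_s^{2}\,ds$, and the law-invariance $W_s\stackrel{d}{=}W$, one reduces to estimating $\E\big[\|\nabla h(W)\|^{2}\,\|\nabla^{2}h(W)\|_{\mathrm{op}}^{2}\big]$. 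Substituting the chain-rule formulas and — crucially — keeping the diagonal $u''$-part of $\nabla^{2}h$ \emph{paired} with the gradient (rather than bounding it through its operator norm, which is too lossy), the two Hessian pieces contribute $c_1^{6}\kappa_1^{2}\kappa_2^{2}$ and $c_1^{2}c_2^{2}\kappa_0^{2}$ respectively, so $\sqrt{\Var(T)}\le C(c_1^{3}\kappa_1\kappa_2+c_1c_2\kappa_0)$. Combining this with the previous display and carrying the constants through the Stein bound yields $d_{\mathrm{TV}}\le 2\sqrt5\,(c_1c_2\kappa_0+c_1^{3}\kappa_1\kappa_2)/\Var(U)$. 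I expect the main obstacles to be precisely this $\Var(T)$ computation (justifying differentiation under the integral, exchanging $\E_W$ and $\E_{W'}$, and handling the $u''$-term by the tight pairing rather than an operator-norm bound), together with controlling the remainder $\mathcal R(f)$ and extracting the sharp constant $2\sqrt5$; the rest is routine Gaussian integration by parts and Cauchy--Schwarz.
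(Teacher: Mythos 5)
You should first note that the paper itself gives no proof of this proposition: it is quoted from Chatterjee \cite{sopi} as an external tool, so the only meaningful comparison is with Chatterjee's original argument --- which is essentially what you reconstruct. Your architecture is the right one: the interpolant $T=\int_0^\infty e^{-s}\,\E_{W'}\big[\langle\nabla h(W),\nabla h(W_s)\rangle\big]\,ds$ (equivalently Chatterjee's $\int_0^1\frac{1}{2\sqrt t}(\cdot)\,dt$ parametrization) with $\E T=\Var(U)$, the Stein-method reduction $d_{\mathrm{TV}}\le C\sqrt{\Var(T)}/\Var(U)$, the ordinary Gaussian Poincar\'e inequality applied to $T$ itself, and --- crucially --- the chain-rule split $\nabla^2h=\diag(u')\,\nabla^2g\,\diag(u')+\diag(\nabla_ig\,u_i'')$ with the diagonal $u''$-part kept paired against the gradient factor. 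That pairing is exactly how $c_1c_2\kappa_0$ arises; composing naively with the Gaussian second-order Poincar\'e inequality would only give $c_1c_2\kappa_1\sqrt{\kappa_0}$, which is weaker since $\kappa_1^2\ge\kappa_0$, so you have correctly identified the non-routine point.

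The one genuine misstep is the remainder $\mathcal R(f)$. In the correct argument there is none: interpolate only the second factor of the covariance, i.e.\ differentiate $s\mapsto\E[\phi(h(W))\,h(W_s)]$; integrating by parts in $W$ and in $W'$ makes the $\phi''$- and $\nabla^2h$-boundary terms cancel, and one is left with the exact identity $\E[U\phi(U)]=\E[\phi'(U)\,T]$, with $\phi'$ evaluated at $U$ itself. Nothing has to be ``carried through the $u_i$'s'' either, because the identity applies directly to $h=g\circ u$. Your concern that $f'$ ends up evaluated at $W_s$ corresponds to the other (Ornstein--Uhlenbeck semigroup) form of the covariance identity; if you use that form and try to control $\E\big[(f'(h(W_s))-f'(U))\langle\nabla h(W),\nabla h(W_s)\rangle\big]$ as a remainder, you cannot do it with only $\|f\|_\infty$ and $\|f'\|_\infty$ --- Stein solutions for total-variation test functions have no usable second derivative --- so that step, as written, is a gap, and your claim that $\mathcal R(f)$ is of the same order is unsupported. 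Once the interpolation is set up as above the remainder vanishes, your $\Var(T)$ computation (differentiation under the integral, Cauchy--Schwarz, the convexity bound $\lb\int e^{-s}a_s\,ds\rb^2\le\int e^{-s}a_s^2\,ds$, and $W_s\stackrel{d}{=}W$) goes through, and the stated bound follows; extracting the precise constant $2\sqrt5$ is then only Chatterjee's bookkeeping of the two terms in $\nabla T$ and of the Stein-factor bounds.
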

From (\ref{eq:first_order}), it is not hard to compute the gradient and Hessian of $\hat{\beta}_{j}$ with respect to $\eps$. Recalling the definitions in Equation \eqref{eq:defOfDiagMatrices} on p. \pageref{eq:defOfDiagMatrices}, we have
\begin{lemma}\label{lem:beta_deriv}
Suppose $\psi\in C^{2}(\R^{n})$, then
\begin{equation}\label{eq:fd}
\Fd{j} = e_{j}^{T}(\wm)^{-1}X^{T}D
\end{equation}
\begin{equation}\label{eq:sd}
\SD{j} = G^{T}\diag(e_{j}^{T}(\wm)^{-1}X^{T}\tilde{D})G
\end{equation}
where $e_{j}$ is the $j$-th cononical basis vectors in $\R^{p}$ and 
\[G = I - X(\wm)^{-1}X^{T}D.\]
\end{lemma}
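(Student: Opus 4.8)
The plan is to obtain both displays by implicit differentiation of the first-order condition \eqref{eq:first_order}, written in vector form as
\[
X^{T}\psi(\epsilon - X\hat\beta) = 0,
\]
where $\psi$ is applied coordinatewise to the residual vector $R = \epsilon - X\hat\beta$. Under \textbf{A}1 we have $\psi' \ge K_{0} > 0$, so $D \succeq K_{0} I_{n}$ and, since $X$ has full column rank (\textbf{A}3), the Hessian $\frac1n\wm \succeq \frac{K_{0}}{n}X^{T}X$ is nonsingular; thus $\hat\beta$ is the unique stationary point, and because $\psi \in C^{2}$ the implicit function theorem makes $\hat\beta$ a $C^{2}$ function of $\epsilon$. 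All derivatives below therefore exist and all linear systems we invert are nonsingular.

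First I would differentiate the first-order condition once with respect to a single coordinate $\epsilon_{k}$. Since $\partial R/\partial\epsilon_{k} = e_{k} - X\,(\partial\hat\beta/\partial\epsilon_{k})$, the chain rule gives $X^{T}D\big(e_{k} - X\,(\partial\hat\beta/\partial\epsilon_{k})\big) = 0$, i.e. $\wm\,(\partial\hat\beta/\partial\epsilon_{k}) = X^{T}De_{k}$. Solving and stacking the columns over $k$ yields $\fd = (\wm)^{-1}X^{T}D$, and reading off the $j$-th row gives \eqref{eq:fd}. It is worth recording as a byproduct that the Jacobian of the residual map is exactly $\partial R/\partial\epsilon^{T} = I - X(\wm)^{-1}X^{T}D = G$, and in particular $e_{k} - X\,(\partial\hat\beta/\partial\epsilon_{k}) = Ge_{k}$; this is the object that produces the two $G$ factors in the Hessian.

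Next I would differentiate the identity $X^{T}D(Ge_{k}) = 0$ once more, with respect to $\epsilon_{l}$. Here $e_{k}$ is constant, so $\partial(Ge_{k})/\partial\epsilon_{l} = -X\,(\partial^{2}\hat\beta/\partial\epsilon_{l}\partial\epsilon_{k})$, while the one point requiring care is that $D = \diag(\psi'(R_{i}))$ depends on $\epsilon$ both explicitly and through $\hat\beta$, giving $\partial D/\partial\epsilon_{l} = \diag\big(\psi''(R_{i})(Ge_{l})_{i}\big) = \td{D}\,\diag(Ge_{l})$. Applying the product rule, $X^{T}\td{D}\,\diag(Ge_{l})(Ge_{k}) - \wm\,(\partial^{2}\hat\beta/\partial\epsilon_{l}\partial\epsilon_{k}) = 0$, hence
\[
\frac{\partial^{2}\hat\beta}{\partial\epsilon_{l}\partial\epsilon_{k}} = (\wm)^{-1}X^{T}\td{D}\,\diag(Ge_{l})\,Ge_{k}.
\]
Taking the $j$-th coordinate and using $\diag(a)b = \diag(b)a = a\odot b$ together with $(Ge_{l})_{i} = G_{il}$, the $(l,k)$ entry of $\SD{j}$ equals $\sum_{i} G_{il}\,\big(e_{j}^{T}(\wm)^{-1}X^{T}\td{D}\big)_{i}\,G_{ik}$, which is precisely the $(l,k)$ entry of $G^{T}\diag\big(e_{j}^{T}(\wm)^{-1}X^{T}\td{D}\big)G$; this is \eqref{eq:sd}. (Sanity check: $G$ is not symmetric, but the sandwiched form $G^{T}\diag(\cdot)G$ is symmetric in $(l,k)$, as any Hessian must be.)

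I do not expect a genuine obstacle here; the argument is routine implicit differentiation and the only places to slip are (i) forgetting that $D$ depends on $\epsilon$ through both $\epsilon$ itself and $\hat\beta(\epsilon)$ when forming $\partial D/\partial\epsilon_{l}$, and (ii) transposing indices correctly when repackaging the coordinatewise second derivatives into the matrix $G^{T}\diag(\cdot)G$. One could alternatively differentiate the explicit formula \eqref{eq:fd} for $\fd$ directly, but that is messier, since it forces separate differentiation of $(\wm)^{-1}$ and of $D$; differentiating the first-order condition instead keeps the bookkeeping compact.
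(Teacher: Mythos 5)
Your proposal is correct and follows essentially the same route as the paper: implicit differentiation of the first-order condition, the observation that $\partial R/\partial\eps^{T} = G$ so that $\partial D/\partial\eps_{k} = \td{D}\diag(Ge_{k})$, and then a repackaging of the second derivatives into $G^{T}\diag(e_{j}^{T}(\wm)^{-1}X^{T}\td{D})G$. The only differences are cosmetic bookkeeping (you compute the Hessian entrywise in $(l,k)$, while the paper differentiates the identity $(\wm)\fd = X^{T}D$ and uses $a^{T}\diag(b)=b^{T}\diag(a)$ to extract the same matrix form), plus your added justification of smoothness via the implicit function theorem, which the paper takes for granted at this point.
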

Recalling the definitions of $K_i$'s in Assumption \textbf{A}1 on p. \pageref{thm:main}, we can bound $\kappa_{0}$, $\kappa_{1}$ and $\kappa_{2}$ as follows.
\begin{lemma}\label{lem:beta_deriv_bound}
Let $\kappa_{0j}, \kappa_{1j}, \kappa_{2j}$ defined as in Proposition \ref{prop:sopi} by setting $\mathscr{W} = \eps$ and $g(\mathscr{W})= \hat{\beta}_{j}$. Let 
\begin{equation}\label{eq:def_Mj}
M_{j} = \E\|e_{j}^{T}(\wm)^{-1}X^{T}D^{\frac{1}{2}}\|_{\infty},
\end{equation}
then 
\[\kappa_{0j}^{2}\le \frac{K_{1}^{2}}{(nK_{0}\lammin)^{\frac{3}{2}}}\cdot M_{j}, \quad \kappa_{1j}^{4}\le \frac{K_{1}^{2}}{(nK_{0}\lammin)^{2}}, \quad \kappa_{2j}^{4}\le \frac{K_{2}^{4}}{(nK_{0}\lammin)^{\frac{3}{2}}}\cdot \lb\frac{K_{1}}{K_{0}}\rb^{4}\cdot M_{j}.\]
\end{lemma}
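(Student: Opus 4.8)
The plan is to start from the closed-form expressions in Lemma~\ref{lem:beta_deriv} and reduce all three bounds to a single deterministic inequality together with the quantity $M_{j}$ from \eqref{eq:def_Mj}. The basic deterministic facts are: Assumption~\textbf{A}1 gives $K_{0}I_{n}\preceq D\preceq K_{1}I_{n}$ (and $\psi\in C^{2}$ with $|\psi''|\le K_{2}\sqrt{K_{1}}$, so the derivatives in Lemma~\ref{lem:beta_deriv} exist), while Assumption~\textbf{A}3 gives $X^{T}X\succeq n\lammin I_{p}$; combining these, $\wm\succeq nK_{0}\lammin I_{p}$, hence $(\wm)^{-1}\preceq (nK_{0}\lammin)^{-1}I_{p}$. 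Writing $v_{j}\triangleq D^{1/2}X(\wm)^{-1}e_{j}\in\R^{n}$, one checks $\|v_{j}\|_{2}^{2}=e_{j}^{T}(\wm)^{-1}X^{T}DX(\wm)^{-1}e_{j}=e_{j}^{T}(\wm)^{-1}e_{j}$, so that
\[
\|v_{j}\|_{\infty}\le\|v_{j}\|_{2}=\sqrt{e_{j}^{T}(\wm)^{-1}e_{j}}\le\frac{1}{\sqrt{nK_{0}\lammin}},\qquad\text{and}\qquad M_{j}=\E\|v_{j}\|_{\infty}.
\]
This deterministic bound on $\|v_{j}\|_{\infty}$, plus the fact that $M_{j}$ is exactly its first moment, is what lets us trade every excess power of $\|v_{j}\|_{\infty}$ for a power of $(nK_{0}\lammin)^{-1/2}$.

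For $\kappa_{1j}$: from \eqref{eq:fd}, $\nabla\hat{\beta}_{j}=DX(\wm)^{-1}e_{j}=D^{1/2}v_{j}$, so $\|\nabla\hat{\beta}_{j}\|_{2}^{2}=e_{j}^{T}(\wm)^{-1}X^{T}D^{2}X(\wm)^{-1}e_{j}\le K_{1}e_{j}^{T}(\wm)^{-1}e_{j}\le K_{1}/(nK_{0}\lammin)$, using $D^{2}\preceq K_{1}D$. This is deterministic, so squaring gives $\kappa_{1j}^{4}\le K_{1}^{2}/(nK_{0}\lammin)^{2}$. For $\kappa_{0j}$: use $\sum_{i}|\nabla_{i}\hat{\beta}_{j}|^{4}\le\|\nabla\hat{\beta}_{j}\|_{\infty}^{2}\,\|\nabla\hat{\beta}_{j}\|_{2}^{2}$ and $\|\nabla\hat{\beta}_{j}\|_{\infty}=\|D^{1/2}v_{j}\|_{\infty}\le\sqrt{K_{1}}\,\|v_{j}\|_{\infty}$, so $\sum_{i}|\nabla_{i}\hat{\beta}_{j}|^{4}\le K_{1}\|v_{j}\|_{\infty}^{2}\cdot K_{1}/(nK_{0}\lammin)$; now write $\|v_{j}\|_{\infty}^{2}\le\|v_{j}\|_{\infty}/\sqrt{nK_{0}\lammin}$ and take expectations to recognize $M_{j}$, yielding $\kappa_{0j}^{2}=\E\sum_{i}|\nabla_{i}\hat{\beta}_{j}|^{4}\le K_{1}^{2}M_{j}/(nK_{0}\lammin)^{3/2}$.

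For $\kappa_{2j}$: from \eqref{eq:sd}, $\|\SD{j}\|_{\mathrm{op}}\le\|G\|_{\mathrm{op}}^{2}\,\|e_{j}^{T}(\wm)^{-1}X^{T}\tilde{D}\|_{\infty}$. The first factor is controlled by noting that $D^{1/2}GD^{-1/2}=I_{n}-D^{1/2}X(\wm)^{-1}X^{T}D^{1/2}$ is an orthogonal projection, hence has operator norm $\le1$, so $\|G\|_{\mathrm{op}}\le\|D^{-1/2}\|_{\mathrm{op}}\|D^{1/2}\|_{\mathrm{op}}\le\sqrt{K_{1}/K_{0}}$. For the second factor, the $k$-th entry of $e_{j}^{T}(\wm)^{-1}X^{T}\tilde{D}$ equals $\psi''(R_{k})\,[X(\wm)^{-1}e_{j}]_{k}$, and Assumption~\textbf{A}1 gives $|\psi''(R_{k})|\le K_{2}\sqrt{\psi'(R_{k})}$, so this entry is at most $K_{2}\,|[D^{1/2}X(\wm)^{-1}e_{j}]_{k}|$ in absolute value; thus $\|e_{j}^{T}(\wm)^{-1}X^{T}\tilde{D}\|_{\infty}\le K_{2}\|v_{j}\|_{\infty}$. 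Combining, $\|\SD{j}\|_{\mathrm{op}}^{4}\le(K_{1}/K_{0})^{4}K_{2}^{4}\|v_{j}\|_{\infty}^{4}$, and since $\|v_{j}\|_{\infty}^{4}\le\|v_{j}\|_{\infty}/(nK_{0}\lammin)^{3/2}$, taking expectations gives $\kappa_{2j}^{4}\le K_{2}^{4}(K_{1}/K_{0})^{4}M_{j}/(nK_{0}\lammin)^{3/2}$, which is the claimed bound.

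The only mildly delicate step is the operator-norm estimate on the oblique projection $G$ via conjugation by $D^{1/2}$; everything else is bookkeeping around the single deterministic inequality $\|v_{j}\|_{\infty}\le(nK_{0}\lammin)^{-1/2}$. The reason the lemma can get away with $M_{j}=\E\|v_{j}\|_{\infty}$ rather than a higher moment is precisely that in each estimate we peel off all but one power of $\|v_{j}\|_{\infty}$ using this deterministic bound before taking expectations; a cruder argument would leave $\E\|v_{j}\|_{\infty}^{2}$ or $\E\|v_{j}\|_{\infty}^{4}$, which is why the careful splitting matters.
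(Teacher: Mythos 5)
Your proposal is correct and follows essentially the same route as the paper: the same deterministic bound $\|e_{j}^{T}(\wm)^{-1}X^{T}D^{1/2}\|_{\infty}\le\|e_{j}^{T}(\wm)^{-1}X^{T}D^{1/2}\|_{2}\le(nK_{0}\lammin)^{-1/2}$ used to peel off all but one power of the sup-norm before taking expectations (leaving exactly $M_{j}$), the same splitting $\kappa_{0j}^{2}\le\E(\|\nabla\hat\beta_{j}\|_{\infty}^{2}\|\nabla\hat\beta_{j}\|_{2}^{2})$, and the same conjugation of $G$ by $D^{1/2}$ to an orthogonal projection giving $\|G\|_{\mathrm{op}}\le\sqrt{K_{1}/K_{0}}$ together with $|\psi''|\le K_{2}\sqrt{\psi'}$ for the Hessian term. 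The only differences are cosmetic (entrywise versus operator-norm phrasing of the $D^{-1/2}\tilde D$ step).
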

As a consequence of the \SOPI, we can bound the total variation distance between $\hat{\beta}_{j}$ and a normal distribution by $M_{j}$ and $\Var(\hat{\beta}_{j})$. More precisely, we prove the following Lemma.
\begin{lemma}\label{lem:tv_bound}
Under assumptions \textbf{A}1-\textbf{A}3, 
  \[\max_{j} d_{\mathrm{TV}}\lb\mathcal{L}\lb\frac{\hat{\beta}_{j} - \E \hat{\beta}_{j}}{\sqrt{\Var(\hat{\beta}_{j})}}\rb, N(0, 1)\rb = O_{p}\lb\frac{\max_{j}(nM_{j}^{2})^{\frac{1}{8}}}{n\cdot \min_{j}\Var(\hat{\beta}_{j})}\cdot \polyLog\rb.\] 
\end{lemma}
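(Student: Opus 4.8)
The plan is to apply the \SOPI (Proposition \ref{prop:sopi}) directly with $\mathscr{W} = \eps$ and $g(\mathscr{W}) = \hat{\beta}_{j}$, and then substitute the derivative bounds of Lemma \ref{lem:beta_deriv_bound}. First I would verify the hypotheses of Proposition \ref{prop:sopi}. Assumption \textbf{A}2 supplies exactly the representation $\eps_{i} = u_{i}(W_{i})$ with $\|u_{i}'\|_{\infty}\le c_{1}$, $\|u_{i}''\|_{\infty}\le c_{2}$. That $\hat{\beta}_{j}$ is a $C^{2}$ function of $\eps$ follows from \textbf{A}1 (so $\psi\in C^{2}$) together with \textbf{A}3: under \textbf{A}1 and \textbf{A}3 the Hessian $\frac{1}{n}\wm \succeq K_{0}\lammin I_{p}$ is uniformly positive definite, so the first-order condition \eqref{eq:first_order} determines $\hat{\beta}$ uniquely and, by the implicit function theorem, smoothly in $\eps$ with no blow-up of $(\wm)^{-1}$; the explicit formulas of Lemma \ref{lem:beta_deriv} then hold globally on $\R^{n}$. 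Finiteness of the fourth moment of $\hat{\beta}_{j}$ follows from the gradient formula \eqref{eq:fd}, which shows $\hat{\beta}_{j}$ is globally Lipschitz in $\eps$ with constant $O(\polyLog/\sqrt{n})$, together with the sub-Gaussianity of $\eps$ implied by \textbf{A}2. Proposition \ref{prop:sopi} then yields, for each $j$,
\[
  d_{\mathrm{TV}}\!\lb\mathcal{L}\lb\frac{\hat{\beta}_{j} - \E \hat{\beta}_{j}}{\sqrt{\Var(\hat{\beta}_{j})}}\rb, N(0,1)\rb \le \frac{2\sqrt{5}\,(c_{1}c_{2}\kappa_{0j} + c_{1}^{3}\kappa_{1j}\kappa_{2j})}{\Var(\hat{\beta}_{j})}.
\]

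Next I would feed in Lemma \ref{lem:beta_deriv_bound}. Collapsing $K_{0},K_{1},K_{2}$ (controlled by \textbf{A}1), $c_{1},c_{2}$ (controlled by \textbf{A}2) and $\lammin$ (controlled by \textbf{A}3) into $\polyLog$ factors, Lemma \ref{lem:beta_deriv_bound} gives $c_{1}c_{2}\kappa_{0j} = O(\polyLog\cdot M_{j}^{1/2} n^{-3/4})$ and $c_{1}^{3}\kappa_{1j}\kappa_{2j} = O(\polyLog\cdot M_{j}^{1/4} n^{-7/8})$, uniformly over $j$. To reconcile the two powers of $M_{j}$ I would use the crude a priori bound
\[
  M_{j} \le \E\|e_{j}^{T}(\wm)^{-1}X^{T}D^{1/2}\|_{2} = \E\sqrt{e_{j}^{T}(\wm)^{-1}e_{j}} \le \frac{1}{\sqrt{nK_{0}\lammin}} = O\!\lb\frac{\polyLog}{\sqrt{n}}\rb,
\]
using $\|\cdot\|_{\infty}\le\|\cdot\|_{2}$, the identity $X^{T}DX=\wm$, and $\lambda_{\min}(\wm)\ge nK_{0}\lammin$ from \textbf{A}1 and \textbf{A}3. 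Hence $M_{j}^{1/2} n^{-3/4} = M_{j}^{1/4}\cdot (M_{j}^{1/4} n^{-3/4}) \le \polyLog\cdot M_{j}^{1/4} n^{-7/8}$, so \emph{both} terms are $O(\polyLog\cdot M_{j}^{1/4} n^{-7/8})$.

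Finally, combining these and using $M_{j}^{1/4} n^{-7/8} = n^{-1}(nM_{j}^{2})^{1/8}$, $\Var(\hat{\beta}_{j})\ge \min_{j}\Var(\hat{\beta}_{j})$, and $M_{j}\le \max_{j}M_{j}$, I would obtain
\[
  \max_{j} d_{\mathrm{TV}}\!\lb\mathcal{L}\lb\frac{\hat{\beta}_{j} - \E \hat{\beta}_{j}}{\sqrt{\Var(\hat{\beta}_{j})}}\rb, N(0,1)\rb \le \frac{2\sqrt{5}\,\polyLog\cdot\max_{j}(nM_{j}^{2})^{1/8}}{n\cdot\min_{j}\Var(\hat{\beta}_{j})},
\]
which is the asserted bound. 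The right-hand side is in fact deterministic given $X$; the $O_{p}$ in the statement is written so the estimate composes directly with the high-probability verifications of \textbf{A}1--\textbf{A}3 for the random designs of Section \ref{subsec:examples}.

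The genuinely content-carrying steps (Lemma \ref{lem:beta_deriv} and Lemma \ref{lem:beta_deriv_bound}, where $M_{j}$ first appears) are already available, so the main place where care is needed is not the inequality chain above but the verification that the \SOPI applies at all: one must argue that $\hat{\beta}_{j}$ is a $C^{2}$ function of $\eps$ on \emph{all} of $\R^{n}$ — so that the global derivative formulas and the finiteness of $\kappa_{0j},\kappa_{1j},\kappa_{2j}$ are legitimate — which rests entirely on the uniform strong convexity furnished jointly by \textbf{A}1 and \textbf{A}3, and that $\Var(\hat{\beta}_{j})>0$ so the normalization and the division in Proposition \ref{prop:sopi} make sense. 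The remaining subtlety is purely notational bookkeeping: tracking which powers of $\polyLog$ are absorbed where, and checking that the a priori bound $M_{j}=O(\polyLog/\sqrt{n})$ is exactly what makes the $M_{j}^{1/2}$ term no larger than the $M_{j}^{1/4}$ term.
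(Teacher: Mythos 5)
Your proposal is correct and follows essentially the same route as the paper: apply the \SOPI with $g(\eps)=\hat{\beta}_{j}$, plug in the bounds of Lemma \ref{lem:beta_deriv_bound}, and use the crude estimate $M_{j}\le (nK_{0}\lammin)^{-1/2}$ (equivalently $nM_{j}^{2}=O(\polyLog)$) to absorb the $M_{j}^{1/2}n^{-3/4}$ term into the $M_{j}^{1/4}n^{-7/8}=n^{-1}(nM_{j}^{2})^{1/8}$ term, exactly as in the paper's derivation. The only cosmetic difference is the justification of $\E\hat{\beta}_{j}^{4}<\infty$: the paper cites Theorem \ref{thm:mainapprox} (via the deterministic bound $\|\hat{\beta}\|_{2}\le (U+U_{0})/(K_{0}\lammin)$), whereas you argue via the global Lipschitz property of $\hat{\beta}_{j}$ in $\eps$ together with sub-Gaussianity from \textbf{A}2, which is an equally valid sub-step.
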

Lemma \ref{lem:tv_bound} is the key to prove Theorem \ref{thm:main}. To obtain the coordinate-wise asymptotic normality, it is left to establish an upper bound for $M_{j}$ and a lower bound for $\Var(\hat{\beta}_{j})$. In fact, we can prove that 
\begin{lemma}\label{lem:heuristic}
Under assumptions \textbf{A}1 - \textbf{A}5,
\[\max_{j}M_{j} = O\lb\frac{\polyLog}{n}\rb, \quad \min_{j}\Var(\hat{\beta}_{j}) = \Omega\lb\frac{1}{n\cdot \polyLog}\rb.\]
\end{lemma}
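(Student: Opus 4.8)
The plan is to reduce both quantities to the leave-$j$-th-predictor-out objects $h_{j,0}$, $h_{j,1,i}$, $\hat{\beta}_{[j]}$, $D_{[j]}$ — which is exactly why Assumptions \textbf{A}4 and \textbf{A}5 are phrased in terms of them — together with the Schur complement of $\wm$ with respect to its $j$-th column. Throughout I would use the a priori estimates that follow from \textbf{A}1--\textbf{A}3 and \textbf{A}5: the objective is strongly convex with curvature $\Omega(1/\polyLog)$, so $\hat{\beta}$ is unique and $\wm \succeq nK_{0}\lammin I_{p}$; the leverage scores $x_{i}^{T}(\wm)^{-1}x_{i}\psi'(R_{i})$ and their leave-out analogues are $O(\polyLog)$; and, with probability $1-o(1)$, $|\hat{\beta}_{j}| = O(\polyLog/\sqrt{n})$ and $\|\hat{\beta}_{-j} - \hat{\beta}_{[j]}\|_{2} = O(\polyLog/\sqrt{n})$ (combining strong convexity with the sub-Gaussianity of $\eps$ from \textbf{A}2). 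These bounds let me replace $R_{i}$ by $r_{i,[j]}$ and $D$ by $D_{[j]}$ with an error that is controlled entrywise, using the bound on $|\psi''|/\sqrt{\psi'}$ in \textbf{A}1.

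\noindent\textbf{Bounding $\max_{j}M_{j}$.} The $k$-th entry of $e_{j}^{T}(\wm)^{-1}X^{T}D^{1/2}$ is $\sqrt{\psi'(R_{k})}\,e_{j}^{T}(\wm)^{-1}x_{k}$, and $\psi'$ is bounded by \textbf{A}1, so it suffices to control $\max_{j,k}|e_{j}^{T}(\wm)^{-1}x_{k}|$. Splitting off the $j$-th column and using the block-inverse/Schur-complement formula gives $e_{j}^{T}(\wm)^{-1}X^{T} = s_{j}^{-1}X_{j}^{T}\bigl(I - DX_{[j]}(X_{[j]}^{T}DX_{[j]})^{-1}X_{[j]}^{T}\bigr)$, where $s_{j} = 1/[(\wm)^{-1}]_{jj} \geq nK_{0}\lammin = \Omega(n/\polyLog)$ by \textbf{A}1, \textbf{A}3. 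Replacing $D$ by $D_{[j]}$ turns $X_{j}^{T}(I - DX_{[j]}(X_{[j]}^{T}DX_{[j]})^{-1}X_{[j]}^{T})e_{k}$ into $X_{j}^{T}h_{j,1,k}$ up to a controlled error, so that $|e_{j}^{T}(\wm)^{-1}x_{k}| \lesssim s_{j}^{-1}|X_{j}^{T}h_{j,1,k}| + (\text{err}) \leq s_{j}^{-1}\Delta_{C}\|h_{j,1,k}\|_{2} + (\text{err})$ by the definition of $\Delta_{C}$. Since $I$ minus the oblique projection $D_{[j]}X_{[j]}(X_{[j]}^{T}D_{[j]}X_{[j]})^{-1}X_{[j]}^{T}$ is conjugate through $D_{[j]}^{1/2}$ to $I$ minus an orthogonal projection, it has operator norm $\leq\sqrt{K_{1}/K_{0}} = O(\polyLog)$, whence $\|h_{j,1,k}\|_{2} = O(\polyLog)$ and $|e_{j}^{T}(\wm)^{-1}x_{k}| = O(\polyLog\,\Delta_{C}/n) + (\text{err})$. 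Taking $\max_{j,k}$, then expectations, and using $\E\Delta_{C}^{8} = O(\polyLog)$ (so $\E\Delta_{C} = O(\polyLog)$, and likewise for the error term) gives $\max_{j}M_{j} = O(\polyLog/n)$.

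\noindent\textbf{Bounding $\min_{j}\Var(\hat{\beta}_{j})$ from below.} The first step is the scalar leave-one-predictor-out expansion $\hat{\beta}_{j} = h_{j,0}^{T}X_{j}/s_{j}^{(0)} + (\text{remainder})$, where $s_{j}^{(0)}$ is the $D_{[j]}$-version of the Schur complement (so $s_{j}^{(0)} = \Theta(n)$ up to $\polyLog$ factors and, being a smooth function of $\eps$, concentrates around its mean with fluctuation $o(n)$ by Gaussian concentration under \textbf{A}2), and the remainder has strictly smaller $L^{2}$-order than the main term. Since $s_{j}^{(0)}$ concentrates, up to a $(1+o(1))$ factor and a negligible cross term $\Var(\hat{\beta}_{j}) \geq c\,\Var(h_{j,0}^{T}X_{j})/(n\polyLog)^{2}$. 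Now $\Var(h_{j,0}^{T}X_{j}) = X_{j}^{T}\Cov(h_{j,0})X_{j} = X_{j}^{T}Q_{j}X_{j} \geq \tr(Q_{j})/\polyLog$ by \textbf{A}4, so it remains to show $\tr(Q_{j}) = \sum_{i}\Var(\psi(r_{i,[j]})) = \Omega(n/\polyLog)$. For this I would bound $\Var(\psi(r_{i,[j]})) \geq \E[\Var(\psi(r_{i,[j]})\mid\eps_{-i})]$ and use the leave-$i$-th-observation-out representation: conditionally on $\eps_{-i}$, the residual $r_{i,[j]}$ solves an implicit equation of the form $r = \eps_{i} - (\text{const in }\eps_{-i}) - \ell_{i}\psi(r)$ with leverage $\ell_{i} = O(\polyLog)$, so it is a strictly increasing function of $\eps_{i}$ with derivative at least $(1+K_{1}\ell_{i})^{-1} = \Omega(1/\polyLog)$; composing with $\psi$ (whose derivative is $\geq K_{0}$ by \textbf{A}1) and using $\Var(\eps_{i}) = \Omega(1/\polyLog)$ together with the density structure in \textbf{A}2, a quantile/spread argument yields $\Var(\psi(r_{i,[j]})\mid\eps_{-i}) = \Omega(1/\polyLog)$ on the probability-$(1-o(1))$ event $\{\ell_{i} = O(\polyLog)\}$. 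Summing over $i$ gives $\tr(Q_{j}) = \Omega(n/\polyLog)$ and hence $\Var(\hat{\beta}_{j}) = \Omega(1/(n\polyLog))$, uniformly over $j\in J_{n}$.

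\noindent\textbf{Main obstacle.} The hard part is the variance lower bound, and within it the passage from the exact implicit estimator $\hat{\beta}_{j}$ to the scalar ratio $h_{j,0}^{T}X_{j}/s_{j}^{(0)}$ with a remainder that is provably of smaller $L^{2}$-order: this is where the double-leave-one-out machinery of \citeA{elkaroui11} is needed in full, since one must simultaneously track the perturbations of $\hat{\beta}_{[j]}$, of the residuals, and of the weight matrices, and verify that none of the error terms — including the cross term between $h_{j,0}^{T}X_{j}$ and the fluctuation of $1/s_{j}^{(0)}$ — degrades the $\Omega(1/(n\polyLog))$ bound. Keeping the constants in the $\polyLog$ bookkeeping mutually consistent, so that the lower bound on $\tr(Q_{j})$ survives division by $(s_{j}^{(0)})^{2}$, is the delicate accounting that is carried out rigorously in Appendix \ref{app:main}.
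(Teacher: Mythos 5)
Your outline reproduces the paper's own strategy: for $\max_j M_j$, the Schur-complement representation of $e_j^T(\wm)^{-1}X^T$, the bound $n\xi_j\ge nK_0\lammin$ from \textbf{A}1 and \textbf{A}3, the swap of $D$ for $D_{[j]}$ controlled by the leave-one-predictor-out bound on $\max_i|R_i-r_{i,[j]}|$, and \textbf{A}5 through $|h_{j,1,i}^TX_j|\le\Delta_C\|h_{j,1,i}\|_2$ with $\|h_{j,1,i}\|_2\le (K_1/K_0)^{1/2}$; for the variance, the reduction to $b_j=N_j/(\sqrt{n}\,\xi_j)$, Gaussian Poincar\'e concentration of $\xi_j$, \textbf{A}4 to pass to $\tr(Q_j)$, and a derivative argument for $\tr(Q_j)=\Omega(n/\polyLog)$ — this is exactly the decomposition carried out in Appendix \ref{app:main} (including the $I_1$/$I_2$ treatment of the cross term in Appendices \ref{subsubsec:I1}--\ref{subsubsec:I2}).

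The one step that does not follow from \textbf{A}1--\textbf{A}5 as you state it is the per-index leverage claim inside the $\tr(Q_j)$ bound. You ask for $\ell_i=O(\polyLog)$ for each (or typical) $i$, phrased as a probability-$(1-o(1))$ event, so that $\Var\lb\psi(r_{i,[j]})\mid\eps_{(i)}\rb=\Omega(1/\polyLog)$ index by index. But \textbf{A}3 does not preclude individual leverages $H_{ii}=e_i^TX_{[j]}(X_{[j]}^TX_{[j]})^{-1}X_{[j]}^Te_i$ from being as large as $1-O(1/n)$: for instance a design whose Gram matrix is $nI+e_1e_1^T$ with first row $\sqrt{n}\,e_1^T$ has $\lammin,\lammax$ of order one yet $1-H_{11}=1/(n+1)$, so your lower bound $(1+K_1\ell_i)^{-1}$ degenerates to $O(1/n)$ for that index; note also that your scalar implicit equation for $r_{i,[j]}$ is only approximate, whereas the exact gradient is $\partial r_{i,[j]}/\partial\eps_i=e_i^TG_{[j]}e_i$. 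The paper's Lemma \ref{lem:var_rij} avoids both issues: the exact derivative is bounded below by $(K_0/K_1)(1-H_{ii})$, and rather than requiring each $1-H_{ii}$ to be of order one it sums over $i$ and applies Cauchy--Schwarz, $\frac{1}{n}\sum_i(1-H_{ii})^2\ge\lb\frac{1}{n}\tr\lb I-X_{[j]}(X_{[j]}^TX_{[j]})^{-1}X_{[j]}^T\rb\rb^2=\lb\frac{n-p+1}{n}\rb^2$, which is bounded below since $p/n\to\kappa<1$. Replacing your event-based step with this averaging argument (equivalently, observing that a constant fraction of indices must satisfy $1-H_{ii}\ge(1-\kappa)/2$) closes the gap; the remainder of your accounting matches the paper's proof.
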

Then Lemma \ref{lem:tv_bound} and Lemma \ref{lem:heuristic} together imply that 
\[\max_{j} d_{\mathrm{TV}}\lb\mathcal{L}\lb\frac{\hat{\beta}_{j} - \E \hat{\beta}_{j}}{\sqrt{\Var(\hat{\beta}_{j})}}\rb, N(0, 1)\rb = O\lb \frac{\polyLog}{n^{\frac{1}{8}}}\rb = o(1).\]
Appendix \ref{app:heuristic}, provides a roadmap of the proof of  Lemma \ref{lem:heuristic} under a special case where the design matrix $X$ is one realization of a random matrix with i.i.d. sub-gaussian entries. It also serves as an outline of the rigorous proof in Appendix \ref{app:main}. 

\subsection{Comment on the Second-Order Poincar\'e inequality}
Notice that when $g$ is a linear function such that $g(z) = \sum_{i=1}^{n}a_{i}z_{i}$, then the Berry-Esseen inequality \cite{esseen45} implies that
\[d_{K}\lb\mathcal{L}\lb\frac{W - \E W}{\sqrt{\Var(W)}}\rb, N(0, 1)\rb\preceq \frac{\sum_{i=1}^{n}|a_{i}|^{3}}{\lb\sum_{i=1}^{n}a_{i}^{2}\rb^{\frac{3}{2}}},\]
where 
\[d_{K}(F, G) = \sup_{x}|F(x) - G(x)|.\]
On the other hand, the \SOPI implies that
\[d_{K}\lb\mathcal{L}\lb\frac{W - \E W}{\sqrt{\Var(W)}}\rb, N(0, 1)\rb\le d_{\mathrm{TV}}\lb\mathcal{L}\lb\frac{W - \E W}{\sqrt{\Var(W)}}\rb, N(0, 1)\rb\preceq \frac{\lb\sum_{i=1}^{n}a_{i}^{4}\rb^{\frac{1}{2}}}{\sum_{i=1}^{n}a_{i}^{2}}\;.\]
This is slightly worse than the Berry-Esseen bound and requires stronger conditions on the distributions of variates but provides bounds for TV metric instead of Kolmogorov metric. This comparison shows that \SOPI can be regarded as a generalization of the Berry-Esseen bound for non-linear transformations of independent random variables.

\section{Least-Squares Estimator}\label{sec:lse}
The Least-Squares Estimator is a special case of an M-estimator with $\rho(x) = \frac{1}{2}x^{2}$. Because the estimator can then be written explicitly, the analysis of its properties is extremely simple and it has been understood for several decades (see arguments in e.g. \citeA{huber73}[Lemma 2.1] and \citeA{HuberBook81}[Proposition 2.2]). In this case, the hat matrix $H=X(X\trsp X)^{-1}X^{T}$ captures all the problems associated with dimensionality in the problem. In particular, proving the asymptotic normality simply requires an application of the Lindeberg-Feller theorem.

It is however somewhat helpful to compare the conditions required for asymptotic normality in this simple case and the ones we required in the more general setup of Theorem \ref{thm:main}. We do so briefly in this section. 
\subsection{Coordinate-Wise Asymptotic Normality of LSE}\label{subsec:an_lse}
Under the linear model \eqref{eq:linearmodel}, when $X$ is full rank,
\[\lse = \betanull + (X^{T}X)^{-1}X^{T}\eps,\]
thus each coordinate of $\lse$ is a linear contrast of $\eps$ with zero mean. Instead of assumption \textbf{A}2, which requires $\eps_{i}$ to be sub-gaussian, we only need to assume $\max_{i}\E |\eps_{i}|^{3} < \infty$, under which the Berry-Essen bound for non-i.i.d. data \cite{esseen45} implies that 
\[d_{K}\lb \mathcal{L}\lb\frac{\hat{\beta}_{j} - \betanull_{j}}{\sqrt{\Var(\hat{\beta}_{j})}}\rb, N(0, 1)\rb \preceq \frac{\|e_{j}(X^{T}X)^{-1}X^{T}\|_{3}^{3}}{\|e_{j}^{T}(X^{T}X)^{-1}X^{T}\|_{2}^{3}}\le \frac{\|e_{j}(X^{T}X)^{-1}X^{T}\|_{\infty}}{\|e_{j}(X^{T}X)^{-1}X^{T}\|_{2}}.\]
This motivates us to define a matrix specific quantity $S_{j}(X)$ such that
\begin{equation}
  \label{eq:Hx}
  S_{j}(X) = \frac{\|e_{j}^{T}(X^{T}X)^{-1}X^{T}\|_{\infty}}{\|e_{j}^{T}(X^{T}X)^{-1}X^{T}\|_{2}}
\end{equation}
then the Berry-Esseen bound implies that $\max_{j\in J_{n}}S_{j}(X)$ determines the coordinate-wise asymptotic normality of $\lse$. 
\begin{theorem}\label{thm:OLS_casn_kd}
If $\E \max_{i}|\eps_{i}|^{3} < \infty$, then 
\[\max_{j\in J_{n}}d_{K}\lb \frac{\hat{\beta}_{LS, j} - \beta_{0, j}}{\sqrt{\Var(\hat{\beta}_{LS, j})}}, N(0, 1)\rb\le A\cdot\frac{\E |\eps_{i}|^{3}}{(\E \eps_{i}^{2})^{\frac{3}{2}}}\cdot \max_{j\in J_{n}}S_{j}(X),\]
where $A$ is an absolute constant and $d_{K}(\cdot, \cdot)$ is the Kolmogorov distance, defined as 
\[d_{K}(F, G) = \sup_{x}|F(x) - G(x)|.\]
\end{theorem}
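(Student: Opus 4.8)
The plan is to exploit the explicit form of the least-squares estimator to reduce the claim to a direct application of the Berry--Esseen theorem for sums of independent (not necessarily identically distributed) summands. Recall that $X$ has full column rank, so $\lse = \betanull + (X^{T}X)^{-1}X^{T}\eps$ and hence
\[\hat{\beta}_{LS, j} - \beta_{0, j} = a_{j}^{T}\eps, \qquad a_{j}^{T} \triangleq e_{j}^{T}(X^{T}X)^{-1}X^{T}.\]
Since the coordinates of $\eps$ are independent with $\E\eps_{i} = 0$ (see Section \ref{subsec:Identifiability}), each $\hat{\beta}_{LS,j} - \beta_{0,j}$ is a centered linear combination of independent random variables, with $\Var(\hat{\beta}_{LS,j}) = \sum_{i=1}^{n} a_{j,i}^{2}\,\E\eps_{i}^{2}$. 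All the dimensional information is now packaged into the single coefficient vector $a_{j}$, so no strong-convexity or sub-Gaussianity is needed — the explicit linear form does all the work.

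First I would invoke the non-i.i.d.\ Berry--Esseen inequality \cite{esseen45}: for independent centered $\eps_{i}$ there is an absolute constant $C$ with
\[d_{K}\lb\mathcal{L}\lb\frac{a_{j}^{T}\eps}{\sqrt{\Var(a_{j}^{T}\eps)}}\rb, N(0,1)\rb \le C\,\frac{\sum_{i=1}^{n}|a_{j,i}|^{3}\,\E|\eps_{i}|^{3}}{\lb\sum_{i=1}^{n}a_{j,i}^{2}\,\E\eps_{i}^{2}\rb^{3/2}}.\]
In the i.i.d.\ case the common moments factor out of numerator and denominator, leaving the bound $C\,\dfrac{\E|\eps_{1}|^{3}}{(\E\eps_{1}^{2})^{3/2}}\cdot\dfrac{\|a_{j}\|_{3}^{3}}{\|a_{j}\|_{2}^{3}}$.

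It then remains only to relate the $\ell^{3}/\ell^{2}$ ratio of $a_{j}$ to $S_{j}(X)$, which is immediate from $\sum_{i}|a_{j,i}|^{3} \le \|a_{j}\|_{\infty}\sum_{i}a_{j,i}^{2}$, i.e.\ $\|a_{j}\|_{3}^{3}/\|a_{j}\|_{2}^{3} \le \|a_{j}\|_{\infty}/\|a_{j}\|_{2} = S_{j}(X)$ — exactly the chain of inequalities displayed just before \eqref{eq:Hx}. Taking the maximum over $j\in J_{n}$ yields the stated bound with $A = C$.

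There is no serious obstacle; the one point requiring a little care is the passage from the general independent-summand bound to the stated form, which implicitly treats the third-moment-to-variance ratio as common across coordinates. This is legitimate when the errors are i.i.d.; in the general independent case one should instead pull out $\max_{i}\E|\eps_{i}|^{3}\big/\min_{i}(\E\eps_{i}^{2})^{3/2}$, which is finite under $\E\max_{i}|\eps_{i}|^{3} < \infty$ together with the variance lower bound, and the remaining factor is still dominated by $S_{j}(X)$. It is worth flagging the contrast with Theorem \ref{thm:main}: here the matrix-dependent quantity $S_{j}(X)$ involves only the pseudo-hat-matrix $(X^{T}X)^{-1}X^{T}$, whereas the general M-estimator requires the more delicate quantities $M_{j}$ and $\Var(\hat{\beta}_{j})$ of Section \ref{sec:heuristic}.
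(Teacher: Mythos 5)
Your proposal is correct and follows essentially the same route as the paper: the paper's argument in Section \ref{subsec:an_lse} is exactly the explicit representation $\lse = \betanull + (X^{T}X)^{-1}X^{T}\eps$ followed by the non-i.i.d.\ Berry--Esseen bound and the chain $\|a_{j}\|_{3}^{3}/\|a_{j}\|_{2}^{3}\le \|a_{j}\|_{\infty}/\|a_{j}\|_{2}=S_{j}(X)$ displayed just before \eqref{eq:Hx}. Your remark about replacing the common ratio $\E|\eps_{i}|^{3}/(\E\eps_{i}^{2})^{3/2}$ by $\max_{i}\E|\eps_{i}|^{3}\big/\min_{i}(\E\eps_{i}^{2})^{3/2}$ in the genuinely non-identically-distributed case is the right way to read the statement, and matches the paper's intent.
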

It turns out that $\max_{j\in J_{n}}S_{j}(X)$ plays in the least-squares setting the role of $\Delta_{C}$ in assumption \textbf{A}5. Since it has been known that a condition like $S_j(X)\tendsto 0$ is necessary for asymptotic normality of least-square estimators (\citeA{huber73}[Proposition 2.2]), this shows in particular that our Assumption \textbf{A}5, or a variant, is also needed in the general case. See Appendix \ref{app:relationSjDelta} for details.

\subsection{Discussion}
Naturally, checking the conditions for asymptotic normality is much easier in the least-squares case than in the general case under consideration in this paper. In particular: 
\begin{enumerate}
\item Asymptotic normality conditions can be checked for a broader class of random design matrices. See Appendix \ref{app:lseExamples} for details.
\item For orthogonal design matrices, i.e $X\trsp X=c\id$ for some $c>0$, $S_{j}(X) = \frac{\|X_j\|_{\infty}}{\|X_j\|_{2}}$. Hence, the condition $S_{j}(X) = o(1)$ is true if and only if no entry dominates the $j-th$ row of $X$.
\item The ANOVA-type counterexample we gave in Section \ref{subsubsec:counterexample} still provides a counter-example. The reason now is different: namely the sum of finitely many independent random variables is evidently in general non-Gaussian. In fact, in this case, $S_{j}(X) = \frac{1}{\sqrt{n_{j}}}$ is bounded away from $0$. 
\end{enumerate}
Inferential questions are also extremely simple in this context and essentially again dimension-independent for the reasons highlighted above. 
Theorem \ref{thm:OLS_casn_kd} naturally reads, 
\begin{equation}\label{eq:betaj_lse}
\frac{\hat{\beta}_{j} - \betanull_{j}}{\sigma\sqrt{e_{j}^{T}(X^{T}X)^{-1}e_{j}}}\stackrel{d}{\rightarrow} N(0, 1).
\end{equation}
Estimating $\sigma$ is still simple under minimal conditions provided $n-p\tendsto \infty$: see \citeA{bickel83}[Theorem 1.3] or standard computations concerning the normalized residual sum-of-squares (using variance computations for the latter may require up to 4 moments for $\eps_i$'s). Then we can replace $\sigma$ in \eqref{eq:betaj_lse} by $\hat{\sigma}$ with
\[\hat{\sigma}^{2} = \frac{1}{n - p}\sum_{k=1}^{n}R_{k}^{2}\]
where $R_{k} = y_{k} - x_{k}^{T}\hat{\beta}$ and construct confidence intervals for $\betanull_{j}$ based on $\hat{\sigma}$. If $n-p$ does not tend to $\infty$, the normalized residual sum of squares is evidently not consistent even in the case of Gaussian errors, so this requirement may not be dispensed of.

\section{Numerical Results}\label{sec:numerical}
As seen in the previous sections and related papers, there are five important factors that affect the distribution of $\hat{\beta}$: the design matrix $X$, the error distribution $\mathcal{L}(\eps)$, the sample size $n$, the ratio $\kappa$, and the loss function $\rho$. The aim of this section is to assess the quality of the agreement between the asymptotic theoretical results of Theorem \ref{thm:main} and the empirical, finite-dimensional properties of $\betaHat$. We also perform a few simulations where some of the assumptions of Theorem \ref{thm:main} are violated to get an intuitive sense of whether those assumptions appear necessary or whether they are simply technical artifacts associated with the method of proof we developed. As such, the numerical experiments we report on in this section can be seen as a complement to Theorem \ref{thm:main} rather than only a simple check of its practical relevance.

The design matrices we consider are one realization of random design matrices of the following three types:
\begin{description}
\item[(i.i.d. design)]: $X_{ij}\stackrel{i.i.d.}{\sim} F$;
\item[(elliptical design)]: $X_{ij} = \zeta_{i}\td{X}_{ij}$, where $\td{X}_{ij}\stackrel{i.i.d.}{\sim} N(0, 1)$ and $\zeta_{i}\stackrel{i.i.d.}{\sim}F$. In addition, $\{\zeta_{i}\}$ is independent of $\{\td{X}_{ij}\}$;
\item[(partial Hadamard design)]: a matrix formed by a random set of $p$ columns of a $n\times n$ Hadamard matrix, i.e. a $n\times n$ matrix whose columns are orthogonal with entries restricted to $\pm 1$. 
\end{description}
Here we consider two candidates for $F$ in i.i.d. design and elliptical design: standard normal distribution $N(0, 1)$ and t-distribution with two degrees of freedom (denoted $\tdist_2$). For the error distribution, we assume that $\eps$ has i.i.d. entries with one of the above two distributions, namely $N(0, 1)$ and $\tdist_2$. The $\tdist$-distribution violates our assumption \textbf{A}2.

To evaluate the finite sample performance, we consider the sample sizes $n\in \{100, 200, 400, 800\}$ and $\kappa \in \{0.5, 0.8\}$. In this section we will consider a Huber loss with $k = 1.345$ \cite{HuberBook81}, i.e. 
\[\rho(x) = \left\{
  \begin{array}{ll}
    \frac{1}{2}x^{2} & |x| \le k\\
    kx - \frac{k^{2}}{2} & |x| > k
  \end{array}
\right.\]
$k=1.345$ is the default in \texttt{R} and yields 95\% relative efficiency for Gaussian errors in low-dimensional problems. We also carried out the numerical work for $L_1$-regression, i.e. $\rho(x)=|x|$. See Appendix \ref{app:numerical} for details.

\subsection{Asymptotic Normality of A Single Coordinate}\label{subsec:normalityOneCoord}
First we simulate the finite sample distribution of $\hat{\beta}_{1}$, the first coordinate of $\hat{\beta}$. For each combination of sample size $n$ ($100, 200, 400$ and $800$), type of design (i.i.d, elliptical and Hadamard), entry distribution $F$ (normal and $\tdist_2$) and error distribution $\mathcal{L}(\eps)$ (normal and $\tdist_2$), we run 50 simulations with each consisting of the following steps:
\begin{enumerate}[(Step 1)]
\item Generate one design matrix $X$;
\item Generate the 300 error vectors $\eps$;
\item Regress each $Y = \eps$ on the design matrix $X$ and end up with 300 random samples of $\hat{\beta}_{1}$, denoted by $\hat{\beta}_{1}^{(1)}, \ldots, \hat{\beta}_{1}^{(300)}$; 
\item Estimate the standard deviation of $\hat{\beta}_{1}$ by the sample standard error $\widehat{\mathrm{sd}}$;
\item Construct a confidence interval $\mathcal{I}^{(k)} = \left[\hat{\beta}_{1}^{(k)} - 1.96\cdot  \widehat{\mathrm{sd}}, \hat{\beta}_{1}^{(k)} + 1.96 \cdot \widehat{\mathrm{sd}}\right]$ for each $k = 1, \ldots, 300$;
\item Calculate the empirical 95\% coverage by the proportion of confidence intervals which cover the true $\beta_{1} = 0$.
\end{enumerate}
Finally, we display the boxplots of the empirical 95\% coverages of $\hat{\beta}_{1}$ for each case in Figure \ref{fig:singleCoord}. It is worth mentioning that our theories cover two cases: 1) i.i.d design with normal entries and normal errors (orange bars in the first row and the first column), see Proposition \ref{prop:subgauss}; 2) elliptical design with normal factors $\zeta_{i}$ and normal errors (orange bars in the second row and the first column), see Proposition \ref{prop:ellip}. 

We first discuss the case $\kappa = 0.5$. In this case, there are only two samples per parameter. Nonetheless, we observe that the coverage is quite close to 0.95, even with a sample size as small as $100$, in both cases that are covered by our theories. For other cases, it is interesting to see that the coverage is valid and most stable in the partial hadamard design case and is not sensitive to the distribution of multiplicative factor in elliptical design case even when the error has a $\tdist_2$ distribution. For i.i.d. designs, the coverage is still valid and stable when the entry is normal. By contrast, when the entry has a $\tdist_2$ distribution, the coverage has a large variation in small samples. The average coverage is still close to 0.95 in the i.i.d. normal design case but is slightly lower than 0.95 in the i.i.d. $\tdist_2$ design case. In summary, the finite sample distribution of $\hat{\beta}_{1}$ is more sensitive to the entry distribution than the error distribution. This indicates that the assumptions on the design matrix are not just artifacts of the proof but are quite essential. 

The same conclusion can be drawn from the case where $\kappa = 0.8$ except that the variation becomes larger in most cases when the sample size is small. However, it is worth pointing out that even in this case where there is 1.25 samples per parameter, the sample distribution of $\hat{\beta}_{1}$ is well approximated by a normal distribution with a moderate sample size ($n \ge 400$). This is in contrast to the classical rule of thumb which suggests that 5-10 samples are needed per parameter.

\begin{figure}
  \centering
  \includegraphics[width = 0.49\textwidth]{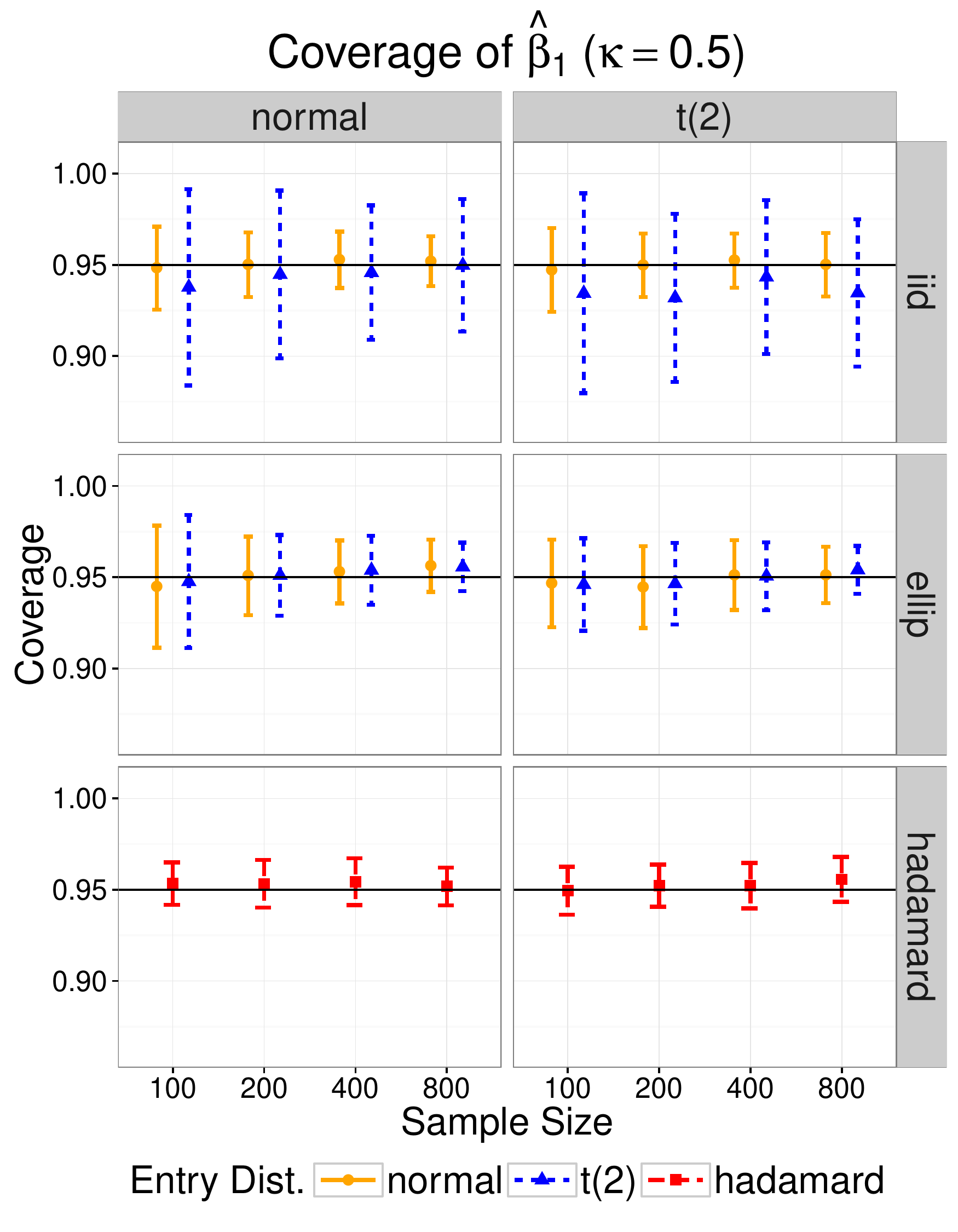}
  \includegraphics[width = 0.49\textwidth]{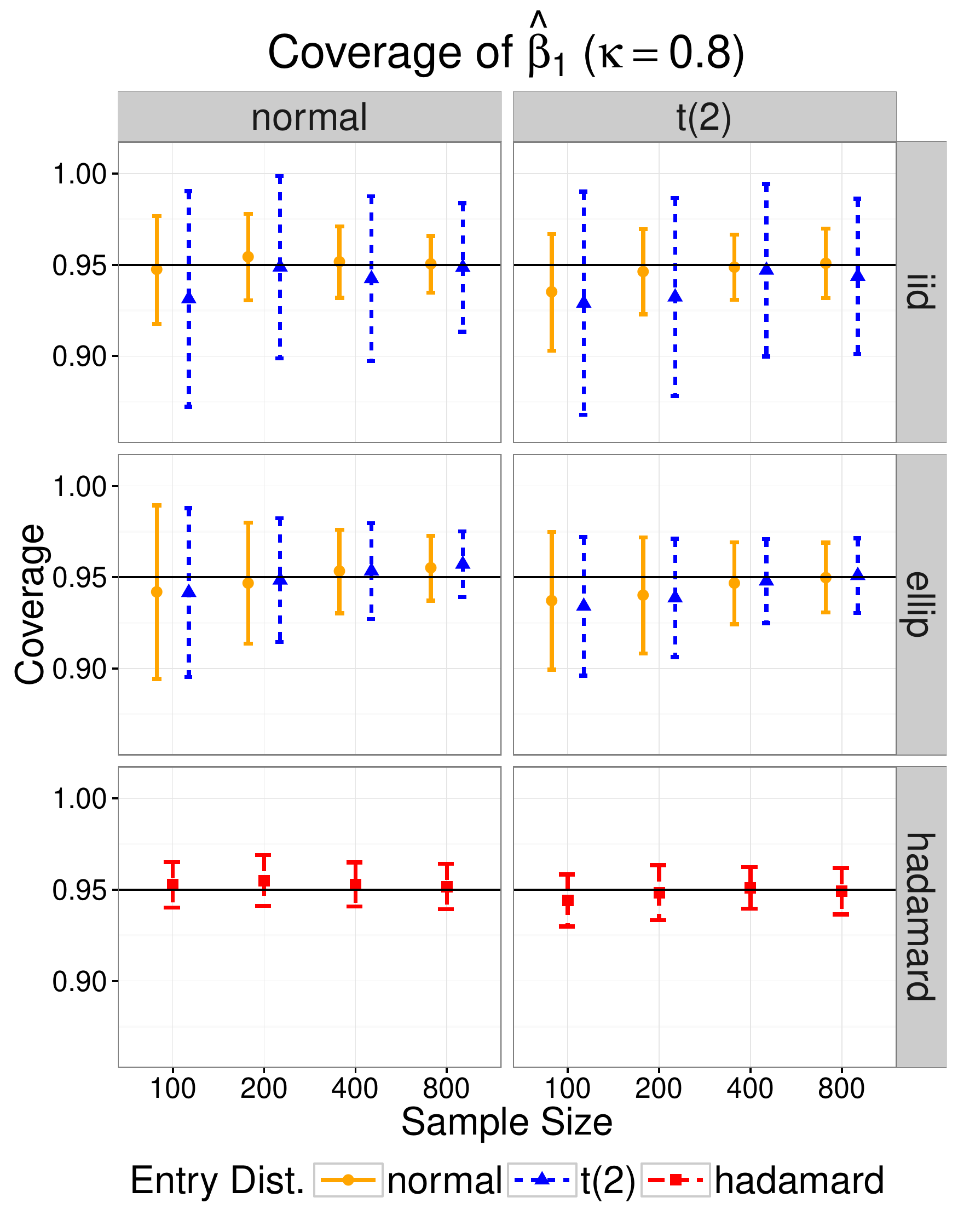}
  \caption{Empirical 95\% coverage of $\hat{\beta}_{1}$ with $\kappa = 0.5$ (left) and $\kappa = 0.8$ (right) using $\text{Huber}_{1.345}$ loss. The x-axis corresponds to the sample size, ranging from $100$ to $800$; the y-axis corresponds to the empirical 95\% coverage. Each column represents an error distribution and each row represents a type of design. The orange solid bar corresponds to the case $F = \text{Normal}$; the blue dotted bar corresponds to the case $F = \tdist_2$; the red dashed bar represents the Hadamard design. 
}\label{fig:singleCoord}
\end{figure}

\subsection{Asymptotic Normality for Multiple Marginals}
Since our theory holds for general $J_{n}$, it is worth checking the approximation for multiple coordinates in finite samples. For illustration, we consider 10 coordinates, namely $\hat{\beta}_{1} \sim \hat{\beta}_{10}$, simultaneously and calculate the minimum empirical 95\% coverage. To avoid the finite sample dependence between coordinates involved in the simulation, we estimate the empirical coverage independently for each coordinate. Specifically, we run 50 simulations with each consisting of the following steps:
\begin{enumerate}[(Step 1)]
\item Generate one design matrix $X$;
\item Generate the 3000 error vectors $\eps$;
\item Regress each $Y = \eps$ on the design matrix $X$ and end up with 300 random samples of $\hat{\beta}_{j}$ for each $j = 1, \ldots, 10$ by using the $(300(j - 1) + 1)$-th to $300j$-th response vector $Y$;
\item Estimate the standard deviation of $\hat{\beta}_{j}$ by the sample standard error $\widehat{\mathrm{sd}}_{j}$ for $j = 1, \ldots, 10$;
\item Construct a confidence interval $\mathcal{I}_{j}^{(k)} = \left[\hat{\beta}_{j}^{(k)} - 1.96\cdot  \widehat{\mathrm{sd}}_{j}, \hat{\beta}_{j}^{(k)} + 1.96 \cdot \widehat{\mathrm{sd}}_{j}\right]$ for each $j = 1, \ldots, 10$ and $k = 1, \ldots, 300$;
\item Calculate the empirical 95\% coverage by the proportion of confidence intervals which cover the true $\beta_{j} = 0$, denoted by $C_{j}$, for each $j = 1, \ldots, 10$, 
\item Report the minimum coverage $\min_{1\le j\le 10}C_{j}$.
\end{enumerate}

If the assumptions \textbf{A}1 - \textbf{A}5 are satisfied, $\min_{1\le j\le 10}C_{j}$ should also be close to 0.95 as a result of Theorem \ref{thm:main}. Thus, $\min_{1\le j\le 10}C_{j}$ is a measure for the approximation accuracy for multiple marginals. Figure \ref{fig:multipleCoord} displays the boxplots of this quantity under the same scenarios as the last subsection. In two cases that our theories cover, the minimum coverage is increasingly closer to the true level $0.95$. Similar to the last subsection, the approximation is accurate in the partial hadamard design case and is insensitive to the distribution of multiplicative factors in the elliptical design case. However, the approximation is very inaccurate in the i.i.d. $\tdist_2$ design case. Again, this shows the evidence that our technical assumptions are not artifacts of the proof.

On the other hand, the figure \ref{fig:multipleCoord} suggests using a conservative variance estimator, e.g. the Jackknife estimator, or corrections on the confidence level in order to make simultaneous inference on multiple coordinates. Here we investigate the validity of Bonferroni correction by modifying the step 5 and step 6. The confidence interval after Bonferroni correction is obtained by 
\begin{equation}\label{eq:Bonferroni}
\mathcal{I}_{j}^{(k)} = \left[\hat{\beta}_{j}^{(k)} - z_{1 - \alpha / 20}\cdot  \widehat{\mathrm{sd}}_{j}, \hat{\beta}_{j}^{(k)} + z_{1 - \alpha / 20} \cdot \widehat{\mathrm{sd}}_{j}\right]
\end{equation}
where $\alpha = 0.05$ and $z_{\gamma}$ is the $\gamma$-th quantile of a standard normal distribution. The proportion of $k$ such that $0\in\mathcal{I}_{j}^{(k)}$ for all $j\le 10$ should be at least $0.95$ if the marginals are all close to a normal distribution. We modify the confidence intervals in step 5 by \eqref{eq:Bonferroni} and calculate the proportion of $k$ such that $0\in\mathcal{I}_{j}^{(k)}$ for all $j$ in step 6. Figure \ref{fig:multipleCoordBon} displays the boxplots of this coverage. It is clear that the Bonferroni correction gives the valid coverage except when $n = 100, \kappa = 0.8$ and the error has a $\tdist_2$ distribution.

\section{Conclusion}\label{sec:discuss}
We have proved coordinate-wise asymptotic normality for regression M-estimates in the moderate-dimensional asymptotic regime $p / n\rightarrow \kappa \in (0, 1)$, for fixed design matrices under appropriate technical assumptions. Our design assumptions are satisfied with high probability for a broad class of random designs. The main novel ingredient of the proof is the use of the second-order Poincar\'e inequality. Numerical experiments confirm and complement our theoretical results. 

\begin{figure}
  \centering
  \includegraphics[width = 0.49\textwidth]{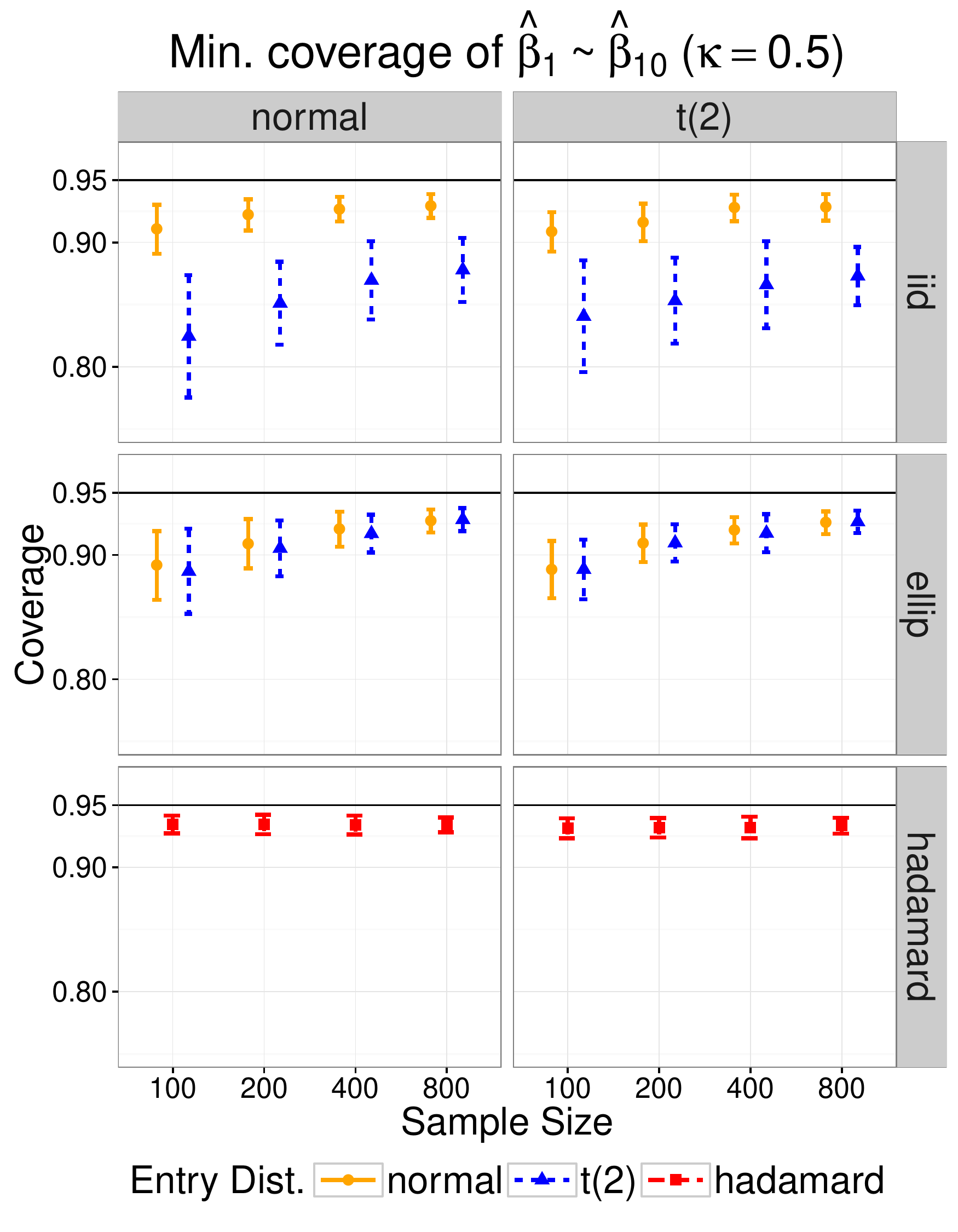}
  \includegraphics[width = 0.49\textwidth]{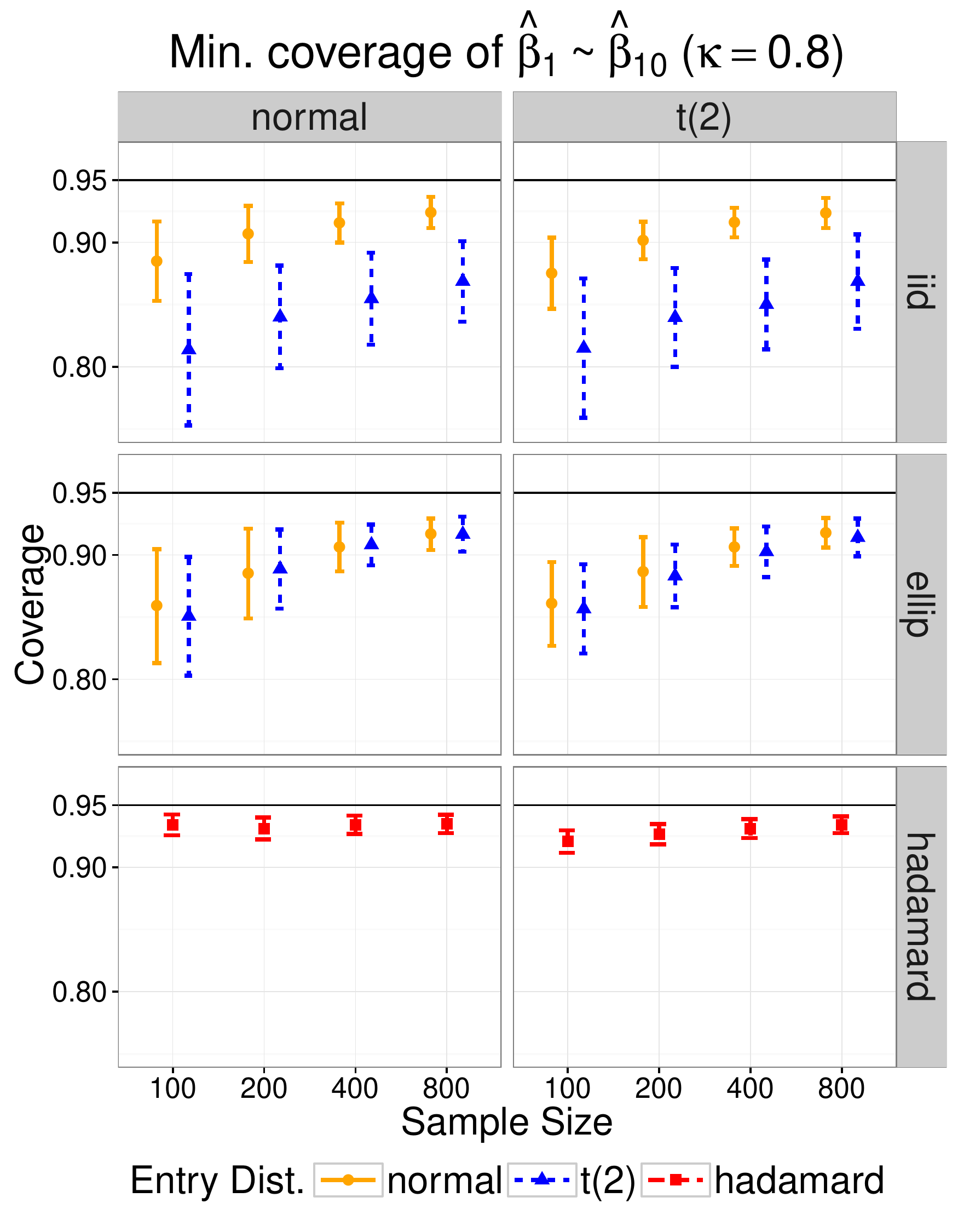}
  \caption{Mininum empirical 95\% coverage of $\hat{\beta}_{1} \sim\hat{\beta}_{10}$ with $\kappa = 0.5$ (left) and $\kappa = 0.8$ (right) using $\text{Huber}_{1.345}$ loss. The x-axis corresponds to the sample size, ranging from $100$ to $800$; the y-axis corresponds to the minimum empirical 95\% coverage. Each column represents an error distribution and each row represents a type of design. The orange solid bar corresponds to the case $F = \text{Normal}$; the blue dotted bar corresponds to the case $F = \tdist_2$; the red dashed bar represents the Hadamard design. 
}\label{fig:multipleCoord}
\end{figure}

\begin{figure}[H]
  \centering
  \includegraphics[width = 0.49\textwidth]{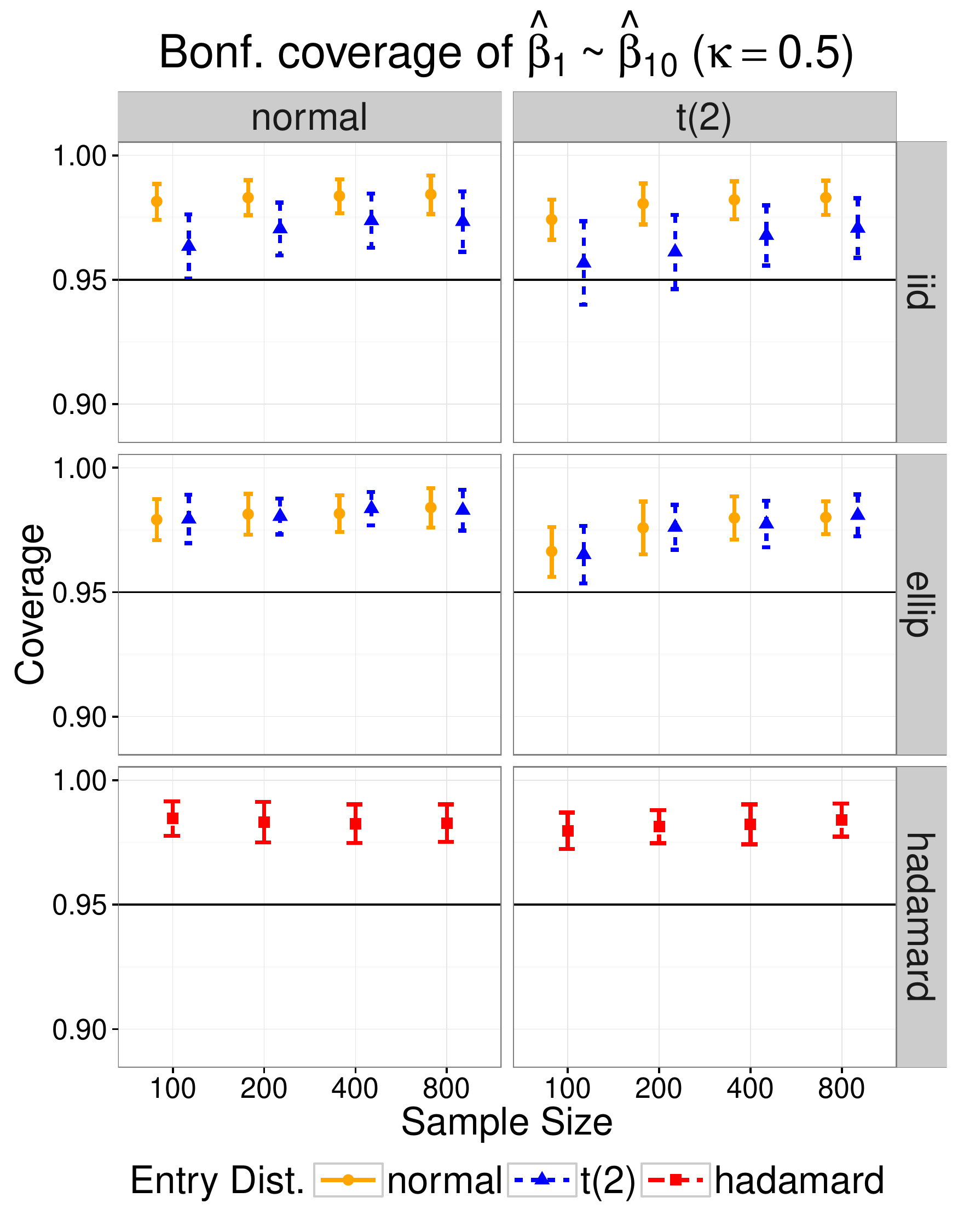}
  \includegraphics[width = 0.49\textwidth]{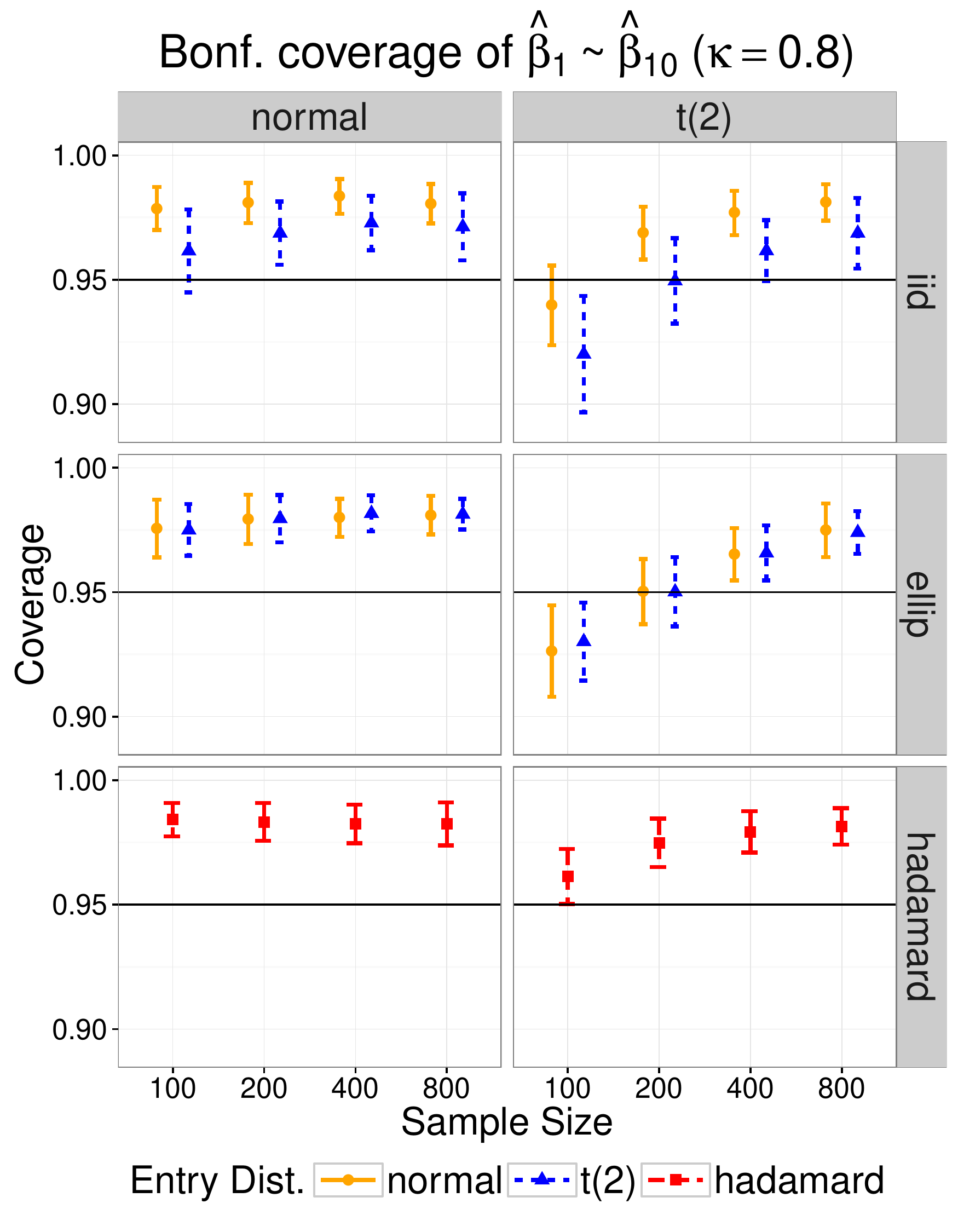}
  \caption{Empirical 95\% coverage of $\hat{\beta}_{1} \sim \hat{\beta}_{10}$ after Bonferroni correction with $\kappa = 0.5$ (left) and $\kappa = 0.8$ (right) using $\text{Huber}_{1.345}$ loss. The x-axis corresponds to the sample size, ranging from $100$ to $800$; the y-axis corresponds to the empirical uniform 95\% coverage after Bonferroni correction. Each column represents an error distribution and each row represents a type of design. The orange solid bar corresponds to the case $F = \text{Normal}$; the blue dotted bar corresponds to the case $F = \tdist_2$; the red dashed bar represents the Hadamard design. 
}\label{fig:multipleCoordBon}
\end{figure}

\bibliographystyle{apacite}

\bibliography{high_dim_robust_reg}

\newpage
	\appendix
	\begin{center}
          \begin{LARGE}
	\textbf{\textsc{APPENDIX}}
        \end{LARGE}
	\end{center}
	\renewcommand{\thesection}{\Alph{section}}
	\renewcommand{\theequation}{\Alph{section}-\arabic{equation}}
	\renewcommand{\thelemma}{\Alph{section}-\arabic{lemma}}
	\renewcommand{\thecorollary}{\Alph{section}-\arabic{corollary}}
	\renewcommand{\thesubsection}{\Alph{section}-\arabic{subsection}}
	\renewcommand{\thesubsubsection}{\Alph{section}-\arabic{subsection}.\arabic{subsubsection}}
	\setcounter{equation}{0}  
	\setcounter{section}{0}

\appendix
\section{Proof Sketch of Lemma \ref{lem:heuristic}}\label{app:heuristic}
In this Appendix, we provide a roadmap for proving Lemma \ref{lem:heuristic} by considering a special case where $X$ is one realization of a random matrix $Z$ with i.i.d. mean-zero $\sigma^{2}$-sub-gaussian entries. Random matrix theory \cite{geman80,silverstein85,bai93} implies that $\lammax = (1 + \sqrt{\kappa})^{2} + o_{p}(1) = O_{p}(1)$ and $\lammin = (1 - \sqrt{\kappa})^{2} + o_{p}(1) = \Omega_{p}(1)$. Thus, the assumption \textbf{A}3 is satisfied with high probability. Thus, the Lemma \ref{lem:tv_bound} in p. \pageref{lem:tv_bound} holds with high probability. It remains to prove the following lemma to obtain Theorem \ref{thm:main}.
\begin{lemma}
  Let $Z$ be a random matrix with i.i.d. mean-zero $\sigma^{2}$-sub-gaussian entries and $X$ be one realization of $Z$. Then under assumptions \textbf{A}1 and \textbf{A}2,
\[\max_{1\le j\le p}M_{j} = O_{p}\lb\frac{\polyLog}{n}\rb, \quad \min_{1\le j\le p}\Var(\hat{\beta}_{j}) = \Omega_{p}\lb\frac{1}{n\cdot \polyLog}\rb,\]
where $M_{j}$ is defined in \eqref{eq:def_Mj} in p.\pageref{eq:def_Mj} and the randomness in $o_{p}(\cdot)$ and $O_{p}(\cdot)$ comes from $Z$. 
\end{lemma}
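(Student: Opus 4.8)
The plan is to prove the two estimates separately; the leave-one-out analysis of \citeA{elkaroui11} is the common engine, and in the i.i.d.\ sub-gaussian regime all the auxiliary facts (including Assumptions \textbf{A}3--\textbf{A}5, which hold here w.h.p.\ by Proposition \ref{prop:subgauss}) reduce to random-matrix and sub-gaussian concentration estimates.

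\textbf{Bound on $M_j$.} Unwinding the definition, $\|e_j^{T}(\wm)^{-1}X^{T}D^{1/2}\|_{\infty}=\max_{i\le n}\sqrt{\psi'(R_i)}\,|e_j^{T}(\wm)^{-1}x_i|$, and since $\psi'\le K_1=O(\polyLog)$ by \textbf{A}1 it suffices to control $\E_\eps[\max_{i,j}|e_j^{T}(\wm)^{-1}x_i|^2\mid X]$. The idea is to decouple the matrix $\wm$ from the row $x_i$. Fixing $i$, let $\hat\beta_{(i)}$ be the estimator obtained after deleting observation $i$ and $\tilde B_{(i)}=\sum_{k\ne i}\psi'(\eps_k-x_k^{T}\hat\beta_{(i)})x_kx_k^{T}$; then $\tilde B_{(i)}$ depends only on $\{(x_k,\eps_k):k\ne i\}$, hence is independent of $(x_i,\eps_i)$, while $\wm=\tilde B_{(i)}+\psi'(R_i)x_ix_i^{T}+\Delta_i$ with $\|\Delta_i\|_{\mathrm{op}}$ bounded by $\max_k|x_k^{T}(\hat\beta-\hat\beta_{(i)})|$ times $n\lammax$, a quantity the leave-one-out estimates render $o_p(n)$. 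A Sherman--Morrison computation then gives $e_j^{T}(\wm)^{-1}x_i=\dfrac{e_j^{T}\tilde B_{(i)}^{-1}x_i}{1+\psi'(R_i)\,x_i^{T}\tilde B_{(i)}^{-1}x_i}$ up to a negligible remainder; the denominator is $\ge 1$ because $\psi'\ge 0$, so it suffices to bound $|v_{ij}^{T}x_i|$ with $v_{ij}=e_j^{T}\tilde B_{(i)}^{-1}$. Conditionally on $\{(x_k,\eps_k):k\ne i\}$ the vector $v_{ij}$ is fixed and $v_{ij}^{T}x_i$ is $\tau^2\|v_{ij}\|_2^2$-sub-gaussian, with $\|v_{ij}\|_2\le (K_0\lambda_{\min}(X_{(i)}^{T}X_{(i)}))^{-1}$. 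Taking $L^q$ norms with $q\asymp\log n$ and union-bounding over the $np$ pairs $(i,j)$ --- using the negative-moment bound $\E\,\lambda_{\min}(X_{(i)}^{T}X_{(i)})^{-q}=O((C/n)^q)$, valid for matrices with i.i.d.\ sub-gaussian entries --- yields $\E_{Z,\eps}[\max_{i,j}|e_j^{T}(\wm)^{-1}x_i|^2]=O(\polyLog/n^2)$. Since $M_j^2\le\E_\eps[\|e_j^{T}(\wm)^{-1}X^{T}D^{1/2}\|_\infty^2\mid X]$ by Jensen, Markov's inequality gives $\max_j M_j=O_p(\polyLog/n)$.

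\textbf{Lower bound on $\Var(\hat\beta_j)$.} Here the plan is to use the leave-one-\emph{predictor}-out expansion. Writing the first-order conditions for $\hat\beta$ and $\hat\beta_{[j]}$ and expanding, one obtains $\hat\beta_j=\dfrac{\frac1n X_j^{T}h_{j,0}}{\lambda_j}+(\text{lower order})$, where $h_{j,0}=(\psi(r_{1,[j]}),\dots,\psi(r_{n,[j]}))^{T}$ (note $X_{[j]}^{T}h_{j,0}=0$ by the first-order conditions for $\hat\beta_{[j]}$) and $\lambda_j=\frac1n X_j^{T}D_{[j]}(I-X_{[j]}(X_{[j]}^{T}D_{[j]}X_{[j]})^{-1}X_{[j]}^{T}D_{[j]})X_j$, which lies in $[\Omega(1/\polyLog),O(\polyLog)]$ thanks to $K_0\le\psi'\le K_1$ and \textbf{A}3. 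Hence $\Var(\hat\beta_j)\gtrsim \dfrac{\Var(X_j^{T}h_{j,0})}{n^2\polyLog^2}=\dfrac{X_j^{T}Q_jX_j}{n^2\polyLog^2}$ with $Q_j=\Cov(h_{j,0})$. Assumption \textbf{A}4 (which holds w.h.p.\ here) gives $X_j^{T}Q_jX_j\ge\tr(Q_j)/\polyLog$, and $\tr(Q_j)=\sum_i\Var(\psi(r_{i,[j]}))\ge K_0^2\sum_i\Var(r_{i,[j]})=\Omega(n/\polyLog)$: the first inequality because $\psi^{-1}$ is $(1/K_0)$-Lipschitz, and $\Var(r_{i,[j]})=\Omega(1/\polyLog)$ because, conditionally on $\eps_{-i}$, the map $\eps_i\mapsto r_{i,[j]}$ is increasing with derivative $1-\ell_{i,[j]}=1/(1+t_i)$ where $t_i\le (K_1/K_0)\,x_{i,[j]}^{T}(X_{[j],(i)}^{T}X_{[j],(i)})^{-1}x_{i,[j]}=O_p(\polyLog)$ is uniform in $\eps_i$, combined with $\Var(\eps_i)=\Omega(1/\polyLog)$ from \textbf{A}2. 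Taking a union bound over the $p$ coordinates gives $\min_j\Var(\hat\beta_j)=\Omega_p(1/(n\,\polyLog))$.

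\textbf{Main obstacle.} The delicate part is not the probabilistic reasoning above but the leave-one-out bookkeeping: the crude bounds $\|\hat\beta-\hat\beta_{(i)}\|_2=O_p(\polyLog/\sqrt n)$ are too weak, and one needs coordinate-level control (e.g.\ $|x_k^{T}(\hat\beta-\hat\beta_{(i)})|=O_p(\polyLog/n)$ for $k\ne i$) to make $\|\Delta_i\|_{\mathrm{op}}=o_p(n)$ and to guarantee that the remainder in the expansion of $\hat\beta_j$ is genuinely of lower order than $\frac1n X_j^{T}h_{j,0}$. Establishing these expansions with the right error sizes --- together with the negative-moment estimates for $\lambda_{\min}(X_{(i)}^{T}X_{(i)})$ and the $\Omega_p(1/\polyLog)$ lower bound on $\Var(r_{i,[j]})$ --- is exactly what the rigorous argument of Appendix \ref{app:main} does; in the general (non-i.i.d.) setting of Theorem \ref{thm:main} the same steps go through but the decoupling errors and the quantity $X_j^{T}Q_jX_j$ must be handled using Assumptions \textbf{A}4 and \textbf{A}5 rather than direct sub-gaussian concentration.
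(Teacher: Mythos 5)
Your variance lower bound follows the paper's own architecture (the leave-one-predictor-out representation $\hat{\beta}_{j}\approx \frac{1}{n}X_{j}^{T}h_{j,0}/\xi_{j}$, Assumption \textbf{A}4/Hanson--Wright for $X_{j}^{T}Q_{j}X_{j}$, and the derivative argument giving $\tr(Q_{j})=\Omega(n/\polyLog)$), but it has a genuine gap at the step $\Var(\hat{\beta}_{j})\gtrsim \Var(X_{j}^{T}h_{j,0})/(n^{2}\polyLog^{2})$: the denominator $\lambda_{j}=\xi_{j}$ is a random function of $\eps$, correlated with the numerator, and for such a ratio the inequality $\Var(A/B)\ge \Var(A)/\sup B^{2}$ is simply false in general (take $A=B$). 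What legitimizes this step in the paper is a separate concentration estimate, $\Var(\xi_{j})=O(\polyLog/n)$, proved via the Gaussian Poincar\'e inequality and an explicit computation of $\partial \xi_{j}/\partial\eps_{k}$; only after showing $\xi_{j}$ is essentially deterministic can one pass from $\Var(N_{j}/\xi_{j})$ to $\Var(N_{j})/(\E\xi_{j})^{2}$ up to the error term the paper calls $I_{2}=O(\polyLog/n)$. Your sketch never controls the fluctuations of $\lambda_{j}$, so as written the claimed lower bound does not follow.

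For $M_{j}$ you take a genuinely different route: deletion of the $i$-th \emph{observation} plus Sherman--Morrison, whereas the paper deletes the $j$-th \emph{column} and uses block inversion, which yields the exact identity $e_{j}^{T}(\wm)^{-1}X^{T}D^{\frac{1}{2}}=\td{X}_{j}^{T}(I-\td{H}_{j})/\lb\td{X}_{j}^{T}(I-\td{H}_{j})\td{X}_{j}\rb$ with $\td{X}=D^{\frac{1}{2}}X$; the denominator is bounded below \emph{deterministically} by $nK_{0}\lammin$ (a Schur-complement fact), and the only approximation needed is the substitution $D\mapsto D_{[j]}$, justified by $\max_{i,j}|R_{i}-r_{i,[j]}|=O_{L^{2}}\lb\polyLog/\sqrt{n}\rb$, before applying the sub-gaussian maximal inequality. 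Your route could plausibly be pushed through, but it rests on leave-one-observation-out estimates — uniform control of $x_{k}^{T}(\hat{\beta}-\hat{\beta}_{(i)})$ to make $\|\Delta_{i}\|_{\mathrm{op}}=o_{p}(n)$, in moment form rather than merely in probability (since $M_{j}$ is a conditional expectation over $\eps$), together with negative moments of $\lambda_{\min}(X_{(i)}^{T}X_{(i)})$ — none of which is established in this paper: Appendix \ref{app:main} develops only the leave-one-predictor-out bookkeeping ($r_{i,[j]}$, $b_{j}$, $\xi_{j}$), so your assertion that "this is exactly what the rigorous argument of Appendix \ref{app:main} does" is inaccurate, and those row-deletion expansions (available in \citeA{elkaroui13} for ridge-penalized estimators) would have to be proved separately. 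A minor point: $O_{p}(\polyLog/\sqrt{n})$ control of $\max_{k\ne i}|x_{k}^{T}(\hat{\beta}-\hat{\beta}_{(i)})|$ already suffices for your remainder, so the stronger $O_{p}(\polyLog/n)$ you ask for is not needed — but with the paper's column-deletion identity none of this machinery is required at all.
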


\subsection{Upper Bound of $M_{j}$}\label{subsec:Mj}
First by Proposition \ref{prop:rmt_bai93}, 
\[\lammax = O_{p}(1), \quad \lammin = \Omega_{p}(1).\]
In the rest of the proof, the symbol $\E$ and $\Var$ denotes the expectation and the variance conditional on $Z$. Let $\td{Z} = D^{\frac{1}{2}}Z$, then $M_{j} = \E\|e_{j}^{T}(\td{Z}^{T}\td{Z})^{-1}\td{Z}^{T}\|_{\infty}$. Let $\td{H}_{j} = I - \td{Z}_{[j]}(\td{Z}_{[j]}^{T}\td{Z}_{[j]})^{-1}\td{Z}_{[j]}^{T}$, then by block matrix inversion formula (see Proposition \ref{prop:block_inv}), which we state as Proposition \ref{prop:block_inv} in Appendix \ref{app:mc}.
\begin{align*}
(\td{Z}^{T}\td{Z})^{-1}\td{Z}^{T} & = \lb
\begin{array}{cc}
  \td{Z}_{1}^{T}\td{Z}_{1}& \td{Z}_{1}^{T}\td{Z}_{[1]}\\
  \td{Z}_{[1]}^{T}\td{Z}_{1} & \td{Z}_{[1]}^{T}\td{Z}_{[1]}
\end{array}
\rb^{-1}\com{\td{Z}_{1}}{\td{Z}_{[1]}}\\
& =  \frac{1}{\td{Z}_{1}^{T}(I - \td{H}_{1})\td{Z}_{1}}\lb
\begin{array}{cc}
  1 & -\td{Z}_{1}^{T}\td{Z}_{[1]}(\td{Z}_{[1]}^{T}\td{Z}_{[1]})^{-1}\\
  * & *
\end{array}
\rb\com{\td{Z}_{1}}{\td{Z}_{[1]}}\\
& = \frac{1}{\td{Z}_{1}^{T}(I - \td{H}_{1})\td{Z}_{1}}\com{\td{Z}_{1}^{T}(I - \td{H}_{1})}{*}.
\end{align*}
This implies that
\begin{equation}\label{eq:M1}
M_{1} = \E\frac{\|\td{Z}_{1}^{T}(I - \td{H}_{1})\|_{\infty}}{\td{Z}_{1}^{T}(I - \td{H}_{1})\td{Z}_{1}}.
\end{equation}
Since $Z^{T}DZ / n\succeq K_{0}\lammin I$, we have
\[\frac{1}{\td{Z}_{1}^{T}(I - \td{H}_{1})\td{Z}_{1}} = e_{1}^{T}(\td{Z}^{T}\td{Z})^{-1}e_{1} = e_{1}^{T}(Z^{T}DZ)^{-1}e_{1} = \frac{1}{n}e_{1}^{T}\lb\frac{Z^{T}DZ}{n}\rb^{-1}e_{1} \le \frac{1}{nK_{0}\lammin}\]
and we obtain a bound for $M_{1}$ as
\[M_{1} \le \frac{\E \|\td{Z}_{1}^{T}(I - \td{H}_{1})\|_{\infty}}{nK_{0}\lammin} = \frac{\E \|Z_{1}^{T}D^{\frac{1}{2}}(I - \td{H}_{1})\|_{\infty}}{nK_{0}\lammin}.\]
Similarly,
\begin{equation}\label{eq:Mj_bound}
M_{j} \le  \frac{\E \|Z_{j}^{T}D^{\frac{1}{2}}(I - \td{H}_{j})\|_{\infty}}{nK_{0}\lammin}= \frac{\E\|Z_{j}^{T}D^{\frac{1}{2}}(I - D^{\frac{1}{2}}Z_{[j]}^{T}(Z_{[j]}^{T}DZ_{[j]})^{-1}Z_{[j]}D^{\frac{1}{2}})\|_{\infty}}{nK_{0}\lammin}.
\end{equation}
The vector in the numerator is a linear contrast of $Z_{j}$ and $Z_{j}$ has mean-zero i.i.d. sub-gaussian entries. For any fixed matrix $A\in \R^{n\times n}$, denote $A_{k}$ by its $k$-th column, then $A_{k}^{T}Z_{j}$ is $\sigma^{2}\|A_{k}\|_{2}^{2}$-sub-gaussian (see Section 5.2.3 of \citeA{vershynin10} for a detailed discussion) and hence by definition of sub-Gaussianity, 
\[P(|A_{k}^{T}Z_{j}|\ge \sigma\|A_{k}\|_{2}t)\le 2e^{-\frac{t^{2}}{2}}.\]
Therefore, by a simple union bound, we conclude that 
\[P(\|A^{T}Z_{j}\|_{\infty}\ge \sigma\max_{k}\|A_{k}\|_{2}t)\le 2ne^{-\frac{t^{2}}{2}}.\]
Let $t = 2\sqrt{\log n}$,
 \[P(\|A^{T}Z_{j}\|_{\infty}\ge 2\sigma\max_{k}\|A_{k}\|_{2}\sqrt{\log n})\le \frac{2}{n} = o(1).\]
This entails that 
\begin{equation}\label{eq:subgauss_max_bound}
\|A^{T}Z_{j}\|_{\infty} = O_{p}\lb \max_{k}\|A_{k}\|_{2}\cdot \polyLog\rb = O_{p}\lb \|A\|_{\mathrm{op}}\cdot \polyLog\rb.
\end{equation}
with high probability. In $M_{j}$, the coefficient matrix $(I - H_{j})D^{\frac{1}{2}}$ depends on $Z_{j}$ through $D$ and hence we cannot use (\ref{eq:subgauss_max_bound}) directly. However, the dependence can be removed by replacing $D$ by $D_{[j]}$ since  $r_{i, [j]}$ does not depend on $Z_{j}$. 

Since $Z$ has i.i.d. sub-gaussian entries, no column is highly influential. In other words, the estimator will not change drastically after removing $j$-th column. This would suggest $R_{i}\approx r_{i, [j]}$. It is proved by \citeA{elkaroui13} that 
\[\sup_{i, j}|R_{i} - r_{i, [j]}| = O_{p}\lb{\frac{\polyLog}{\sqrt{n}}}\rb.\]
It can be rigorously proved that
\[\big|\|Z_{j}^{T}D(I - \td{H}_{j})\|_{\infty} -  \|Z_{j}^{T}D_{[j]}(I - H_{j})\|_{\infty}\big| = O_{p}\lb\frac{\polyLog}{n}\rb,\]
where $H_{j} = I - D_{[j]}^{\frac{1}{2}}Z_{[j]}(Z_{[j]}^{T}D_{[j]}Z_{[j]})^{-1}Z_{[j]}^{T}D_{[j]}^{\frac{1}{2}}$; see Appendix \ref{subsec:Mj} for details. Since $D_{[j]}(I - H_{j})$ is independent of $Z_{j}$ and 
\[\|D_{[j]}(I - H_{j})\|_{\mathrm{op}} \le \|D_{[j]}\|_{\mathrm{op}}\le K_{1} = O\lb\polyLog\rb, \]
it follows from \eqref{eq:Mj_bound} and \eqref{eq:subgauss_max_bound} that
\[\|Z_{j}^{T}D_{[j]}(I - H_{j})\|_{\infty} = O_{p}\lb\frac{\polyLog}{n}\rb.\]
In summary, 
\begin{equation}\label{eq:upper_Mj}
M_{j} = O_{p}\lb\frac{\polyLog}{n}\rb.
\end{equation}

\subsection{Lower Bound of $\Var(\hat{\beta}_{j})$}
\subsubsection{Approximating $\Var(\hat{\beta}_{j})$ by $\Var(b_{j})$}\label{subsubsec:approx_varbetaj_varbj}
It is shown by \citeA{elkaroui13}\footnote{\citeA{elkaroui13} considers a ridge regularized M estimator, which is different from our setting. However, this argument still holds in our case and proved in Appendix \ref{app:main}.} that 
\begin{equation}\label{eq:rough_approx_bj}
\hat{\beta}_{j} \approx b_{j} \triangleq \frac{1}{\sqrt{n}}\frac{N_{j}}{\xi_{j}}
\end{equation}
where
\[N_{j} = \frac{1}{\sqrt{n}}\sum_{i=1}^{n}Z_{ij}\psi(r_{i, [j]}), \quad \xi_{j} = \frac{1}{n}Z_{j}^{T}(D_{[j]} - D_{[j]}Z_{[j]}(\wmj)^{-1}Z_{[j]}^{T}D_{[j]})Z_{j}.\]
It has been shown by \citeA{elkaroui13} that 
\[\max_{j}|\hat{\beta}_{j} - b_{j}| = O_{p}\lb\frac{\polyLog}{n}\rb.\]
Thus, $\Var(\hat{\beta}_{j})\approx \Var(b_{j})$ and a more refined calculation in Appendix \ref{subsubsec:approx_varbetaj_varbj} shows that
\[|\Var(\hat{\beta}_{j}) - \Var(b_{j})| = O_{p}\lb\frac{\polyLog}{n^{\frac{3}{2}}}\rb.\]
It is left to show that 
\begin{equation}\label{eq:app_A_var_bj}
\Var(b_{j}) = \Omega_{p}\lb\frac{1}{n\cdot \polyLog}\rb.
\end{equation}

\subsubsection{Bounding $\Var(b_{j})$ via $\Var(N_{j})$}
By definition of $b_{j}$, 
\[\Var(b_{j}) = \Omega_{p}\lb\frac{\polyLog}{n}\rb\Longleftrightarrow \Var\lb\frac{N_{j}}{\xi_{j}}\rb = \Omega_{p}\lb \polyLog\rb.\]
As will be shown in Appendix \ref{subsubsec:I2}, 
\[\Var(\xi_{j}) = O_{p}\lb\frac{\polyLog}{n}\rb.\]
As a result, $\xi_{j}\approx \E \xi_{j}$ and 
\[\Var\lb \frac{N_{j}}{\xi_{j}}\rb\approx \Var\lb\frac{N_{j}}{\E \xi_{j}}\rb = \frac{\Var(N_{j})}{(\E \xi_{j})^{2}}.\]
As in the previous paper \cite{elkaroui13}, we rewrite $\xi_{j}$ as
\[\xi_{j} =  \frac{1}{n}Z_{j}^{T}D_{[j]}^{\frac{1}{2}}(I - D_{[j]}^{\frac{1}{2}}Z_{[j]}(\wmj)^{-1}Z_{[j]}^{T}D_{[j]}^{\frac{1}{2}})D_{[j]}^{\frac{1}{2}}Z_{j}.\]
The middle matrix is idempotent and hence positive semi-definite. Thus, 
\begin{align*}
\xi_{j} & \le \frac{1}{n}Z_{j}^{T}D_{[j]}Z_{j} \le K_{1}\lammax = O_{p}\lb\polyLog\rb.
\end{align*}
Then we obtain that
\[\frac{\Var(N_{j})}{(\E \xi_{j})^{2}} = \Omega_{p}\lb \frac{\Var(N_{j})}{\polyLog}\rb,\]
and it is left to show that 
\begin{equation}\label{eq:app_A_var_Nj}
\Var(N_{j}) = \Omega_{p}\lb\frac{1}{\polyLog}\rb.
\end{equation}

\subsubsection{Bounding $\Var(N_{j})$ via $\tr(Q_{j})$}
Recall the definition of $N_{j}$ \eqref{eq:rough_approx_bj}, and that of $Q_{j}$ (see Section \ref{subsec:notation} in p.\pageref{subsec:notation}), we have
\[\Var(N_{j}) = \frac{1}{n}Z_{j}^{T}Q_{j}Z_{j}\]
Notice that $Z_{j}$ is independent of $r_{i, [j]}$ and hence the conditional distribution of $Z_{j}$ given $Q_{j}$ remains the same as the marginal distribution of $Z_{j}$. Since $Z_{j}$ has i.i.d. sub-gaussian entries, the Hanson-Wright inequality (\citeNP{hanson71, rudelson13}; see Proposition \ref{prop:hanson_wright}), shown in Proposition \ref{prop:hanson_wright}, implies that any quadratic form of $Z_{j}$, denoted by $Z_{j}^{T}Q_{j}Z_{j}$ is concentrated on its mean, i.e.
\[Z_{j}^{T}Q_{j}Z_{j}\approx \E_{\tiny Z_{j}, \eps}Z_{j}^{T}Q_{j}Z_{j} = (\E Z_{1j}^{2}) \cdot \tr(Q_{j}).\]
As a consequence, it is left to show that 
\begin{equation}\label{eq:app_A_tr_Qj}
\tr(Q_{j}) = \Omega_{p}\lb\frac{n}{\polyLog}\rb.
\end{equation}

\subsubsection{Lower Bound of $\tr(Q_{j})$}
By definition of $Q_{j}$, 
\[\tr(Q_{j}) = \sum_{i=1}^{n}\Var(\psi(r_{i, [j]})).\]
To lower bounded the variance of $\psi(r_{i, [j]})$, recall that for any random variable $W$,
\begin{equation}\label{eq:varformula}
\Var(W) = \frac{1}{2}\E (W - W')^{2}.
\end{equation}
where $W'$ is an independent copy of $W$. Suppose $g: \R\rightarrow \R$ is a function such that $|g'(x)|\ge c$ for all $x$, then (\ref{eq:varformula}) implies that
\begin{equation}\label{eq:varfunc}
\Var(g(W)) = \frac{1}{2}\E (g(W) - g(W'))^{2} \ge \frac{c^{2}}{2}\E (W - W')^{2} = c^{2}\Var(W).
\end{equation}

In other words, (\ref{eq:varfunc}) entails that $\Var(W)$ is a lower bound for $\Var(g(W))$ provided that the derivative of $g$ is bounded away from 0. As an application, we see that
\[\Var(\psi(r_{i, [j]}))\ge K_{0}^{2}\Var(r_{i, [j]})\]
and hence
\[\tr(Q_{j}) \ge K_{0}^{2}\sum_{i=1}^{n}\Var(r_{i, [j]}).\]
By the variance decomposition formula, 
\begin{align*}
\Var(r_{i, [j]}) &= \E \lb\Var\lb r_{i, [j]} \big| \eps_{(i)}\rb\rb + \Var\lb\E \lb r_{i, [j]}\big| \eps_{(i)}\rb\rb \ge \E \lb\Var\lb r_{i, [j]} \big| \eps_{[i]}\rb\rb,
\end{align*}
where $\eps_{(i)}$ includes all but $i$-th entry of $\eps$. Given $\eps_{(i)}$, $r_{i, [j]}$ is a function of $\eps_{i}$. Using \eqref{eq:varfunc}, we have 
\[\Var(r_{i, [j]} | \eps_{(i)})\ge \inf_{\eps_{i}}\bigg|\pd{r_{i, [j]}}{\eps_{i}}\bigg|^{2}\cdot \Var (\eps_{i} | \eps_{(i)})\ge \inf_{\eps_{i}}\bigg|\pd{r_{i, [j]}}{\eps_{i}}\bigg|^{2}\cdot \Var (\eps_{i}).\]
This implies that 
\[\Var(r_{i, [j]})\ge \E \lb\Var\lb r_{i, [j]} \big| \eps_{[i]}\rb\rb \ge \E\inf_{\eps}\bigg|\pd{r_{i, [j]}}{\eps_{i}}\bigg|^{2}\cdot \min_{i}\Var (\eps_{i}).\]
Summing $\Var(r_{i, [j]})$ over $i = 1, \ldots, n$, we obtain that 
\[\tr(Q_{j}) = \sum_{i=1}^{n}\Var(r_{i, [j]})\ge \E\lb\sum_{i}\inf_{\eps}\bigg|\pd{r_{i, [j]}}{\eps_{i}}\bigg|^{2}\rb\cdot \min_{i}\Var (\eps_{i}).\]
It will be shown in Appendix \ref{subsubsec:I1} that under assumptions \textbf{A}1-\textbf{A}3, 
\begin{equation}
  \label{eq:Rideriv}
\E\sum_{i}\inf_{\eps}\bigg|\pd{r_{i, [j]}}{\eps_{i}}\bigg|^{2} = \Omega_{p}\lb\frac{n}{\polyLog}\rb.
\end{equation}
This proves \eqref{eq:app_A_tr_Qj} and as a result,
\[\min_{j}\Var(\hat{\beta}_{j}) = \Omega_{p}\lb\frac{1}{n\cdot \polyLog}\rb.\]

\section{Proof of Theorem \ref{thm:main}}\label{app:main}

\subsection{Notation}\label{app:notation}
To be self-contained, we summarize our notations in this subsection. The model we considered here is 
\[y = X\betanull + \eps\]
where $X\in \R^{n\times p}$ be the design matrix and $\eps$ is a random vector with independent entries. Notice that the target quantity $\frac{\hat{\beta}_{j} - \E \hat{\beta}_{j}}{\sqrt{\Var(\hat{\beta}_{j})}}$ is shift invariant, we can assume $\betanull = 0$ without loss of generality provided that $X$ has full column rank; see Section \ref{subsec:notation} for details.

Let $x_{i}^{T}\in \R^{1\times p}$ denote the $i$-th row of $X$ and $X_{j}\in \R^{n\times 1}$ denote the $j$-th column of X. Throughout the paper we will denote by $X_{ij}\in \R$ the $(i, j)$-th entry of $X$, by $X_{(i)}\in \R^{(n-1)\times p}$ the design matrix $X$ after removing the $i$-th row, by $X_{[j]}\in \R^{n\times (p-1)}$ the design matrix $X$ after removing the $j$-th column, by $X_{(i), [j]}\in \R^{(n-1)\times (p-1)}$ the design matrix after removing both $i$-th row and $j$-th column, and by $x_{i, [j]}\in \R^{1\times (p-1)}$ the vector $x_{i}$ after removing $j$-th entry. The M-estimator $\hat{\beta}$ associated with the loss function $\rho$ is defined as
\begin{equation}\label{eq:hatbeta}
\hat{\beta} = \argmin_{\beta\in\R^{p}}\frac{1}{n}\sum_{k=1}^{n}\rho(\eps_{k} - x_{k}^{T}\beta).
\end{equation}
Similarly we define the leave-$j$-th-predictor-out version as
\begin{equation}\label{eq:hatbetaj}
\hat{\beta}_{[j]} = \argmin_{\beta\in\R^{p}}\frac{1}{n}\sum_{k=1}^{n}\rho(\eps_{k} - x_{k, [j]}^{T}\beta).
\end{equation}
Based on these notation we define the full residual $R_{k}$ as
\begin{equation}\label{eq:Rk}
R_{k} = \eps_{k} - x_{k}^{T}\hat{\beta}, \quad k = 1, 2, \ldots, n
\end{equation}
the leave-$j$-th-predictor-out residual as
\begin{equation}\label{eq:rij}
r_{k, [j]} = \eps_{k} - x_{k, [j]}^{T}\hat{\beta}_{[j]}, \quad k = 1,2,\ldots, n, \,\,j \in J_{n}.
\end{equation}
Four diagonal matrices are defined as
\begin{equation}\label{eq:DtdD}
D = \diag(\psi'(R_{k})), \quad \td{D} = \diag(\psi''(R_{k})),
\end{equation}
\begin{equation} \label{eq:DjtdDj}
D_{[j]} = \diag(\psi'(r_{k, [j]})), \quad \td{D}_{[j]} = \diag(\psi''(r_{k, [j]})).
\end{equation}
Further we define $G$ and $G_{[j]}$ as 
\begin{equation}
  \label{eq:GGj}
  G = I - X(X^{T}DX)^{-1}X^{T}D, \quad G_{[j]} = I - X_{[j]}(X_{[j]}^{T}D_{[j]}X_{[j]})^{-1}X_{[j]}^{T}D_{[j]}.
\end{equation}
Let $J_{n}$ denote the indices of coefficients of interest. We say $a\in ]a_{1}, a_{2}[$ if and only if $a\in [\min\{a_{1}, a_{2}\}, \max\{a_{1}, a_{2}\}]$. Regarding the technical assumptions, we need the following quantities
\begin{equation}\label{eq:lambda}
\lammax = \lambda_{\mathrm{\max}}\lb\frac{X^{T}X}{n}\rb, \quad \lammin = \lambda_{\mathrm{\min}}\lb\frac{X^{T}X}{n}\rb
\end{equation}
be the largest (resp. smallest) eigenvalue of the matrix $\frac{X^{T}X}{n}$. Let  $e_{i}\in\R^{n}$ be the $i$-th canonical basis vector and
\begin{equation}\label{eq:hj}
h_{j, 0} = (\psi(r_{1, [j]}), \ldots, \psi(r_{n, [j]}))^{T}, \quad h_{j, 1, i} = G_{[j]}^{T}e_{i}.
\end{equation}
Finally, let
\begin{align}
\Delta_{C} &= \max\left\{\max_{j\in J_{n}}\frac{|h_{j, 0}^{T}X_{j}|}{\lnorm h_{j, 0}\lnorm }, \max_{i\le n, j\in J_{n}}\frac{|h_{j, 1, i}^{T}X_{j}|}{\lnorm h_{j, 1, i}\lnorm }\right\},\label{eq:DeltaC}\\ 
Q_{j} & = \Cov(h_{j, 0}). \label{eq:Qj}
\end{align}
We adopt Landau's notation ($O(\cdot), o(\cdot), O_{p}(\cdot), o_{p}(\cdot)$). In addition, we say $a_{n} = \Omega(b_{n})$ if $b_{n} = O(a_{n})$ and similarly, we say $a_{n} = \Omega_{p}(b_{n})$ if $b_{n} = O_{p}(a_{n})$. To simplify the logarithm factors, we use the symbol $\polyLog$ to denote any factor that can be upper bounded by $(\log n)^{\gamma}$ for some $\gamma > 0$. Similarly, we use $\frac{1}{\polyLog}$ to denote any factor that can be lower bounded by $\frac{1}{(\log n)^{\gamma'}}$ for some $\gamma' > 0$. 

Finally we restate all the technical assumptions:
\begin{enumerate}[\textbf{A}1]
\item $\rho(0) = \psi(0) = 0$ and there exists $K_{0} = \Omega\lb\frac{1}{\polyLog}\rb$, $K_{1}, K_{2} = O\lb \polyLog\rb$, such that for any $x\in\m{R}$,
\[K_{0} \le \psi'(x)\le K_{1}, \quad \bigg|\frac{d}{dx}(\sqrt{\psi'}(x))\bigg| = \frac{|\psi''(x)|}{\sqrt{\psi'(x)}}\le K_{2};\]
\item $\ep_{i} = u_{i}(W_{i})$ where $(W_{1}, \ldots, W_{n})\sim N(0, I_{n\times n})$ and $u_{i}$ are smooth functions with $\|u'_{i}\|_{\infty}\le c_{1}$ and $\|u''_{i}\|_{\infty}\le c_{2}$ for some $c_{1}, c_{2} = O(\polyLog)$. Moreover, assume $\min_{i}\Var(\eps_{i}) = \Omega\lb\frac{1}{\polyLog}\rb$.
\item $\lammax = O(\polyLog)$ and $\lammin = \Omega\lb\frac{1}{\polyLog}\rb$;
\item $\cmin\frac{X_{j}^{T}Q_{j}X_{j}}{\tr(Q_{j})} = \Omega\lb\frac{1}{\polyLog}\rb$;
\item $\E\Delta_{C}^{8} = O\lb\polyLog\rb$.
\end{enumerate}

\subsection{Deterministic Approximation Results}\label{subsec:approx}
In Appendix \ref{app:heuristic}, we use several approximations under random designs, e.g. $R_{i}\approx r_{i, [j]}$. To prove them, we follow the strategy of \citeA{elkaroui13} which establishes the deterministic results and then apply the concentration inequalities to obtain high probability bounds. Note that $\hat{\beta}$ is the solution of 
\[0 = f(\beta)\triangleq \frac{1}{n}\sum_{i=1}^{n}x_{i}\psi(\eps_{i} - x_{i}^{T}\beta),\]
we need the following key lemma to bound $\|\beta_{1} - \beta_{2}\|_{2}$ by $\|f(\beta_{1}) - f(\beta_{2})\|_{2}$, which can be calculated explicily. 
\begin{lemma}\label{lem:keylemma}[\citeA{elkaroui13}, Proposition 2.1]
For any $\beta_{1}$ and $\beta_{2}$, 
\[\norm{2}{\beta_{1} - \beta_{2}}\le \frac{1}{K_{0}\lammin}\norm{2}{f(\beta_{1}) - f(\beta_{2})}.\]
\end{lemma}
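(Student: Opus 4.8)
The plan is to recognize $f$ as minus the gradient of the strongly convex objective $L(\beta) = \frac1n\sum_{i=1}^{n}\rho(\eps_i - x_i^T\beta)$ and to convert the lower bounds $\psi'\ge K_0$ (Assumption \textbf{A}1) and $\lammin = \Omega(1/\polyLog)$ (Assumption \textbf{A}3) into a quantitative strong-monotonicity statement for $f$. Concretely, I would write the increment $f(\beta_1)-f(\beta_2)$ exactly as a symmetric positive-definite linear map applied to $\beta_1-\beta_2$ and then finish with one application of Cauchy--Schwarz.

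First I would apply the fundamental theorem of calculus to $t\mapsto \psi\big(\eps_i - x_i^T(\beta_2 + t(\beta_1-\beta_2))\big)$ to get, for each $i$,
\[
\psi(\eps_i - x_i^T\beta_1) - \psi(\eps_i - x_i^T\beta_2) = -\,w_i\, x_i^T(\beta_1-\beta_2),\qquad w_i \triangleq \int_0^1 \psi'\big(\eps_i - x_i^T\beta_2 - t\,x_i^T(\beta_1-\beta_2)\big)\,dt,
\]
so that, summing over $i$, $f(\beta_1)-f(\beta_2) = -\tfrac1n X^T W X\,(\beta_1-\beta_2)$ with $W = \diag(w_1,\dots,w_n)$. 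Assumption \textbf{A}1 forces $K_0 \le w_i \le K_1$ for every $i$, hence $W \succeq K_0 I_n$, and therefore $\tfrac1n X^T W X \succeq \tfrac{K_0}{n}X^T X \succeq K_0\lammin I_p$.

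With $u \triangleq \beta_1-\beta_2$ and $v \triangleq f(\beta_1)-f(\beta_2) = -\tfrac1n X^T W X u$, the displayed ellipticity bound gives $u^T(-v) = u^T\big(\tfrac1n X^T W X\big)u \ge K_0\lammin\|u\|_2^2$, while Cauchy--Schwarz gives $u^T(-v)\le \|u\|_2\|v\|_2$. Combining, $\|u\|_2\|v\|_2 \ge K_0\lammin\|u\|_2^2$; dividing by $\|u\|_2$ when $u\neq 0$ (the inequality being trivial otherwise) yields exactly $\norm{2}{\beta_1-\beta_2} \le \tfrac1{K_0\lammin}\norm{2}{f(\beta_1)-f(\beta_2)}$.

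I do not expect a genuine obstacle here: the only points needing care are that $\psi$ is assumed merely $C^1$ with bounded derivative, so the integral average $w_i$ (rather than a single mean-value point) is the clean object to work with, and that this average inherits the lower bound $K_0$ from \textbf{A}1, which is what lets $\tfrac1n X^T W X$ inherit positive definiteness from \textbf{A}3. Equivalently one could phrase the whole argument as ``$L$ is $K_0\lammin$-strongly convex, hence $\nabla L$ is $K_0\lammin$-strongly monotone'', but the explicit computation above is self-contained and needs nothing beyond \textbf{A}1 and \textbf{A}3.
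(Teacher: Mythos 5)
Your proposal is correct and follows essentially the same route as the paper: the paper applies the mean value theorem to each $\psi$ increment to write $f(\beta_1)-f(\beta_2)=\frac{1}{n}\sum_i \psi'(\nu_i)x_ix_i^T(\beta_1-\beta_2)$ and then bounds the norm below by $\lambda_{\min}\lb\frac{1}{n}\sum_i\psi'(\nu_i)x_ix_i^T\rb\ge K_0\lammin$, exactly the ellipticity bound you derive. Your use of the integral average $w_i$ instead of a single mean-value point, and of strong monotonicity plus Cauchy--Schwarz instead of the direct minimum-eigenvalue bound on the norm, are only cosmetic variations of the same argument.
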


\begin{proof}
By the mean value theorem, there exists $\nu_{i}\in ]\eps_{i} - x_{i}^{T}\beta_{1}, \eps_{i} - x_{i}^{T}\beta_{2}[$ such that
\[\psi(\eps_{i} - x_{i}^{T}\beta_{1}) - \psi(\eps_{i} - x_{i}^{T}\beta_{2}) = \psi'(\nu_{i})\cdot x_{i}^{T}(\beta_{2} - \beta_{1}).\]
Then
\begin{align*}
\norm{2}{f(\beta_{1}) - f(\beta_{2})} &= \norm{2}{\frac{1}{n}\sum_{i=1}^{n}\psi'(\nu_{i})x_{i}x_{i}^{T}\lb\beta_{1} - \beta_{2}\rb}\\
& \ge \lambda_{\mathrm{min}}\lb \frac{1}{n}\sum_{i=1}^{n}\psi'(\nu_{i})x_{i}x_{i}^{T}\rb\cdot\norm{2}{\beta_{1} - \beta_{2}}\\
& \ge K_{0}\lammin\norm{2}{\beta_{1} - \beta_{2}}.
\end{align*}
\end{proof}
\newcommand{\cure}{\mathcal{E}}
\newcommand{\curr}{\Re}
\newcommand{\curb}{\mathcal{B}}
Based on Lemma \ref{lem:keylemma}, we can derive the deterministic results informally stated in Appendix \ref{app:heuristic}. Such results are shown by \citeA{elkaroui13} for ridge-penalized M-estimates and here we derive a refined version for unpenalized M-estimates. Throughout this subsection, we only assume assumption \textbf{A}1. This implies the following lemma, 
\begin{lemma}\label{lem:psi}
Under assumption \textbf{A}1, for any $x$ and $y$, 
\[|\psi(x)|\le K_{1}|x|, \quad|\sqrt{\psi'}(x) - \sqrt{\psi'}(y)|\le K_{2}|x - y|, \quad |\psi'(x) - \psi'(y)|\le 2\sqrt{K_{1}}K_{2}|x - y| \triangleq K_{3}|x - y|.\]
\end{lemma}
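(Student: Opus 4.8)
The plan is to derive the three inequalities in sequence, each by an elementary calculus argument from Assumption \textbf{A}1. Since $\rho$ is smooth, $\psi = \rho'$ is continuously differentiable and, because $\psi' \ge K_0 > 0$ everywhere, $\sqrt{\psi'}$ is a well-defined, strictly positive, differentiable function on all of $\R$. For the first bound I would use $\psi(0) = 0$ to write $\psi(x) = \int_0^x \psi'(t)\,dt$ and bound the integrand: since $0 < K_0 \le \psi'(t) \le K_1$ we have $|\psi'(t)| \le K_1$, hence $|\psi(x)| \le K_1|x|$.

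For the Lipschitz bound on $\sqrt{\psi'}$, I would apply the mean value theorem to the differentiable function $\sqrt{\psi'}$: for any $x, y$ there is a point $\xi$ strictly between them with $\sqrt{\psi'}(x) - \sqrt{\psi'}(y) = \big(\tfrac{d}{dt}\sqrt{\psi'}\big)(\xi)\,(x - y)$, and Assumption \textbf{A}1 gives $\big|\tfrac{d}{dx}\sqrt{\psi'}(x)\big| \le K_2$ uniformly, so $|\sqrt{\psi'}(x) - \sqrt{\psi'}(y)| \le K_2|x-y|$.

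For the Lipschitz bound on $\psi'$ itself, I would factor the difference of squares
\[\psi'(x) - \psi'(y) = \big(\sqrt{\psi'}(x) - \sqrt{\psi'}(y)\big)\big(\sqrt{\psi'}(x) + \sqrt{\psi'}(y)\big),\]
bound the sum factor by $2\sqrt{K_1}$ using $\psi' \le K_1$, and invoke the previous step to control the difference factor, obtaining $|\psi'(x) - \psi'(y)| \le 2\sqrt{K_1}\,K_2|x - y| = K_3|x-y|$.

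There is no genuine obstacle here: the argument is short and self-contained. The only point that warrants a remark is the differentiability of $\sqrt{\psi'}$, which is precisely why Assumption \textbf{A}1 pairs the strict lower bound $K_0 > 0$ with the bound on $\tfrac{d}{dx}\sqrt{\psi'}$ — this strict positivity is what licenses passing freely between estimates on $\psi'$ and estimates on $\sqrt{\psi'}$ in the last step.
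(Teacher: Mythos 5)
Your proof is correct, and it is exactly the elementary argument the paper has in mind (the paper states Lemma \ref{lem:psi} as an immediate consequence of Assumption \textbf{A}1 without writing out a proof): the fundamental theorem of calculus with $\psi(0)=0$ for the first bound, the mean value theorem with the assumed bound on $\frac{d}{dx}\sqrt{\psi'}$ for the second, and the difference-of-squares factorization with $\sqrt{\psi'}\le\sqrt{K_1}$ for the third. No gaps.
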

To state the result, we define the following quantities.
\begin{equation}\label{eq:TE}
T = \frac{1}{\sqrt{n}}\max\left\{\max_{i}\|x_{i}\|_{2}, \cmax \|X_{j}\|_{2}\right\}, \quad \cure = \frac{1}{n}\sum_{i=1}^{n}\rho(\eps_{i}),
\end{equation}
\begin{equation}\label{eq:UU0}
U = \norm{2}{\frac{1}{n}\sum_{i=1}^{n}x_{i}(\psi(\eps_{i}) - \E\psi(\eps_{i}))}, \quad U_{0} = \left\|\frac{1}{n}\sum_{i=1}^{n}x_{i}\E \psi(\eps_{i})\right\|_{2}.
\end{equation}
 The following proposition summarizes all deterministic results which we need in the proof.
\begin{proposition}\label{prop:deterministic}
Under Assumption $\textbf{A}1$, 
\begin{enumerate}[(i)]
\item The norm of M estimator is bounded by 
\[\|\hat{\beta}\|_{2} \le \frac{1}{K_{0}\lammin}(U + U_{0});\]
\item Define $b_{j}$ as
\[b_{j} = \frac{1}{\sqrt{n}}\frac{N_{j}}{\xi_{j}}\]
where
\[N_{j} = \frac{1}{\sqrt{n}}\sum_{i=1}^{n}X_{ij}\psi(r_{i, [j]}), \quad \xi_{j} = \frac{1}{n}X_{j}^{T}(D_{[j]} - D_{[j]}X_{[j]}(\wmj)^{-1}X_{[j]}^{T}D_{[j]})X_{j},\]
Then 
\[\cmax|b_{j}|\le \frac{1}{\sqrt{n}}\cdot\frac{\sqrt{2K_{1}}}{K_{0}\lammin}\cdot \Delta_{C}\cdot\sqrt{\cure},\]
\item The difference between $\hat{\beta}_{j}$ and $b_{j}$ is bounded by
\[\cmax|\hat{\beta}_{j} - b_{j}|\le \frac{1}{n}\cdot \frac{2K_{1}^{2}K_{3}\lammax T}{K_{0}^{4}\lammin^{\frac{7}{2}}}\cdot\Delta_{C}^{3}\cdot \cure.\]
\item The difference between the full and the leave-one-predictor-out residual is bounded by 
\[\cmax\rmax|R_{i} - r_{i, [j]}|\le \frac{1}{\sqrt{n}}\lb \frac{2K_{1}^{2}K_{3}\lammax T^{2}}{K_{0}^{4}\lammin^{\frac{7}{2}}}\cdot \Delta_{C}^{3}\cdot\cure + \frac{\sqrt{2}K_{1}}{K_{0}^{\frac{3}{2}}\lammin}\cdot\Delta_{C}^{2}\cdot\sqrt{\cure}\rb.\]
\end{enumerate}
\end{proposition}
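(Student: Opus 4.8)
All four bounds in Proposition \ref{prop:deterministic} are deterministic once $X$ and $\eps$ are fixed, and the entire argument rests only on Assumption \textbf{A}1, entering through Lemma \ref{lem:keylemma} and Lemma \ref{lem:psi}. Part (i) is immediate: since $\hat\beta$ solves $f(\hat\beta)=0$, Lemma \ref{lem:keylemma} applied at $\beta_1=\hat\beta$, $\beta_2=0$ gives $\|\hat\beta\|_2\le\frac{1}{K_0\lammin}\|f(0)\|_2$, and writing $\psi(\eps_i)=(\psi(\eps_i)-\E\psi(\eps_i))+\E\psi(\eps_i)$ in $f(0)=\frac1n\sum_i x_i\psi(\eps_i)$ and applying the triangle inequality produces the $U+U_0$ bound. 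For part (ii) I would isolate two elementary inputs. First, $\xi_j\ge K_0\lammin$: writing $\xi_j=\frac1n\|(I-P_{[j]})D_{[j]}^{1/2}X_j\|_2^2$ with $P_{[j]}$ the orthogonal projection onto the column space of $D_{[j]}^{1/2}X_{[j]}$, using $D_{[j]}\succeq K_0 I$ gives $\xi_j\ge\frac{K_0}{n}X_j^T(I-X_{[j]}(X_{[j]}^TX_{[j]})^{-1}X_{[j]}^T)X_j=\frac{K_0}{n}\,(e_j^T(X^TX)^{-1}e_j)^{-1}\ge K_0\lammin$ by the block inversion formula and $e_j^T(X^TX)^{-1}e_j\le 1/(n\lammin)$. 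Second, $\|h_{j,0}\|_2^2\le 2K_1 n\cure$: from $\psi'\in[K_0,K_1]$ and $\psi(0)=\rho(0)=0$ one checks $\psi(x)^2\le 2K_1\rho(x)$, and optimality of $\hat\beta_{[j]}$ at $\beta=0$ gives $\frac1n\sum_k\rho(r_{k,[j]})\le\cure$. Since $N_j=\frac1{\sqrt n}X_j^T h_{j,0}$, the definition of $\Delta_C$ yields $|N_j|\le\frac1{\sqrt n}\Delta_C\|h_{j,0}\|_2\le\Delta_C\sqrt{2K_1\cure}$, and dividing by $\xi_j\ge K_0\lammin$ gives the claimed bound on $|b_j|$.

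Parts (iii) and (iv) are the substance of the proposition, and I would prove them via a single leave-one-predictor-out linearization. For each $j\in J_n$ introduce the augmented point $\tilde\beta^{(j)}\in\R^p$ whose $j$-th coordinate is $b_j$ and whose off-$j$ coordinates equal $\hat\beta_{[j]}-b_j(\wmj)^{-1}X_{[j]}^TD_{[j]}X_j$; a short computation gives $\eps_i-x_i^T\tilde\beta^{(j)}=r_{i,[j]}-b_j\,h_{j,1,i}^TX_j$, recalling $h_{j,1,i}=G_{[j]}^Te_i$. Taylor-expanding $\psi$ to first order around $r_{i,[j]}$ in $f(\tilde\beta^{(j)})$: off the $j$-th coordinate the constant term vanishes by optimality of $\hat\beta_{[j]}$ and the linear term vanishes because $X_{[j]}^TD_{[j]}G_{[j]}=0$; in the $j$-th coordinate the constant term equals $N_j/\sqrt n$ and the linear term equals $-b_j\xi_j=-N_j/\sqrt n$, which cancel by the very definition of $b_j$. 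Hence $f(\tilde\beta^{(j)})=\frac1n X^T(\text{remainder})$ with $|\text{remainder}_i|\le\frac{K_3}{2}(b_j\,h_{j,1,i}^TX_j)^2$ by the Lipschitz bound on $\psi'$ from Lemma \ref{lem:psi}. Bounding $\|h_{j,1,i}\|_2=\|G_{[j]}^Te_i\|_2\le\sqrt{K_1/K_0}$, $\|G_{[j]}X_j\|_2\le\sqrt{K_1/K_0}\,\|X_j\|_2\le\sqrt{K_1/K_0}\,\sqrt n\,T$, and $\|\frac1nX^Tv\|_2\le\sqrt{\lammax/n}\,\|v\|_2$, together with the bound on $|b_j|$ from (ii), controls $\|f(\tilde\beta^{(j)})\|_2$ by a multiple of $\Delta_C^3\,\cure\,T/n$ (up to factors of the $K$'s and powers of $\lammax,\lammin$). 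Lemma \ref{lem:keylemma} then turns this into a bound on $\|\hat\beta-\tilde\beta^{(j)}\|_2$; reading off the $j$-th coordinate gives (iii), and writing $R_i-r_{i,[j]}=x_i^T(\tilde\beta^{(j)}-\hat\beta)-b_j\,h_{j,1,i}^TX_j$ and bounding the first piece via (iii) together with $\|x_i\|_2\le\sqrt n\,T$ and the second piece via (ii) together with $\|h_{j,1,i}\|_2\le\sqrt{K_1/K_0}$ gives (iv).

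The main obstacle is the remainder estimate for $f(\tilde\beta^{(j)})$. One must verify that the particular correction $-b_j(\wmj)^{-1}X_{[j]}^TD_{[j]}X_j$ annihilates both the constant and linear parts of $f(\tilde\beta^{(j)})$ in all $p$ coordinates at once, and then track the quadratic remainder uniformly over $i\le n$ and $j\in J_n$ with exactly the dependence on $\Delta_C$, $T$, $\lammax$, $\lammin$ and the $K$'s that the statement asserts. This is precisely where $\psi\in C^2$ with the Lipschitz constant $K_3$ of Lemma \ref{lem:psi} is used, and where the innocuous-looking lower bound $\xi_j\ge K_0\lammin$ becomes essential, since $b_j$ — and hence every error term — is controlled only through it.
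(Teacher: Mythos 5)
Your proposal is correct and follows essentially the same route as the paper: (i) is Lemma \ref{lem:keylemma} applied at $f(0)$; (ii) rests on the Schur-complement bound $\xi_j \ge K_0\lammin$ together with $\psi(x)^2\le 2K_1\rho(x)$ and the optimality of $\hat\beta_{[j]}$; and (iii)--(iv) use the paper's own augmented vector (with $j$-th coordinate $b_j$ and off-$j$ block $\hat\beta_{[j]}-b_j(\wmj)^{-1}X_{[j]}^{T}D_{[j]}X_j$), whose zeroth- and first-order terms cancel exactly, leaving a remainder of size $K_3\,(b_j\,h_{j,1,i}^{T}X_j)^2$ per coordinate that is turned into the stated bounds via $\|h_{j,1,i}\|_2\le\sqrt{K_1/K_0}$, the definition of $\Delta_C$, and a second application of Lemma \ref{lem:keylemma}. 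The only differences are cosmetic: you organize the expansion as a first-order Taylor step and bound $\|n^{-1}X^{T}(\mathrm{remainder})\|_2\le\sqrt{\lammax/n}\,\|\mathrm{remainder}\|_2$ directly, whereas the paper factors the same remainder through the auxiliary vector $\gamma_j$ via the mean value theorem, and the constants your route produces are no worse than (in fact slightly sharper than) those stated, so they are consistent with the proposition.
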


\newcommand{\uj}{X_{[j]}^{T}D_{[j]}X_{j}}

\begin{proof}\begin{enumerate}[(i)]
  \item By Lemma \ref{lem:keylemma}, 
\[\|\hat{\beta}\|_{2} \le \frac{1}{K_{0}\lammin}\|f(\hat{\beta}) - f(0)\|_{2} = \frac{\|f(0)\|_{2}}{K_{0}\lammin},\]
since $\hat{\beta}$ is a zero of $f(\beta)$. By definition,
\[f(0) = \frac{1}{n}\sum_{i=1}^{n}x_{i}\psi(\eps_{i}) = \frac{1}{n}\sum_{i=1}^{n}x_{i}(\psi(\eps_{i}) - \E \psi(\eps_{i})) + \frac{1}{n}\sum_{i=1}^{n}x_{i}\E \psi(\eps_{i}).\]
This implies that
\[\norm{2}{f(0)} \le U + U_{0}.\]
\item First we prove that
\begin{equation}\label{eq:xij}
\xi_{j}\ge K_{0}\lammin.
\end{equation}
Since all diagonal entries of $D_{[j]}$ is lower bounded by $K_{0}$, we conclude that 
\[\lambda_{\mathrm{min}}\lb\frac{X^{T}D_{[j]}X}{n}\rb\ge K_{0}\lammin.\]
Note that $\xi_{j}$ is the Schur's complement (\citeNP{horn12}, chapter 0.8) of $\frac{X^{T}D_{[j]}X}{n}$, we have
\[\xi_{j}^{-1} = e_{j}^{T}\lb\frac{X^{T}D_{[j]}X}{n}\rb^{-1} e_{j}\le \frac{1}{K_{0}\lammin},\]
which implies \eqref{eq:xij}. As for $N_{j}$, we have
\begin{equation}\label{eq:Njhj}
N_{j} = \frac{X_{j}^{T}h_{j, 0}}{\sqrt{n}} = \frac{\norm{2}{h_{j, 0}}}{\sqrt{n}}\cdot \frac{X_{j}^{T}h_{j, 0}}{\norm{2}{h_{j, 0}}}.
\end{equation}
The the second term is bounded by $\Delta_{C}$ by definition, see \eqref{eq:DeltaC}. For the first term, the assumption \textbf{A}1 that $\psi'(x)\le K_{1}$ implies that
\[\rho(x) = \rho(x) - \rho(0) = \int_{0}^{x}\psi(y)dy\ge \int_{0}^{x}\frac{\psi'(y)}{K_{1}}\cdot \psi(y)dy = \frac{1}{2K_{1}}\psi^{2}(x).\]
Here we use the fact that $\sign(\psi(y)) = \sign(y)$. Recall the definition of $h_{j, 0}$, we obtain that 
\[\frac{\norm{2}{h_{j, 0}}}{\sqrt{n}} =  \sqrt{\frac{\sum_{i=1}^{n}\psi(r_{i, [j]})^{2}}{n}}\le \sqrt{2K_{1}}\cdot \sqrt{\frac{\sum_{i=1}^{n}\rho(r_{i, [j]})}{n}}.\] 
Since $\hat{\beta}_{[j]}$ is the minimizer of the loss function $\sum_{i=1}^{n}\rho(\eps_{i} - x_{i, [j]}^{T}\beta_{[j]})$, it holds that
\[\frac{1}{n}\sum_{i=1}^{n}\rho(r_{i, [j]})\le \frac{1}{n}\sum_{i=1}^{n}\rho(\eps_{i}) = \cure.\]
Putting together the pieces, we conclude that 
\begin{equation}\label{eq:Nj}
|N_{j}|\le \sqrt{2K_{1}}\cdot\Delta_{C}\sqrt{\cure}.
\end{equation}

By definition of $b_{j}$, 
\[|b_{j}|\le \frac{1}{\sqrt{n}}\cdot\frac{\sqrt{2K_{1}}}{K_{0}\lammin}\Delta_{C}\sqrt{\cure}.\]

\item The proof of this result is almost the same as \citeA{elkaroui13}. We state it here for the sake of completeness. Let $\mathbf{\td{b}_{j}}\in\R^{p}$ with
\begin{equation}\label{eq:boldbj}
(\mathbf{\td{b}_{j}})_{j} = b_{j}, \quad (\mathbf{\td{b}_{j}})_{[j]} = \hat{\beta}_{[j]} - b_{j}(\wmj)^{-1}\uj
\end{equation}
where the subscript $j$ denotes the $j$-th entry and the subscript $[j]$ denotes the sub-vector formed by all but $j$-th entry. Furthermore, define $\gamma_{j}$ with 
\begin{equation}\label{eq:gammaj}
(\gamma_{j})_{j} = -1, \quad (\gamma_{j})_{[j]} = (\wmj)^{-1}\uj.
\end{equation}
Then we can rewrite $\mathbf{\td{b}_{j}}$ as 
\[(\mathbf{\td{b}_{j}})_{j} = -b_{j}(\gamma_{j})_{j}, \quad (\mathbf{\td{b}_{j}})_{[j]} = \hat{\beta}_{[j]} - b_{j}(\gamma_{j})_{[j]}.\]
By definition of $\hat{\beta}_{[j]}$, we have $[f(\hat{\beta}_{[j]})]_{[j]} = 0$ and hence
\begin{align}\label{eq:[j]}
[f(\mathbf{\td{b}_{j}})]_{[j]} = [f(\mathbf{\td{b}_{j}})]_{[j]} - [f(\hat{\beta}_{[j]})]_{[j]} = \frac{1}{n}\sum_{i=1}^{n}x_{i, [j]}\left[\psi(\eps_{i} - x_{i}^{T}\mathbf{\td{b}_{j}}) - \psi(\eps_{i} - x_{i, [j]}^{T}\hat{\beta}_{[j]})\right].
\end{align}
By mean value theorem, there exists $\nu_{i, j}\in ]\eps_{i} - x_{i}^{T}\mathbf{\td{b}_{j}}, \eps_{i} - x_{i, [j]}^{T}\hat{\beta}_{[j]}[$ such that
\begin{align*}
&\psi(\eps_{i} - x_{i}^{T}\mathbf{\td{b}_{j}}) - \psi(\eps_{i} - x_{i, [j]}^{T}\hat{\beta}_{[j]}) = \psi'(\nu_{i, j})(x_{i, [j]}^{T}\hat{\beta}_{[j]} - x_{i}^{T}\mathbf{\td{b}_{j}})\\
= & \psi'(\nu_{i, j})(x_{i, [j]}^{T}\hat{\beta}_{[j]} - x_{i, [j]}^{T}(\mathbf{\td{b}_{j}})_{[j]} - X_{ij}b_{j})\\
= & \psi'(\nu_{i, j})\cdot b_{j}\cdot \left[x_{i, [j]}^{T}(\wmj)^{-1}\uj - X_{ij}\right]
\end{align*}
Let 
\begin{equation}\label{eq:dij}
d_{i, j} = \psi'(\nu_{i, j}) - \psi'(r_{i, [j]})
\end{equation}
and plug the above result into \eqref{eq:[j]}, we obtain that
\begin{align*}
[f(\mathbf{\td{b}_{j}})]_{[j]} & = \frac{1}{n}\sum_{i=1}^{n}x_{i, [j]}\cdot \lb\psi'(r_{i, [j]}) + d_{i, j}\rb \cdot b_{j}\cdot \left[x_{i, [j]}^{T}(\wmj)^{-1}\uj - X_{ij}\right]\\
& = b_{j}\cdot \frac{1}{n}\sum_{i=1}^{n}\psi'(r_{i, [j]})x_{i, [j]}\left[x_{i, [j]}^{T}(\wmj)^{-1}\uj - X_{ij}\right]\\
& \quad + b_{j}\cdot\frac{1}{n}\sum_{i=1}^{n}d_{i, j}x_{i, [j]}(x_{i, [j]}^{T}(\wmj)^{-1}\uj - X_{ij})\\
& = b_{j}\cdot \frac{1}{n}\left[\wmj(\wmj)^{-1}\uj - \uj\right]\\
& \quad + b_{j}\cdot\frac{1}{n}\sum_{i=1}^{n}d_{i, j}x_{i, [j]}\cdot x_{i}^{T}\gamma_{j}\\
& = b_{j}\cdot\frac{1}{n}\lb\sum_{i=1}^{n}d_{i, j}x_{i, [j]}x_{i}^{T}\rb\gamma_{j}.
\end{align*}
Now we calculate $[f(\mathbf{\td{b}_{j}})]_{j}$, the $j$-th entry of $f(\mathbf{\td{b}_{j}})$. Note that
\begin{align*}
&[f(\mathbf{\td{b}_{j}})]_{j}  = \frac{1}{n}\sum_{i=1}^{n}X_{ij}\psi(\eps_{i} - x_{i}^{T}\mathbf{\td{b}_{j}})\\
= &\frac{1}{n}\sum_{i=1}^{n}X_{ij}\psi(r_{i, [j]}) + b_{j}\cdot\frac{1}{n}\sum_{i=1}^{n}X_{ij}(\psi'(r_{i, [j]}) + d_{i, j})\cdot \left[x_{i, [j]}^{T}(\wmj)^{-1}\uj - X_{ij}\right]\\
= &\frac{1}{n}\sum_{i=1}^{n}X_{ij}\psi(r_{i, [j]}) + b_{j}\cdot \frac{1}{n}\sum_{i=1}^{n}\psi'(r_{i, [j]})X_{ij}\left[x_{i, [j]}^{T}(\wmj)^{-1}\uj - X_{ij}\right]\\
& \quad  + b_{j}\cdot\lb\frac{1}{n}\sum_{i=1}^{n}d_{i, j}X_{ij}x_{i}^{T}\rb \gamma_{j}\\
= & \frac{1}{\sqrt{n}}N_{j} + b_{j}\cdot \lb\frac{1}{n}X_{j}^{T}D_{[j]}X_{[j]}(\wmj)^{-1}\uj - \frac{1}{n}\sum_{i=1}^{n}\psi'(r_{i, [j]})X_{ij}^{2}\rb\\
& \quad  + b_{j}\cdot\lb\frac{1}{n}\sum_{i=1}^{n}d_{i, j}X_{ij}x_{i}^{T}\rb \gamma_{j}\\
& = \frac{1}{\sqrt{n}}N_{j} - b_{j}\cdot \xi_{j} + b_{j}\cdot\lb\frac{1}{n}\sum_{i=1}^{n}d_{i, j}X_{ij}x_{i}^{T}\rb \gamma_{j}\\
& = b_{j}\cdot\lb\frac{1}{n}\sum_{i=1}^{n}d_{i, j}X_{ij}x_{i}^{T}\rb \gamma_{j}
\end{align*}
where the second last line uses the definition of $b_{j}$. Putting the results together, we obtain that
\begin{equation*}
  f(\mathbf{\td{b}_{j}}) = b_{j}\cdot\lb\frac{1}{n}\sum_{i=1}^{n}d_{i,j}x_{i}x_{i}^{T}\rb\cdot \gamma_{j}.
\end{equation*}
This entails that 
\begin{equation}
  \label{eq:fbound}
\|f(\mathbf{\td{b}_{j}})\|_{2}\le |b_{j}|\cdot \max_{i}|d_{i,j}|\cdot \lammax \cdot \|\gamma_{j}\|_{2}.
\end{equation}
Now we derive a bound for $\max_{i}|d_{i,j}|$, where $d_{i,j}$ is defined in \eqref{eq:dij}. By Lemma \ref{lem:psi},
\[|d_{i,j}| = |\psi'(\nu_{i,j}) - \psi'(r_{i, [j]})|\le K_{3}|\nu_{i, j} - r_{i, [j]}| = K_{3}|x_{i, [j]}^{T}\hat{\beta}_{[j]} - x_{i}^{T}\mathbf{\td{b}_{j}}|.\]
By definition of $\mathbf{\td{b}_{j}}$ and $h_{j, 1, i}$,
\begin{align}
&|x_{i, [j]}^{T}\hat{\beta}_{[j]} - x_{i}^{T}\mathbf{\td{b}_{j}}| = |b_{j}|\cdot \big|x_{i, [j]}^{T}(\wmj)^{-1}\uj - X_{ij}\big|\nonumber\\
= & |b_{j}| \cdot |e_{i}^{T}(I - X_{[j]}(\wmj)^{-1}X_{[j]}^{T}D_{[j]})X_{j}| \nonumber\\
= & |b_{j}|\cdot |h_{j, 1, i}^{T}X_{j}| \le |b_{j}|\cdot \Delta_{C}\norm{2}{h_{j, 1, i}},\label{eq:piij}
\end{align}
where the last inequality is derived by definition of $\Delta_{C}$, see \eqref{eq:DeltaC}. Since $h_{j, 1, i}$ is the $i$-th column of matrix $I - D_{[j]}X_{[j]}(\wmj)^{-1}X_{[j]}^{T}$, its $L_{2}$ norm is upper bounded by the operator norm of this matrix. Notice that
\[I - D_{[j]}X_{[j]}(\wmj)^{-1}X_{[j]}^{T} = D_{[j]}^{\frac{1}{2}}\lb I - D_{[j]}^{\frac{1}{2}}X_{[j]}(\wmj)^{-1}X_{[j]}^{T}D_{[j]}^{\frac{1}{2}}\rb D_{[j]}^{-\frac{1}{2}}.\]
The middle matrix in RHS of the displayed atom is an orthogonal projection matrix and hence 
\begin{equation}\label{eq:Gj}
\|I - D_{[j]}X_{[j]}(\wmj)^{-1}X_{[j]}^{T}\|_{\mathrm{op}}\le \|D_{[j]}^{\frac{1}{2}}\|_{\mathrm{op}}\cdot \|D_{[j]}^{-\frac{1}{2}}\|_{\mathrm{op}} \le \lb\frac{K_{1}}{K_{0}}\rb^{\frac{1}{2}}.
\end{equation}
Therefore,
\begin{equation}
  \label{eq:hj1i}
  \max_{i, j}\|h_{j, 1, i}\|_{2}\le \cmax \|I - D_{[j]}X_{[j]}(\wmj)^{-1}X_{[j]}^{T}\|_{\mathrm{op}}\le \lb\frac{K_{1}}{K_{0}}\rb^{\frac{1}{2}},
\end{equation}
and thus
\begin{equation}\label{eq:dij}
\max_{i}|d_{i,j}|\le K_{3}\sqrt{\frac{K_{1}}{K_{0}}}\cdot |b_{j}|\cdot  \Delta_{C}.
\end{equation}
As for $\gamma_{j}$, we have
\begin{align*}
&K_{0}\lammin\|\gamma_{j}\|_{2}^{2} \le \gamma_{j}^{T}\lb\frac{X^{T}D_{[j]}X}{n}\rb\gamma_{j} \\
=& (\gamma_{j})_{j}^{2}\cdot \frac{X_{j}^{T}D_{j}X_{j}}{n} + (\gamma_{j})_{[j]}^{T}\lb\frac{X_{[j]}^{T}D_{[j]}X_{[j]}}{n}\rb(\gamma_{j})_{[j]} + 2\gamma_{j}\frac{X_{j}^{T}D_{[j]}X_{[j]}}{n}(\gamma_{j})_{[j]}
\end{align*}
Recall the definition of $\gamma_{j}$ in \eqref{eq:gammaj}, we have
\[(\gamma_{j})_{[j]}^{T}\lb\frac{X_{[j]}^{T}D_{[j]}X_{[j]}}{n}\rb(\gamma_{j})_{[j]} = \frac{1}{n}X_{j}^{T}D_{[j]}X_{[j]}(X_{[j]}^{T}D_{[j]}X_{[j]})^{-1}X_{[j]}^{T}D_{[j]}X_{j}\]
and 
\[\gamma_{j}\frac{X_{j}^{T}D_{[j]}X_{[j]}}{n}(\gamma_{j})_{[j]} = - \frac{1}{n}X_{j}^{T}D_{[j]}X_{[j]}(X_{[j]}^{T}D_{[j]}X_{[j]})^{-1}X_{[j]}^{T}D_{[j]}X_{j}.\]
As a result, 
\begin{align*}
& K_{0}\lammin\|\gamma_{j}\|_{2}^{2}\\
\le & \frac{1}{n}X_{j}^{T}D_{[j]}^{\frac{1}{2}}(I - D_{[j]}^{\frac{1}{2}}X_{[j]}(\wmj)^{-1}X_{[j]}^{T}D_{[j]}^{\frac{1}{2}})D_{[j]}^{\frac{1}{2}}X_{j}\\
\le & \frac{\|D_{[j]}^{\frac{1}{2}}X_{j}\|_{2}^{2}}{n}\cdot\norm{op}{I - D_{[j]}^{\frac{1}{2}}X_{[j]}(\wmj)^{-1}X_{[j]}^{T}D_{[j]}^{\frac{1}{2}}}\\
\le & \frac{\|D_{[j]}^{\frac{1}{2}}X_{j}\|_{2}^{2}}{n} \le \frac{K_{1}\|X_{j}\|_{2}^{2}}{n}\le T^{2}K_{1},
\end{align*}
where $T$ is defined in \eqref{eq:TE}. Therefore we have
\begin{equation}\label{eq:gammaj}
\norm{2}{\gamma_{j}}\le \sqrt{\frac{K_{1}}{K_{0}\lammin}}T.
\end{equation}
Putting (\ref{eq:fbound}), (\ref{eq:dij}), (\ref{eq:gammaj}) and part (ii) together, we obtain that
\begin{align*}
  \|f(\mathbf{\td{b}_{j}})\|_{2} &\le \lammax\cdot|b_{j}|\cdot K_{3}\sqrt{\frac{K_{1}}{K_{0}}} \Delta_{C}|b_{j}| \cdot \sqrt{\frac{K_{1}}{K_{0}\lammin}}T\\
&\le \lammax\cdot \frac{1}{n}\frac{2K_{1}}{(K_{0}\lammin)^{2}}\Delta_{C}^{2}\cure\cdot K_{3}\sqrt{\frac{K_{1}}{K_{0}}} \Delta_{C}\cdot \sqrt{\frac{K_{1}}{K_{0}\lammin}}T\\
& = \frac{1}{n}\cdot \frac{2K_{1}^{2}K_{3}\lammax T}{K_{0}^{3}\lammin^{\frac{5}{2}}}\cdot \Delta_{C}^{3}\cdot \cure.
\end{align*}
By Lemma \ref{lem:keylemma}, 
\begin{align*}
  \|\hat{\beta} - \mathbf{\td{b}_{j}}\|_{2} &\le \frac{\|f(\hat{\beta}) - f(\mathbf{\td{b}_{j}})\|_{2}}{K_{0}\lammin} = \frac{\|f(\mathbf{\td{b}_{j}})\|_{2}}{K_{0}\lammin} \le \frac{1}{n}\cdot \frac{2K_{1}^{2}K_{3}\lammax T}{K_{0}^{4}\lammin^{\frac{7}{2}}}\cdot \Delta_{C}^{3}\cdot \cure.
\end{align*}
Since $\hat{\beta}_{j} - b_{j}$ is the $j$-th entry of $\hat{\beta} - \mathbf{\td{b}_{j}}$, we have
\[|\hat{\beta}_{j} - b_{j}|\le \|\hat{\beta} - \mathbf{\td{b}_{j}}\|_{2} \le \frac{1}{n}\cdot \frac{2K_{1}^{2}K_{3}\lammax T}{K_{0}^{4}\lammin^{\frac{7}{2}}}\cdot \Delta_{C}^{3} \cdot \cure.\]
\item Similar to part (iii), this result has been shown by \citeA{elkaroui13}. Here we state a refined version for the sake of completeness. Let $\mathbf{\td{b}_{j}}$ be defined as in \eqref{eq:boldbj}, then
  \begin{align*}
    |R_{i} - r_{i, [j]}| & = |x_{i}^{T}\hat{\beta} - x_{i, [j]}^{T}\hat{\beta}_{[j]}| = |x_{i}^{T}(\hat{\beta} - \mathbf{\td{b}_{j}}) + x_{i}^{T}\mathbf{\td{b}_{j}} - x_{i, [j]}^{T}\hat{\beta}_{[j]}|\\ 
& \le \| x_{i}\|_{2} \cdot \| \hat{\beta} - \mathbf{\td{b}_{j}}\|_{2} + |x_{i}^{T}\mathbf{\td{b}_{j}} - x_{i, [j]}^{T}\hat{\beta}_{[j]}|.
\end{align*}
Note that $\norm{2}{x_{i}}\le \sqrt{n}T$, by part (iii), we have
\begin{equation}\label{eq:leave-j-out-err_part1}
\| x_{i}\|_{2} \cdot \| \hat{\beta} - \mathbf{\td{b}_{j}}\|_{2}\le \frac{1}{\sqrt{n}}\frac{2K_{1}^{2}K_{3}\lammax T^{2}}{K_{0}^{4}\lammin^{\frac{7}{2}}}\cdot \Delta_{C}^{3}\cdot \cure.
\end{equation}
On the other hand, similar to \eqref{eq:dij}, by \eqref{eq:piij},
\begin{equation}\label{eq:leave-j-out-err_part2}
|x_{i}^{T}\mathbf{\td{b}_{j}} - x_{i, [j]}^{T}\hat{\beta}_{[j]}|\le \sqrt{\frac{K_{1}}{K_{0}}}\cdot |b_{j}|\cdot \Delta_{C} \le \frac{1}{\sqrt{n}}\cdot \frac{\sqrt{2}K_{1}}{K_{0}^{\frac{3}{2}}\lammin}\cdot \Delta_{C}^{2}\cdot \sqrt{\cure}.
\end{equation}
Therefore, 
\begin{align*}
|R_{i} - r_{i, [j]}|\le \frac{1}{\sqrt{n}}\lb \frac{2K_{1}^{2}K_{3}\lammax T^{2}}{K_{0}^{4}\lammin^{\frac{7}{2}}}\cdot \Delta_{C}^{3}\cdot\cure + \frac{\sqrt{2}K_{1}}{K_{0}^{\frac{3}{2}}\lammin}\cdot\Delta_{C}^{2}\cdot\sqrt{\cure}\rb.
\end{align*}
\end{enumerate}
\end{proof}

\subsection{Summary of Approximation Results}
Under our technical assumptions, we can derive the rate for approximations via Proposition \ref{prop:deterministic}. This justifies all approximations in Appendix \ref{app:heuristic}.
\begin{theorem}\label{thm:mainapprox}
  Under the assumptions \textbf{A}1 - \textbf{A}5, 
  \begin{enumerate}[(i)]
  \item 
\[T\le \lammax = O\lb\polyLog\rb;\]
  \item 
\[\cmax |\hat{\beta}_{j}|\le \|\hat{\beta}\|_{2} = \kbigo{4}{\polyLog};\]
\item
\[\cmax|b_{j}| = \kbigo{2}{\frac{\polyLog}{\sqrt{n}}};\]
\item 
\[\cmax|\hat{\beta}_{j} - b_{j}| = \kbigo{2}{\frac{\polyLog}{n}};\]
\item 
\[\cmax\rmax|R_{i} - r_{i, [j]}| = \kbigo{2}{\frac{\polyLog}{\sqrt{n}}}.\]
  \end{enumerate}
\end{theorem}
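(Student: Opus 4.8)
The plan is to feed the random quantities appearing in Proposition~\ref{prop:deterministic} — namely $\mathcal{E}=\tfrac1n\sum_i\rho(\eps_i)$, $U$, $U_0$, $T$, $\Delta_C$, together with $K_0,K_1,K_2,K_3,\lammax,\lammin$ — into its four deterministic estimates, and to control the relevant moments of each of these quantities using \textbf{A}1--\textbf{A}5. The scalar constants are immediate: \textbf{A}1 gives $K_0=\Omega(1/\polyLog)$, $K_1,K_2=O(\polyLog)$ and hence $K_3=2\sqrt{K_1}K_2=O(\polyLog)$, while \textbf{A}3 gives $\lammax=O(\polyLog)$ and $1/\lammin=O(\polyLog)$; any fixed power or finite product of such quantities stays $O(\polyLog)$. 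For part (i), I would note $\|x_i\|_2^2=e_i^T XX^T e_i\le\lambda_{\max}(XX^T)=\lambda_{\max}(X^TX)=n\lammax$ and similarly $\|X_j\|_2^2\le n\lammax$, whence $T\le\sqrt{\lammax}$, which is $O(\polyLog)$ by \textbf{A}3 (and $\le\lammax$ once $\lammax\ge1$). This also removes the $T$-dependence from the deterministic bounds used in (iv) and (v).

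Next I would extract from \textbf{A}2 the two auxiliary moment bounds on which the rest rests. Since $\psi$ is $K_1$-Lipschitz (from $\psi'\le K_1$) and $\|u_i'\|_\infty\le c_1$, the map $W\mapsto\psi(u_i(W_i))$ is $K_1c_1$-Lipschitz, so by Gaussian concentration the $\eps_i$ and $\psi(\eps_i)$ have sub-gaussian fluctuations with proxy $O(\polyLog)$, hence fixed-order moments of size $O(\polyLog)$, and $\Var(\eps_i)\le c_1^2=O(\polyLog)$ by the Gaussian Poincar\'e inequality. From $\psi'\le K_1$ we get $\rho(x)\le\tfrac{K_1}{2}x^2$, so $\mathcal{E}\le\tfrac{K_1}{2}\cdot\tfrac1n\sum_i\eps_i^2$ and, by Jensen, $\E\mathcal{E}^8\le(\tfrac{K_1}{2})^8\,\tfrac1n\sum_i\E\eps_i^{16}=O(\polyLog)$, i.e.\ $\mathcal{E}=\kbigo{8}{\polyLog}$. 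For $U$, I would use that $W\mapsto U=\tfrac1n\|X^T(v-\E v)\|_2$, with $v=(\psi(\eps_1),\dots,\psi(\eps_n))^T$, is Lipschitz with constant $\tfrac1n\|X^T\|_{\mathrm{op}}K_1c_1=\sqrt{\lammax/n}\,K_1c_1$, so $U-\E U$ is $O(\polyLog/n)$-sub-gaussian; combined with $\E U\le(\E U^2)^{1/2}=\big(\tfrac1{n^2}\sum_i\|x_i\|_2^2\Var(\psi(\eps_i))\big)^{1/2}\le(\tfrac{p\lammax}{n}O(\polyLog))^{1/2}=O(\polyLog)$ (using $\sum_i\|x_i\|_2^2=\tr(X^TX)\le np\lammax$ and $p/n\to\kappa<1$), this yields $U=\kbigo{q}{\polyLog}$ for every fixed $q$. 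Finally $U_0$ is non-random: $U_0=\tfrac1n\|X^Tc\|_2\le\sqrt{\lammax}\,\|c\|_2/\sqrt n$ with $c_i=\E\psi(\eps_i)$ and $|c_i|\le K_1\E|\eps_i|=O(\polyLog)$, so $U_0=O(\polyLog)$.

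Each conclusion then follows by substituting into Proposition~\ref{prop:deterministic} and applying H\"older/Cauchy--Schwarz, with \textbf{A}5 ($\E\Delta_C^8=O(\polyLog)$) absorbing the powers of $\Delta_C$. For (ii): $\cmax|\hat\beta_j|\le\|\hat\beta\|_2\le\tfrac{U+U_0}{K_0\lammin}$, deterministic prefactor $O(\polyLog)$ and $U+U_0=\kbigo{4}{\polyLog}$. For (iii): $\cmax|b_j|\le\tfrac1{\sqrt n}\tfrac{\sqrt{2K_1}}{K_0\lammin}\Delta_C\sqrt{\mathcal{E}}$, and $\E[\Delta_C^2\mathcal{E}]\le(\E\Delta_C^4)^{1/2}(\E\mathcal{E}^2)^{1/2}=O(\polyLog)$. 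For (iv): $\cmax|\hat\beta_j-b_j|\le\tfrac1n\tfrac{2K_1^2K_3\lammax T}{K_0^4\lammin^{7/2}}\Delta_C^3\mathcal{E}$, and $\E[\Delta_C^6\mathcal{E}^2]\le(\E\Delta_C^8)^{3/4}(\E\mathcal{E}^8)^{1/4}=O(\polyLog)$ by H\"older with exponents $(4/3,4)$. For (v): the first summand is handled exactly as in (iv), and for the second, $\E[\Delta_C^4\mathcal{E}]\le(\E\Delta_C^8)^{1/2}(\E\mathcal{E}^2)^{1/2}=O(\polyLog)$.

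The main obstacle is not conceptual — Proposition~\ref{prop:deterministic} does the heavy lifting — but it has two parts needing care. The first is the moment control of $U$, a norm of a sum of independent mean-zero random vectors; I would obtain it cleanly from Gaussian concentration of Lipschitz functions of $W$ rather than from a Rosenthal-type inequality. The second is the bookkeeping of which $L^p$ space each estimate inhabits: (iv) and (v) pair $\Delta_C^6$ (resp.\ $\Delta_C^4$) with powers of $\mathcal{E}$, and because $\mathcal{E}$ is not bounded one must spend the full eighth moment of $\Delta_C$ supplied by \textbf{A}5 — which is exactly why the exponent $8$ appears in that assumption — and correspondingly only $L^2$ (not $L^\infty$) conclusions are available for (iii)--(v), matching the $\kbigo{2}{\cdot}$ in the statement.
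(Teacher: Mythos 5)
Your proposal is correct and follows essentially the same route as the paper's proof: it plugs moment bounds on $\mathcal{E}$, $U$, $U_0$, $T$ and $\Delta_C$ (obtained from \textbf{A}1--\textbf{A}3, \textbf{A}5 and the sub-gaussianity of the $\eps_i$ implied by \textbf{A}2) into the deterministic estimates of Proposition \ref{prop:deterministic} and finishes with the same Cauchy--Schwarz/H\"older pairings, including spending the full eighth moment of $\Delta_C$ in parts (iv)--(v). The only substantive variation is your treatment of $U$, where the paper bounds $\E U^4$ by an explicit expansion exploiting independence and mean-zero cancellation, while you invoke Gaussian concentration of the Lipschitz map $W\mapsto U$ together with a second-moment bound on $\E U$; both arguments are valid under \textbf{A}2 and give the same conclusion.
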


\begin{proof}
  \begin{enumerate}[(i)]
  \item Notice that $X_{j} = Xe_{j}$, where $e_{j}$ is the $j$-th canonical basis vector in $\R^{p}$, we have
\[\frac{\|X_{j}\|^{2}}{n} = e_{j}^{T}\frac{X^{T}X}{n}e_{j}\le \lammax.\]
Similarly, consider the $X^{T}$ instead of $X$, we conclude that 
\[\frac{\|x_{i}\|^{2}}{n} \le \lambda_{\max}\lb\frac{XX^{T}}{n}\rb = \lammax.\]
Recall the definition of $T$ in \eqref{eq:TE}, we conclude that 
\[T \le \sqrt{\lammax} = O\lb\polyLog\rb.\]
  \item Since $\eps_{i} = u_{i}(W_{i})$ with $\|u'_{i}\|_{\infty}\le c_{1}$, the gaussian concentration property (\citeNP{ledoux01}, chapter 1.3) implies that $\eps_{i}$ is $c_{1}^{2}$-sub-gaussian and hence $\E |\eps_{i}|^{k} = O(c_{1}^{k})$ for any finite $k > 0$. By Lemma \ref{lem:psi}, $|\psi(\eps_{i})|\le K_{1}|\eps_{i}|$ and hence for any finite $k$,
\[\E |\psi(\eps_{i})|^{k}\le K_{1}^{k}\E |\eps_{i}|^{k} = O(c_{1}^{k}).\]
By part (i) of Proposition \ref{prop:deterministic}, using the convexity of $x^{4}$ and hence $\lb\frac{a + b}{2}\rb^{4} \le \frac{a^{4} + b^{4}}{2}$, 
\begin{align*}
  \E \|\hat{\beta}\|_{2}^{4}\le \frac{1}{(K_{0}\lammin)^{4}}\E (U + U_{0})^{4}\le \frac{8}{(K_{0}\lammin)^{4}}(\E U^{4} + U_{0}^{4}).
\end{align*}
Recall \eqref{eq:UU0} that $U = \norm{2}{\frac{1}{n}\sum_{i=1}^{n}x_{i}(\psi(\eps_{i}) - \E\psi(\eps_{i}))}$, 
\begin{align*}
&  U^{4} = (U^{2})^{2} = \frac{1}{n^{4}}\lb\sum_{i,i'=1}^{n}x_{i}^{T}x_{i'}(\psi(\eps_{i}) - \E\psi(\eps_{i}))(\psi(\eps_{i'}) - \E\psi(\eps_{i'}))\rb^{2}\\
= & \frac{1}{n^{4}}\lb\sum_{i=1}^{n}\|x_{i}\|_{2}^{2}(\psi(\eps_{i}) - \E\psi(\eps_{i}))^{2} + \sum_{i\not= i'}|x_{i}^{T}x_{i'}|(\psi(\eps_{i}) - \E\psi(\eps_{i}))(\psi(\eps_{i'}) - \E\psi(\eps_{i'}))\rb^{2}\\
= &\frac{1}{n^{4}}\bigg\{\sum_{i=1}^{n}\|x_{i}\|_{2}^{4}(\psi(\eps_{i}) - \E\psi(\eps_{i}))^{4} + \sum_{i\not = i'}(2|x_{i}^{T}x_{i'}|^{2} + \|x_{i}\|_{2}^{2}\|x_{i'}\|_{2}^{2})(\psi(\eps_{i}) - \E\psi(\eps_{i}))^{2}(\psi(\eps_{i'}) - \E\psi(\eps_{i'}))^{2}\\
&+ \sum_{\mbox{others}}|x_{i}^{T}x_{i'}|\cdot|x_{k}^{T}x_{k'}|\cdot(\psi(\eps_{i}) - \E\psi(\eps_{i}))(\psi(\eps_{i'}) - \E\psi(\eps_{i'}))(\psi(\eps_{k}) - \E\psi(\eps_{k}))(\psi(\eps_{k'}) - \E\psi(\eps_{k'}))\bigg\}
\end{align*}
Since $\psi(\eps_{i}) - \E\psi(\eps_{i})$ has a zero mean, we have
\[\E (\psi(\eps_{i}) - \E\psi(\eps_{i}))(\psi(\eps_{i'}) - \E\psi(\eps_{i'}))(\psi(\eps_{k}) - \E\psi(\eps_{k}))(\psi(\eps_{k'}) - \E\psi(\eps_{k'})) = 0\]
for any $(i, i')\not = (k, k')\mbox{ or }(k', k)$ and $i\not= i'$. As a consequence, 
\begin{align*}
\E U^{4} &= \frac{1}{n^{4}}\bigg(\sum_{i=1}^{n}\|x_{i}\|_{2}^{4}\E (\psi(\eps_{i}) - \E \psi(\eps_{i}))^{4}\\
&  + \sum_{i\not =i'}(2|x_{i}^{T}x_{i'}|_{2}^{2} + \|x_{i}\|_{2}^{2}\|x_{i'}\|_{2}^{2})\E (\psi(\eps_{i}) - \E \psi(\eps_{i}))^{2}\E (\psi(\eps_{i'}) - \E \psi(\eps_{i'}))^{2}\bigg)\\
&\le \frac{1}{n^{4}}\lb\sum_{i=1}^{n}\|x_{i}\|_{2}^{4}\E (\psi(\eps_{i}) - \E \psi(\eps_{i}))^{4} + 3\sum_{i\not =i'}\|x_{i}\|_{2}^{2}\|x_{i'}\|_{2}^{2}\E (\psi(\eps_{i}) - \E \psi(\eps_{i}))^{2}\E (\psi(\eps_{i'}) - \E \psi(\eps_{i'}))^{2}\rb.
\end{align*}
For any $i$, using the convexity of $x^{4}$, hence $(\frac{a + b}{2})^{4}\le \frac{a^{4} + b^{4}}{2}$, we have
\[\E (\psi(\eps_{i}) - \E \psi(\eps_{i}))^{4}\le 8\E \lb \psi(\eps_{i})^{4} + (\E \psi(\eps_{i}))^{4}\rb \le 16 \E \psi(\eps_{i})^{4}\le 16\max_{i}\E \psi(\eps_{i})^{4}.\]
By Cauchy-Schwartz inequality,
\[\E(\psi(\eps_{i}) - \E \psi(\eps_{i}))^{2}\le \E \psi(\eps_{i})^{2}\le \sqrt{\E \psi(\eps_{i})^{4}}\le \sqrt{\max_{i}\E \psi(\eps_{i})^{4}}.\]
Recall \eqref{eq:TE} that $\|x_{i}\|_{2}^{2} \le nT^{2}$ and thus,
\begin{align*}
\E U^{4} & \le \frac{1}{n^{4}}\lb 16 n\cdot n^{2}T^{4} + 3n^{2}\cdot n^{2}T^{4}\rb\cdot \max_{i}\E \psi(\eps_{i})^{4}\\
& \le \frac{1}{n^{4}}\cdot (16 n^{3} + 3n^{4})T^{4}\max_{i}\E \psi(\eps_{i})^{4} = O\lb\polyLog\rb.
\end{align*}
On the other hand, let $\mu^{T} = (\E \psi(\eps_{1}), \ldots, \E \psi(\eps_{n}))$, then $\|\mu\|_{2}^{2} = O(n\cdot\polyLog)$ and hence by definition of $U_{0}$ in \eqref{eq:UU0},
\[U_{0} = \frac{\|\mu^{T}X\|_{2}}{n} = \frac{1}{n}\sqrt{\mu^{T}XX^{T}\mu}\le \sqrt{\frac{\|\mu\|_{2}^{2}}{n}\cdot \lammax} = O\lb\polyLog\rb.\]
In summary,
\[\E \|\hat{\beta}\|_{2}^{4} = O\lb\polyLog\rb.\]
\item By mean-value theorem, there exists $a_{x}\in (0, x)$ such that 
\[\rho(x) = \rho(0) + x\psi(0) + \frac{x^{2}}{2}\psi'(a_{x}).\]
By assumption \textbf{A}1 and Lemma \ref{lem:psi}, we have
\[\rho(x) = \frac{x^{2}}{2}\psi'(a_{x})\le \frac{x^{2}}{2}\|\psi'\|_{\infty} \le \frac{K_{3}x^{2}}{2},\]
where $K_{3}$ is defined in Lemma \ref{lem:psi}. As a result, 
\[\E \rho(\eps_{i})^{8} \le \lb\frac{K_{3}}{2}\rb^{8}\E\eps_{i}^{16} = O(c_{1}^{16}).\] 
Recall the definition of $\cure$ in \eqref{eq:TE} and the convexity of $x^{8}$, we have
\begin{equation}\label{eq:cure}
\E \cure^{8} \le \frac{1}{n}\sum_{i=1}^{n}\E \rho(\eps_{i})^{8} = O(c_{1}^{16}) = O\lb\polyLog\rb.
\end{equation}
Under assumption \textbf{A}5, by Cauchy-Schwartz inequality,
\[\E (\Delta_{C}\sqrt{\cure})^{2} = \E \Delta_{C}^{2}\cure\le \sqrt{\E \Delta_{C}^{4}}\cdot \sqrt{\E \cure^{2}} = O\lb\polyLog\rb.\]
Under assumptions \textbf{A}1 and \textbf{A}3, 
\[\frac{\sqrt{2K_{1}}}{K_{0}\lammin} = O\lb\polyLog\rb.\]
Putting all the pieces together, we obtain that
\[\cmax |b_{j}| = \kbigo{2}{\frac{\polyLog}{\sqrt{n}}}.\]
\item Similarly, by Holder's inequality,
\[\E (\Delta_{C}^{3}\cure)^{2} = \E \Delta_{C}^{6}\cure^{2}\le \lb\E \Delta_{C}^{8}\rb^{\frac{3}{4}}\cdot \lb\E \cure^{8}\rb^{\frac{1}{4}} = O\lb\polyLog\rb,\]
and under assumptions \textbf{A}1 and \textbf{A}3,
\[\frac{2K_{1}^{2}K_{3}\lammax T}{K_{0}^{4}\lammin^{\frac{7}{2}}} = O\lb\polyLog\rb.\]
Therefore,
\[\cmax|\hat{\beta}_{j} - b_{j}| = \kbigo{2}{\frac{\polyLog}{n}}.\]
\item It follows from the previous part that 
\[\E (\Delta_{C}^{2}\cdot\sqrt{\cure})^{2} = O\lb\polyLog\rb.\]
Under assumptions \textbf{A}1 and \textbf{A}3, the multiplicative factors are also $O\lb\polyLog\rb$, i.e.
\[\frac{2K_{1}^{2}K_{3}\lammax T^{2}}{K_{0}^{4}\lammin^{\frac{7}{2}}} = O\lb\polyLog\rb, \quad \frac{\sqrt{2}K_{1}}{K_{0}^{\frac{3}{2}}\lammin} = O\lb\polyLog\rb.\]
Therefore, 
\[\cmax\rmax|R_{i} - r_{i, [j]}| = \kbigo{2}{\frac{\polyLog}{\sqrt{n}}}.\]
  \end{enumerate}
\end{proof}

\subsection{Controlling Gradient and Hessian}
\begin{proof}[\textbf{Proof of Lemma \ref{lem:beta_deriv}}]
Recall that $\hat{\beta}$ is the solution of the following equation
\begin{equation}
  \label{eq:first_order_cond}
  \frac{1}{n}\sum_{i=1}^{n}x_{i}\psi(\eps_{i} - x_{i}^{T}\hat{\beta}) = 0.
\end{equation}
Taking derivative of (\ref{eq:first_order_cond}), we have
\[X^{T}D\lb I - X\fd\rb = 0\lra \fd = (\wm)^{-1}X^{T}D.\]
This establishes (\ref{eq:fd}). To establishes (\ref{eq:sd}), note that (\ref{eq:fd}) can be rewritten as
\begin{equation}\label{eq:gradient}
(\wm)\fd = X^{T}D.
\end{equation}
Fix $k\in\{1, \cdots, n\}$. Note that 
\[\frac{\partial R_{i}}{\partial \eps_{k}} = \frac{\partial \eps_{i}}{\partial \eps_{k}} - x_{i}^{T}\frac{\partial \hat{\beta}}{\partial \eps_{k}} = I(i = k) - x_{i}^{T}(X^{T}DX)^{-1}X^{T}D.\]
Recall that $G = I - X(X^{T}DX)^{-1}X^{T}D$, we have
\begin{equation}\label{eq:gradient_Ri}
\frac{\partial R_{i}}{\partial \eps_{k}} = e_{i}^{T}Ge_{k},
\end{equation}
where $e_{i}$ is the $i$-th canonical basis of $\R^{n}$. As a result,
\begin{equation}\label{eq:gradient_D}
\frac{\partial D}{\partial \eps_{k}} = \td{D}\diag(Ge_{k}).
\end{equation}
Taking derivative of \eqref{eq:gradient}, we have
\begin{align*}
&X^{T}\frac{\partial D}{\partial \epsilon_{k}}X\fd + (\wm)\sd{k} = X^{T}\frac{\partial D}{\partial \epsilon_{k}}\\
\lra & \sd{k} = (\wm)^{-1}X^{T}\frac{\partial D}{\partial \epsilon_{k}}\lb I - X(\wm)^{-1}X^{T}D\rb\\
\lra & \sd{k} = (\wm)^{-1}X^{T} \tilde{D}\diag(Ge_{k})G,
\end{align*}
where $G = I - X(\wm)^{-1}X^{T}D$ is defined in \eqref{eq:GGj} in p.\pageref{eq:GGj}. Then for each $j\in\{1, \cdots, p\}$ and $k\in \{1, \ldots, n\}$,
\[\Sd{k}{j} = e_{j}^{T}(\wm)^{-1}X^{T}\tilde{D}\diag(Ge_{k})G = e_{k}^{T}G^{T}\diag(e_{j}^{T}(\wm)^{-1}X^{T}\tilde{D})G\]
where we use the fact that $a^{T}\diag(b) = b^{T}\diag(a)$ for any vectors $a, b$. This implies that
\[\SD{j} = G^{T}\diag(e_{j}^{T}(\wm)^{-1}X^{T}\tilde{D})G\]
\end{proof}

\begin{proof}[\textbf{Proof of Lemma \ref{lem:beta_deriv_bound}}]
Throughout the proof we are using the simple fact that $\norm{\infty}{a}\le \norm{2}{a}$. Based on it, we found that
\begin{align}
&\norm{\infty}{e_{j}^{T}(\wm)^{-1}X^{T}D^{\frac{1}{2}}}\le \norm{2}{e_{j}^{T}(\wm)^{-1}X^{T}D^{\frac{1}{2}}}\nonumber\\
= & \sqrt{e_{j}^{T}(\wm)^{-1}\wm(\wm)^{-1}e_{j}}\nonumber\\ 
= & \sqrt{e_{j}^{T}(\wm)^{-1}e_{j}} \le  \frac{1}{(nK_{0}\lammin)^{\frac{1}{2}}}.\label{eq:2_norm}
\end{align}
Thus for any $m > 1$, recall that $M_{j} = \E \norm{\infty}{e_{j}^{T}(\wm)^{-1}X^{T}D^{\frac{1}{2}}}$, 
\begin{align}
&\E \norm {\infty}{e_{j}^{T}(\wm)^{-1}X^{T}D^{\frac{1}{2}}}^{m}\nonumber\\
\le & \E \norm {\infty}{e_{j}^{T}(\wm)^{-1}X^{T}D^{\frac{1}{2}}}\cdot \norm {2}{e_{j}^{T}(\wm)^{-1}X^{T}D^{\frac{1}{2}}}^{m - 1}\nonumber\\
\le & \frac{M_{j}}{(nK_{0}\lammin)^{\frac{m - 1}{2}}}.\label{eq:Mjmoment}
\end{align}
We should emphasize that we cannot use the naive bound that
\begin{align}
&\E \norm {\infty}{e_{j}^{T}(\wm)^{-1}X^{T}D^{\frac{1}{2}}}^{m} \le \E \norm {2}{e_{j}^{T}(\wm)^{-1}X^{T}D^{\frac{1}{2}}}^{m}\le \frac{1}{(nK_{0}\lammin)^{\frac{m}{2}}},\label{eq:naive_bound}\\
& \lra \norm {\infty}{e_{j}^{T}(\wm)^{-1}X^{T}D^{\frac{1}{2}}} = \kbigo{m}{\frac{\polyLog}{\sqrt{n}}}\nonumber
\end{align}
since it fails to guarantee the convergence of TV distance. We will address this issue after deriving Lemma \ref{lem:tv_bound}. 

By contrast, as proved below, 
\begin{equation}\label{eq:inf_norm}
\norm {\infty}{e_{j}^{T}(\wm)^{-1}X^{T}D^{\frac{1}{2}}} = O_{p}(M_{j}) = O_{p}\lb\frac{\polyLog}{n}\rb <\!\!< \frac{1}{\sqrt{nK_{0}\lammin}}.
\end{equation}
Thus \eqref{eq:Mjmoment} produces a slightly tighter bound 
\[\norm {\infty}{e_{j}^{T}(\wm)^{-1}X^{T}D^{\frac{1}{2}}} = \kbigo{m}{\frac{\polyLog}{n^{\frac{m + 1}{2m}}}}.\]
It turns out that the above bound suffices to prove the convergence. Although (\ref{eq:inf_norm}) implies the possibility to sharpen the bound from $n^{-\frac{m + 1}{2m}}$ to $n^{-1}$ using refined analysis, we do not explore this to avoid extra conditions and notation.

~\\
\noindent $\bullet$ \textbf{Bound for $\kappa_{0j}$}

First we derive a bound for $\kappa_{0j}$. By definition, 
\[\kappa_{0j}^{2} = \E \norm{4}{\pd{\hat{\beta}_{j}}{\eps^{T}}}^{4}\le \E\lb\norm{\infty}{\pd{\hat{\beta}_{j}}{\eps^{T}}}^{2}\cdot \norm{2}{\pd{\hat{\beta}_{j}}{\eps^{T}}}^{2}\rb.\]
By Lemma \ref{lem:beta_deriv} and \eqref{eq:Mjmoment} with $m = 2$, 
\begin{align*}
\E\norm{\infty}{\pd{\hat{\beta}_{j}}{\eps^{T}}}^{2} \le \E\norm{\infty}{e_{j}^{T}(\wm)^{-1}X^{T}D^{\frac{1}{2}}}^{2}\cdot K_{1} = \frac{K_{1}M_{j}}{(nK_{0}\lammin)^{\frac{1}{2}}}.
\end{align*}
On the other hand, it follows from \eqref{eq:2_norm} that
\begin{align}
\norm{2}{\pd{\hat{\beta}_{j}}{\eps^{T}}}^{2} &= \norm{2}{e_{j}^{T}(\wm)^{-1}X^{T}D}^{2} \le K_{1}\cdot \norm{2}{e_{j}^{T}(\wm)^{-1}X^{T}D^{\frac{1}{2}}}^{2} \le \frac{K_{1}}{nK_{0}\lammin}.\label{eq:2_norm_sq}
\end{align}
Putting the above two bounds together we have
\begin{equation}\label{eq:kappa0j}
\kappa_{0j}^{2}\le \frac{K_{1}^{2}}{(nK_{0}\lammin)^{\frac{3}{2}}}\cdot M_{j}.
\end{equation}

~\\
\noindent $\bullet$ \textbf{Bound for $\kappa_{1j}$}

As a by-product of \eqref{eq:2_norm_sq}, we obtain that
\begin{align}
\kappa_{1j}^{4} & = \E \norm{2}{\pd{\hat{\beta}_{j}}{\eps^{T}}}^{4} \le \frac{K_{1}^{2}}{(nK_{0}\lammin)^{2}}.\label{eq:kappa1j}
\end{align}

~\\
\noindent $\bullet$ \textbf{Bound for $\kappa_{2j}$}

Finally, we derive a bound for $\kappa_{2j}$. By Lemma \ref{lem:beta_deriv}, $\kappa_{2j}$ involves the operator norm of a symmetric matrix with form $G^{T}MG$ where $M$ is a diagonal matrix. Then by the triangle inequality, 
\[\norm{op}{G^{T}MG}\le \|M\|_{\mathrm{op}}\cdot\norm{op}{G^{T}G} = \|M\|_{\mathrm{op}}\cdot\norm{op}{G}^{2}.\]
Note that 
\[D^{\frac{1}{2}}GD^{-\frac{1}{2}} = I - D^{\frac{1}{2}}X(\wm)^{-1}X^{T}D^{\frac{1}{2}}\]
is a projection matrix, which is idempotent. This implies that
\[\norm{op}{D^{\frac{1}{2}}GD^{-\frac{1}{2}}} = \lambda_{\mathrm{max}}\lb D^{\frac{1}{2}}GD^{-\frac{1}{2}}\rb\le 1.\]
Write $G$ as $D^{-\frac{1}{2}}(D^{\frac{1}{2}}GD^{-\frac{1}{2}})D^{\frac{1}{2}}$, then we have
\[\norm{op}{G} \le \norm{op}{D^{-\frac{1}{2}}}\cdot \norm{op}{D^{\frac{1}{2}}GD^{-\frac{1}{2}}}\cdot \norm{op}{D^{\frac{1}{2}}}\le \sqrt{\frac{K_{1}}{K_{0}}}.\]
Returning to $\kappa_{2j}$, we obtain that
\begin{align*}
 \kappa_{2j}^{4} & = \E \norm{op}{G^{T}\diag(e_{j}^{T}(\wm)^{-1}X^{T}\td{D})G}^{4} \\
& \le \E \lb\norm{\infty}{e_{j}^{T}(\wm)^{-1}X^{T}\td{D}}^{4}\cdot \norm{op}{G}^{8}\rb\\
& \le \E \lb\norm{\infty}{e_{j}^{T}(\wm)^{-1}X^{T}\td{D}}^{4}\rb \lb\frac{K_{1}}{K_{0}}\rb^{4}\\
& = \E \lb\norm{\infty}{e_{j}^{T}(\wm)^{-1}X^{T}D^{\frac{1}{2}}D^{-\frac{1}{2}}\td{D}}^{4}\rb\cdot \lb\frac{K_{1}}{K_{0}}\rb^{4}
\end{align*}
Assumption \textbf{A}1 implies that 
\[\forall i, \,\, \frac{|\psi''(R_{i})|}{\sqrt{\psi'(R_{i})}} \le K_{2}\mbox{ \& hence } \|D^{-\frac{1}{2}}\td{D}\|_{\mathrm{op}}\le K_{2}.\]
Therefore,
\[\norm{\infty}{e_{j}^{T}(\wm)^{-1}X^{T}D^{\frac{1}{2}}D^{-\frac{1}{2}}\td{D}}^{4}\le K_{2}^{4}\cdot\norm{\infty}{e_{j}^{T}(\wm)^{-1}X^{T}D^{\frac{1}{2}}}^{4}.\]
By \eqref{eq:Mjmoment} with $m = 4$,
\begin{align}
 \kappa_{2j}^{4} & \le \frac{K_{2}^{4}}{(n\lammin)^{\frac{3}{2}}}\cdot \lb\frac{K_{1}}{K_{0}}\rb^{4}\cdot M_{j}.\label{eq:kappa2j}
\end{align}
\end{proof}

\begin{proof}[\textbf{Proof of Lemma \ref{lem:tv_bound}}]
By Theorem \ref{thm:mainapprox}, for any $j$, 
\[\E \hat{\beta}_{j}^{4} \le \E \|\hat{\beta}\|_{2}^{4} < \infty.\]
Then using the \SOPI (Proposition \ref{prop:sopi}), 
\begin{align*}
&\cmax d_{TV}\lb\mathcal{L}\lb\frac{\hat{\beta}_{j} - \E \hat{\beta}_{j}}{\sqrt{\Var(\hat{\beta}_{j})}}\rb, N(0, 1)\rb = O\lb\frac{c_{1}c_{2}\kappa_{0j}+ c_{1}^{3}\kappa_{1j}\kappa_{2j}}{\Var(\hat{\beta}_{j})}\rb \\
= &O\lb \frac{\frac{M_{j}^{\frac{1}{2}}}{n^{\frac{3}{4}}} + \frac{M_{j}^{\frac{1}{4}}}{n^{\frac{7}{8}}}}{\Var(\hat{\beta}_{j})}\cdot \polyLog\rb =  O\lb \frac{(nM_{j}^{2})^{\frac{1}{4}} + (nM_{j}^{2})^{\frac{1}{8}}}{n\Var(\hat{\beta}_{j})}\cdot \polyLog\rb.
\end{align*}
It follows from \eqref{eq:2_norm} that $nM_{j}^{2} = O\lb\polyLog\rb$ and the above bound can be simplified as
\[\cmax d_{TV}\lb\mathcal{L}\lb\frac{\hat{\beta}_{j} - \E \hat{\beta}_{j}}{\sqrt{\Var(\hat{\beta}_{j})}}\rb, N(0, 1)\rb = O\lb \frac{(nM_{j}^{2})^{\frac{1}{8}}}{n\Var(\hat{\beta}_{j})}\cdot \polyLog\rb.\]
\end{proof}
\begin{remark}
  If we use the naive bound \eqref{eq:naive_bound}, by repeating the above derivation, we obtain a worse bound for $\kappa_{0, j} = O(\frac{\polyLog}{n})$ and $\kappa_{2} = O(\frac{\polyLog}{\sqrt{n}})$, in which case,
\[\cmax d_{TV}\lb\mathcal{L}\lb\frac{\hat{\beta}_{j} - \E \hat{\beta}_{j}}{\sqrt{\Var(\hat{\beta}_{j})}}\rb, N(0, 1)\rb = O\lb\frac{\polyLog}{n\Var(\hat{\beta}_{j})}\rb.\]
However, we can only prove that  $\Var(\hat{\beta}_{j}) = \Omega(\frac{1}{n})$. Without the numerator $(nM_{j}^{2})^{\frac{1}{8}}$, which will be shown to be $O(n^{-\frac{1}{8}}\polyLog)$ in the next subsection, the convergence cannot be proved.
\end{remark}

\subsection{Upper Bound of $M_{j}$}
As mentioned in Appendix \ref{app:heuristic}, we should approximate $D$ by $D_{[j]}$ to remove the functional dependence on $X_{j}$ . To achieve this, we introduce two terms, $M_{j}^{(1)}$ and $M_{j}^{(2)}$, defined as
\[M_{j}^{(1)} = \E (\| e_{j}^{T}(\wm)^{-1}X^{T} D_{[j]}^{\frac{1}{2}}\| _{\infty}), \quad M_{j}^{(2)} = \E (\| e_{j}^{T}(X^{T}D_{[j]}X)^{-1}X^{T} D_{[j]}^{\frac{1}{2}}\| _{\infty}).\]
We will first prove that both $|M_{j} - M_{j}^{(1)}|$ and $|M_{j}^{(1)} - M_{j}^{(2)}|$ are negligible and then derive an upper bound for $M_{j}^{(2)}$. 
\subsubsection{Controlling $|M_{j} - M_{j}^{(1)}|$}
By Lemma \ref{lem:psi}, 
\[\| D^{\frac{1}{2}} - D_{[j]}^{\frac{1}{2}}\| _{\infty} \le K_{2}\max_{i}|R_{i} - r_{i, [j]}| \triangleq K_{2}\mathcal{R}_{j},\]
and by Theorem \ref{thm:mainapprox}, 
\[\sqrt{\E\mathcal{R}_{j}^{2}} = O\lb\frac{\polyLog}{\sqrt{n}}\rb.\]
Then we can bound $|M_{j} - M_{j}^{(1)}|$ via the fact that $\norm{\infty}{a}\le \norm{2}{a}$ and algebra as follows.
\begin{align*}
|M_{j} - M_{j}^{(1)}|&\le \E (\|  e_{j}^{T}(\wm)^{-1}X^{T} (D^{\frac{1}{2}} - D_{[j]}^{\frac{1}{2}})\| _{\infty} )\\
&\le \E (\|  e_{j}^{T}(\wm)^{-1}X^{T} (D^{\frac{1}{2}} - D_{[j]}^{\frac{1}{2}})\| _{2} )\\
&\le \sqrt{\E (\|  e_{j}^{T}(\wm)^{-1}X^{T} (D^{\frac{1}{2}} - D_{[j]}^{\frac{1}{2}})\| _{2}^{2} )}\\
& = \sqrt{\E ( e_{j}^{T}(\wm)^{-1}X^{T} (D^{\frac{1}{2}} - D_{[j]}^{\frac{1}{2}})^{2} X(\wm)^{-1} e_{j})}.
\end{align*}
By Lemma \ref{lem:psi}, 
\[|\sqrt{\psi'(R_{i})} - \sqrt{\psi'(r_{i, [j]})}|\le K_{2}|R_{i} - r_{i, [j]}|\le K_{2}\mathcal{R}_{j},\]
thus 
\[(D^{\frac{1}{2}} - D_{[j]}^{\frac{1}{2}})^{2}\preceq K_{2}^{2}\mathcal{R}_{j}^{2}I\preceq \frac{K_{2}^{2}}{K_{0}}\mathcal{R}_{j}^{2}D.\]
This entails that 
\begin{align*}
|M_{j} - M_{j}^{(1)}| & \le K_{2}K_{0}^{-\frac{1}{2}}\sqrt{\E (\mathcal{R}_{j}^{2}\cdot e_{j}^{T}(\wm)^{-1}\wm(\wm)^{-1} e_{j})}\\
& = K_{2}K_{0}^{-\frac{1}{2}}\sqrt{\E (\mathcal{R}_{j}^{2}\cdot e_{j}^{T}(\wm)^{-1} e_{j})}\\
&\le \frac{K_{2}}{\sqrt{n}K_{0}\sqrt{\lammin}}\sqrt{\E (\mathcal{R}_{j}^{2})} = O\lb\frac{\polyLog}{n}\rb.
\end{align*}

\subsubsection{Bound of $|M_{j}^{(1)} - M_{j}^{(2)}|$}
First we prove a useful lemma.
\begin{lemma}\label{lem:I_plus_M_inv}
For any symmetric matrix $N$ with $\|N\|_{\mathrm{op}} <  1$,
\[(I - (I + N)^{-1})^{2}\preceq \frac{N^{2}}{(1 - \|N\|_{\mathrm{op}})^{2}}.\]
\end{lemma}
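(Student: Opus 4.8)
The plan is to exploit that $N$ is symmetric, hence simultaneously diagonalizable with every rational function of $N$, so that the whole matrix inequality collapses to a scalar statement about eigenvalues. Concretely, since $N$ is symmetric with $\|N\|_{\mathrm{op}} < 1$, each eigenvalue $\lambda$ of $N$ lies in $(-1,1)$, so $I+N$ is invertible with eigenvalues $1+\lambda \in (0,2)$, and in particular every eigenvalue of $I+N$ is at least $1 - \|N\|_{\mathrm{op}} > 0$.

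Next I would record the algebraic identity
\[I - (I+N)^{-1} = (I+N)^{-1}N,\]
which holds because $(I+N)^{-1}N = (I+N)^{-1}\bigl((I+N) - I\bigr) = I - (I+N)^{-1}$. Since $(I+N)^{-1}$ and $N$ are both functions of the symmetric matrix $N$, they commute, and squaring the identity gives
\[\bigl(I - (I+N)^{-1}\bigr)^2 = (I+N)^{-2}N^2.\]
From the eigenvalue bound in the first step, $(I+N)^{-2} \preceq (1 - \|N\|_{\mathrm{op}})^{-2}\,I$.

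Finally I would multiply this last operator inequality by $N^2$: because $(I+N)^{-2}$ and $N^2$ commute and $N^2 \succeq 0$, an inequality $A \preceq cI$ with $A$ commuting with a positive semidefinite $B$ yields $AB \preceq cB$ (immediate in the joint eigenbasis of $N$, where $A B$ has eigenvalues $a_i b_i \le c b_i$). Hence
\[\bigl(I - (I+N)^{-1}\bigr)^2 = (I+N)^{-2}N^2 \preceq \frac{N^2}{(1 - \|N\|_{\mathrm{op}})^2},\]
which is the claim.

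The argument is essentially routine and I do not expect a genuine obstacle. The only point that requires a moment's care is the last step: an operator inequality $A \preceq cI$ is \emph{not} in general preserved under left- or right-multiplication by a positive semidefinite matrix, so one must invoke the commutativity of $(I+N)^{-2}$ and $N^2$ (equivalently, diagonalize $N$ first) rather than appeal to a generic monotonicity principle. Everything else is just the spectral mapping theorem applied to the symmetric matrix $N$.
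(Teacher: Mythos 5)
Your proof is correct and follows essentially the same route as the paper's: the identity $I-(I+N)^{-1}=N(I+N)^{-1}$ combined with the spectral bound $(I+N)^{-2}\preceq (1-\|N\|_{\mathrm{op}})^{-2}I$. The only cosmetic difference is the final step: the paper keeps the square in the sandwich form $N(I+N)^{-2}N$, so the conclusion follows from conjugation-invariance of the Loewner order ($A\preceq cI$ implies $NAN\preceq cN^{2}$) without needing commutativity, whereas you commute the factors and argue in the joint eigenbasis; both justifications are valid.
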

\begin{proof}
First, notice that
\[I - (I + N)^{-1} = (I + N - I)(I + N)^{-1} = N(I + N)^{-1},\]
and therefore
\[(I - (I + N)^{-1})^{2} = N(I + N)^{-2}N.\]
Since $\|N\|_{\mathrm{op}} < 1$, $I + N$ is positive semi-definite and 
\[(I + N)^{-2}\preceq \frac{1}{(1 - \|N\|_{\mathrm{op}})^{2}}I.\]
Therefore,
\[N(I + N)^{-2}N\preceq \frac{N^{2}}{(1 - \|N\|_{\mathrm{op}})^{2}}.\]
\end{proof}

We now back to bounding $|M_{j}^{(1)} - M_{j}^{(2)}|$. Let $A_{j} = X^{T}D_{[j]}X$, $B_{j} = X^{T}(D - D_{[j]})X$. By Lemma \ref{lem:psi}, 
\[\|D - D_{[j]}\|_{\infty}\le K_{3}\max_{i}|R_{i} - r_{i, [j]}| = K_{3}\mathcal{R}_{j}\]
and hence 
\[\|B_{j}\|_{\mathrm{op}}\le K_{3}\mathcal{R}_{j}\cdot n\lammax I\triangleq n\eta_{j}.\]
where $\eta_{j} = K_{3}\lammax\cdot \mathcal{R}_{j}$. Then by Theorem \ref{thm:mainapprox}.(v),
\[\E(\eta_{j}^{2}) = O\lb\frac{\polyLog}{n}\rb.\]
Using the fact that $\norm{\infty}{a}\le \norm{2}{a}$, we obtain that
\begin{align*}
|M_{j}^{(1)} - M_{j}^{(2)}| & \le \E (\|  e_{j}^{T}A_{j}^{-1}X^{T}D_{[j]}^{\frac{1}{2}} -  e_{j}^{T}(A_{j} + B_{j})^{-1}X^{T}D_{[j]}^{\frac{1}{2}}\| _{\infty} )\\
& \le \sqrt{\E (\|  e_{j}^{T}A_{j}^{-1}X^{T}D_{[j]}^{\frac{1}{2}} -  e_{j}^{T}(A_{j} + B_{j})^{-1}X^{T}D_{[j]}^{\frac{1}{2}}\|_{2}^{2} )}\\
& = \sqrt{\E \left[e_{j}^{T}(A_{j}^{-1} - (A_{j} + B_{j})^{-1})X^{T}D_{[j]}X(A_{j}^{-1} - (A_{j} + B_{j})^{-1}) e_{j}\right]}\\
& = \sqrt{\E  \left[e_{j}^{T}(A_{j}^{-1} - (A_{j} + B_{j})^{-1})A_{j}(A_{j}^{-1} - (A_{j} + B_{j})^{-1}) e_{j}\right]}
\end{align*}
The inner matrix can be rewritten as
\begin{align}
&(A_{j}^{-1} - (A_{j} + B_{j})^{-1})A_{j}(A_{j}^{-1} - (A_{j} + B_{j})^{-1})\nonumber\\
=& A_{j}^{-\frac{1}{2}}(I - (I + A_{j}^{-\frac{1}{2}}B_{j}A_{j}^{-\frac{1}{2}})^{-1})A_{j}^{-\frac{1}{2}}A_{j}A_{j}^{-\frac{1}{2}}(I - (I + A_{j}^{-\frac{1}{2}}B_{j}A_{j}^{-\frac{1}{2}})^{-1})A_{j}^{-\frac{1}{2}}\nonumber\\
=& A_{j}^{-\frac{1}{2}}(I - (I + A_{j}^{-\frac{1}{2}}B_{j}A_{j}^{-\frac{1}{2}})^{-1})^{2}A_{j}^{-\frac{1}{2}}.\label{eq:matrix_game}
\end{align}
Let $N_{j} = A_{j}^{-\frac{1}{2}}B_{j}A_{j}^{-\frac{1}{2}}$, then 
\[\|N_{j}\|_{\mathrm{op}}\le \|A_{j}^{-\frac{1}{2}}\|_{\mathrm{op}}\cdot \|B_{j}\|_{\mathrm{op}}\cdot \|A_{j}^{-\frac{1}{2}}\|_{\mathrm{op}}\le (nK_{0}\lammin)^{-\frac{1}{2}}\cdot n\eta_{j}\cdot (nK_{0}\lammin)^{-\frac{1}{2}} = \frac{\eta_{j}}{K_{0}\lammin}.\]

\noindent On the event $\{\eta_{j} \le \frac{1}{2}K_{0}\lammin\}$, $\|N_{j}\|_{\mathrm{op}}\le \frac{1}{2}$. By Lemma \ref{lem:I_plus_M_inv},
\[(I - (I + N_{j})^{-1})^{2}\preceq 4N_{j}^{2}.\]
This together with \eqref{eq:matrix_game} entails that 
\begin{align*}
&e_{j}^{T}(A_{j}^{-1} - (A_{j} + B_{j})^{-1})A_{j}(A_{j}^{-1} - (A_{j} + B_{j})^{-1}) e_{j} = e_{j}^{T}A_{j}^{-\frac{1}{2}}(I - (I + N_{j})^{-1})^{2}A_{j}^{-\frac{1}{2}}e_{j}\\
\le & 4e_{j}^{T}A_{j}^{-\frac{1}{2}}N_{j}^{2}A_{j}^{-\frac{1}{2}}e_{j} = e_{j}^{T}A_{j}^{-1}B_{j}A_{j}^{-1}B_{j}A_{j}^{-1}e_{j} \le \|A_{j}^{-1}B_{j}A_{j}^{-1}B_{j}A_{j}^{-1}\|_{\mathrm{op}}.
\end{align*}
Since $A_{j}\succeq nK_{0}\lammin I$, and $\|B_{j}\|_{\mathrm{op}}\le  n\eta_{j}$, we have
\[\|A_{j}^{-1}B_{j}A_{j}^{-1}B_{j}A_{j}^{-1}\|_{\mathrm{op}}\le \|A_{j}^{-1}\|_{\mathrm{op}}^{3} \cdot \|B_{j}\|_{\mathrm{op}}^{2}\le \frac{1}{n}\cdot \frac{1}{(K_{0}\lammin)^{3}}\cdot \eta_{j}^{2}.\]
Thus, 
\begin{align*}
&\E\left[ e_{j}^{T}(A_{j}^{-1} - (A_{j} + B_{j})^{-1})A_{j}(A_{j}^{-1} - (A_{j} + B_{j})^{-1}) e_{j} \cdot I\lb\eta_{j}\le \frac{K_{0}\lammin}{2}\rb\right]\\
\le & \E \left[e_{j}^{T}A_{j}^{-1}B_{j}A_{j}^{-1}B_{j}A_{j}^{-1}e_{j}\right] \le \frac{1}{n}\cdot \frac{1}{(K_{0}\lammin)^{3}}\cdot \E \eta_{j}^{2} = O\lb\frac{\polyLog}{n^{2}}\rb.
\end{align*}

\noindent On the event $\{\eta_{j} > \frac{1}{2}K_{0}\lammin\}$, since $nK_{0}\lammin I \preceq A_{j}\preceq nK_{1}\lammax I$ and $A_{j} + B_{j}\succeq nK_{0}\lammin I$, 
\begin{align*}
&|e_{j}^{T}(A_{j}^{-1} - (A_{j} + B_{j})^{-1})A_{j}(A_{j}^{-1} - (A_{j} + B_{j})^{-1}) e_{j}|\\
\le & nK_{1}\lammax\cdot |e_{j}^{T}(A_{j}^{-1} - (A_{j} + B_{j})^{-1})^{2} e_{j}|\\
\le & nK_{1}\lammax\cdot \lb 2|e_{j}^{T}A_{j}^{-2}e_{j}| + 2|e_{j}^{T}(A_{j} + B_{j})^{-2}e_{j}|\rb\\
\le & \frac{4nK_{1}\lammax}{(nK_{0}\lammin)^{2}} = \frac{1}{n}\cdot \frac{4K_{1}\lammax}{(K_{0}\lammin)^{2}}.
\end{align*}
This together with Markov inequality implies htat 
\begin{align*}
&\E\left[ e_{j}^{T}(A_{j}^{-1} - (A_{j} + B_{j})^{-1})A_{j}(A_{j}^{-1} - (A_{j} + B_{j})^{-1}) e_{j} \cdot I\lb\eta_{j} > \frac{K_{0}\lammin}{2}\rb\right]\\
\le & \frac{1}{n}\cdot \frac{4K_{1}\lammax}{(K_{0}\lammin)^{2}} \cdot P\lb\eta_{j} > \frac{K_{0}\lammin}{2}\rb\\
\le & \frac{1}{n}\cdot \frac{4K_{1}\lammax}{(K_{0}\lammin)^{2}} \cdot \frac{4}{(K_{0}\lammin)^{2}}\cdot \E \eta_{j}^{2}\\
= & O\lb\frac{\polyLog}{n^{2}}\rb.
\end{align*}
Putting pieces together, we conclude that
\begin{align*}
|M_{j}^{(1)} - M_{j}^{(2)}| & \le \sqrt{\E\left[ e_{j}^{T}(A_{j}^{-1} - (A_{j} + B_{j})^{-1})A_{j}(A_{j}^{-1} - (A_{j} + B_{j})^{-1}) e_{j}\right]}\\
& \le \sqrt{\E\left[ e_{j}^{T}(A_{j}^{-1} - (A_{j} + B_{j})^{-1})A_{j}(A_{j}^{-1} - (A_{j} + B_{j})^{-1}) e_{j} \cdot I\lb\eta_{j} > \frac{K_{0}\lammin}{2}\rb\right]}\\
& \quad + \sqrt{\E\left[ e_{j}^{T}(A_{j}^{-1} - (A_{j} + B_{j})^{-1})A_{j}(A_{j}^{-1} - (A_{j} + B_{j})^{-1}) e_{j} \cdot I\lb\eta_{j} \le \frac{K_{0}\lammin}{2}\rb\right]}\\
= & O\lb\frac{\polyLog}{n}\rb.
\end{align*}

\subsubsection{Bound of $M_{j}^{(2)}$}
Similar to \eqref{eq:M1}, by block matrix inversion formula (See Proposition \ref{prop:block_inv}),
\[e_{j}^{T}(X^{T}D_{[j]}X)^{-1}X^{T}D_{[j]}^{\frac{1}{2}} = \frac{X_{j}^{T}D_{[j]}^{\frac{1}{2}}(I - H_{j})}{X_{j}^{T}D_{[j]}^{\frac{1}{2}}(I - H_{j})D_{[j]}^{\frac{1}{2}} X_{j}},\]
where $H_{j} = D_{[j]}^{\frac{1}{2}} X_{[j]} (\wmj)^{-1}X_{[j]}^{T}D_{[j]}^{\frac{1}{2}}$. Recall that $\xi_{j}\ge K_{0}\lammin$ by (\ref{eq:xij}), so we have
\[X_{j}^{T}D_{[j]}^{\frac{1}{2}}(I - H_{j})D_{[j]}^{\frac{1}{2}} X_{j} = n\xi_{j}\ge n\lammin.\]
As for the numerator, recalling the definition of $h_{j, 1, i}$, we obtain that
\begin{align*}
\| X_{j}^{T}D_{[j]}^{\frac{1}{2}}(I - H_{j})\|_{\infty} &= \norm{\infty}{\frac{1}{n} X_{j}^{T}(I - D_{[j]}X_{[j]}(\wmj)^{-1}X_{[j]})\cdot D_{[j]}^{\frac{1}{2}}}\\
&\le \sqrt{K_{1}}\cdot\norm{\infty}{\frac{1}{n} X_{j}^{T}(I - D_{[j]}X_{[j]}(\wmj)^{-1}X_{[j]})}\\
& = \sqrt{K_{1}}\max_{i}\big|h_{j, 1, i}^{T}X_{j}\big| \le \sqrt{K_{1}}\Delta_{C}\max_{i}\| h_{j, 1, i}\|_{2}.
\end{align*}
As proved in \eqref{eq:hj1i}, 
\[\rmax \|h_{j, 1, i}\|_{2}\le \lb\frac{K_{1}}{K_{0}}\rb^{\frac{1}{2}}.\]
This entails that
\[\| X_{j}^{T}D_{[j]}^{\frac{1}{2}}(I - H_{j})\|_{\infty}\le \frac{K_{1}}{\sqrt{K_{0}}}\cdot \Delta_{C} = \kbigo{1}{\polyLog}.\]
Putting the pieces together we conclude that
\begin{align*}
M_{j}^{(2)}\le & \frac{\E\| X_{j}^{T}D_{[j]}^{\frac{1}{2}}(I - H_{j})\| _{\infty}}{n\lammin} = O\lb\frac{\polyLog}{n}\rb.
\end{align*}

\subsubsection{Summary}
Based on results from Section B.5.1 - Section B.5.3, we have
\[M_{j} = O\lb\frac{\polyLog}{n}\rb.\]
Note that the bounds we obtained do not depend on $j$, so we conclude that
\[\cmax M_{j} = O\lb\frac{\polyLog}{n}\rb.\]

\subsection{Lower Bound of $\Var(\hat{\beta}_{j})$}

\subsubsection{Approximating $\Var(\hat{\beta}_{j})$ by $\Var(b_{j})$}
By Theorem \ref{thm:mainapprox}, 
\[\max_{j}\E (\hat{\beta}_{j} - b_{j})^{2} = O\lb\frac{\polyLog}{n^{2}}\rb, \quad \max_{j} \E b_{j}^{2} = O\lb\frac{\polyLog}{n}\rb.\]
Using the fact that 
\[\hat{\beta}_{j}^{2} - b_{j}^{2} = (\hat{\beta}_{j} - b_{j} + b_{j})^{2} - b_{j}^{2} = (\hat{\beta}_{j} - b_{j})^{2} + 2(\hat{\beta}_{j} - b_{j})b_{j},\]
we can bound the difference between $\E\hat{\beta}_{j}^{2}$ and $\E b_{j}^{2}$ by
\begin{align*}
\big|\E \hat{\beta}_{j}^{2} - \E b_{j}^{2}\big| & = \E (\hat{\beta}_{j} - b_{j})^{2} + 2|\E (\hat{\beta}_{j} - b_{j})b_{j}| \le \E (\hat{\beta}_{j} - b_{j})^{2} + 2\sqrt{\E (\hat{\beta}_{j} - b_{j})^{2}}\sqrt{\E b_{j}^{2}} = O\lb\frac{\polyLog}{n^{\frac{3}{2}}}\rb.
\end{align*}
Similarly, since $|a^{2} - b^{2}| = |a - b|\cdot |a + b|\le |a - b|(|a - b| + 2|b|)$, 
\begin{align*}
|(\E \hat{\beta}_{j})^{2} - (\E b_{j})^{2}| & \le \E |\hat{\beta}_{j} - b_{j}|\cdot \lb\E |\hat{\beta}_{j} - b_{j}| + 2 \E |b_{j}| \rb = O\lb\frac{\polyLog}{n^{\frac{3}{2}}}\rb.
\end{align*}
Putting the above two results together, we conclude that
\begin{equation}\label{eq:approxbetajbj}
\big|\Var(\hat{\beta}_{j}) - \Var(b_{j})\big| = O\lb\frac{\polyLog}{n^{\frac{3}{2}}}\rb.
\end{equation}
Then it is left to show that
\[\Var(b_{j}) = \Omega\lb\frac{1}{n\cdot \polyLog}\rb.\]

\subsubsection{Controlling $\Var(b_{j})$ by $\Var(N_{j})$}
Recall that
\[b_{j} = \frac{1}{\sqrt{n}}\frac{N_{j}}{\xi_{j}}\]
where 
\[N_{j} = \frac{1}{\sqrt{n}}\sum_{i=1}^{n}X_{ij}\psi(r_{i, [j]}), \quad \xi_{j} = \frac{1}{n}X_{j}^{T}(D_{[j]} - D_{[j]}X_{[j]}(\wmj)^{-1}X_{[j]}^{T}D_{[j]})X_{j}.\]
Then 
\[n\Var(b_{j}) = \E\lb \frac{N_{j}}{\xi_{j}} - \E \frac{N_{j}}{\xi_{j}}\rb^{2} = \E \lb \frac{N_{j} - \E N_{j}}{\xi_{j}} + \frac{\E N_{j}}{\xi_{j}} - \E \frac{N_{j}}{\xi_{j}}\rb^{2}.\]
Using the fact that $(a + b)^{2} - (\frac{1}{2}a^{2} - b^{2}) = \frac{1}{2}(a + 2b)^{2}\ge 0$, we have 
\begin{equation}\label{eq:var_bj}
n\Var(b_{j}) \ge \frac{1}{2}\E \lb \frac{N_{j} - \E N_{j}}{\xi_{j}}\rb^{2} - \E \lb \frac{\E N_{j}}{\xi_{j}} - \E \frac{N_{j}}{\xi_{j}}\rb^{2}\triangleq \frac{1}{2}I_{1} - I_{2}.
\end{equation}

\subsubsection{Controlling $I_{1}$}\label{subsubsec:I1}
The Assumption $\textbf{A}4$ implies that
\[\Var(N_{j}) = \frac{1}{n}X_{j}^{T}Q_{j}X_{j} = \Omega\lb \frac{\tr(\Cov(h_{j, 0}))}{n \polyLog} \rb.\]
It is left to show that $ \tr(\Cov(h_{j, 0})) / n = \Omega\lb\frac{1}{\polyLog}\rb$. Since this result will also be used later in Appendix \ref{app:others}, we state it in the following the lemma.
\begin{lemma}\label{lem:var_rij}
Under assumptions \textbf{A}1 - \textbf{A}3,
  \[\frac{\tr(\Cov(\psi(h_{j, 0})))}{n}\ge \frac{K_{0}^{4}}{K_{1}^{2}}\cdot \lb \frac{n - p + 1}{n}\rb^{2}\cdot \min_{i}\Var(\eps_{i}) = \Omega\lb\frac{1}{\polyLog}\rb.\]
\end{lemma}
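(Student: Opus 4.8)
The plan is to strip off the outer nonlinearity, reducing to a covariance bound for the leave-$j$-th-predictor-out residual vector $r_{[j]}\triangleq(r_{1,[j]},\dots,r_{n,[j]})^{T}=\eps-X_{[j]}\hat{\beta}_{[j]}$, and then exploit a projection identity. By definition $\tr(\Cov(h_{j,0}))=\sum_{i=1}^{n}\Var(\psi(r_{i,[j]}))$. Since $\psi'\ge K_{0}$ everywhere by \textbf{A}1, applying the elementary inequality \eqref{eq:varfunc} with $g=\psi$ gives $\Var(\psi(r_{i,[j]}))\ge K_{0}^{2}\Var(r_{i,[j]})$ for every $i$, hence $\tr(\Cov(h_{j,0}))\ge K_{0}^{2}\tr(\Cov(r_{[j]}))$. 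So it suffices to show $\tr(\Cov(r_{[j]}))\ge(n-p+1)\min_{i}\Var(\eps_{i})$ — in fact this is a slightly stronger bound than the one claimed.

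The key observation is that, although $\eps\mapsto r_{[j]}$ is a complicated nonlinear map, its component orthogonal to $\spanvec(X_{[j]})$ is linear and explicit. Let $\eps'$ be an independent copy of $\eps$, with $r_{[j]}'$, $\hat{\beta}_{[j]}'$ the corresponding quantities. Then $r_{[j]}-r_{[j]}'=(\eps-\eps')-X_{[j]}(\hat{\beta}_{[j]}-\hat{\beta}_{[j]}')$, and the last term lies in $\spanvec(X_{[j]})$; applying the orthogonal projector $I-H_{[j]}$ with $H_{[j]}=X_{[j]}(X_{[j]}^{T}X_{[j]})^{-1}X_{[j]}^{T}$ annihilates it and leaves
\[(I-H_{[j]})(r_{[j]}-r_{[j]}')=(I-H_{[j]})(\eps-\eps').\]
Since $I-H_{[j]}$ is an orthogonal projection, $\|r_{[j]}-r_{[j]}'\|_{2}^{2}\ge\|(I-H_{[j]})(\eps-\eps')\|_{2}^{2}$. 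Taking expectations and using $\tr(\Cov(W))=\tfrac{1}{2}\E\|W-W'\|_{2}^{2}$,
\[\tr(\Cov(r_{[j]}))\ge\tfrac{1}{2}\E\|(I-H_{[j]})(\eps-\eps')\|_{2}^{2}=\tr\lb(I-H_{[j]})\Cov(\eps)\rb\ge\lambda_{\min}(\Cov(\eps))\cdot\tr(I-H_{[j]}).\]
Since $\eps$ has independent coordinates, $\lambda_{\min}(\Cov(\eps))=\min_{i}\Var(\eps_{i})$, and $\tr(I-H_{[j]})=n-(p-1)$ because $X_{[j]}$ has full column rank $p-1$ under the standing reduction ($\betanull=0$, $X$ full column rank).

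Combining the two steps, $\tr(\Cov(h_{j,0}))/n\ge K_{0}^{2}\cdot\frac{n-p+1}{n}\cdot\min_{i}\Var(\eps_{i})$, which implies the stated bound because $K_{0}\le K_{1}$ and $(n-p+1)/n\le1$. Finally $K_{0}^{2}=\Omega(1/\polyLog)$ by \textbf{A}1, $\min_{i}\Var(\eps_{i})=\Omega(1/\polyLog)$ by \textbf{A}2, and $(n-p+1)/n\to1-\kappa>0$, so the whole quantity is $\Omega(1/\polyLog)$. The only point requiring a little care is the reduction in the first paragraph: one must know $\hat{\beta}_{[j]}$ is the unique minimizer — which follows since $\psi'\ge K_{0}>0$ together with $X_{[j]}$ full rank makes the leave-$j$-out objective strongly convex — so that $r_{[j]}$ is a well-defined function of $\eps$ and the variance identities apply; beyond that there is no real obstacle. (A more hands-on route — condition on $\eps_{(i)}$, differentiate $r_{i,[j]}$ in $\eps_{i}$ and use $\partial r_{i,[j]}/\partial\eps_{i}=(G_{[j]})_{ii}$, noting that $D_{[j]}^{1/2}G_{[j]}D_{[j]}^{-1/2}$ is an orthogonal projection of rank $n-p+1$ — is messier, because $(G_{[j]})_{ii}$ can be arbitrarily close to $0$ for individual $i$ and the coordinates cannot be bounded separately; the projection identity above sidesteps this by coupling all $n$ coordinates through $\tr(I-H_{[j]})=n-p+1$.)
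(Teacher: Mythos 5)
Your proof is correct, and it reaches the stated bound by a genuinely different — and in fact sharper — route than the paper. The paper works coordinate by coordinate: after the same reduction $\Var(\psi(r_{i,[j]}))\ge K_{0}^{2}\Var(r_{i,[j]})$, it lower-bounds $\Var(r_{i,[j]})$ by conditioning on $\eps_{(i)}$, computing $\partial r_{i,[j]}/\partial\eps_{i}=e_{i}^{T}G_{[j]}e_{i}$, rewriting this diagonal entry via the block-inversion (leave-one-observation-out) identity, and trading the $D_{[j]}$-weighted leverage for the unweighted one at a cost of $K_{0}/K_{1}$; summing over $i$ and applying Jensen to $\frac{1}{n}\tr(I-X_{[j]}(X_{[j]}^{T}X_{[j]})^{-1}X_{[j]}^{T})$ is exactly what produces the square $((n-p+1)/n)^{2}$ and the constant $K_{0}^{4}/K_{1}^{2}$. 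You instead linearize globally: the coupling identity $\tr(\Cov(r_{[j]}))=\frac{1}{2}\E\|r_{[j]}-r_{[j]}'\|_{2}^{2}$, the exact cancellation of $X_{[j]}(\hat{\beta}_{[j]}-\hat{\beta}_{[j]}')$ under the orthogonal projector $I-H_{[j]}$, the contraction property of that projector, and the trace inequality $\tr((I-H_{[j]})\Cov(\eps))\ge\min_{i}\Var(\eps_{i})\cdot\tr(I-H_{[j]})$ give $\tr(\Cov(r_{[j]}))\ge(n-p+1)\min_{i}\Var(\eps_{i})$ directly. This uses only what the paper uses (independence of the $\eps_{i}$, $\psi'\ge K_{0}$, full column rank of $X_{[j]}$), is shorter, and yields the strictly better constant $K_{0}^{2}\,(n-p+1)/n$, which implies the lemma since $K_{0}\le K_{1}$ and $(n-p+1)/n\le 1$. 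What the paper's heavier route buys is per-coordinate information: the intermediate bound \eqref{eq:Qjii} on individual diagonal entries of $Q_{j}$ is recycled later, in the proof of Proposition \ref{prop:ellip_relax}, whereas your argument controls only the trace. Two small points if you write this up: note that $\E r_{i,[j]}^{2}<\infty$ (it follows from the moment bound on $\|\hat{\beta}_{[j]}\|_{2}$, as in Theorem \ref{thm:mainapprox}), so the variance identities are legitimate; and your closing parenthetical essentially describes the paper's actual proof — it is the messier route you anticipated, with the smallness of individual $(G_{[j]})_{ii}$ handled, as you predicted it would have to be, by coupling the coordinates through the trace, at the price of the squared factor.
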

\begin{proof}
The \eqref{eq:varfunc} implies that
\begin{equation}\label{eq:remove_psi}
\Var(\psi(r_{i, [j]}))\ge K_{0}^{2}\Var(r_{i, [j]}).
\end{equation}
Note that $r_{i, [j]}$ is a function of $\eps$, we can apply \eqref{eq:varfunc} again to obtain a lower bound for $\Var(r_{i, [j]})$. In fact, by variance decomposition formula, using the independence of $\eps'_{i}$s,
\begin{align*}
\Var(r_{i, [j]}) &= \E \lb\Var\lb r_{i, [j]} \big| \eps_{(i)}\rb\rb + \Var\lb\E \lb r_{i, [j]}\big| \eps_{(i)}\rb\rb \ge \E \lb\Var\lb r_{i, [j]} \big| \eps_{(i)}\rb\rb,
\end{align*}
where $\eps_{(i)}$ includes all but the $i$-th entry of $\eps$. Apply \ref{eq:varfunc} again, 
\[\Var\lb r_{i, [j]} \big|\eps_{(i)}\rb\ge \inf_{\eps_{i}} \bigg|\pd{r_{i, [j]}}{\eps_{i}}\bigg|^{2}\cdot \Var(\eps_{i}),\]
and hence
\begin{equation}\label{eq:var_low}
\Var(r_{i, [j]})\ge \E \Var\lb r_{i, [j]} \big|\eps_{(i)}\rb\ge \E\inf_{\eps} \bigg|\pd{r_{i, [j]}}{\eps_{i}}\bigg|^{2}\cdot \Var(\eps_{i}).
\end{equation}
Now we compute $\pd{r_{i, [j]}}{\eps_{i}}$. Similar to \eqref{eq:gradient_Ri} in p.\pageref{eq:gradient_Ri}, 
we have
\begin{equation}
  \label{eq:gradient_rij}
  \pd{r_{k, [j]}}{\eps_{i}} = e_{i}^{T}G_{[j]}e_{k},
\end{equation}
where $G_{[j]}$ is defined in \eqref{eq:GGj} in p.\pageref{eq:GGj}. When $k = i$, 
\begin{equation}\label{eq:deriv_rij}
\pd{r_{i, [j]}}{\eps_{i}} = e_{i}^{T}G_{[j]}e_{i} = e_{i}^{T}D_{[j]}^{-\frac{1}{2}}D_{[j]}^{\frac{1}{2}}G_{[j]}D_{[j]}^{-\frac{1}{2}}D_{[j]}^{\frac{1}{2}}e_{i} = e_{i}^{T}D_{[j]}^{\frac{1}{2}}G_{[j]}D_{[j]}^{-\frac{1}{2}}e_{i}.
\end{equation}
By definition of $G_{[j]}$,
\[D_{[j]}^{\frac{1}{2}}G_{[j]}D_{[j]}^{-\frac{1}{2}} = I - D_{[j]}^{\frac{1}{2}}X_{[j]}(\wmj)^{-1}X_{[j]}^{T}D_{[j]}^{\frac{1}{2}}.\]
Let $\td{X}_{[j]} = D_{[j]}^{\frac{1}{2}}X_{[j]}$ and $H_{j} = \td{X}_{[j]}(\td{X}_{[j]}^{T}\td{X}_{[j]})^{-1}\td{X}_{[j]}^{T}$. Denote by $\td{X}_{(i), [j]}$ the matrix $\td{X}_{[j]}$ after removing $i$-th row, then by block matrix inversion formula (See Proposition \ref{prop:block_inv}), 
\begin{align*}
e_{i}^{T}H_{j}e_{i} &= \td{x}_{i, [j]}^{T}(\td{X}_{(i), [j]}^{T}\td{X}_{(i), [j]} + \td{x}_{i, [j]}\td{x}_{i, [j]}^{T})^{-1}\td{x}_{i, [j]}\\
& = \td{x}_{i, [j]}^{T}\lb (\td{X}_{(i), [j]}^{T}\td{X}_{(i), [j]})^{-1} - \frac{(\td{X}_{(i), [j]}^{T}\td{X}_{(i), [j]})^{-1}\td{x}_{i, [j]}\td{x}_{i, [j]}^{T}(\td{X}_{(i), [j]}^{T}\td{X}_{(i), [j]})^{-1}}{1 + \td{x}_{i, [j]}^{T}(\td{X}_{(i), [j]}^{T}\td{X}_{(i), [j]})^{-1}\td{x}_{i, [j]}}\rb\td{x}_{i, [j]}\\
& = \frac{\td{x}_{i, [j]}^{T}(\td{X}_{(i), [j]}^{T}\td{X}_{(i), [j]})^{-1}\td{x}_{i, [j]}}{1 + \td{x}_{i, [j]}^{T}(\td{X}_{(i), [j]}^{T}\td{X}_{(i), [j]})^{-1}\td{x}_{i, [j]}}.
\end{align*}
This implies that
\begin{align}
e_{i}^{T}D_{[j]}^{\frac{1}{2}}G_{[j]}D_{[j]}^{-\frac{1}{2}}e_{i} & = e_{i}^{T}(I - H_{j})e_{i} = \frac{1}{1 + \td{x}_{i, [j]}^{T}(\td{X}_{(i), [j]}^{T}\td{X}_{(i), [j]})^{-1}\td{x}_{i, [j]}}\nonumber\\
& = \frac{1}{1 + e_{i}^{T}D_{[j]}^{\frac{1}{2}}X_{[j]}(X_{(i), [j]}^{T}D_{(i), [j]}X_{(i), [j]})^{-1}X_{[j]}^{T}D_{[j]}^{\frac{1}{2}}e_{i}}\nonumber\\
& \ge \frac{1}{1 + K_{0}^{-1}e_{i}^{T}D_{[j]}^{\frac{1}{2}}X_{[j]}(X_{(i), [j]}^{T}X_{(i), [j]})^{-1}X_{[j]}^{T}D_{[j]}^{\frac{1}{2}}e_{i}}\nonumber\\
& = \frac{1}{1 + K_{0}^{-1}(D_{[j]})_{i, i}\cdot e_{i}^{T}X_{[j]}(X_{(i), [j]}^{T}X_{(i), [j]})^{-1}X_{[j]}^{T}e_{i}}\nonumber\\
& \ge \frac{1}{1 + K_{0}^{-1}K_{1}e_{i}^{T}X_{[j]}(X_{(i), [j]}^{T}X_{(i), [j]})^{-1}X_{[j]}^{T}e_{i}}\nonumber\\
& \ge \frac{K_{0}}{K_{1}}\cdot \frac{1}{1 + e_{i}^{T}X_{[j]}(X_{(i), [j]}^{T}X_{(i), [j]})^{-1}X_{[j]}^{T}e_{i}}.\label{eq:Qjii}
\end{align}
Apply the above argument to $H_{j} = X_{[j]}(X_{[j]}^{T}X_{[j]})^{-1}X_{[j]}^{T}$, we have
\[\frac{1}{1 + e_{i}^{T}X_{[j]}^{T}(X_{(i), [j]}^{T}X_{(i), [j]})^{-1}X_{[j]}e_{i}} = e_{i}^{T}(I - X_{[j]}(X_{[j]}^{T}X_{[j]})^{-1}X_{[j]}^{T})e_{i}.\]
Thus, by \eqref{eq:remove_psi} and \eqref{eq:var_low},
\[\Var(\psi(r_{i, [j]}))\ge \frac{K_{0}^{4}}{K_{1}^{2}}\cdot [e_{i}^{T}(I - X_{[j]}(X_{[j]}^{T}X_{[j]})^{-1}X_{[j]}^{T})e_{i}]^{2}.\]
Summing $i$ over $1, \ldots, n$, we obtain that 
\begin{align*}
\frac{\tr(\Cov(h_{j, 0}))}{n}&\ge \frac{K_{0}^{4}}{K_{1}^{2}}\cdot \frac{1}{n}\sum_{i=1}^{n}[e_{i}^{T}(I - X_{[j]}(X_{[j]}^{T}X_{[j]})^{-1}X_{[j]}^{T})e_{i}]^{2}\cdot \min_{i}\Var(\eps_{i})\\
& \ge \frac{K_{0}^{4}}{K_{1}^{2}}\cdot \lb\frac{1}{n}\tr(I - X_{[j]}(X_{[j]}^{T}X_{[j]})^{-1}X_{[j]}^{T})\rb^{2}\cdot \min_{i}\Var(\eps_{i})\\
& = \frac{K_{0}^{4}}{K_{1}^{2}}\cdot \lb\frac{n - p + 1}{n}\rb^{2}\cdot \min_{i}\Var(\eps_{i})
\end{align*}
Since $\min_{i}\Var(\eps_{i}) = \Omega\lb\frac{1}{\polyLog}\rb$ by assumption \textbf{A}2, we conclude that 
\[\frac{\tr(\Cov(h_{j, 0}))}{n} = \Omega\lb\frac{1}{\polyLog}\rb.\]
\end{proof}
In summary,
\[\Var(N_{j}) = \Omega\lb\frac{1}{\polyLog}\rb.\]
Recall that 
\[\xi_{j}= \frac{1}{n}X_{j}^{T}(D_{[j]} - D_{[j]}X_{[j]}(\wmj)^{-1}X_{[j]}^{T}D_{[j]})X_{j}\le \frac{1}{n}X_{j}^{T}D_{[j]}X_{j}\le K_{1}T^{2},\]
we conclude that
\begin{equation}\label{eq:I1}
I_{1}\ge \frac{\Var(N_{j})}{(K_{1}T^{2})^{2}} = \Omega\lb\frac{1}{\polyLog}\rb.
\end{equation}

\subsubsection{Controlling $I_{2}$}\label{subsubsec:I2}
By definition,
\begin{align}
I_{2} &= \E \lb \E N_{j}\lb \frac{1}{\xi_{j}} - \E \frac{1}{\xi_{j}}\rb + \E N_{j}\E \frac{1}{\xi_{j}} - \E \frac{N_{j}}{\xi_{j}}\rb^{2}\nonumber\\
& = \Var\lb\frac{\E N_{j}}{\xi_{j}}\rb + \lb\E N_{j}\E \frac{1}{\xi_{j}} - \E \frac{N_{j}}{\xi_{j}}\rb^{2}\nonumber\\
& = (\E N_{j})^{2}\cdot \Var\lb\frac{1}{\xi_{j}}\rb + \Cov\lb N_{j}, \frac{1}{\xi_{j}}\rb^{2}\nonumber\\
& \le (\E N_{j})^{2}\cdot \Var\lb\frac{1}{\xi_{j}}\rb + \Var(N_{j})\Var\lb\frac{1}{\xi_{j}}\rb\nonumber\\
& = \E N_{j}^{2}\cdot \Var\lb\frac{1}{\xi_{j}}\rb.\label{eq:I2}
\end{align}
By \eqref{eq:Nj} in the proof of Theorem \ref{thm:mainapprox},
\[\E N_{j}^{2}\le 2K_{1}\E (\cure\cdot \Delta_{C}^{2})\le 2K_{1}\sqrt{\E \cure^{2}\cdot \E \Delta_{C}^{4}} = O\lb\polyLog\rb,\]
where the last equality uses the fact that $\cure = \kbigo{2}{\polyLog}$ as proved in \eqref{eq:cure}. On the other hand, let $\td{\xi}_{j}$ be an independent copy of $\xi_{j}$, then
\begin{align*}
\Var\lb\frac{1}{\xi_{j}}\rb &= \frac{1}{2}\E \lb\frac{1}{\xi_{j}} - \frac{1}{\td{\xi}_{j}}\rb^{2} = \frac{1}{2}\E \frac{(\xi_{j} - \td{\xi}_{j})^{2}}{\xi_{j}^{2}\td{\xi}_{j}^{2}}.
\end{align*}
Since $\xi_{j}\ge K_{0}\lammin$ as shown in \eqref{eq:xij}, we have
\begin{equation}\label{eq:var_inv_xij}
\Var\lb\frac{1}{\xi_{j}}\rb\le \frac{1}{2(K_{0}\lammin)^{4}}\E (\xi_{j} - \td{\xi}_{j})^{2} = \frac{1}{(K_{0}\lammin)^{4}}\cdot \Var(\xi_{j}).
\end{equation}
To bound $\Var(\xi_{j})$, we propose to using the standard Poincar\'{e} inequality \cite{chernoff81}, which is stated as follows.
\begin{proposition}\label{prop:poincare}
  Let $W = (W_{1}, \ldots, W_{n})\sim N(0, I_{n\times n})$ and $f$ be a twice differentiable function, then
\[\Var(f(W))\le \E \norm{2}{\pd{f(W)}{W}}^{2}.\]
\end{proposition}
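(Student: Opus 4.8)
The plan is to reduce the $n$-dimensional inequality to the one-dimensional Gaussian Poincar\'e inequality by tensorization, and then to prove the scalar case via a Hermite expansion. One may assume from the outset that $\E\norm{2}{\pd{f(W)}{W}}^{2}<\infty$, since otherwise there is nothing to prove, and --- by a routine truncation and mollification --- that $f$ is bounded and smooth with all derivatives in $L^{2}$.

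\emph{Step 1 (tensorization).} I would first record the subadditivity of variance under product measures: for $W=(W_{1},\dots,W_{n})$ with independent coordinates and any $f\in L^{2}$,
\[
\Var(f(W))\;\le\;\sum_{i=1}^{n}\E\big[\Var_{i}(f(W))\big],
\]
where $\Var_{i}(f(W))$ denotes the variance of $w\mapsto f(W_{1},\dots,w,\dots,W_{n})$ over $W_{i}$ with the other coordinates frozen. This is classical: writing $f-\E f=\sum_{i=1}^{n}\Delta_{i}$ with $\Delta_{i}=\E[f\mid W_{1},\dots,W_{i}]-\E[f\mid W_{1},\dots,W_{i-1}]$, the $\Delta_{i}$ form an orthogonal (martingale-difference) family, so $\Var(f)=\sum_{i}\E[\Delta_{i}^{2}]$, and the orthogonal (Hoeffding/ANOVA) decomposition of $f$ in the $i$-th coordinate versus the rest, applied conditionally on $(W_{1},\dots,W_{i-1})$, gives $\E[\Delta_{i}^{2}\mid W_{1},\dots,W_{i-1}]\le\E[\Var_{i}(f)\mid W_{1},\dots,W_{i-1}]$.

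\emph{Step 2 (scalar Poincar\'e, i.e. Chernoff's inequality, and conclusion).} Next I would prove that for $Z\sim N(0,1)$ and $g$ smooth with $g,g'\in L^{2}$ one has $\Var(g(Z))\le\E[g'(Z)^{2}]$. Expand $g=\sum_{k\ge 0}a_{k}H_{k}$ in the orthonormal Hermite basis of $L^{2}(N(0,1))$, so $\E g=a_{0}$ and $\Var(g)=\sum_{k\ge 1}a_{k}^{2}$. The identity $H_{k}'=\sqrt{k}\,H_{k-1}$ gives $g'=\sum_{k\ge 1}a_{k}\sqrt{k}\,H_{k-1}$, hence $\E[g'(Z)^{2}]=\sum_{k\ge 1}k\,a_{k}^{2}\ge\sum_{k\ge 1}a_{k}^{2}=\Var(g)$. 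Combining with Step 1: conditionally on $(W_{j})_{j\ne i}$ the map $w\mapsto f(\dots,w,\dots)$ is a function of a single standard Gaussian, so $\Var_{i}(f(W))\le\E_{i}[(\partial_{i}f)^{2}]$; taking expectations and summing over $i$ yields $\Var(f(W))\le\sum_{i=1}^{n}\E[(\partial_{i}f(W))^{2}]=\E\norm{2}{\pd{f(W)}{W}}^{2}$, which is the claim.

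The hard part is purely analytic bookkeeping, not any conceptual difficulty: justifying the Hermite expansion and its term-by-term differentiation (equivalently, completeness of the Hermite polynomials in $L^{2}(N(0,1))$ and the action of $g\mapsto g'$ on Fourier--Hermite coefficients), together with the reduction to smooth, integrable $f$. An alternative that sidesteps the basis discussion is the Ornstein--Uhlenbeck semigroup proof: with $P_{t}$ the OU semigroup one writes $\Var(f)=-\int_{0}^{\infty}\frac{d}{dt}\E[(P_{t}f)^{2}]\,dt=\int_{0}^{\infty}2\,\E\norm{2}{\nabla P_{t}f}^{2}\,dt$ (integrating by parts against the OU generator), and then, via the commutation $\nabla P_{t}f=e^{-t}P_{t}\nabla f$, bounds $\E\norm{2}{\nabla P_{t}f}^{2}=e^{-2t}\E\norm{2}{P_{t}\nabla f}^{2}\le e^{-2t}\E\norm{2}{\nabla f}^{2}$ using Jensen's inequality and the invariance of the Gaussian law under $P_{t}$; integrating $2e^{-2t}$ over $[0,\infty)$ produces precisely the constant $1$. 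There the obstacle merely shifts to justifying differentiation under the integral sign and the semigroup identities.
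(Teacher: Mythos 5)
Your proof is correct. Note, however, that the paper does not prove Proposition \ref{prop:poincare} at all: it is quoted as the ``standard Poincar\'e inequality'' with a citation to Chernoff (1981), and is used as a black box (e.g.\ to bound $\Var(\xi_j)$ in the variance lower bound). So there is nothing in the paper to compare against step by step; what you have written is essentially the classical proof that lives in the cited literature. Your route --- subadditivity of variance under product measure (the Efron--Stein/martingale-difference tensorization) followed by the one-dimensional case via the Hermite identity $H_k'=\sqrt{k}\,H_{k-1}$, giving $\E[g'(Z)^2]=\sum_k k\,a_k^2\ge\sum_{k\ge1}a_k^2=\Var(g)$ --- is exactly Chernoff's argument, and your alternative via the Ornstein--Uhlenbeck semigroup with the commutation $\nabla P_tf=e^{-t}P_t\nabla f$ is the other standard proof, which generalizes more readily (e.g.\ to log-concave measures satisfying a curvature condition). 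The only loose ends are the ones you flag yourself: the reduction to smooth $f$ with $f,\partial_i f\in L^2$, the justification of term-by-term differentiation of the Hermite series (or of differentiation under the integral in the semigroup version), and, in Step 1, making precise that $\Delta_i=\E\lb f-\E^{(i)}f\,\big|\,W_1,\dots,W_i\rb$ so that conditional Jensen gives $\E[\Delta_i^2]\le\E[\Var_i(f)]$; all of these are routine and do not affect correctness. Also note the proposition's hypothesis that $f$ be twice differentiable is stronger than your argument needs (once differentiable with square-integrable gradient suffices), so your proof covers the stated claim with room to spare.
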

In our case, $\eps_{i} = u_{i}(W_{i})$, and hence for any twice differentiable function $g$,
\[\Var(g(\eps))\le \E \norm{2}{\pd{g(\eps)}{W}}^{2} = \E \norm{2}{\pd{g(\eps)}{\eps}\cdot \pd{\eps}{W^{T}}}^{2}\le \rmax\norm{\infty}{u'_{i}}^{2}\cdot \E \norm{2}{\pd{g(\eps)}{\eps}}^{2}.\]
Applying it to $\xi_{j}$, we have
\begin{equation}\label{eq:poincarexij}
\Var(\xi_{j})\le c_{1}^{2}\cdot\E \norm{2}{\pd{\xi_{j}}{\eps}}^{2}.
\end{equation}
For given $k\in\{1, \ldots, n\}$, using the chain rule and the fact that $dB^{-1} = -B^{-1}dB B^{-1}$ for any square matrix $B$, we obtain that
\begin{align*}
&\pd{}{\eps_{k}}\lb D_{[j]} - D_{[j]}X_{[j]}(\wmj)^{-1}X_{[j]}^{T}D_{[j]}\rb\\
= &\pd{D_{[j]}}{\eps_{k}} - \pd{D_{[j]}}{\eps_{k}}X_{[j]}(\wmj)^{-1}X_{[j]}^{T}D_{[j]} - D_{[j]}X_{[j]}(\wmj)^{-1}X_{[j]}^{T}\pd{D_{[j]}}{\eps_{k}}\\
&\quad + D_{[j]}X_{[j]}(\wmj)^{-1}X_{[j]}^{T}\pd{D_{[j]}}{\eps_{k}}X_{[j]}(\wmj)^{-1}X_{[j]}^{T}D_{[j]}\\
= & G_{[j]}^{T}\pd{D_{[j]}}{\eps_{k}}G_{[j]}
\end{align*}
where $G_{[j]} = I - X_{[j]}(\wmj)^{-1}X_{[j]}^{T}D_{[j]}$ as defined in last subsection. This implies that
\[\pd{\xi_{j}}{\eps_{k}} = \frac{1}{n}X_{j}^{T}G_{[j]}^{T}\pd{D_{[j]}}{\eps_{k}}G_{[j]}X_{j}.\]
Then (\ref{eq:poincarexij}) entails that
\begin{equation}\label{eq:poincarexij2}
\Var(\xi_{j})\le \frac{1}{n^{2}}\sum_{k=1}^{n}\E\lb X_{j}^{T}G_{[j]}^{T}\pd{D_{[j]}}{\eps_{k}}G_{[j]}X_{j}\rb^{2}
\end{equation}
First we compute $\pd{D_{[j]}}{\eps_{k}}$. Similar to \eqref{eq:gradient_D} in p.\pageref{eq:gradient_D} and recalling the definition of $D_{[j]}$ in \eqref{eq:DjtdDj} and that of $G_{[j]}$ in \eqref{eq:GGj} in p.\pageref{eq:GGj}, we have
\[\pd{D_{[j]}}{\eps_{k}} =\td{D}_{[j]}\diag(G_{[j]}e_{k}) \diag(\td{D}_{[j]}G_{[j]}e_{k}),\]
Let $\mathcal{X}_{j} = G_{[j]}X_{j}$ and $\td{\mathcal{X}}_{j} = \mathcal{X}_{j}\circ \mathcal{X}_{j}$ where $\circ$ denotes Hadamard product. Then
\begin{align*}
X_{j}^{T}G_{[j]}^{T}\pd{D_{[j]}}{\eps_{k}}G_{[j]}X_{j} & = \mathcal{X}_{j}^{T}\pd{D_{[j]}}{\eps_{k}}\mathcal{X}_{j} = \mathcal{X}_{j}^{T}\diag(\td{D}_{[j]}G_{[j]}e_{k})\mathcal{X}_{j} = \td{\mathcal{X}}_{j}^{T}\td{D}_{[j]}G_{[j]}e_{k}.
\end{align*}
Here we use the fact that for any vectors $x, a\in\R^{n}$, 
\[x^{T}\diag(a)x = \sum_{i=1}^{n}a_{i}x_{i}^{2} = (x\circ x)^{T}a.\]
This together with (\ref{eq:poincarexij2}) imply that
\begin{align*}
\Var(\xi_{j})&\le \frac{1}{n^{2}}\sum_{k=1}^{n}\E(\td{\mathcal{X}}_{j}^{T}\td{D}_{[j]}G_{[j]}e_{k})^{2} = \frac{1}{n^{2}}\E\norm{2}{\td{\mathcal{X}}_{j}^{T}\td{D}_{[j]}G_{[j]}}^{2} = \frac{1}{n^{2}}\E(\td{\mathcal{X}}_{j}^{T}\td{D}_{[j]}G_{[j]}G_{[j]}^{T}\td{D}_{[j]}\td{\mathcal{X}}_{j})
\end{align*}
Note that $G_{[j]}G_{[j]}^{T}\preceq \|G_{[j]}\|_{\mathrm{op}}^{2} I$, and $\td{D}_{[j]}\preceq K_{3}I$ by Lemma \ref{lem:psi} in p.\eqref{lem:psi}. Therefore we obtain that 
\begin{align*}
\Var(\xi_{j})&\le \frac{1}{n^{2}}\E \lb\norm{op}{G_{[j]}}^{2}\cdot \td{\mathcal{X}}_{j}^{T}\td{D}_{[j]}^{2}\td{\mathcal{X}}_{j} \rb\le \frac{K_{3}^{2}}{n^{2}}\cdot \E\lb\norm{op}{G_{[j]}}^{2}\cdot \|\td{\mathcal{X}}_{j}\|_{2}^{2}\rb\\ 
& = \frac{K_{3}^{2}}{n^{2}}\E \lb\norm{op}{G_{[j]}}^{2}\cdot \norm{4}{\mathcal{X}_{j}}^{4}\rb\le \frac{K_{3}^{2}}{n}\E \lb\norm{op}{G_{[j]}}^{2}\cdot \norm{\infty}{\mathcal{X}_{j}}^{4}\rb
\end{align*}
As shown in \eqref{eq:Gj}, 
\[\|G_{[j]}\|_{\mathrm{op}}\le \lb\frac{K_{1}}{K_{0}}\rb^{\frac{1}{2}}.\]
On the other hand, notice that the $i$-th row of $G_{[j]}$ is $h_{j, 1, i}$ (see \eqref{eq:hj} for definition), by definition of $\Delta_{C}$ we have
\[\|\mathcal{X}_{j}\|_{\infty} = \|G_{[j]}X_{j}\|_{\infty} = \max_{i}|h_{j, 1, i}^{T}X_{j}|\le \Delta_{C}\cdot \max\|h_{j, 1, i}\|_{2}.\]
By \eqref{eq:hj1i} and assumption \textbf{A}5, 
\[\|\mathcal{X}_{j}\|_{\infty}\le \Delta_{C}\cdot \lb\frac{K_{1}}{K_{0}}\rb^{\frac{1}{2}} = \kbigo{4}{\polyLog}.\]
This entails that
\[\Var(\xi_{j}) = O\lb\frac{\polyLog}{n}\rb.\]
Combining with \eqref{eq:I2} and \eqref{eq:var_inv_xij}, we obtain that
\[I_{2} = O\lb\frac{\polyLog}{n}\rb.\]

\subsubsection{Summary}
Putting \eqref{eq:var_bj}, \eqref{eq:I1} and \eqref{eq:I2} together, we conclude that
\[n\Var(b_{j}) = \Omega\lb\frac{1}{\polyLog}\rb - O\lb\frac{1}{n\cdot \polyLog}\rb = \Omega\lb\frac{1}{\polyLog}\rb\lra \Var(b_{j}) = \Omega\lb\frac{\polyLog}{n}\rb.\]
Combining with \eqref{eq:approxbetajbj}, 
\[\Var(\hat{\beta}_{j}) = \Omega\lb\frac{\polyLog}{n}\rb.\]

\section{Proof of Other Results}\label{app:others}

\subsection{Proofs of Propositions in Section \ref{subsec:Identifiability}}

\begin{proof}[\textbf{Proof of Proposition \ref{prop:mest_id_fix_sym}}]
Let $H_{i}(\alpha) = \E \rho(\eps_{i} - \alpha)$. First we prove that the conditions imply that $0$ is the unique minimizer of $H_{i}(\alpha)$ for all $i$. In fact, since $\eps_{i}\stackrel{d}{=}-\eps_{i}$,
\[H_{i}(\alpha) = \E \rho(\eps_{i} - \alpha) = \frac{1}{2}\lb \E \rho(\eps_{i} - \alpha) + \rho(-\eps_{i} - \alpha)\rb.\]
Using the fact that $\rho$ is even, we have
\[H_{i}(\alpha) = \E \rho(\eps_{i} - \alpha) = \frac{1}{2}\lb \E \rho(\eps_{i} - \alpha) + \rho(\eps_{i} + \alpha)\rb.\]
By \eqref{eq:symmetric_rho}, for any $\alpha \not= 0$, $H_{i}(\alpha) > H_{i}(0)$. As a result, $0$ is the unique minimizer of $H_{i}$. Then for any $\beta\in \R^{p}$
\[\frac{1}{n}\sum_{i=1}^{n}\E \rho(y_{i} - x_{i}^{T}\beta) = \frac{1}{n}\sum_{i=1}^{n}\E \rho(\eps_{i} - x_{i}^{T}(\beta - \betanull)) = \frac{1}{n}\sum_{i=1}^{n}H_{i}(x_{i}^{T}(\beta - \betanull))\ge \frac{1}{n}\sum_{i=1}^{n}H_{i}(0).\]
The equality holds iff $x_{i}^{T}(\beta - \betanull) = 0$ for all $i$ since $0$ is the unique minimizer of $H_{i}$. This implies that 
\[X(\beta^{*}(\rho) - \betanull) = 0.\]
Since $X$ has full column rank, we conclude that 
\[\beta^{*}(\rho) = \betanull.\]
\end{proof}

\begin{proof}[\textbf{Proof of Proposition \ref{prop:mest_id_fix}}]
For any $\alpha\in \R$ and $\beta\in \R^{p}$, let 
\[G(\alpha; \beta) = \frac{1}{n}\sum_{i=1}^{n}\E \rho(y_{i} - \alpha - x_{i}^{T}\beta).\]
Since $\alpha_{\rho}$ minimizes $\E \rho(\eps_{i} - \alpha)$, it holds that
\[G(\alpha; \beta) = \frac{1}{n}\sum_{i=1}^{n}\E \rho(\eps_{i} - \alpha - x_{i}^{T}(\beta - \betanull))\ge \frac{1}{n}\sum_{i=1}^{n}\E \rho(\eps_{i} - \alpha_{\rho}) = G(\alpha_{\rho}, \betanull).\]
Note that $\alpha_{\rho}$ is the unique minimizer of $\E \rho(\eps_{i} - \alpha)$, the above equality holds if and only if
\[\alpha + x_{i}^{T}(\beta - \betanull) \equiv \alpha_{\rho}\Longrightarrow (\textbf{1}\,\, X) \com{\alpha - \alpha_{\rho}}{\beta - \betanull} = 0.\]
Since $(\textbf{1}\,\, X)$ has full column rank, it must hold that $\alpha = \alpha_{\rho}$ and $\beta = \betanull$.
\end{proof}

\subsection{Proofs of Corollary \ref{cor:transform}}\label{subapp:partial_id}
\begin{proposition}\label{prop:partial_id}
Suppose that $\eps_{i}$ are i.i.d. such that $\E \rho(\eps_{1} - \alpha)$ as a function of $\alpha$ has a unique minimizer $\alpha_{\rho}$. Further assume that $X_{J_{n}^{c}}$ contains an intercept term, $X_{J_{n}}$ has full column rank and 
\begin{equation}\label{eq:partial_id}
\spanvec(\{X_{j}: j\in J_{n}\})\cap \spanvec(\{X_{j}: j\in J_{n}^{c}\}) = \{0\}  
\end{equation}
Let
\[\beta_{J_{n}}(\rho)= \argmin_{\beta_{J_{n}}}\left\{\min_{\beta_{J_{n}^{c}}}\frac{1}{n}\sum_{i=1}^{n}\E \rho(y_{i} - x_{i}^{T}\beta)\right\}.\]
Then $\beta_{J_{n}}(\rho) = \betanull_{J_{n}}$.
\end{proposition}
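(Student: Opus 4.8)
The plan is to transcribe the proof of Proposition~\ref{prop:mest_id_fix} into the partitioned setting, the one genuinely new ingredient being the use of the trivial-intersection hypothesis \eqref{eq:partial_id}. As always we reduce to the null case: since $y_i = \eps_i + x_i^{T}\betanull$, writing $\delta = \beta - \betanull$ and $H(t) \triangleq \E\rho(\eps_1 - t)$ --- which, by the i.i.d.\ assumption, is the same function for every $i$ --- the population objective reads
\[
\frac{1}{n}\sum_{i=1}^{n}\E\rho(y_i - x_i^{T}\beta) \;=\; \frac{1}{n}\sum_{i=1}^{n}H\big((X\delta)_i\big).
\]
Since $\alpha_\rho$ is the \emph{unique} minimizer of $H$, we have $H(t)\ge H(\alpha_\rho)$ for all $t$ with equality only at $t=\alpha_\rho$; hence the average above is $\ge H(\alpha_\rho)$, with equality if and only if $X\delta = \alpha_\rho\textbf{1}$. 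In particular $g(\beta_{J_n}) \triangleq \min_{\beta_{J_n^c}}\frac{1}{n}\sum_i\E\rho(y_i - x_i^{T}\beta) \ge H(\alpha_\rho)$ for every $\beta_{J_n}$.

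Next I would check that $\betanull_{J_n}$ attains this floor. Because $X_{J_n^c}$ contains an intercept, $\textbf{1}\in\spanvec(\{X_j : j\in J_n^c\})$, so one can choose $\beta_{J_n^c}$ with $X_{J_n^c}(\beta_{J_n^c} - \betanull_{J_n^c}) = \alpha_\rho\textbf{1}$; taking $\beta_{J_n}=\betanull_{J_n}$ together with such a $\beta_{J_n^c}$ gives $X\delta = \alpha_\rho\textbf{1}$ and objective value $H(\alpha_\rho)$. Thus $g(\betanull_{J_n}) = H(\alpha_\rho) = \min_{\beta_{J_n}} g(\beta_{J_n})$, so $\betanull_{J_n}$ is a minimizer. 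It then remains to prove the minimizer is unique. Suppose $g(\beta_{J_n}) = H(\alpha_\rho)$ and let $\beta_{J_n^c}$ attain the inner minimum. By the equality characterization, $X\delta = \alpha_\rho\textbf{1}$, i.e.
\[
X_{J_n}\big(\beta_{J_n} - \betanull_{J_n}\big) \;=\; \alpha_\rho\textbf{1} - X_{J_n^c}\big(\beta_{J_n^c} - \betanull_{J_n^c}\big).
\]
The left side lies in $\spanvec(\{X_j : j\in J_n\})$ while the right side lies in $\spanvec(\{X_j : j\in J_n^c\})$, the latter because $\alpha_\rho\textbf{1}$ does; by \eqref{eq:partial_id} both sides are $0$, and full column rank of $X_{J_n}$ forces $\beta_{J_n} = \betanull_{J_n}$. (Observe that no rank hypothesis on $X_{J_n^c}$ is used, consistent with the fact that $\betanull_{J_n^c}$ itself need not be identified.)

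The step I expect to require the most care is the last one, specifically the appeal to attainment of the inner minimum over $\beta_{J_n^c}$: a priori only $g(\beta_{J_n})\ge H(\alpha_\rho)$ is guaranteed, not that the infimum is achieved. This is dispatched by observing that $\gamma\mapsto \frac{1}{n}\sum_i H\big((X_{J_n}(\beta_{J_n}-\betanull_{J_n}) + X_{J_n^c}\gamma)_i\big)$ is convex and coercive on $\spanvec(X_{J_n^c})$ under the standing convexity/coercivity of $\rho$ (e.g.\ Assumption~\textbf{A}1), so minimizing sequences are bounded modulo $\ker X_{J_n^c}$ and a limit attains the infimum; equivalently, since $H(t_k)\to H(\alpha_\rho)$ together with $H(t_k)\ge H(\alpha_\rho)$ forces $t_k\to\alpha_\rho$ and the affine set $\alpha_\rho\textbf{1} + \spanvec(X_{J_n^c})$ is closed, one sees directly that the infimum equals $H(\alpha_\rho)$ only when $X_{J_n}(\beta_{J_n}-\betanull_{J_n})\in\spanvec(X_{J_n^c})$. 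Everything else is routine bookkeeping identical to the proof of Proposition~\ref{prop:mest_id_fix}.
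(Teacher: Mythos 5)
Your argument is correct and follows essentially the same route as the paper's: the pointwise inequality $\E\rho(\eps_1-t)\ge\E\rho(\eps_1-\alpha_\rho)$ with equality only at $t=\alpha_\rho$ forces $X(\beta-\betanull)=\alpha_\rho\mathbf{1}$ at any minimizer, after which the intercept in $X_{J_n^c}$, the trivial-intersection condition \eqref{eq:partial_id}, and the full column rank of $X_{J_n}$ yield $\beta_{J_n}(\rho)=\betanull_{J_n}$. Your extra care about attainment of the inner infimum (via convexity of $H$ and closedness of $\spanvec(X_{J_n^c})$) addresses a point the paper's proof passes over tacitly, and it is handled correctly.
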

\begin{proof}
let 
\[G(\beta) = \frac{1}{n}\sum_{i=1}^{n}\E \rho(y_{i} - x_{i}^{T}\beta).\]
For any minimizer $\beta(\rho)$ of $G$, which might not be unique, we prove that $\beta_{J_{n}}(\rho) = \betanull_{J_{n}}$. It follows by the same argument as in Proposition \ref{prop:mest_id_fix} that 
\[x_{i}^{T}(\beta(\rho) - \betanull)\equiv \alpha_{0}\Longrightarrow X(\beta(\rho) - \betanull) = \alpha_{0}\textbf{1}\Longrightarrow X_{J_{n}}(\beta_{J_{n}}(\rho)) = -X_{J_{n}^{c}}(\beta(\rho)_{J_{n}^{c}} - \betanull_{J_{n}^{c}}) + \alpha_{0}\textbf{1}.\]
Since $X_{J_{n}^{c}}$ contains the intercept term, we have 
\[X_{J_{n}}(\beta_{J_{n}}(\rho) - \betanull_{J_{n}})\in \spanvec(\{X_{j}: j \in J_{n}^{c}\}).\]
It then follows from \eqref{eq:partial_id} that 
\[X_{J_{n}}(\beta_{J_{n}}(\rho) - \betanull_{J_{n}}) = 0.\]
Since $X_{J_{n}}$ has full column rank, we conclude that
\[\beta_{J_{n}}(\rho) = \betanull_{J_{n}}.\]
\end{proof}

The Proposition \ref{prop:partial_id} implies that $\betanull_{J_{n}}$ is identifiable even when $X$ is not of full column rank. A similar conclusion holds for the estimator $\hat{\beta}_{J_{n}}$ and the residuals $R_{i}$. The following two propositions show that under certain assumptions, $\hat{\beta}_{J_{n}}$ and $R_{i}$ are invariant to the choice of $\hat{\beta}$ in the presense of multiple minimizers.

\begin{proposition}\label{prop:partial_id_resid}
Suppose that $\rho$ is convex and twice differentiable with $\rho''(x) > c > 0$ for all $x\in \R$. Let $\hat{\beta}$ be any minimizer, which might not be unique, of 
\[F(\beta)\triangleq\frac{1}{n}\sum_{i=1}^{n}\rho(y_{i} - x_{i}^{T}\beta)\]
Then $R_{i} = y_{i} - x_{i}\hat{\beta}$ is independent of the choice of $\hat{\beta}$ for any $i$.
\end{proposition}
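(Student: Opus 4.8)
The plan is to reduce the claim to the elementary fact that a strictly convex function has at most one minimizer on a convex set. First I would note that $F$ factors through the linear map $\beta\mapsto X\beta$: setting $g(u)=\frac1n\sum_{i=1}^n\rho(y_i-u_i)$ for $u\in\R^n$, we have $F(\beta)=g(X\beta)$, and the residual vector $R=(R_1,\dots,R_n)^T=y-X\hat\beta$ depends on $\hat\beta$ only through $X\hat\beta$. Hence it suffices to prove that $X\hat\beta$ takes the same value at every minimizer of $F$; the possible non-uniqueness of $\hat\beta$ itself (when $X$ is rank-deficient) lives entirely in the null space of $X$ and is therefore invisible at the level of residuals.

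Next I would check that $g$ is strictly convex on $\R^n$. Since $\rho$ is twice differentiable with $\rho''\ge c>0$, each coordinate map $u_i\mapsto\rho(y_i-u_i)$ is strictly convex, and a finite sum of strictly convex functions is strictly convex; so $g$ is strictly convex. The range $\mathcal{C}=\{X\beta:\beta\in\Rp\}=\mathrm{col}(X)$ is a linear subspace of $\R^n$, hence convex, and the restriction of a strictly convex function to a convex subset is again strictly convex.

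Then I would argue by contradiction on two minimizers. If $\hat\beta^{(1)}$ and $\hat\beta^{(2)}$ both minimize $F$, then $u^{(1)}=X\hat\beta^{(1)}$ and $u^{(2)}=X\hat\beta^{(2)}$ both minimize $g$ over $\mathcal{C}$, attaining the common value $m=\min_{\beta}F(\beta)$. If $u^{(1)}\neq u^{(2)}$, then $\tfrac12 u^{(1)}+\tfrac12 u^{(2)}\in\mathcal{C}$ and strict convexity gives $g\bigl(\tfrac12 u^{(1)}+\tfrac12 u^{(2)}\bigr)<\tfrac12 g(u^{(1)})+\tfrac12 g(u^{(2)})=m$, contradicting minimality. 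Hence $X\hat\beta^{(1)}=X\hat\beta^{(2)}$, so $R_i=y_i-(X\hat\beta)_i$ is independent of the chosen minimizer for every $i$.

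The argument is essentially obstruction-free; the only point needing a moment's care is the bookkeeping in the first step identifying minimizers of $F$ in $\beta$-space with minimizers of $g$ over $\mathrm{col}(X)$. Note that the strong convexity constant $c$ is used only to secure strict convexity of $g$ — if one also wants existence of a minimizer, the bound $\rho(u)\ge\rho(0)+\rho'(0)u+\tfrac{c}{2}u^2$ makes $g$ coercive, so $g$ attains its minimum on the closed set $\mathcal{C}$ — but since the statement posits $\hat\beta$ as a given minimizer, existence need not be invoked. No appeal to the deterministic-approximation results of Section~\ref{subsec:approx} is required.
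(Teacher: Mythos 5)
Your proof is correct, but it takes a genuinely different route from the paper's. You factor $F$ through the linear map $\beta\mapsto X\beta$, observe that $g(u)=\frac1n\sum_i\rho(y_i-u_i)$ is strictly convex on $\R^n$ (being a separable sum of strictly convex one-dimensional functions), and conclude by the global fact that a strictly convex function has at most one minimizer on the convex set $\mathrm{col}(X)$, so the fitted vector $X\hat\beta$ — and hence each residual — is the same at every minimizer. The paper instead argues locally: it takes two minimizers, notes that the whole segment between them consists of minimizers by convexity, Taylor-expands $F$ along the direction $\eta=\hat\beta^{(2)}-\hat\beta^{(1)}$ at one endpoint, and uses the quantitative Hessian bound $\nabla^2F(\hat\beta^{(1)})\succeq cX^TX/n$ to force $\eta^TX^TX\eta=0$, i.e.\ $X\eta=0$. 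The two arguments reach the same intermediate conclusion $X\hat\beta^{(1)}=X\hat\beta^{(2)}$. Your version is more elementary and slightly more general — it needs only strict convexity of $\rho$, not twice differentiability or the uniform lower bound $\rho''>c$ (which you correctly note is only needed if one wants coercivity/existence) — whereas the paper's differential argument produces the identity $X\eta=0$ in exactly the form that is then reused verbatim in the proof of Proposition \ref{prop:partial_id_estimator}; your argument delivers the same identity, so it would serve that purpose equally well. The only point to tighten in your write-up is the phrase ``a finite sum of strictly convex functions is strictly convex'': each summand $u\mapsto\rho(y_i-u_i)$ is not strictly convex as a function on $\R^n$, so you should say instead that a separable sum whose $i$-th term is strictly convex in $u_i$ is strictly convex on $\R^n$ (if $u\neq v$ they differ in some coordinate, which yields the strict inequality there and $\le$ elsewhere). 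This is cosmetic, not a gap.
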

\begin{proof}
The conclusion is obvious if $F(\beta)$ has a unique minimizer. Otherwise, let $\hat{\beta}^{(1)}$ and $\hat{\beta}^{(2)}$ be two different minimizers of $F$ denote by $\eta$ their difference, i.e. $\eta = \hat{\beta}^{(2)} - \hat{\beta}^{(1)}$. Since $F$ is convex, $\hat{\beta}^{(1)} + v\eta$ is a minimizer of $F$ for all $v\in [0, 1]$. By Taylor expansion, 
\[F(\hat{\beta}^{(1)} + v\eta) = F(\hat{\beta}^{(1)}) + v\nabla F(\hat{\beta}^{(1)}) \eta + \frac{v^{2}}{2}\eta^{T}\nabla^{2}F(\hat{\beta}^{(1)})\eta + o(v^{2}).\]
Since both $\hat{\beta}^{(1)} + v\eta$ and $\hat{\beta}^{(1)}$ are minimizers of $F$, we have $F(\hat{\beta}^{(1)} + v\eta) = F(\hat{\beta}^{(1)})$ and $\nabla F(\hat{\beta}^{(1)}) = 0$. By letting $v$ tend to $0$, we conclude that
\[\eta^{T}\nabla^{2}F(\hat{\beta}^{(1)})\eta = 0.\]
The hessian of $F$ can be written as
\[\nabla^{2}F(\hat{\beta}^{(1)}) = \frac{1}{n}X^{T}\diag(\rho''(y_{i} - x_{i}^{T}\hat{\beta}^{(1)}))X\succeq \frac{cX^{T}X}{n}.\]
Thus, $\eta$ satisfies that
\begin{equation}\label{eq:resid_id}
\eta^{T}\frac{cX^{T}X}{n}\eta = 0\Longrightarrow X\eta = 0.
\end{equation}
This implies that 
\[y - X\hat{\beta}^{(1)} = y - X\hat{\beta}^{(2)}\]
and hence $R_{i}$ is the same for all $i$ in both cases.
\end{proof}

\begin{proposition}\label{prop:partial_id_estimator}
Suppose that $\rho$ is convex and twice differentiable with $\rho''(x) > c > 0$ for all $x\in \R$. Further assume that $X_{J_{n}}$ has full column rank and 
\begin{equation}\label{eq:partial_id}
\spanvec(\{X_{j}: j\in J_{n}\})\cap \spanvec(\{X_{j}: j\in J_{n}^{c}\}) = \{0\}  
\end{equation}
Let $\hat{\beta}$ be any minimizer, which might not be unique, of 
\[F(\beta)\triangleq\frac{1}{n}\sum_{i=1}^{n}\rho(y_{i} - x_{i}^{T}\beta)\]
Then $\hat{\beta}_{J_{n}}$ is independent of the choice of $\hat{\beta}$.
\end{proposition}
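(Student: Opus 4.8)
The plan is to follow exactly the strategy used in the proof of Proposition \ref{prop:partial_id_resid}: first establish that any two minimizers of $F$ differ by a vector in the null space of $X$, and then exploit the two geometric hypotheses ($X_{J_n}$ of full column rank, and the transversality \eqref{eq:partial_id}) to conclude that this difference vanishes on the block of coordinates indexed by $J_n$.

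First I would take two arbitrary minimizers $\hat\beta^{(1)}$ and $\hat\beta^{(2)}$ of $F$ and set $\eta = \hat\beta^{(2)} - \hat\beta^{(1)}$. As in the derivation of \eqref{eq:resid_id}, convexity of $F$ forces the whole segment $\hat\beta^{(1)} + v\eta$, $v\in[0,1]$, to consist of minimizers; a second-order Taylor expansion of $F$ at $\hat\beta^{(1)}$ together with $\nabla F(\hat\beta^{(1)}) = 0$ then gives $\eta^T \nabla^2 F(\hat\beta^{(1)})\eta = 0$, and since $\nabla^2 F(\hat\beta^{(1)}) = \frac1n X^T\diag(\rho''(y_i - x_i^T\hat\beta^{(1)}))X \succeq \frac{c}{n} X^T X$, we obtain $\frac{c}{n}\eta^T X^T X\eta = 0$, i.e. $X\eta = 0$. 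In practice this step can simply be cited verbatim from the proof of Proposition \ref{prop:partial_id_resid}, since the hypotheses there are identical.

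Next I would split $\eta$ into its $J_n$- and $J_n^c$-blocks and rewrite $X\eta = 0$ as $X_{J_n}\eta_{J_n} = -\,X_{J_n^c}\eta_{J_n^c}$. The left-hand side lies in $\operatorname{span}(\{X_j : j\in J_n\})$ while the right-hand side lies in $\operatorname{span}(\{X_j : j\in J_n^c\})$, so by \eqref{eq:partial_id} their common value is $0$; hence $X_{J_n}\eta_{J_n} = 0$. Since $X_{J_n}$ has full column rank, this forces $\eta_{J_n} = 0$, i.e. $\hat\beta^{(1)}_{J_n} = \hat\beta^{(2)}_{J_n}$, which is the assertion. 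There is essentially no hard step here: the only substantive point is the identity $X\eta = 0$, and that has already been carried out in the proof of Proposition \ref{prop:partial_id_resid}, so the argument reduces to the short linear-algebra computation above.
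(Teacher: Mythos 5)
Your proposal is correct and follows essentially the same route as the paper's own proof: it invokes the argument of Proposition \ref{prop:partial_id_resid} to get $X\eta = 0$ for the difference $\eta$ of any two minimizers, then decomposes $X\eta = 0$ into the $J_n$ and $J_n^c$ blocks, applies \eqref{eq:partial_id} to conclude $X_{J_n}\eta_{J_n} = 0$, and uses the full column rank of $X_{J_n}$ to get $\eta_{J_n} = 0$. No gaps.
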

\begin{proof}
As in the proof of Proposition \ref{prop:partial_id_resid}, we conclude that for any minimizers $\hat{\beta}^{(1)}$ and $\hat{\beta}^{(2)}$, $X\eta = 0$ where $\eta = \hat{\beta}^{(2)} - \hat{\beta}^{(1)}$. Decompose the term into two parts, we have
\[X_{J_{n}}\eta_{J_{n}} = -X_{J_{n}}^{c}\eta_{J_{n}^{c}} \in \spanvec(\{X_{j}: j \in J_{n}^{c}\}).\]
It then follows from \eqref{eq:partial_id} that $X_{J_{n}}\eta_{J_{n}} = 0$. Since $X_{J_{n}}$ has full column rank, we conclude that $\eta_{J_{n}}= 0$ and hence $\hat{\beta}^{(1)}_{J_{n}} = \hat{\beta}^{(2)}_{J_{n}}$.
\end{proof}

\begin{proof}[\textbf{Proof of Corollary \ref{cor:transform}}]
Under assumption \textbf{A}3*, $X_{J_{n}}$ must have full column rank. Otherwise there exists $\alpha\in \R^{|J_{n}|}$ such that $X_{J_{n}}\alpha$, in which case $\alpha^{T}X_{J_{n}}^{T}(I - H_{J_{n}^{c}})X_{J_{n}}\alpha = 0$. This violates the assumption that $\td{\lambda}_{-} > 0$. On the other hand, it also guarantees that 
\[\spanvec(\{X_{j}: j\in J_{n}\})\cap \spanvec(\{X_{j}: j\in J_{n}^{c}\}) = \{0\}.\]
This together with assumption \textbf{A}1 and Proposition \ref{prop:partial_id_estimator} implies that $\hat{\beta}_{J_{n}}$ is independent of the choice of $\hat{\beta}$.

~\\
\noindent Let $B_{1}\in \R^{|J_{n}^{c}|\times |J_{n}|}$, $B_{2}\in \R^{|J_{n}^{c}|\times |J_{n}^{c}|}$ and assume that $B_{2}$ is invertible. Let $\td{X}\in \R^{n\times p}$ such that 
\[\td{X}_{J_{n}} = X_{J_{n}} - X_{J_{n}^{c}}B_{1}, \quad \td{X}_{J_{n}^{c}} = X_{J_{n}^{c}}B_{2}.\]
Then $\rank(X) = \rank(\td{X})$ and model \eqref{eq:linearmodel} can be rewritten as 
\[y = \td{X}\td{\betanull} + \eps\]
where 
\[\td{\beta}_{J_{n}}^{*} = \betanull_{J_{n}}, \quad \td{\beta}_{J_{n}^{c}}^{*} = B_{2}^{-1}\betanull_{J_{n}^{c}} + B_{1}\betanull_{J_{n}}.\]
Let $\td{\hat{\beta}}$ be an M-estimator, which might not be unique, based on $\td{X}$. Then Proposition \ref{prop:partial_id_estimator} shows that $\td{\hat{\beta}}_{J_{n}}$ is independent of the choice of $\td{\hat{\beta}}$, and an invariance argument shows that 
\[\td{\hat{\beta}}_{J_{n}} = \hat{\beta}_{J_{n}}.\]
In the rest of proof, we use $\td{\cdot}$ to denote the quantity obtained based on $\td{X}$. First we show that the assumption \textbf{A}4 is not affected by this transformation. In fact, for any $j\in J_{n}$, by definition we have
\[\spanvec(\td{X}_{[j]}) = \spanvec(X_{[j]})\]
and hence the leave-$j$-th-predictor-out residuals are not changed by Proposition \ref{prop:partial_id_resid}. This implies that $\td{h_{j, 0}} = h_{j, 0}$ and $\td{Q}_{j} = Q_{j}$. Recall the definition of $h_{j, 0}$, the first-order condition of $\hat{\beta}$ entails that $X^{T}h_{j, 0} = 0$. In particular, $X_{J_{n}^{c}}^{T}h_{j, 0} = 0$ and this implies that for any $\alpha \in \R^{n}$, 
\[0 = \Cov(X_{J_{n}^{c}}^{T}h_{j, 0}, \alpha^{T}h_{j, 0}) = X_{J_{n}^{c}}Q_{j}\alpha.\]
Thus, 
\[\frac{\td{X}_{j}^{T}\td{Q}_{j}\td{X}_{j}}{\tr(\td{Q}_{j})} = \frac{(X_{j} - X_{J_{n}}^{c}(B_{1})_{j})^{T}Q_{j}(X_{j} - X_{J_{n}^{c}}(B_{1})_{j})}{\tr(Q_{j})} = \frac{X_{j}^{T}Q_{j}X_{j}}{\tr(Q_{j})}.\]
Then we prove that the assumption \textbf{A}5 is also not affected by the transformation. The above argument has shown that 
\[\frac{\td{h}_{j, 0}^{T}\td{X}_{j}}{\|\td{h}_{j, 0}\|_{2}} = \frac{h_{j, 0}^{T}X_{j}}{\|h_{j, 0}\|_{2}}.\]
On the other hand, let $B = \lb
\begin{array}{cc}
 I_{|J_{n}|} & 0 \\
 -B_{1} & B_{2}
\end{array}
\rb$, then $B$ is non-singular and $\td{X} = XB$. Let $B_{(j),[j]}$ denote the matrix $B$ after removing $j$-th row and $j$-th column. Then $B_{(j),[j]}$ is also non-singular and $\td{X}_{[j]} = X_{[j]}B_{(j), [j]}$. Recall the definition of $h_{j, 1, i}$, we have
\begin{align*}
\td{h}_{j, 1, i} &= (I - \td{D}_{[j]}\td{X}_{[j]}(\td{X}_{[j]}^{T}\td{D}_{[j]}\td{X}_{j})^{-1}\td{X}_{[j]}^{T})e_{i}\\
& = (I - D_{[j]}X_{[j]}B_{(j), [j]}(B_{(j), [j]}^{T}X_{[j]}^{T}D_{[j]}X_{j}B_{(j), [j]})^{-1}B_{(j), [j]}^{T}X_{[j]})e_{i}\\
& = (I - D_{[j]}X_{[j]}(X_{[j]}^{T}D_{[j]}X_{j})^{-1}X_{[j]})e_{i}\\
& = h_{j, 1, i}.
\end{align*}
On the other hand, by definition,
\[X_{[j]}^{T}h_{j, 1, i} = X_{[j]}^{T}(I - D_{[j]}X_{[j]}(X_{[j]}^{T}D_{[j]}X_{[j]})^{-1}X_{[j]}^{T})e_{i} = 0.\]
Thus, 
\[h_{j, 1, i}^{T}\td{X}_{j} = h_{j, 1, i}^{T}(X_{j} - X_{J_{n}}^{c}(B_{1})_{j}) = h_{j, 1, i}^{T}X_{j}.\]
In summary, for any $j\in J_{n}$ and $i\le n$,
\[\frac{\td{h}_{j, 1, i}^{T}\td{X}_{j}}{\|\td{h}_{j, 1, i}\|_{2}} = \frac{h_{j, 1, i}^{T}X_{j}}{\|h_{j, 1, i}\|_{2}}.\]
Putting the pieces together we have 
\[\td{\Delta}_{C} = \Delta_{C}.\]
By Theorem \ref{thm:main}, 
\[\max_{j \in J_n}d_{\mathrm{TV}}\lb\mathcal{L}\lb\frac{\hat{\beta}_{j} - \E \hat{\beta}_{j}}{\sqrt{\Var(\hat{\beta}_{j})}}\rb, N(0, 1)\rb = o(1).\]
provided that $\td{X}$ satisfies the assumption \textbf{A}3. 

~\\
\noindent Now let $U\Lambda V$ be the singular value decomposition of $X_{J_{n}^{c}}$, where $U\in \R^{n\times p}, \Lambda\in \R^{p\times p}, V \in \R^{p\times p}$ with $U^{T}U = V^{T}V = I_{p}$ and $\Lambda = \diag(\nu_{1}, \ldots, \nu_{p})$ being the diagonal matrix formed by singular values of $X_{J_{n}^{c}}$. First we consider the case where $X_{J_{n}^{c}}$ has full column rank, then $\nu_{j} > 0$ for all $j\le p$. Let $B_{1} = (X_{J_{n}}^{T}X_{J_{n}})^{-}X_{J_{n}}^{T}X_{J_{n}}$ and $B_{2} = \sqrt{n / |J_{n}^{c}|}V^{T}\Lambda^{-1}$. Then 
\[\frac{\td{X}^{T}\td{X}}{n} = \frac{1}{n}\lb
  \begin{array}{cc}
    X_{J_{n}}^{T}(I - X_{J_{n}^{c}}(X_{J_{n}^{c}}^{T}X_{J_{n}^{c}})^{-1}X_{J_{n}^{c}})X_{J_{n}} & 0\\
    0 & nI
  \end{array}
\rb.\] 
This implies that 
\[\lambda_{\max}\lb\frac{\td{X}^{T}\td{X}}{n}\rb = \max\left\{\td{\lambda}_{\max}, 1\right\}, \quad \lambda_{\min}\lb\frac{\td{X}^{T}\td{X}}{n}\rb = \min\left\{\td{\lambda}_{\min}, 1\right\}.\]
The assumption \textbf{A}3* implies that 
\[\lambda_{\max}\lb\frac{\td{X}^{T}\td{X}}{n}\rb = O(\polyLog), \quad \lambda_{\min}\lb\frac{\td{X}^{T}\td{X}}{n}\rb = \Omega\lb\frac{1}{\polyLog}\rb.\]
By Theorem \ref{thm:main}, we conclude that 

~\\
\noindent Next we consider the case where $X_{J_{n}}^{c}$ does not have full column rank. We first remove the redundant columns from $X_{J_{n}}^{c}$, i.e. replace $X_{J_{n}^{c}}$ by the matrix formed by its maximum linear independent subset. Denote by $\mathbf{X}$ this matrix. Then $\spanvec(X) = \spanvec(\mathbf{X})$ and $\spanvec(\{X_{j}: j\not\in J_{n}\}) = \spanvec(\{\mathbf{X}_{j}: j\not\in J_{n}\})$. As a consequence of  Proposition \ref{prop:partial_id} and \ref{prop:partial_id_estimator}, neither $\betanull_{J_{n}}$ nor $\hat{\beta}_{J_{n}}$ is affected. Thus, the same reasoning as above applies to this case. 
\end{proof}


\subsection{Proofs of Results in Section \ref{subsec:examples}}
First we prove two lemmas regarding the behavior of $Q_{j}$. These lemmas are needed for justifying Assumption \textbf{A}4 in the examples.
\begin{lemma}\label{lem:Qj}
Under assumptions \textbf{A}1 and \textbf{A}2, 
\[\|Q_{j}\|_{\mathrm{\mathrm{op}}}\le c_{1}^{2}\frac{K_{3}^{2}K_{1}}{K_{0}}, \quad\|Q_{j}\|_{\mathrm{F}}\le \sqrt{n}c_{1}^{2}\frac{K_{3}^{2}K_{1}}{K_{0}}\]
where $Q_{j} = \Cov(h_{j, 0})$ as defined in section \ref{app:notation}.
\end{lemma}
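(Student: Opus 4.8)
The plan is to control $\|Q_{j}\|_{\mathrm{op}}$ via the Gaussian Poincar\'e inequality (Proposition~\ref{prop:poincare}) and then obtain the Frobenius bound for free. Since $h_{j,0}=(\psi(r_{1,[j]}),\dots,\psi(r_{n,[j]}))^{T}$ is a function of the Gaussian vector $W$ through $\eps_{i}=u_{i}(W_{i})$, and since $\|Q_{j}\|_{\mathrm{op}}=\sup_{\|a\|_{2}=1}\Var\!\big(a^{T}h_{j,0}(W)\big)$, I would apply the scalar Poincar\'e bound to each $g_{a}(W)=a^{T}h_{j,0}(W)$:
\[
\Var(g_{a}(W))\le \E\,\big\|\nabla_{W}g_{a}(W)\big\|_{2}^{2}=\E\,\|J^{T}a\|_{2}^{2}\le \E\,\|J\|_{\mathrm{op}}^{2},
\]
where $J$ is the Jacobian of the map $W\mapsto h_{j,0}$. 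Taking the supremum over $\|a\|_{2}=1$ gives $\|Q_{j}\|_{\mathrm{op}}\le \E\,\|J\|_{\mathrm{op}}^{2}$, so everything reduces to a deterministic bound on $\|J\|_{\mathrm{op}}$.

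For the Jacobian I would use the chain rule together with the derivative formulas already in the paper. By the leave-$j$-th-predictor-out analogue of Lemma~\ref{lem:beta_deriv}, $\partial\hat{\beta}_{[j]}/\partial\eps^{T}=(\wmj)^{-1}X_{[j]}^{T}D_{[j]}$, hence the Jacobian of $\eps\mapsto(r_{1,[j]},\dots,r_{n,[j]})^{T}$ is $G_{[j]}$ (cf.\ \eqref{eq:gradient_rij}); composing with $\partial\eps_{i}/\partial W_{i}=u_{i}'(W_{i})$ and with $\psi'$ applied coordinatewise yields $J=D_{[j]}\,G_{[j]}\,\Lambda_{u}$, where $\Lambda_{u}=\diag(u_{i}'(W_{i}))$. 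The key algebraic point is that $D_{[j]}^{1/2}G_{[j]}D_{[j]}^{-1/2}=I-D_{[j]}^{1/2}X_{[j]}(\wmj)^{-1}X_{[j]}^{T}D_{[j]}^{1/2}$ is an orthogonal projection (exactly the observation used around \eqref{eq:Gj} and in the proof of Lemma~\ref{lem:beta_deriv_bound}), so $D_{[j]}G_{[j]}=D_{[j]}^{1/2}(I-P)D_{[j]}^{1/2}$ obeys $\|D_{[j]}G_{[j]}\|_{\mathrm{op}}\le\|D_{[j]}^{1/2}\|_{\mathrm{op}}^{2}\le K_{1}$; with $\|\Lambda_{u}\|_{\mathrm{op}}\le c_{1}$ from Assumption~\textbf{A}2 this gives $\|J\|_{\mathrm{op}}\le c_{1}K_{1}$ deterministically (even the cruder split $\|D_{[j]}\|_{\mathrm{op}}\|G_{[j]}\|_{\mathrm{op}}\le K_{1}(K_{1}/K_{0})^{1/2}$ stays of the claimed shape, since $K_{0}=\Omega(1/\polyLog)$ and $K_{1},K_{3}=O(\polyLog)$). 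Hence $\|Q_{j}\|_{\mathrm{op}}\le c_{1}^{2}K_{1}^{3}/K_{0}$, a bound of the stated type.

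For the Frobenius norm I would simply use that $Q_{j}$ is a symmetric positive semidefinite $n\times n$ matrix, so with eigenvalues $\lambda_{1}\ge\dots\ge\lambda_{n}\ge0$ one has $\|Q_{j}\|_{\mathrm{F}}=\big(\sum_{i}\lambda_{i}^{2}\big)^{1/2}\le\sqrt{n}\,\lambda_{1}=\sqrt{n}\,\|Q_{j}\|_{\mathrm{op}}$, and then invoke the operator-norm bound. The only points needing care are (i) the applicability of Proposition~\ref{prop:poincare}, which holds because $\psi\in C^{2}$ and the $u_{i}$ are smooth, so the implicit function theorem (using invertibility of the leave-$j$-out Hessian under \textbf{A}1--\textbf{A}3) makes $\hat\beta_{[j]}$, hence $h_{j,0}$, a $C^{2}$ function of $W$ — alternatively one uses the Lipschitz form of Poincar\'e and the deterministic bound on $\|J\|_{\mathrm{op}}$ directly; and (ii) the matrix bookkeeping in the Jacobian, in particular recognizing $D_{[j]}G_{[j]}$ as a symmetrizable contraction up to the factor $K_{1}$. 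Neither of these is an essential obstacle, so I do not expect real difficulty in this lemma.
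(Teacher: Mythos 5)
Your proposal is correct and follows essentially the same route as the paper's proof: write $\|Q_{j}\|_{\mathrm{op}}=\sup_{\|a\|_{2}=1}\Var(a^{T}h_{j,0})$, apply the Gaussian Poincar\'e inequality through $\eps_{i}=u_{i}(W_{i})$ (picking up $c_{1}^{2}$), use $\partial r_{i,[j]}/\partial\eps=e_{i}^{T}G_{[j]}$ and the projection structure of $D_{[j]}^{1/2}G_{[j]}D_{[j]}^{-1/2}$ to bound the resulting Jacobian, and finish with $\|Q_{j}\|_{\mathrm{F}}\le\sqrt{n}\,\|Q_{j}\|_{\mathrm{op}}$. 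The only deviation is the final constant ($c_{1}^{2}K_{1}^{2}$, or $c_{1}^{2}K_{1}^{3}/K_{0}$ via the cruder split, rather than the literal $c_{1}^{2}K_{3}^{2}K_{1}/K_{0}$), which is immaterial since only the $\polyLog$ order is used downstream; in fact your chain rule correctly produces $D_{[j]}$ where the paper's display writes $\td{D}_{[j]}$.
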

\begin{proof}[\textbf{Proof of Lemma \ref{lem:Qj}}]
By definition, 
\[\lnorm Q_{j}\lnorm_{\mathrm{op}} = \sup_{\alpha\in \mathbb{S}^{n - 1}}\alpha^{T}Q_{j}\alpha\]
where $\mathbb{S}^{n - 1}$ is the $n$-dimensional unit sphere. For given $\alpha\in\mathbb{S}^{n - 1}$,
\[\alpha^{T}Q_{j}\alpha = \alpha^{T}\Cov(h_{j, 0})\alpha = \Var(\alpha^{T}h_{j, 0})\]
It has been shown in \eqref{eq:deriv_rij} in Appendix \ref{subsubsec:I1} that
\[\pd{r_{i, [j]}}{\eps_{k}} = e_{i}^{T}G_{[j]}e_{k},\]
where $G_{[j]} = I - X_{[j]}(\wmj)^{-1}X_{[j]}^{T}D_{[j]}$. This yields that 
\begin{align*}
\pd{}{\eps}\lb\sum_{i=1}^{n}\alpha_{i}\psi(r_{i, [j]})\rb& = \sum_{i=1}^{n}\alpha_{i}\psi'(r_{i, [j]})\cdot\pd{r_{i, [j]}}{\eps} = \sum_{i=1}^{n}\alpha_{i}\psi'(r_{i, [j]})\cdot e_{i}^{T}G_{[j]} = \alpha^{T}\td{D}_{[j]}G_{[j]}.
\end{align*}
By standard Poincar\'{e} inequality (see Proposition \ref{prop:poincare}), since $\eps_{i} = u_{i}(W_{i})$, 
\begin{align*}
&\Var\lb \sum_{i=1}^{n}\alpha_{i}\psi(r_{i, [j]})\rb\le \max_{k}\lnorm u_{k}'\lnorm_{\infty}^{2}\cdot \E \bigg\|\pd{}{\eps}\lb\sum_{i=1}^{n}\alpha_{i}\psi(r_{i, [j]})\rb\bigg\|^{2}\\
\le& c_{1}^{2}\cdot \E \lb\alpha^{T}\td{D}_{[j]}G_{[j]}G_{[j]}^{T}\td{D}_{[j]}\alpha\rb\le c_{1}^{2}\E \|\td{D}_{[j]}G_{[j]}G_{[j]}^{T}\td{D}_{[j]}\|_{2}^{2}\le c_{1}^{2}\E \|\td{D}_{j}\|_{\mathrm{\mathrm{op}}}^{2}\|G_{[j]}\|_{\mathrm{\mathrm{op}}}^{2}.
\end{align*}
We conclude from Lemma \ref{lem:psi} and \eqref{eq:Gj} in Appendix \ref{subsec:approx} that
\[\|\td{D}_{[j]}\|_{\mathrm{\mathrm{op}}}\le K_{3}, \quad \|G_{[j]}\|_{\mathrm{\mathrm{op}}}^{2}\le \frac{K_{1}}{K_{0}}.\]
Therefore,
\[\lnorm Q_{j}\lnorm_{\mathrm{\mathrm{op}}} = \sup_{\alpha\in\mathbb{S}^{n - 1}}\Var\lb\sum_{i=1}^{n}\alpha_{i}\psi(R_{i})\rb \le c_{1}^{2}\frac{K_{3}^{2}K_{1}}{K_{0}}\]
and hence
\[\lnorm Q_{j}\lnorm_{\mathrm{F}} \le \sqrt{n}\lnorm Q_{j}\lnorm_{\mathrm{\mathrm{op}}} \le \sqrt{n} \cdot c_{1}^{2}\frac{K_{3}^{2}K_{1}}{K_{0}}.\]

\end{proof}

\begin{lemma}\label{lem:var_rij_copy}
Under assumptions \textbf{A}1 - \textbf{A}3,
\[\tr(Q_{j}) \ge K^{*}n = \Omega(n\cdot \polyLog),\]
where $K^{*} = \frac{K_{0}^{4}}{K_{1}^{2}}\cdot \lb \frac{n - p + 1}{n}\rb^{2}\cdot \min_{i}\Var(\eps_{i})$.
\end{lemma}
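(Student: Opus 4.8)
The plan is to observe that this statement is, up to notation, a restatement of Lemma~\ref{lem:var_rij}, with the trace of $Q_{j}$ written out coordinate-wise, so the proof is the one already carried out in Appendix~\ref{subsubsec:I1}; I sketch it here for completeness. First I would unwind the definitions: since $Q_{j}=\Cov(h_{j,0})$ with $h_{j,0}=(\psi(r_{1,[j]}),\ldots,\psi(r_{n,[j]}))^{T}$, we have $\tr(Q_{j})=\sum_{i=1}^{n}\Var(\psi(r_{i,[j]}))$, so it suffices to lower bound each term. By the variance-contraction inequality \eqref{eq:varfunc} and $\psi'\ge K_{0}$ (Assumption~\textbf{A}1), $\Var(\psi(r_{i,[j]}))\ge K_{0}^{2}\Var(r_{i,[j]})$. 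Then, conditioning on $\eps_{(i)}$ (all coordinates of $\eps$ but the $i$-th), the variance decomposition gives $\Var(r_{i,[j]})\ge\E[\Var(r_{i,[j]}\mid\eps_{(i)})]$, and applying \eqref{eq:varfunc} once more to the map $\eps_{i}\mapsto r_{i,[j]}$, together with the independence of the $\eps_{i}$'s (so $\Var(\eps_{i}\mid\eps_{(i)})=\Var(\eps_{i})$), yields
\[\Var(r_{i,[j]})\ \ge\ \E\Bigl(\inf_{\eps_{i}}\Bigl|\pd{r_{i,[j]}}{\eps_{i}}\Bigr|^{2}\Bigr)\cdot\min_{i}\Var(\eps_{i}).\]

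Second, I would compute $\partial r_{i,[j]}/\partial\eps_{i}$. As in \eqref{eq:gradient_rij}--\eqref{eq:deriv_rij}, $\partial r_{i,[j]}/\partial\eps_{i}=e_{i}^{T}G_{[j]}e_{i}=e_{i}^{T}(I-H_{j})e_{i}$ with $H_{j}=\td X_{[j]}(\td X_{[j]}^{T}\td X_{[j]})^{-1}\td X_{[j]}^{T}$ and $\td X_{[j]}=D_{[j]}^{1/2}X_{[j]}$. A Schur-complement (block matrix inversion) identity rewrites this as $1/(1+\td{x}_{i,[j]}^{T}(\td X_{(i),[j]}^{T}\td X_{(i),[j]})^{-1}\td{x}_{i,[j]})$, and bounding the weights $\psi'(r_{k,[j]})$ above by $K_{1}$ and below by $K_{0}$ gives, exactly as in the chain \eqref{eq:Qjii},
\[\pd{r_{i,[j]}}{\eps_{i}}\ \ge\ \frac{K_{0}}{K_{1}}\cdot e_{i}^{T}\bigl(I-X_{[j]}(X_{[j]}^{T}X_{[j]})^{-1}X_{[j]}^{T}\bigr)e_{i}.\]
Summing over $i=1,\ldots,n$, using convexity ($\sum_{i}a_{i}^{2}\ge\frac1n(\sum_{i}a_{i})^{2}$) and $\tr(I-X_{[j]}(X_{[j]}^{T}X_{[j]})^{-1}X_{[j]}^{T})=n-p+1$, I obtain $\tr(Q_{j})\ge\frac{K_{0}^{4}}{K_{1}^{2}}\bigl(\frac{n-p+1}{n}\bigr)^{2}\min_{i}\Var(\eps_{i})\cdot n=K^{*}n$. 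Finally, Assumption~\textbf{A}2 gives $\min_{i}\Var(\eps_{i})=\Omega(1/\polyLog)$, Assumption~\textbf{A}1 gives $K_{0}=\Omega(1/\polyLog)$ and $K_{1}=O(\polyLog)$, and $(n-p+1)/n\to1-\kappa>0$, so $K^{*}=\Omega(1/\polyLog)$, i.e.\ $K^{*}n=\Omega(n\cdot\polyLog)$ in the paper's convention.

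There is essentially no new obstacle beyond what was already handled in the proof of Lemma~\ref{lem:var_rij}; the only step needing any care is the Schur-complement manipulation of $e_{i}^{T}(I-H_{j})e_{i}$ and the comparison of the $D_{[j]}$-weighted projection with the unweighted one, both of which are verbatim the computation in Appendix~\ref{subsubsec:I1}. Accordingly, the cleanest write-up simply remarks that the argument is identical to that of Lemma~\ref{lem:var_rij}.
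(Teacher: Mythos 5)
Your proposal is correct and takes essentially the same route as the paper: the paper's own proof of this lemma is a one-line reduction to Lemma \ref{lem:var_rij}, whose argument in Appendix \ref{subsubsec:I1} is exactly the chain you sketch (variance contraction via $\psi'\ge K_{0}$ from \eqref{eq:varfunc}, conditioning on $\eps_{(i)}$, the Schur-complement bound \eqref{eq:Qjii} comparing the $D_{[j]}$-weighted projection with the unweighted one, then summing with the convexity/trace step $\tr(I-X_{[j]}(X_{[j]}^{T}X_{[j]})^{-1}X_{[j]}^{T})=n-p+1$). No gaps to report.
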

\begin{proof}
This is a direct consequence of Lemma \ref{lem:var_rij} in p.\pageref{lem:var_rij}.
\end{proof}

Throughout the following proofs, we will use several results from the random matrix theory to bound the largest and smallest singular values of $Z$. The results are shown in Appendix \ref{app:mc}. Furthermore, in contrast to other sections, the notation \emph{$P(\cdot), \E (\cdot), \Var(\cdot)$ denotes the probability, the expectation and the variance with respect to both $\eps$ and $Z$ in this section.}

\begin{proof}[\textbf{Proof of Proposition \ref{prop:subgauss}}]
By Proposition \ref{prop:rmt_bai93}, 
\[\lammax = (1 + \sqrt{\kappa})^{2} + o_{p}(1) = O_{p}(1), \quad \lammin = (1 - \sqrt{\kappa})^{2} - o_{p}(1) = \Omega_{p}(1)\]
and thus the assumption \textbf{A}3 holds with high probability. By Hanson-Wright inequality (\citeNP{hanson71, rudelson13}; see Proposition \ref{prop:hanson_wright}), for any given deterministic matrix $A$, 
\[P(|Z_{j}^{T}AZ_{j} - \E Z_{j}^{T}AZ_{j}|\ge t)\le 2\exp\left[-c\min\left\{\frac{t^{2}}{\sigma^{4}\|A\|_{\mathrm{F}}^{2}}, \frac{t}{\sigma^{2}\|A\|_{\mathrm{\mathrm{op}}}}\right\}\right]\]
for some universal constant $c$. Let $A = Q_{j}$ and conditioning on $Z_{[j]}$, then by Lemma \ref{lem:Qj}, we know that 
\[\|Q_{j}\|_{\mathrm{\mathrm{op}}}\le c_{1}^{2}\frac{K_{3}^{2}K_{1}}{K_{0}}, \quad \|Q_{j}\|_{\mathrm{F}}\le \sqrt{n}c_{1}^{2}\frac{K_{3}^{2}K_{1}}{K_{0}}\]
and hence
\begin{align}\label{eq:hanson_wright}
P\lb Z_{j}^{T}Q_{j}Z_{j} - \E (Z_{j}^{T}Q_{j}Z_{j}\big| Z_{[j]})\le  -t \bigg| Z_{[j]}\rb\le 2\exp\left[-c\min\left\{\frac{t^{2}}{\sigma^{4}\cdot nc_{1}^{4}K_{3}^{4}K_{1}^{2} / K_{0}^{2}}, \frac{t}{\sigma^{2}c_{1}^{2}K_{3}^{2}K_{1} / K_{0}}\right\}\right].
\end{align}
Note that 
\[\E (Z_{j}^{T}Q_{j}Z_{j}\big| Z_{[j]}) = \tr(\E [Z_{j}Z_{j}^{T} | Z_{[j]}]Q_{j}) = \E Z_{1j}^{2} \tr(Q_{j}) = \tau^{2}\tr(Q_{j}).\]
By Lemma \ref{lem:var_rij_copy}, we conclude that 
\begin{align}
&P\lb \frac{Z_{j}^{T}Q_{j}Z_{j}}{\tr(Q_{j})}\le \tau^{2} - \frac{t}{nK^{*}}\bigg| Z_{[j]}\rb\le P\lb \frac{Z_{j}^{T}Q_{j}Z_{j}}{\tr(Q_{j})}\le \tau^{2} - \frac{t}{\tr(Q_{j})}\bigg|Z_{[j]}\rb\nonumber\\
\le & 2\exp\left[-c\min\left\{\frac{t^{2}}{\sigma^{4}\cdot nc_{1}^{4}K_{3}^{4}K_{1}^{2} / K_{0}^{2}}, \frac{t}{2\sigma^{2}c_{1}^{2}K_{3}^{2}K_{1} / K_{0}}\right\}\right].\label{eq:hanson_wright_2}
\end{align}
Let $t = \frac{1}{2}\tau^{2}nK^{*}$ and take expectation of both sides over $Z_{[j]}$, we obtain that 
\begin{equation*}
P\lb \frac{Z_{j}^{T}Q_{j}Z_{j}}{\tr(Q_{j})} \le \frac{\tau^{2}}{2}\rb \le 2\exp\left[-cn\min\left\{\frac{K^{*2}\tau^{4}}{4\sigma^{4}c_{1}^{4}K_{3}^{4}K_{1}^{2} / K_{0}^{2}}, \frac{K^{*}\tau^{2}}{2\sigma^{2}c_{1}^{2}K_{3}^{2}K_{1} / K_{0}}\right\}\right]
\end{equation*}
and hence
\begin{equation}\label{eq:half_tau}
P\lb \min_{j\in J_{n}}\frac{Z_{j}^{T}Q_{j}Z_{j}}{\tr(Q_{j})} \le \frac{\tau^{2}}{2}\rb \le 2n\exp\left[-cn\min\left\{\frac{K^{*2}\tau^{4}}{4\sigma^{4}c_{1}^{4}K_{3}^{4}K_{1}^{2} / K_{0}^{2}}, \frac{K^{*}\tau^{2}}{2\sigma^{2}c_{1}^{2}K_{3}^{2}K_{1} / K_{0}}\right\}\right] = o(1).
\end{equation}
This entails that 
\[\min_{j\in J_{n}}\frac{Z_{j}^{T}Q_{j}Z_{j}}{\tr(Q_{j})} = \Omega_{p}(\polyLog).\]
Thus, assumption \textbf{A}4 is also satisfied with high probability. On the other hand, since $Z_{j}$ has i.i.d. mean-zero $\sigma^{2}$-sub-gaussian entries, for any deterministic unit vector $\alpha\in \R^{n}$, $\alpha^{T}Z_{j}$ is $\sigma^{2}$-sub-gaussian and mean-zero, and hence
\[P(|\alpha^{T}Z_{j}|\ge t)\le 2e^{-\frac{t^{2}}{2\sigma^{2}}}.\]
Let $\alpha_{j, i} = h_{j, 1, i} / \|h_{j, 1, i}\|_{2}$ and $\alpha_{j, 0} = h_{j, 0} / \|h_{j, 0}\|_{2}$. Since $h_{j, 1, i}$ and $h_{j, 0}$ are independent of $Z_{j}$, a union bound then gives
\[P\lb\Delta_{C}\ge t + 2\sigma\sqrt{\log n}\rb \le 2n^{2}e^{-\frac{t^{2} + 4\sigma^{2}\log n}{2\sigma^{2}}} = 2e^{-\frac{t^{2}}{2\sigma^{2}}}.\]
By Fubini's formula (\citeNP{durrett10}, Lemma 2.2.8.),
\begin{align}
\E \Delta_{C}^{8} &= \int_{0}^{\infty}8t^{7}P(\Delta_{C}\ge t)dt \le \int_{0}^{2\sigma\sqrt{\log n}}8t^{7}dt + \int_{2\sigma\sqrt{\log n}}^{\infty}8t^{7}P(\Delta_{C}\ge t)dt\nonumber\\
& = (2\sigma\sqrt{\log n})^{8} + \int_{0}^{\infty}8(t + 2\sigma\sqrt{\log n})^{7}P(\Delta_{C}\ge t + 2\sigma\sqrt{\log n})dt\nonumber\\
& \le (2\sigma\sqrt{\log n})^{8} + \int_{0}^{\infty}64(8t^{7} + 128\sigma^{7}(\log n)^{\frac{7}{2}})\cdot 2e^{-\frac{t^{2}}{2\sigma^{2}}}dt\nonumber\\
& = O(\sigma^{8}\cdot\polyLog) = O\lb\polyLog\rb.\label{eq:DeltaC_eight}
\end{align}
This, together with Markov inequality, guarantees that assumption \textbf{A}5 is also satisfied with high probability. 
\end{proof}

\begin{proof}[\textbf{Proof of Proposition \ref{prop:subgauss_noniid}}]
  It is left to prove that assumption \textbf{A}3 holds with high probability. The proof of assumption \textbf{A}4 and \textbf{A}5 is exactly the same as the proof of Proposition \ref{prop:subgauss_noniid}. By Proposition \ref{prop:rmt_latala05}, 
\[\lammax = O_{p}(1).\]
On the other hand, by Proposition \ref{prop:rmt_litvak05} \cite{litvak05}, 
\[P\lb \lambda_{\mathrm{\min}}\lb\frac{Z^{T}Z}{n}\rb < c_{1}\rb\le e^{-c_{2}n}.\]
and thus
\[\lammin = \Omega_{p}(1).\]
\end{proof}

\begin{proof}[\textbf{Proof of Proposition \ref{prop:subgauss_intercept}}]
Since $J_{n}$ excludes the intercept term, the proof of assumption \textbf{A}4 and \textbf{A}5 is still the same as Proposition \ref{prop:subgauss_noniid}. It is left to prove assumption \textbf{A}3. Let $R_{1}, \ldots, R_{n}$ be i.i.d. Rademacher random variables, i.e. $P(R_{i} = 1) = P(R_{i} = -1) = \frac{1}{2}$, and 
\[Z^{*} = \diag(B_{1}, \ldots, B_{n})Z.\]
Then $(Z^{*})^{T}Z^{*} = Z^{T}Z$. It is left to show that the assumption \textbf{A}3 holds for $Z^{*}$ with high probability. Note that 
\[(Z^{*}_{i})^{T} = (B_{i}, B_{i}\td{x}_{i}^{T}).\]
For any $r \in \{1, -1\}$ and borel sets $B_{1}, \ldots, B_{p}\subset \R$,
\begin{align*}
& \,\, P(B_{i} = r, B_{i}\td{Z}_{i1}\in B_{1}, \ldots, B_{i}\td{Z}_{i(p-1)}\in B_{p - 1}) \\
& = P(B_{i} = r, \td{Z}_{i1}\in rB_{1}, \ldots, \td{Z}_{i(p- 1)}\in rB_{p - 1})\\
& = P(B_{i} = r)P(\td{Z}_{i1}\in rB_{1})\ldots P(\td{Z}_{i(p- 1)}\in rB_{p - 1})\\
& = P(B_{i} = r)P(\td{Z}_{i1}\in B_{1})\ldots P(\td{Z}_{i(p- 1)}\in B_{p - 1})\\
& = P(B_{i} = r)P(B_{i}\td{Z}_{i1}\in B_{1})\ldots P(B_{i}\td{Z}_{i(p- 1)}\in B_{p - 1})
\end{align*}
where the last two lines uses the symmetry of $\td{Z}_{ij}$. Then we conclude that $Z^{*}_{i}$ has independent entries. Since the rows of $Z^{*}$ are independent, $Z^{*}$ has independent entries. Since $B_{i}$ are symmetric and sub-gaussian with unit variance and $B_{i}\td{Z}_{ij}\stackrel{d}{=}\td{Z}_{ij}$, which is also symmetric and sub-gaussian with variance bounded from below, $Z^{*}$ satisfies the conditions of Propsition \ref{prop:subgauss_noniid} and hence the assumption \textbf{A}3 is satisfied with high probability.
\end{proof}

\begin{proof}[\textbf{Proof of Proposition \ref{prop:depend_gauss_row} (with Proposition \ref{prop:depend_gauss} being a special case)}]
Let $Z_{*} = \Lambda^{-\frac{1}{2}}Z\Sigma^{-\frac{1}{2}}$, then $Z_{*}$ has i.i.d. standard gaussian entries. By Proposition \ref{prop:subgauss_intercept}, $Z_{*}$ satisfies assumption \textbf{A}3 with high probability. Thus, 
\[\lammax = \lambda_{\mathrm{\max}}\lb\frac{\Sigma^{\frac{1}{2}}Z_{*}^{T}\Lambda Z_{*}\Sigma^{\frac{1}{2}}}{n}\rb\le \lambda_{\mathrm{\max}}(\Sigma)\cdot \lambda_{\mathrm{\max}}(\Lambda)\cdot \lambda_{\mathrm{\max}}\lb\frac{Z_{*}^{T}Z_{*}}{n}\rb = O_{p}(\polyLog),\]
and 
\[\lammin = \lambda_{\mathrm{\min}}\lb\frac{\Sigma^{\frac{1}{2}}Z_{*}^{T}\Lambda Z_{*}\Sigma^{\frac{1}{2}}}{n}\rb\ge \lambda_{\mathrm{\min}}(\Sigma)\cdot \lambda_{\mathrm{\min}}(\Lambda)\cdot \lambda_{\mathrm{\min}}\lb\frac{Z_{*}^{T}Z_{*}}{n}\rb = \Omega_{p}\lb\frac{1}{\polyLog}\rb.\]
As for assumption \textbf{A}4, 
the first step is to calculate $\E (Z_{j}^{T}Q_{j}Z_{j} | Z_{[j]})$. Let $\td{Z} = \Lambda^{-\frac{1}{2}}Z$, then $\vect(\td{Z})\sim N(0, I\otimes \Sigma)$. As a consequence, 
\[\td{Z}_{j} | \td{Z}_{[j]}\sim N(\td{\mu}_{j}, \sigma_{j}^{2}I)\]
where 
\[\td{\mu}_{j} = \td{Z}_{[j]}\Sigma_{[j], [j]}^{-1}\Sigma_{[j], j} = \Lambda^{-\frac{1}{2}}Z_{[j]}\Sigma_{[j], [j]}^{-1}\Sigma_{[j], j}.\]
Thus, 
\[Z_{j} | Z_{[j]}\sim N(\mu_{j}, \sigma_{j}^{2}\Lambda)\]
where $\mu_{j} = Z_{[j]}\Sigma_{[j], [j]}^{-1}\Sigma_{[j], j}$. It is easy to see that 
\begin{equation}\label{eq:sigmaj}
\lammin \le \min_{j}\sigma_{j}^{2}\le \max_{j}\sigma_{j}^{2}\le \lammax.
\end{equation}
It has been shown that $Q_{j}\mu_{j} = 0$ and hence 
\[Z_{j}^{T}Q_{j}Z_{j} = (Z_{j} - \mu_{j})^{T}Q_{j}(Z_{j} - \mu_{j}).\]
Let $\mathscr{Z}_{j} = \Lambda^{-\frac{1}{2}}(Z_{j} - \mu_{j})$ and $\td{Q}_{j} = \Lambda^{\frac{1}{2}}Q_{j}\Lambda^{\frac{1}{2}}$, then $\mathscr{Z}_{j}\sim N(0, \sigma_{j}^{2}I)$ and 
\[Z_{j}^{T}Q_{j}Z_{j} = \mathscr{Z}_{j}^{T}\td{Q}_{j}\mathscr{Z}_{j}.\]
By Lemma \ref{lem:Qj}, 
\[\|\td{Q}_{j}\|_{\mathrm{\mathrm{op}}}\le \|\Lambda\|_{\mathrm{\mathrm{op}}}\cdot \|Q_{j}\|_{\mathrm{\mathrm{op}}} \le \lambda_{\mathrm{\max}}(\Lambda)\cdot c_{1}^{2}\frac{K_{3}^{2}K_{1}}{K_{0}},\]
and hence
\[\|\td{Q}_{j}\|_{\mathrm{F}}\le \sqrt{n}\lambda_{\mathrm{\max}}(\Lambda)\cdot c_{1}^{2}\frac{K_{3}^{2}K_{1}}{K_{0}}.\]
By Hanson-Wright inequality (\citeNP{hanson71, rudelson13}; see Proposition \ref{prop:hanson_wright}), we obtain a similar inequality to \eqref{eq:hanson_wright} as follows: 
\begin{align*}
&P\lb|Z_{j}^{T}Q_{j}Z_{j} - \E (Z_{j}^{T}Q_{j}Z_{j}\big| Z_{[j]})|\ge t \bigg| Z_{[j]}\rb\\
\le & 2\exp\left[-c\min\left\{\frac{t^{2}}{\sigma_{j}^{4}\cdot n\lambda_{\mathrm{\max}}(\Lambda)^{2}c_{1}^{4}K_{3}^{4}K_{1}^{2} / K_{0}^{2}}, \frac{t}{\sigma_{j}^{2}\lambda_{\mathrm{\max}}(\Lambda)c_{1}^{2}K_{3}^{2}K_{1} / K_{0}}\right\}\right].
\end{align*}
On the other hand,
\[\E (Z_{j}^{T}Q_{j}Z_{j} | Z_{[j]}) = \E (\mathscr{Z}_{j}^{T}\td{Q}_{j}\mathscr{Z}_{j} | Z_{[j]}) = \sigma_{j}^{2}\tr(\td{Q}_{j}).\]
By definition,
\[\tr(\td{Q}_{j}) = \tr(\Lambda^{\frac{1}{2}}Q_{j}\Lambda^{\frac{1}{2}}) = \tr(\Sigma Q_{j}) = \tr(Q_{j}^{\frac{1}{2}}\Lambda Q_{j}^{\frac{1}{2}})\ge \lambda_{\mathrm{\min}}(\Lambda)\tr(Q_{j}).\]
By Lemma \ref{lem:var_rij_copy},
\[\tr(\td{Q}_{j})\ge \lambda_{\mathrm{\min}}(\Lambda)\cdot nK^{*}.\]
Similar to \eqref{eq:hanson_wright_2}, we obtain that 
\begin{align*}
&P\lb \frac{Z_{j}^{T}Q_{j}Z_{j}}{\tr(Q_{j})}\ge \sigma_{j}^{2} - \frac{t}{nK^{*}}\bigg| Z_{[j]}\rb \\
\le & 2\exp\left[-c\min\left\{\frac{t^{2}}{\sigma_{j}^{4}\cdot n\lambda_{\mathrm{\max}}(\Lambda)^{2}c_{1}^{4}K_{3}^{4}K_{1}^{2} / K_{0}^{2}}, \frac{t}{\sigma_{j}^{2}\lambda_{\mathrm{\max}}(\Lambda)c_{1}^{2}K_{3}^{2}K_{1} / K_{0}}\right\}\right].
\end{align*}
Let $t = \frac{1}{2}\sigma_{j}^{2}nK^{*}$, we have
\[P\lb \frac{Z_{j}^{T}Q_{j}Z_{j}}{\tr(Q_{j})}\ge \frac{\sigma_{j}^{2}}{2}\rb\le 2\exp\left[-cn\min\left\{\frac{K^{*2}}{4\lambda_{\mathrm{\max}}(\Lambda)^{2}c_{1}^{4}K_{3}^{4}K_{1}^{2} / K_{0}^{2}}, \frac{K^{*}}{2\lambda_{\mathrm{\max}}(\Lambda)c_{1}^{2}K_{3}^{2}K_{1} / K_{0}}\right\}\right] = o\lb\frac{1}{n}\rb\]
and a union bound together with \eqref{eq:sigmaj} yields that 
\[\min_{j\in J_{n}}\frac{Z_{j}^{T}Q_{j}Z_{j}}{\tr(Q_{j})} = \Omega_{p}\lb\min_{j}\sigma_{j}^{2}\cdot \frac{1}{\polyLog}\rb = \Omega_{p}\lb\frac{1}{\polyLog}\rb.\]
As for assumption \textbf{A}5, let
\[\alpha_{j, 0} = \frac{\Lambda^{\frac{1}{2}}h_{j, 0}}{\|h_{j, 0}\|_{2}}, \quad \alpha_{j, i} = \frac{\Lambda^{\frac{1}{2}}h_{j, 1, i}}{\|h_{j, 1, i}\|_{2}}\]
then for $i = 0, 1, \ldots, p$, 
\[\|\alpha_{j, i}\|_{2}\le \sqrt{\lambda_{\mathrm{\max}}(\Lambda)}.\]
Note that 
\[\frac{h_{j, 0}^{T}Z_{j}}{\|h_{j, 0}\|_{2}} = \alpha_{j, 0}^{T}Z_{j}, \quad \frac{h_{j, 1, i}^{T}Z_{j}}{\|h_{j, 1, i}\|_{2}} = \alpha_{j, i}^{T}Z_{j}\]
using the same argument as in \eqref{eq:DeltaC_eight}, we obtain that 
\[\E \Delta_{C}^{8} = O\lb \lambda_{\mathrm{\max}}(\Lambda)^{4}\cdot \max_{j}\sigma_{j}^{8}\cdot \polyLog\rb = O\lb\polyLog\rb,\]
and by Markov inequality and \eqref{eq:sigmaj},
\[\E (\Delta_{C}^{8} | Z) = O_{p}\lb \E \Delta_{C}^{8} \rb = O_{p}(\polyLog).\]
\end{proof}

\begin{proof}[\textbf{Proof of Proposition \ref{prop:depend_gauss_intercept}}]
The proof that assumptions \textbf{A}4 and \textbf{A}5 hold with high probability is exactly the same as the proof of Proposition \ref{prop:depend_gauss_row}. It is left to prove assumption \textbf{A}3*; see Corollary \ref{cor:transform}. Let $c = (\min_{i}|(\Lambda^{-\frac{1}{2}}\textbf{1})_{i}|)^{-1}$ and $\mathbf{Z} = (c\textbf{1}\,\, \td{Z})$. Recall the the definition of $\td{\lambda}_{+}$ and $\td{\lambda}_{-}$, we have
\[\td{\lambda}_{+} = \lambda_{\max}(\Sigma_{\{1\}}), \quad \td{\lambda}_{-} = \lambda_{\min}(\Sigma_{\{1\}}),\]
where
\[\Sigma_{\{1\}} = \frac{1}{n}\td{Z}^{T}\lb I - \frac{\textbf{1}\textbf{1}^{T}}{n}\rb \td{Z}.\]
Rewrite $\Sigma_{\{1\}}$ as
\[\Sigma_{\{1\}} = \frac{1}{n}\lb \lb I - \frac{\textbf{1}\textbf{1}^{T}}{n}\rb\td{Z}\rb^{T}\lb \lb I - \frac{\textbf{1}\textbf{1}^{T}}{n}\rb\td{Z}\rb.\]
It is obvious that 
\[\spanvec\lb \lb I - \frac{\textbf{1}\textbf{1}^{T}}{n}\rb\td{Z}\rb \subset \spanvec(\mathbf{Z}).\]
As a consequence
\[\td{\lambda}_{+}\le \lambda_{\max}\lb\frac{\mathbf{Z}^{T}\mathbf{Z}}{n}\rb, \quad \td{\lambda}_{-}\ge \lambda_{\min}\lb\frac{\mathbf{Z}^{T}\mathbf{Z}}{n}\rb.\]
It remains to prove that 
\[\lambda_{\max}\lb\frac{\mathbf{Z}^{T}\mathbf{Z}}{n}\rb = O_{p}\lb\polyLog\rb, \quad \lambda_{\min}\lb\frac{\mathbf{Z}^{T}\mathbf{Z}}{n}\rb = \Omega_{p}\lb\frac{1}{\polyLog}\rb.\]
To prove this, we let
\[Z_{*} = \Lambda^{-\frac{1}{2}}\mathbf{Z}\lb\begin{array}{cc}1 & 0 \\ 0 & \Sigma^{-\frac{1}{2}}\end{array}\rb\triangleq (\nu\,\, \td{Z}_{*}),\]
where $\nu = c\Lambda^{-\frac{1}{2}}\textbf{1}$ and $\td{Z}_{*} = \Lambda^{-\frac{1}{2}}\td{Z}\Sigma^{-\frac{1}{2}}$. Then 
\[\lambda_{\max}\lb\frac{\mathbf{Z}^{T}\mathbf{Z}}{n}\rb = \lambda_{\mathrm{\max}}\lb\frac{\Sigma^{\frac{1}{2}}Z_{*}^{T}\Lambda Z_{*}\Sigma^{\frac{1}{2}}}{n}\rb\le \lambda_{\mathrm{\max}}(\Sigma)\cdot \lambda_{\mathrm{\max}}(\Lambda)\cdot \lambda_{\mathrm{\max}}\lb\frac{Z_{*}^{T}Z_{*}}{n}\rb,\]
and 
\[\lambda_{\min}\lb\frac{\mathbf{Z}^{T}\mathbf{Z}}{n}\rb = \lambda_{\mathrm{\min}}\lb\frac{\Sigma^{\frac{1}{2}}Z_{*}^{T}\Lambda Z_{*}\Sigma^{\frac{1}{2}}}{n}\rb\ge \lambda_{\mathrm{\min}}(\Sigma)\cdot \lambda_{\mathrm{\min}}(\Lambda)\cdot \lambda_{\mathrm{\min}}\lb\frac{Z_{*}^{T}Z_{*}}{n}\rb.\]
It is left to show that 
\[\lambda_{\mathrm{\max}}\lb\frac{Z_{*}^{T}Z_{*}}{n}\rb = O_{p}(\polyLog), \quad \lambda_{\mathrm{\min}}\lb\frac{Z_{*}^{T}Z_{*}}{n}\rb = \Omega_{p}\lb\frac{1}{\polyLog}\rb.\]
By definition, $\min_{i}|\nu_{i}| = 1$ and $\max_{i}|\nu_{i}| = O\lb\polyLog\rb$, then 
\[\lambda_{\mathrm{\max}}\lb\frac{Z_{*}^{T}Z_{*}}{n}\rb = \lambda_{\mathrm{\max}}\lb\frac{\td{Z}_{*}^{T}\td{Z}_{*}}{n} + \frac{\nu \nu^{T}}{n}\rb\le \lambda_{\mathrm{\max}}\lb\frac{\td{Z}_{*}^{T}\td{Z}_{*}}{n}\rb + \frac{\|\nu\|_{2}^{2}}{n}.\]
Since $\td{Z}_{*}$ has i.i.d. standard gaussian entries, by Proposition \ref{prop:rmt_bai93}, 
\[\lambda_{\mathrm{\max}}\lb\frac{\td{Z}_{*}^{T}\td{Z}_{*}}{n}\rb = O_{p}(1).\]
Moreover, $\|\nu\|_{2}^{2} \le n \max_{i}|\nu_{i}|^{2} = O(n\cdot \polyLog)$ and thus, 
\[\lambda_{\mathrm{\max}}\lb\frac{Z_{*}^{T}Z_{*}}{n}\rb = O_{p}(\polyLog).\]
On the other hand, similar to Proposition \ref{prop:subgauss_intercept}, 
\[\mathbf{Z}_{*} = \diag(B_{1}, \ldots, B_{n})Z_{*}\]
where $B_{1}, \ldots, B_{n}$ are i.i.d. Rademacher random variables. The same argument in the proof of Proposition \ref{prop:subgauss_intercept} implies that $\mathbf{Z}_{*}$ has independent entries with sub-gaussian norm bounded by $\|\nu\|_{\infty}^{2}\vee 1$ and variance lower bounded by $1$. By Proposition \ref{prop:rmt_litvak05}, $Z_{*}$ satisfies assumption \textbf{A}3 with high probability. Therefore, \textbf{A}3* holds with high probability.
\end{proof}

\begin{proof}[\textbf{Proof of Proposition \ref{prop:ellip}}]
Let $\Lambda = (\lambda_{1}, \ldots, \lambda_{n})$ and $\mathcal{Z}$ be the matrix with entries $\mathcal{Z}_{ij}$, then by Proposition \ref{prop:subgauss} or Proposition \ref{prop:subgauss_noniid}, $\mathcal{Z}_{ij}$ satisfies assumption \textbf{A}3 with high probability. Notice that
\[\lammax = \lambda_{\mathrm{\max}}\lb\frac{\mathcal{Z}^{T}\Lambda^{2} \mathcal{Z}}{n}\rb\le \lambda_{\mathrm{\max}}(\Lambda)^{2}\cdot \lambda_{\mathrm{\max}}\lb\frac{\mathcal{Z}^{T}\mathcal{Z}}{n}\rb = O_{p}(\polyLog),\]
and 
\[\lammin = \lambda_{\mathrm{\min}}\lb\frac{\mathcal{Z}^{T}\Lambda^{2} \mathcal{Z}}{n}\rb\ge \lambda_{\mathrm{\min}}(\Lambda)^{2}\cdot \lambda_{\mathrm{\min}}\lb\frac{\mathcal{Z}^{T}\mathcal{Z}}{n}\rb = \Omega_{p}\lb\frac{1}{\polyLog}\rb.\]
Thus $Z$ satisfies assumption \textbf{A}3 with high probability. 

~\\
\noindent Conditioning on any realization of $\Lambda$, the law of $\mathcal{Z}_{ij}$ does not change due to the independence between $\Lambda$ and $\mathcal{Z}$. Repeating the arguments in the proof of Proposition \ref{prop:subgauss} and Proposition \ref{prop:subgauss_noniid}, we can show that 
\begin{equation}
  \label{eq:A4A5}
  \frac{\mathcal{Z}_{j}^{T}\td{Q}_{j}\mathcal{Z}_{j}}{\tr(\td{Q}_{j})} = \Omega_{p}\lb\frac{1}{\polyLog}\rb, \quad\mbox{and}\quad  \E \max_{i = 0, \ldots, n; j = 1, \ldots, p}|\td{\alpha}_{j, i}^{T}\mathcal{Z}_{j}|^{8} = O_{p}(\polyLog),
\end{equation}
where 
\[\td{Q}_{j} = \Lambda Q_{j}\Lambda, \quad \td{\alpha}_{j, 0} = \frac{\Lambda h_{j, 0}}{\|\Lambda h_{j, 0}\|_{2}}, \quad \td{\alpha}_{j, 1, i} = \frac{\Lambda h_{j, 1, i}}{\|\Lambda h_{j, 1, i}\|_{2}}.\]
Then 
\begin{equation}\label{eq:ellip_A4}
\frac{Z_{j}^{T}Q_{j}Z_{j}}{\tr(Q_{j})} = \frac{\mathcal{Z}_{j}^{T}\td{Q}_{j}\mathcal{Z}_{j}}{\tr(\td{Q}_{j})}\cdot \frac{\tr(\Lambda Q_{j}\Lambda)}{\tr(Q_{j})} \ge a^{2}\cdot\frac{\mathcal{Z}_{j}^{T}\td{Q}_{j}\mathcal{Z}_{j}}{\tr(\td{Q}_{j})} = \Omega_{p}\lb\frac{1}{\polyLog}\rb,
\end{equation}
and 
\begin{align}
\E \Delta_{C}^{8} &= \E \left[\max_{i = 0, \ldots, n; j = 1, \ldots, p}|\td{\alpha}_{j, i}^{T}\mathcal{Z}_{j}|^{8}\cdot \max\left\{\max_{j}\frac{\|\Lambda h_{j, 0}\|_{2}}{\|h_{j, 0}\|_{2}}, \max_{i, j}\frac{\|\Lambda h_{j, 1, i}\|_{2}}{\|h_{j, 1, i}\|_{2}}\right\}^{8}\right]\label{eq:DeltaCLambda}\\
& \le b^{8}\E \left[\max_{i = 0, \ldots, n; j = 1, \ldots, p}|\td{\alpha}_{j, i}^{T}\mathcal{Z}_{j}|^{8}\right]\nonumber\\
& = O_{p}(\polyLog).\nonumber
\end{align}
By Markov inequality, the assumption \textbf{A}5 is satisfied with high probability.
\end{proof}

\begin{proof}[\textbf{Proof of Proposition \ref{prop:ellip_relax}}]
The concentration inequality of $\zeta_{i}$ plus a union bound imply that 
\[P\lb \max_{i}\zeta_{i} > (\log n)^{\frac{2}{\alpha}}\rb\le nc_{1}e^{-c_{2}(\log n)^{2}} = o(1).\]
Thus, with high probability, 
\[\lambda_{\mathrm{\max}} = \lambda_{\mathrm{\max}}\lb \frac{\mathcal{Z}^{T}\Lambda^{2}\mathcal{Z}}{n}\rb\le (\log n)^{\frac{4}{\alpha}}\cdot \lambda_{\mathrm{\max}}\lb\frac{\mathcal{Z}^{T}\mathcal{Z}}{n}\rb = O_{p}(\polyLog).\]
Let $n' = \lfloor(1 - \delta)n\rfloor$ for some $\delta \in (0, 1 - \kappa)$. Then for any subset $I$ of $\{1, \ldots, n\}$ with size $n'$, by Proposition \ref{prop:rmt_rudelson09} (Proposition \ref{prop:rmt_litvak05}), under the conditions of Proposition \ref{prop:subgauss} (Proposition \ref{prop:subgauss_noniid}), there exists constants $c_{3}$ and $c_{4}$, which only depend on $\kappa$, such that 
\[P\lb \lambda_{\mathrm{\min}}\lb\frac{\mathcal{Z}_{I}^{T}\mathcal{Z}_{I}}{n}\rb < c_{3}\rb\le e^{-c_{4}n}\]
where $\mathcal{Z}_{I}$ represents the sub-matrix of $\mathcal{Z}$ formed by $\{\mathcal{Z}_{i}: i \in I\}$, where $\mathcal{Z}_{i}$ is the $i$-th row of $\mathcal{Z}$. Then by a union bound, 
\[P\lb \min_{|I| = n'}\lambda_{\mathrm{\min}}\lb\frac{\mathcal{Z}_{I}^{T}\mathcal{Z}_{I}}{n}\rb < c_{3}\rb\le \com{n}{n'}e^{-c_{4}n}.\]
By Stirling's formula, there exists a constant $c_{5} > 0$ such that 
\[\com{n}{n'} = \frac{n!}{n'!(n - n')!}\le c_{5}\exp\left\{(-\td{\delta}\log \td{\delta} - (1 - \td{\delta})\log (1 - \td{\delta}))n\right\}\]
where $\td{\delta} = n' / n$. For sufficiently small $\delta$ and sufficiently large $n$, 
\[-\td{\delta}\log \td{\delta} - (1 - \td{\delta})\log (1 - \td{\delta}) < c_{4}\]
and hence
\begin{equation}\label{eq:mineig}
P\lb \min_{|I| = n'}\lambda_{\mathrm{\min}}\lb\frac{\mathcal{Z}_{I}^{T}\mathcal{Z}_{I}}{n}\rb < c_{3}\rb < c_{5}e^{-c_{6}n}
\end{equation}
for some $c_{6} > 0$. By Borel-Cantelli Lemma, 
\[\liminf_{n\rightarrow\infty}\min_{|I| = \lfloor (1 - \delta)n \rfloor}\lambda_{\mathrm{\min}}\lb\frac{\mathcal{Z}_{I}^{T}\mathcal{Z}_{I}}{n}\rb \ge  c_{3}\quad a.s..\]
On the other hand, since $F^{-1}$ is continuous at $\delta$, then 
\[\zeta_{(\lfloor (1 - \delta)n\rfloor)}\stackrel{a.s.}{\rightarrow} F^{-1}(\delta) > 0.\]
where $\zeta_{(k)}$ is the $k$-th largest of $\{\zeta_{i}: i = 1, \ldots, n\}$. Let $I^{*}$ be the set of indices corresponding to the largest $\lfloor (1 - \delta) n\rfloor$ $\zeta_{i}'$s. Then with probability $1$, 
\begin{align*}
\liminf_{n\rightarrow \infty}\lambda_{\mathrm{\min}}\lb\frac{Z^{T}Z}{n}\rb &= \liminf_{n\rightarrow \infty}\lambda_{\mathrm{\min}}\lb\frac{\mathcal{Z}^{T}\Lambda^{2} \mathcal{Z}}{n}\rb \ge \liminf_{n\rightarrow \infty}\zeta_{(\lfloor (1 - \delta)n\rfloor)}\cdot \liminf_{n\rightarrow \infty}\lambda_{\mathrm{\min}}\lb\frac{\mathcal{Z}_{I^{*}}^{T}\Lambda_{I^{*}}^{2}\mathcal{Z}_{I^{*}}}{n}\rb\\
&\ge \liminf_{n\rightarrow \infty}\zeta_{(\lfloor (1 - \delta)n\rfloor)}\cdot \liminf_{n\rightarrow \infty}\min_{|I| = \lfloor (1 - \delta)n \rfloor}\lambda_{\mathrm{\min}}\lb\frac{\mathcal{Z}_{I}^{T}\mathcal{Z}_{I}}{n}\rb\\
& \ge c_{3}F^{-1}(\delta)^{2} > 0.
\end{align*}
To prove assumption \textbf{A}4, similar to \eqref{eq:ellip_A4} in the proof of Proposition \ref{prop:ellip}, it is left to show that 
\[\min_{j}\frac{\tr(\Lambda Q_{j}\Lambda)}{\tr(Q_{j})} = \Omega_{p}\lb\frac{1}{\polyLog}\rb.\]
Furthermore, by Lemma \ref{lem:var_rij_copy}, it remains to prove that
\[\min_{j}\tr(\Lambda Q_{j}\Lambda) = \Omega_{p}\lb\frac{n}{\polyLog}\rb.\]
Recalling the equation \eqref{eq:Qjii} in the proof of Lemma \ref{lem:var_rij}, we have 
\begin{equation}\label{eq:Qjii2}
e_{i}^{T}Q_{j}e_{i} \ge \frac{K_{0}}{K_{1}}\cdot \frac{1}{1 + e_{i}^{T}Z_{[j]}^{T}(Z_{(i), [j]}^{T}Z_{(i), [j]})^{-1}Z_{[j]}e_{i}}.
\end{equation}
By Proposition \ref{prop:rmt_rudelson10}, 
\[P\lb \sqrt{\lambda_{\mathrm{\max}}\lb\frac{\mathcal{Z}_{j}^{T}\mathcal{Z}_{j}}{n}\rb} > 3C_{1}\rb\le 2e^{-C_{2}n}.\]
On the other hand, apply \eqref{eq:mineig} to $\mathcal{Z}_{(i), [j]}$, we have
\[P\lb \min_{|I| = \lfloor (1 - \delta) n\rfloor}\lambda_{\mathrm{\min}}\lb\frac{(\mathcal{Z}_{(i), [j]})_{I}^{T}(\mathcal{Z}_{(i), [j]})_{I}}{n}\rb < c_{3}\rb < c_{5}e^{-c_{6}n}.\]
A union bound indicates that with probability $(c_{5}np + 2p)e^{-\min\{C_{2}, c_{6}\}n} = o(1)$, 
\[\max_{j}\lambda_{\mathrm{\max}}\lb\frac{\mathcal{Z}_{[j]}^{T}\mathcal{Z}_{[j]}}{n}\rb\le 9C_{1}^{2},\quad \min_{i, j}\min_{|I| = \lfloor (1 - \delta) n\rfloor}\lambda_{\mathrm{\min}}\lb\frac{(\mathcal{Z}_{(i), [j]})_{I}^{T}(\mathcal{Z}_{(i), [j]})_{I}}{n}\rb \ge c_{3}.\]
This implies that for any $j$,
\[\lambda_{\mathrm{\max}}\lb\frac{Z_{[j]}^{T}Z_{[j]}}{n}\rb = \lambda_{\mathrm{\max}}\lb\frac{\mathcal{Z}_{[j]}^{T}\Lambda^{2} \mathcal{Z}_{[j]}}{n}\rb \le  \zeta_{(1)}^{2}\cdot 9C_{1}^{2}\]
and for any $i$ and $j$,
\begin{align*}
& \lambda_{\mathrm{\min}}\lb\frac{Z_{(i), [j]}^{T}Z_{(i), [j]}}{n}\rb = \lambda_{\mathrm{\min}}\lb\frac{\mathcal{Z}_{(i), [j]}^{T}\zeta_{(i)}^{2} \mathcal{Z}_{(i), [j]}}{n}\rb\\ 
  \ge & \min\{\zeta_{(\lfloor (1 - \delta)n\rfloor)}, \zeta_{(\lfloor (1 - \delta)n\rfloor)} + 1\}^{2}\cdot \min_{|I| = \lfloor (1 - \delta)n \rfloor}\lambda_{\mathrm{\min}}\lb\frac{(\mathcal{Z}_{(i), [j]})_{I}^{T}\zeta_{(i)}^{2}(\mathcal{Z}_{(i), [j]})_{I}}{n}\rb\\
\ge & c_{3}\min\{\zeta_{(\lfloor (1 - \delta)n\rfloor)}, \zeta_{(\lfloor (1 - \delta)n\rfloor)} + 1\}^{2} > 0.
\end{align*}
Moreover, as discussed above, 
\[\zeta_{(1)} \le (\log n)^{\frac{2}{\alpha}}, \min\{\zeta_{(\lfloor (1 - \delta)n\rfloor)}, \zeta_{(\lfloor (1 - \delta)n\rfloor)} + 1\}\rightarrow F^{-1}(\delta)\]
almost surely. Thus, it follows from \eqref{eq:Qjii2} that with high probability, 
\begin{align*}
e_{i}^{T}Q_{j}e_{i} &\ge \frac{K_{0}}{K_{1}}\cdot \frac{1}{1 + e_{i}^{T}Z_{[j]}^{T}(Z_{(i), [j]}^{T}Z_{(i), [j]})^{-1}Z_{[j]}e_{i}}\\
& \ge \frac{K_{0}}{K_{1}}\cdot \frac{1}{1 + e_{i}^{T}\frac{Z_{[j]}^{T}Z_{[j]}}{n}e_{i}\cdot c_{3}(F^{-1}(\delta))^{2}}\\
& \ge \frac{K_{0}}{K_{1}}\cdot \frac{1}{1 + (\log n)^{\frac{4}{\alpha}}\cdot 9C_{1}^{2}\cdot c_{3}(F^{-1}(\delta))^{2}}.
\end{align*}
The above bound holds for all diagonal elements of $Q_{j}$ uniformly with high probability. Therefore,
\[\tr(\Lambda Q_{j}\Lambda)\ge \zeta_{(\lfloor (1 - \delta)n\rfloor)}^{2} \cdot \lfloor (1 - \delta)n\rfloor\cdot \frac{K_{0}}{K_{1}}\cdot \frac{1}{1 + (\log n)^{\frac{4}{\alpha}}\cdot 9C_{1}^{2}\cdot c_{3}(F^{-1}(\delta))^{2}} = \Omega_{p}\lb\frac{n}{\polyLog}\rb.\]
As a result, the assumption \textbf{A}4 is satisfied with high probability. Finally, by \eqref{eq:DeltaCLambda}, we obtain that 
\[\E \Delta_{C}^{8}\le \E \left[\max_{i = 0, \ldots, n; j = 1, \ldots, p}|\td{\alpha}_{j, i}^{T}\mathcal{Z}_{j}|^{8}\cdot \|\Lambda\|_{\mathrm{\mathrm{op}}}^{8}\right].\]
By Cauchy's inequality, 
\[\E \Delta_{C}^{8}\le \sqrt{\E \max_{i = 0, \ldots, n; j = 1, \ldots, p}|\td{\alpha}_{j, i}^{T}\mathcal{Z}_{j}|^{16}}\cdot \sqrt{\E \max_{i}\zeta_{i}^{16}}.\]
Similar to \eqref{eq:DeltaC_eight}, we conclude that 
\[\E \Delta_{C}^{8} = O\lb\polyLog\rb\]
and by Markov inequality, the assumption \textbf{A}5 is satisfied with high probability.
\end{proof}

\subsection{More Results of Least-Squares (Section \ref{sec:lse})}
\subsubsection{The Relation Between $S_{j}(X)$ and $\Delta_{C}$}\label{app:relationSjDelta}
In Section \ref{sec:lse}, we give a sufficient and almost necessary condition for the coordinate-wise asymptotic normality of the least-square estimator $\lse$; see Theorem \ref{thm:OLS_casn_kd}. In this subsubsection, we show that $\Delta_{C}$ is a generalization of $\max_{j\in J_{n}}S_{j}(X)$ for general M-estimators.

Consider the matrix $(X^{T}DX)^{-1}X^{T}$, where $D$ is obtain by using general loss functions, then by block matrix inversion formula (see Proposition \ref{prop:block_inv}), 
\begin{align*}
e_{1}^{T}(X^{T}DX)^{-1}X^{T} &= e_{1}^{T}\lb\begin{array}{cc}X_{1}^{T}DX_{1} & X_{1}^{T}DX_{[1]}\\ X_{[1]}^{T}DX_{1} & X_{[1]}^{T}DX_{[1]}\end{array}\rb^{-1}\com{X_{1}^{T}}{X_{[1]}^{T}}\\
& = \frac{X_{1}^{T}(I - DX_{[1]}(X_{[1]}^{T}DX_{[1]})^{-1}X_{[1]}^{T})}{X_{1}^{T}(D - DX_{[1]}(X_{[1]}^{T}DX_{[1]})^{-1}X_{[1]}^{T}D)X_{1}}\\
& \approx \frac{X_{1}^{T}(I - D_{[1]}X_{[1]}(X_{[1]}^{T}D_{[1]}X_{[1]})^{-1}X_{[1]}^{T})}{X_{1}^{T}(D - DX_{[1]}(X_{[1]}^{T}DX_{[1]})^{-1}X_{[1]}^{T}D)X_{1}}
\end{align*}
where we use the approximation $D \approx D_{[1]}$. The same result holds for all $j\in J_{n}$, then 
\[\frac{\|e_{j}^{T}(X^{T}DX)^{-1}X^{T}\|_{\infty}}{\|e_{j}^{T}(X^{T}DX)^{-1}X^{T}\|_{2}}\approx \frac{\|X_{1}^{T}(I - D_{[1]}X_{[1]}(X_{[1]}^{T}D_{[1]}X_{[1]})^{-1}X_{[1]}^{T})\|_{\infty}}{\|X_{1}^{T}(I - D_{[1]}X_{[1]}(X_{[1]}^{T}D_{[1]}X_{[1]})^{-1}X_{[1]}^{T})\|_{2}}.\]
Recall that $h_{j, 1, i}^{T}$ is $i$-th row of $I - D_{[1]}X_{[1]}(X_{[1]}^{T}D_{[1]}X_{[1]})^{-1}X_{[1]}^{T}$, we have
\[\max_{i}\frac{|h_{j, 1, i}^{T}X_{1}|}{\|h_{j, 1, i}\|_{2}}\approx \frac{\|e_{j}^{T}(X^{T}DX)^{-1}X^{T}\|_{\infty}}{\|e_{j}^{T}(X^{T}DX)^{-1}X^{T}\|_{2}}.\]
The right-handed side equals to $S_{j}(X)$ in the least-square case. Therefore, although of complicated form, assumption \textbf{A}5 is not an artifact of the proof but is essential for the asymptotic normality. 

\subsubsection{Additional Examples}\label{app:lseExamples}
Benefit from the analytical form of the least-square estimator, we can depart from sub-gaussinity of the entries. The following proposition shows that a random design matrix $Z$ with i.i.d. entries under appropriate moment conditions satisfies $\max_{j\in J_{n}}S_{j}(Z) = o(1)$ with high probability. This implies that, when $X$ is one realization of $Z$, the conditions Theorem \ref{thm:OLS_casn_kd} are satisfied for $X$ with high probability over $Z$.

\begin{proposition}\label{prop:OLS_casn_moment}
If $\{Z_{ij}: i\le n, j\in J_{n}\}$ are independent random variables with
\begin{enumerate}
  \item $\max_{i\le n, j\in J_{n}}(\E |Z_{ij}|^{8 + \delta})^{\frac{1}{8 + \delta}}\le M$ for some $\delta, M > 0$;
  \item $\min_{i\le n, j\in J_{n}}\Var(Z_{ij}) > \tau^{2}$ for some $\tau > 0$
  \item $P(Z\mbox{ has full column rank}) = 1 - o(1)$;
  \item $\E Z_{j} \in \spanvec\{Z_{j}: j\in J_{n}^{c}\}$ almost surely for all $j\in J_{n}$;
\end{enumerate}
where $Z_{j}$ is the $j$-th column of $Z$. Then 
\[\max_{j\in J_{n}}S_{j}(Z) = O_{p}\lb\frac{1}{n^{\frac{1}{4}}}\rb = o_{p}(1).\]
\end{proposition}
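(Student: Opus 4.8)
## Proof Proposal for Proposition \ref{prop:OLS_casn_moment}

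The plan is to bound $S_j(Z) = \|e_j^T(Z^TZ)^{-1}Z^T\|_\infty / \|e_j^T(Z^TZ)^{-1}Z^T\|_2$ uniformly over $j \in J_n$ by controlling the numerator and denominator separately, using the same block-inversion identity that appears in the least-squares discussion. Fix $j \in J_n$. By the block matrix inversion formula (Proposition \ref{prop:block_inv}), the row vector $e_j^T(Z^TZ)^{-1}Z^T$ equals $Z_j^T(I - H_{[j]}) / (Z_j^T(I - H_{[j]})Z_j)$, where $H_{[j]} = Z_{[j]}(Z_{[j]}^TZ_{[j]})^{-1}Z_{[j]}^T$ is the projection onto the column span of $Z_{[j]}$. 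Since scaling cancels in the ratio defining $S_j$, we get
\[
S_j(Z) = \frac{\|Z_j^T(I - H_{[j]})\|_\infty}{\|Z_j^T(I - H_{[j]})\|_2} = \frac{\|(I-H_{[j]})Z_j\|_\infty}{\|(I-H_{[j]})Z_j\|_2}.
\]
So the task reduces to showing that the residual vector $v_j := (I - H_{[j]})Z_j$ has no entry dominating its $\ell_2$ norm, uniformly in $j$.

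First I would handle the denominator. Condition (4), that $\E Z_j$ lies in $\spanvec\{Z_k : k \in J_n^c\} \subseteq \spanvec(Z_{[j]})$, implies $(I - H_{[j]})\E Z_j = 0$, so $v_j = (I - H_{[j]})(Z_j - \E Z_j)$ is a projection of a mean-zero vector with independent entries having variance at least $\tau^2$. Conditioning on $Z_{[j]}$ (hence on $H_{[j]}$), $\E[\|v_j\|_2^2 \mid Z_{[j]}] = \tr((I - H_{[j]})\Sigma_j(I-H_{[j]})) \ge \tau^2 \tr(I - H_{[j]}) = \tau^2(n - p + 1)$, where $\Sigma_j = \diag(\Var(Z_{ij}))$. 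Using the $8+\delta$ moment bound and a Hanson–Wright-type (or fourth-moment Chebyshev) concentration argument for quadratic forms with bounded-operator-norm kernel, $\|v_j\|_2^2$ concentrates around its conditional mean, so $\|v_j\|_2^2 = \Omega_p(n)$, and a union bound over the $|J_n| \le p = O(n)$ columns preserves this with high probability (the exponential or polynomial tail, combined with $n - p + 1 = \Omega(n)$ since $\kappa < 1$, gives enough room).

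Next the numerator: I need $\|v_j\|_\infty = O_p(n^{1/4})$ uniformly. Write the $i$-th entry as $v_{j,i} = e_i^T(I - H_{[j]})(Z_j - \E Z_j) = \sum_{k} [(I-H_{[j]})_{ik}](Z_{kj} - \E Z_{kj})$, a weighted sum of independent mean-zero random variables with weights $a_{ik} := (I-H_{[j]})_{ik}$ satisfying $\sum_k a_{ik}^2 = (I-H_{[j]})_{ii} \le 1$. By the Marcinkiewicz–Zygmund / Rosenthal inequality, $\E[|v_{j,i}|^{8+\delta} \mid Z_{[j]}] \lesssim M^{8+\delta}(\sum_k a_{ik}^2)^{(8+\delta)/2} \le M^{8+\delta}$, a bound independent of $i$, $j$, and $n$. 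Then by Markov's inequality plus a union bound over $n$ rows and $O(n)$ columns, $P(\|v_j\|_\infty > t \text{ for some } i,j \mid Z_{[j]}) \lesssim n^2 M^{8+\delta} / t^{8+\delta}$; taking $t = n^{1/2 - \varepsilon}$ for small $\varepsilon$ makes this $o(1)$, so $\max_{j} \|v_j\|_\infty = O_p(n^{1/2-\varepsilon})$. Combining with the denominator bound $\|v_j\|_2 = \Omega_p(n^{1/2})$ gives $\max_{j\in J_n} S_j(Z) = O_p(n^{-\varepsilon})$; being slightly more careful (the stated exponent $1/4$ follows from optimizing the tradeoff, e.g. $t \asymp n^{1/4} \cdot \|v_j\|_2^{1/2} \asymp n^{1/2}/n^{1/4}$ after noting one only needs $\|v_j\|_\infty/\|v_j\|_2 = O_p(n^{-1/4})$, which the $8+\delta$ moments comfortably supply) yields the claimed rate. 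Throughout, condition (3) guarantees $(Z^TZ)^{-1}$ and the block inverses exist almost surely, so all the algebraic identities are valid on a probability-$(1-o(1))$ event.

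The main obstacle I anticipate is the denominator lower bound: unlike the sub-Gaussian examples (Propositions \ref{prop:subgauss}–\ref{prop:ellip}) where random matrix theory pins down $\lambda_{\min}(Z^TZ/n)$, here I only have eight-plus-$\delta$ moments, so I cannot appeal to a spectral lower bound. Instead I must argue directly that the \emph{residual} quadratic form $\|(I-H_{[j]})(Z_j - \E Z_j)\|_2^2$ is $\Omega_p(n)$ — this needs a quadratic-form concentration inequality valid under only moment conditions (e.g. a truncation argument combined with Chebyshev on the fourth moment of the quadratic form, where the kernel $(I - H_{[j]})$ is a projection so $\|(I-H_{[j]})\|_{\mathrm{op}} = 1$ and $\|(I-H_{[j]})\|_F^2 = n-p+1$), and it must survive the union bound over all $j \in J_n$. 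The numerator bound, by contrast, is routine given the moment hypothesis.
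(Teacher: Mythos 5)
Your proposal is correct and follows essentially the same route as the paper: reduce via block inversion to the ratio $\|(I-H_{j})Z_j\|_\infty/\|(I-H_{j})Z_j\|_2$, bound the numerator entrywise by Rosenthal's inequality plus Markov and a union bound (the $8+\delta$ moments give exactly the $o(1)$ tail at scale $n^{1/4}$), and lower-bound the quadratic form by its conditional mean $\gtrsim \tau^2(n-p+1)$ plus a moment-based concentration bound for quadratic forms with projection kernel, conditioning on $Z_{[j]}$ throughout. The only cosmetic differences are that the paper states the two bounds uniformly over all idempotent matrices of trace $n-p+1$ and invokes the Bai--Silverstein quadratic-form moment lemma with $q=4+\delta/2$, whereas you condition directly and make the centering via condition (4) explicit, which is if anything slightly cleaner.
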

A typical practically interesting example is that $Z$ contains an intercept term, which is not in $J_{n}$, and $Z_{j}$ has i.i.d. entries for $j\in J_{n}$ with continuous distribution and sufficiently many moments, in which case the first three conditions are easily checked and $\E Z_{j}$ is a multiple of $(1, \ldots, 1)$, which belongs to $\spanvec\{Z_{j}: j\in J_{n}^{c}\}$. 

In fact, the condition 4 allows Proposition \ref{prop:OLS_casn_moment} to cover more general cases than the above one. For example, in a census study, a state-specific fix effect might be added into the model, i.e. 
\[y_{i} = \alpha_{s_{i}} + z_{i}^{T}\betanull + \eps_{i}\]
where $s_{i}$ represents the state of subject $i$. In this case, $Z$ contains a sub-block formed by $z_{i}$ and a sub-block with ANOVA forms as mentioned in Example 1. The latter is usually incorporated only for adjusting group bias and not the target of inference. Then condition 4 is satisfied if only $Z_{ij}$ has same mean in each group for each $j$, i.e. $\E Z_{ij} = \mu_{s_{i}, j}$. 

\begin{proof}[\textbf{Proof of Proposition \ref{prop:OLS_casn_moment}}]
By Sherman-Morison-Woodbury formula,
\[e_{j}^{T}(Z^{T}Z)^{-1}Z^{T} = \frac{Z_{j}^{T}(I - H_{j})}{Z_{j}^{T}(I - H_{j})Z_{j}}\]
where $H_{j} = Z_{[j]}(Z_{[j]}^{T}Z_{[j]})^{-1}Z_{[j]}^{T}$ is the projection matrix generated by $Z_{[j]}$. Then
\begin{equation}
  \label{eq:Hxj}
  S_{j}(Z) = \frac{\|e_{j}^{T}(Z^{T}Z)^{-1}Z^{T}\|_{\infty}}{\|e_{j}^{T}(Z^{T}Z)^{-1}Z^{T}\|_{2}} = \frac{\|Z_{j}^{T}(I - H_{j})\|_{\infty}}{\sqrt{Z_{j}^{T}(I - H_{j})Z_{j}}}.
\end{equation}
Similar to the proofs of other examples, the strategy is to show that the numerator, as a linear contrast of $Z_{j}$, and the denominator, as a quadratic form of $Z_{j}$, are both concentrated around their means. Specifically, we will show that there exists some constants $C_{1}$ and $C_{2}$ such that 
\begin{equation}\label{eq:moment_idempotent}
\max_{j\in J_{n}}\sup_{\substack{A\in\mathbb{R}^{n\times n}, A^{2} = A,\\ \tr(A) = n - p + 1}}\left\{P\lb\|AZ_{j}\|_{\infty} > C_{1}n^{\frac{1}{4}}\rb + P\lb Z_{j}^{T}AZ_{j} < C_{2}n\rb\right\} = o\lb\frac{1}{n}\rb.
\end{equation}
If \eqref{eq:moment_idempotent} holds, since $H_{j}$ is independent of $Z_{j}$ by assumptions, we have
\begin{align}
  &P\lb S_{j}(Z) \ge \frac{C_{1}}{\sqrt{C_{2}}}\cdot n^{-\frac{1}{4}}\rb = P\lb\frac{\|Z_{j}^{T}(I - H_{j})\|_{\infty}}{\sqrt{Z_{j}^{T}(I - H_{j})Z_{j}}} \ge \frac{C_{1}}{\sqrt{C_{2}}}\cdot n^{-\frac{1}{4}}\rb\nonumber\\
\le &P\lb \|(I - H_{j})Z_{j}\|_{\infty} > C_{1}n^{\frac{1}{4}}\rb + P\lb Z_{j}^{T}(I - H_{j})Z_{j} < C_{2}n\rb\nonumber\\
= & \E \left[P\lb \|(I - H_{j})Z_{j}\|_{\infty} > C_{1}n^{\frac{1}{4}}\rb \bigg| Z_{[j]}\right] + \E \left[P\lb Z_{j}^{T}(I - H_{j})Z_{j} < C_{2}n\rb\bigg| Z_{[j]}\right]\label{eq:condition}\\
\le & \sup_{A\in\mathbb{R}^{n\times n}, A^{2} = A, \tr(A) = n - p + 1}P\lb\|AZ_{j}\|_{\infty} > C_{1}n^{\frac{1}{4}}\rb + P\lb Z_{j}^{T}AZ_{j} < C_{2}n\rb\nonumber\\
\le & \max_{j\in J_{n}}\left\{\sup_{A\in\mathbb{R}^{n\times n}, A^{2} = A, \tr(A) = n - p + 1}P\lb\|AZ_{j}\|_{\infty} > C_{1}n^{\frac{1}{4}}\rb + P\lb Z_{j}^{T}AZ_{j} < C_{2}n\rb\right\} = o\lb\frac{1}{n}\rb.\label{eq:prob_order}
\end{align}
Thus with probability $1 - o(|J_{n}| / n) = 1 - o(1)$, 
\[\max_{j\in J_{n}}S_{j}(Z)\le \frac{C_{1}}{\sqrt{C_{2}}}\cdot n^{-\frac{1}{4}}\]
and hence
\[\max_{j\in J_{n}}S_{j}(Z) = O_{p}\lb \frac{1}{n^{\frac{1}{4}}}\rb.\]

~\\

\noindent Now we prove \eqref{eq:moment_idempotent}. The proof, although looks messy, is essentially the same as the proof for other examples. Instead of relying on the exponential concentration given by the sub-gaussianity, we show the concentration in terms of higher-order moments. 

~\\
\noindent In fact, for any idempotent $A$, the sum square of each row is bounded by 1 since
\[\sum_{i}A_{ij}^{2} = (A^{2})_{j, j} \le \lambda_{\mathrm{\max}}(A^{2}) =  1.\] 
By Jensen's inequality, 
\[\E Z_{ij}^{2}\le (\E |Z_{ij}|^{8 + \delta})^{\frac{2}{8 + \delta}}.\]
For any $j$, by Rosenthal's inequality \cite{rosenthal70}, there exists some universal constant $C$ such that 
\begin{align*}
\E \left|\sum_{i=1}^{n}A_{ij}Z_{ij}\right|^{8 + \delta}&\le C\left\{\sum_{i=1}^{n}|A_{ij}|^{8 + \delta}\E |Z_{ij}|^{8 + \delta} + \lb \sum_{i=1}^{n}A_{ij}^{2}\E Z_{ij}^{2}\rb^{4 + \delta / 2}\right\}\\
&\le C\left\{\sum_{i=1}^{n}|A_{ij}|^{2}\E |Z_{ij}|^{8 + \delta} + \lb \sum_{i=1}^{n}A_{ij}^{2}\E Z_{ij}^{2}\rb^{4 + \delta / 2}\right\}\\
& \le CM^{8 + \delta}\left\{\sum_{i=1}^{n}A_{ij}^{2} + \lb \sum_{i=1}^{n}A_{ij}^{2}\rb^{4 + \delta / 2}\right\}\le 2CM^{8 + \delta}.
\end{align*}
Let $C_{1}= (2CM^{8 + \delta})^{\frac{1}{8 + \delta}}$, then for given $i$, by Markov inequality, 
\[P\lb \bigg|\sum_{i=1}^{n}A_{ij}Z_{ij}\bigg| > C_{1}n^{\frac{1}{4}}\rb\le \frac{1}{n^{2 + \delta / 4}}\]
and a union bound implies that
\begin{equation}\label{eq:moment_numerator}
P\lb \|AZ_{j}\|_{\infty} > C_{1}n^{\frac{1}{4}}\rb\le \frac{1}{n^{1 + \delta / 4}} = o\lb\frac{1}{n}\rb.
\end{equation}
Now we derive a bound for $Z_{j}^{T}AZ_{j}$. Since $p/n \rightarrow \kappa\in (0, 1)$, there exists $\td{\kappa}\in (0, 1 - \kappa)$ such that  $n - p > \td{\kappa} n$. Then 
\begin{equation}\label{eq:expect_quad}
\E Z_{j}^{T}AZ_{j} = \sum_{i=1}^{n}A_{ii}\E Z_{ij}^{2} > \tau^{2}\tr(A) = \tau^{2}(n - p + 1) > \td{\kappa} \tau^{2} n.
\end{equation}
To bound the tail probability, we need the following result:
\begin{lemma}[\citeA{bai10}, Lemma 6.2]\label{lem:quad_markov}
  Let $B$ be an $n\times n$ nonrandom matrix and $W = (W_{1}, \ldots, W_{n})^{T}$ be a random vector of independent entries. Assume that $\E W_{i} = 0$, $\E W_{i}^{2} = 1$ and $\E |W_{i}|^{k}\le \nu_{k}$. Then, for any $q\ge 1$, 
\[E|W^{T}BW - \tr(B)|^{q} \le  C_{q}\lb(\nu_{4}\tr(BB^{T}))^{\frac{q}{2}} + \nu_{2q}\tr(BB^{T})^{\frac{q}{2}}\rb,\]
where $C_{q}$ is a constant depending on $q$ only.
\end{lemma}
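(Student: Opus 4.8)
The plan is to follow the classical martingale-plus-Rosenthal argument used to establish such quadratic-form moment bounds. First I split the centered quadratic form into its diagonal and off-diagonal parts,
\[
W^{T}BW - \tr(B) = \underbrace{\textstyle\sum_{i=1}^{n}B_{ii}(W_{i}^{2}-1)}_{=:D_{n}} + \underbrace{\textstyle\sum_{i\neq j}B_{ij}W_{i}W_{j}}_{=:O_{n}},
\]
and use $\E|D_{n}+O_{n}|^{q}\le 2^{q-1}\lb\E|D_{n}|^{q}+\E|O_{n}|^{q}\rb$ to treat the two pieces separately. It suffices to handle $q\ge 2$: for $1\le q<2$, Lyapunov's inequality $\E|Z|^{q}\le(\E|Z|^{2})^{q/2}$ reduces matters to the elementary second-moment computation.

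For the diagonal part the summands $B_{ii}(W_{i}^{2}-1)$ are independent and mean zero, with $\E(W_{i}^{2}-1)^{2}\le \nu_{4}$ and $\E|W_{i}^{2}-1|^{q}\le C_{q}\nu_{2q}$. Rosenthal's inequality for sums of independent centred variables then gives
\[
\E|D_{n}|^{q}\le C_{q}\lb\nu_{4}\textstyle\sum_{i}B_{ii}^{2}\rb^{q/2}+C_{q}\nu_{2q}\textstyle\sum_{i}|B_{ii}|^{q}.
\]
Since $\sum_{i}B_{ii}^{2}\le\tr(BB^{T})$, and since $q\ge2$ gives $\sum_{i}|B_{ii}|^{q}\le\lb\sum_{i}B_{ii}^{2}\rb^{q/2}\le\tr(BB^{T})^{q/2}$, this contribution is already of the claimed form.

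For the off-diagonal part I write $O_{n}=\sum_{i=2}^{n}\gamma_{i}$ with $\gamma_{i}=W_{i}\zeta_{i}$ and $\zeta_{i}=\sum_{j<i}(B_{ij}+B_{ji})W_{j}$. Since $W_{i}$ is independent of the $\mathcal{F}_{i-1}$-measurable variable $\zeta_{i}$ and has mean zero, $(\gamma_{i})$ is a martingale difference sequence for $\mathcal{F}_{i}=\sigma(W_{1},\dots,W_{i})$, so Burkholder's inequality yields
\[
\E|O_{n}|^{q}\le C_{q}\,\E\Bigl(\textstyle\sum_{i}\E[\gamma_{i}^{2}\mid\mathcal{F}_{i-1}]\Bigr)^{q/2}+C_{q}\textstyle\sum_{i}\E|\gamma_{i}|^{q}.
\]
Here $\E[\gamma_{i}^{2}\mid\mathcal{F}_{i-1}]=\zeta_{i}^{2}$, so the first term equals $C_{q}\,\E\bigl(\sum_{i}\zeta_{i}^{2}\bigr)^{q/2}$, where $\sum_{i}\zeta_{i}^{2}$ is again a non-negative quadratic form in $W$; its $(q/2)$-th moment I bound either by induction on $q$ applied to the lemma itself, or by a further application of Rosenthal's inequality to each $\zeta_{i}$ and to the outer sum, using $\sum_{i}\sum_{j<i}(B_{ij}+B_{ji})^{2}\le 4\,\tr(BB^{T})$. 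Collecting the resulting contributions produces exactly the two terms $(\nu_{4}\tr(BB^{T}))^{q/2}$ and $\nu_{2q}\tr(BB^{T})^{q/2}$; the residual sum $\sum_{i}\E|\gamma_{i}|^{q}=\nu_{q}\sum_{i}\E|\zeta_{i}|^{q}$ is dominated by the same expression via the same linear-combination moment bound.

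The main obstacle is the bookkeeping in this off-diagonal step: obtaining $\E\bigl(\sum_{i}\zeta_{i}^{2}\bigr)^{q/2}$ with the correct powers of $\tr(BB^{T})$ and the correct moment constants $\nu_{4},\nu_{2q}$ requires iterating a moment inequality one further level and carefully tracking which contributions feed the ``$\nu_{4}$-term'' and which feed the ``$\nu_{2q}$-term'' — this recursion is precisely what produces the two-term shape of the bound and is the only genuinely delicate part; the diagonal estimate, the reduction to $q\ge2$, and the final triangle-type combination are routine. Since the statement is verbatim Lemma~6.2 of \citeA{bai10}, one may alternatively simply invoke that reference; the sketch above is included only for self-containedness.
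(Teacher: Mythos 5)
The paper does not prove this lemma at all: it is imported verbatim from the cited reference (\citeA{bai10}, Lemma 6.2) and used as a black box inside the proof of Proposition \ref{prop:OLS_casn_moment}, so there is no in-paper argument to compare against. Your sketch is the standard proof of that result and is sound in outline: the split into the diagonal part $\sum_i B_{ii}(W_i^2-1)$ handled by Rosenthal, and the off-diagonal part written as a martingale difference sequence $\gamma_i=W_i\zeta_i$ handled by the Burkholder--Rosenthal inequality, is exactly the classical route. The only piece you leave schematic is the one you correctly identify as delicate, namely $\E\bigl(\sum_i\zeta_i^2\bigr)^{q/2}$. To close it, note that $\sum_i\zeta_i^2=W^TAW$ with $A=\sum_i u_iu_i^T\succeq 0$, $u_i$ having entries $B_{ij}+B_{ji}$ for $j<i$, so $\tr(A)\le 2\tr(BB^T)$ and $\tr(AA^T)\le\|A\|_{\mathrm{op}}\tr(A)\le\tr(A)^2$; then $\E(W^TAW)^{q/2}\le C_q\bigl(\tr(A)^{q/2}+\E|W^TAW-\tr(A)|^{q/2}\bigr)$ and the induction hypothesis at exponent $q/2$ gives the claim, with the moment bookkeeping using $\nu_q^2\le\nu_{2q}$ and $\nu_4\ge 1$ (both consequences of $\E W_i^2=1$ and Cauchy--Schwarz) to fold everything into the two stated terms; the residual $\nu_q\sum_i\E|\zeta_i|^q$ is absorbed the same way since $\sum_i\bigl(\sum_{j<i}(B_{ij}+B_{ji})^2\bigr)^{q/2}\le\bigl(2\tr(BB^T)\bigr)^{q/2}$ for $q\ge2$. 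With that recursion spelled out (and the reduction to $q\ge2$ via Lyapunov, which you already note), your argument is complete; alternatively, citing the reference, as the paper does, is of course sufficient.
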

It is easy to extend Lemma \ref{lem:quad_markov} to non-isotropic case by rescaling. In fact, denote $\sigma_{i}^{2}$ by the variance of $W_{i}$, and let $\Sigma = \diag(\sigma_{1}, \ldots, \sigma_{n})$, $Y = (W_{1} / \sigma_{1}, \ldots, W_{n} / \sigma_{n})$. Then 
\[W^{T}BW = Y^{T}\Sigma^{\frac{1}{2}}B\Sigma^{\frac{1}{2}}Y,\]
with $\Cov(Y) = I$. Let $\td{B} = \Sigma^{\frac{1}{2}}B\Sigma^{\frac{1}{2}}$, then
\[\td{B}\td{B}^{T} =  \Sigma^{\frac{1}{2}}B\Sigma B^{T}\Sigma^{\frac{1}{2}}\preceq \nu_{2}\Sigma^{\frac{1}{2}}BB^{T}\Sigma^{\frac{1}{2}}.\]
This entails that 
\[\tr(\td{B}\td{B}^{T})\le nu_{2}\tr(\Sigma^{\frac{1}{2}}BB^{T}\Sigma^{\frac{1}{2}}) = \nu_{2}\tr(\Sigma BB^{T})\le \nu_{2}^{2}\tr(BB^{T}).\]
On the other hand, 
\[\tr(\td{B}\td{B}^{T})^{\frac{q}{2}}\le n\lambda_{\mathrm{\max}}(\td{B}\td{B}^{T})^{\frac{q}{2}} = n\nu_{2}^{\frac{q}{2}}\lambda_{\mathrm{\max}}\lb\Sigma^{\frac{1}{2}}BB^{T}\Sigma^{\frac{1}{2}}\rb^{\frac{q}{2}}\le n\nu_{2}^{q}\lambda_{\mathrm{\max}}(BB^{T})^{\frac{q}{2}}.\]
Thus we obtain the following result
\begin{lemma}\label{lem:quad_markov_noniso}
Let $B$ be an $n\times n$ nonrandom matrix and $W = (W_{1}, \ldots, W_{n})^{T}$ be a random vector of independent mean-zero entries. Suppose $\E |W_{i}|^{k}\le \nu_{k}$, then for any $q\ge 1$, 
\[E|W^{T}BW -\E W^{T}BW|^{q} \le C_{q}\nu_{2}^{q}\lb(\nu_{4}\tr(BB^{T}))^{\frac{q}{2}} + \nu_{2q}\tr(BB^{T})^{\frac{q}{2}}\rb,\]
where $C_{q}$ is a constant depending on $q$ only.
\end{lemma}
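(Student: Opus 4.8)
The plan is to reduce Lemma~\ref{lem:quad_markov_noniso} to its isotropic counterpart Lemma~\ref{lem:quad_markov} by whitening $W$ coordinate-by-coordinate. Put $\sigma_i^2=\E W_i^2$, $\Sigma=\diag(\sigma_1^2,\dots,\sigma_n^2)$ and $Y=(W_1/\sigma_1,\dots,W_n/\sigma_n)^T$, so that $Y$ has independent entries with $\E Y_i=0$, $\E Y_i^2=1$, and $W=\Sigma^{1/2}Y$. Setting $\widehat B=\Sigma^{1/2}B\Sigma^{1/2}$ one has $W^TBW=Y^T\widehat BY$, and since $\Cov(Y)=I$ also $\E W^TBW=\tr(\widehat B)$, hence $W^TBW-\E W^TBW=Y^T\widehat BY-\tr(\widehat B)$. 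Applying Lemma~\ref{lem:quad_markov} to the isotropic vector $Y$ and the matrix $\widehat B$ then gives a bound of the shape $C_q\bigl((\widehat\nu_4\tr(\widehat B\widehat B^T))^{q/2}+\widehat\nu_{2q}\tr((\widehat B\widehat B^T)^{q/2})\bigr)$, where $\widehat\nu_k$ is any bound on $\E|Y_i|^k$, so that it remains to re-express the two matrix traces and these moment factors through $B$ and the $\nu_k$.

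Next I would push the diagonal rescaling through the matrix quantities, using only $\|\Sigma^{1/2}\|_{\mathrm{op}}^2=\max_i\sigma_i^2\le\nu_2$, which holds because $\sigma_i^2=\E W_i^2$. For the Frobenius term, $\tr(\widehat B\widehat B^T)=\|\Sigma^{1/2}B\Sigma^{1/2}\|_F^2\le\|\Sigma^{1/2}\|_{\mathrm{op}}^4\|B\|_F^2\le\nu_2^2\,\tr(BB^T)$; equivalently, $\tr(\Sigma^{1/2}(B\Sigma B^T)\Sigma^{1/2})\le\nu_2\,\tr(\Sigma BB^T)=\nu_2\sum_i\sigma_i^2(BB^T)_{ii}\le\nu_2^2\,\tr(BB^T)$ since the diagonal of $BB^T$ is nonnegative. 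For the Schatten term I would invoke submultiplicativity of Schatten norms under two-sided multiplication by bounded operators: $\|\widehat B\|_{S_q}=\|\Sigma^{1/2}B\Sigma^{1/2}\|_{S_q}\le\|\Sigma^{1/2}\|_{\mathrm{op}}^2\|B\|_{S_q}\le\nu_2\|B\|_{S_q}$, hence $\tr((\widehat B\widehat B^T)^{q/2})=\|\widehat B\|_{S_q}^q\le\nu_2^q\,\tr((BB^T)^{q/2})$. One should \emph{not} route this second bound through the Loewner order, since $\widehat B\widehat B^T\preceq\nu_2^2\,BB^T$ is already false in general once the $\sigma_i$ are not all equal. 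Substituting both bounds and pulling out the common factor $\nu_2^q$ produces exactly the claimed inequality, provided the moments of $Y$ can be replaced by those of $W$, i.e.\ $\widehat\nu_4\le\nu_4$ and $\widehat\nu_{2q}\le\nu_{2q}$.

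That last replacement is the one delicate point, and I expect it to be the main obstacle. Since $\E|Y_i|^k=\E|W_i|^k/\sigma_i^k$, the whitened moments are controlled by the $\nu_k$ only when the variances are bounded away from $0$; in the settings where Lemma~\ref{lem:quad_markov_noniso} is actually invoked this holds (e.g.\ $\min_i\Var(W_i)>\tau^2$ in the proof of Proposition~\ref{prop:OLS_casn_moment}), so that $\widehat\nu_k\le\nu_k/\tau^k$ and the $\tau$-dependence can be folded into $C_q$; more carefully, one can keep the scalar identity $\sigma_i^k\,\E|Y_i|^k=\E|W_i|^k\le\nu_k$ in view and track the matching powers of $\sigma_i$ sitting inside $\widehat B$ through the Bai--Silverstein argument, rather than splitting maxima off from traces. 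Modulo this bookkeeping, and fixing the harmless convention that $\tr(BB^T)^{q/2}$ denotes $\tr((BB^T)^{q/2})$ as in Lemma~\ref{lem:quad_markov}, the rest of the argument is routine.
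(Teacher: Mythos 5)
Your reduction is the same one the paper uses: whiten coordinatewise, write $W^{T}BW=Y^{T}\widehat{B}Y$ with $\widehat{B}=\Sigma^{1/2}B\Sigma^{1/2}$, apply Lemma \ref{lem:quad_markov} to $Y$, and push $\Sigma$ through the two trace functionals via $\|\Sigma^{1/2}\|_{\mathrm{op}}^{2}\le\nu_{2}$. Your treatment of the Schatten term is in fact cleaner than the paper's: the paper bounds $\tr\lb(\widehat{B}\widehat{B}^{T})^{q/2}\rb\le n\lambda_{\max}(\widehat{B}\widehat{B}^{T})^{q/2}\le n\nu_{2}^{q}\lambda_{\max}(BB^{T})^{q/2}$, which is enough for the only place the lemma is used (there $B=A$ is idempotent, so both $n\lambda_{\max}(AA^{T})^{q/2}$ and $\tr((AA^{T})^{q/2})$ are at most $n$) but does not literally produce the stated $\tr((BB^{T})^{q/2})$; your submultiplicativity of the Schatten $q$-norm does. (Note the paper does not invoke the false relation $\widehat{B}\widehat{B}^{T}\preceq\nu_{2}^{2}BB^{T}$ that you warn against; it uses the valid intermediate step $\widehat{B}\widehat{B}^{T}\preceq\nu_{2}\Sigma^{1/2}BB^{T}\Sigma^{1/2}$ and then passes to $\lambda_{\max}$.)

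The "delicate point" you flag is a genuine gap, and it sits in the paper's own proof as well: Lemma \ref{lem:quad_markov} must be applied with the moments of $Y$, i.e.\ $\E|Y_{i}|^{k}=\E|W_{i}|^{k}/\sigma_{i}^{k}$, and these are not controlled by $\nu_{k}$ without a lower bound on the variances, yet the paper silently substitutes $\nu_{4},\nu_{2q}$ for them. Indeed, as stated (with $C_{q}$ depending on $q$ only and $\nu_{k}$ the moments of $W$) the lemma is false: take $n=1$, $B=(1)$, and $W_{1}=\pm1$ each with probability $p/2$ and $0$ otherwise, so that $\nu_{k}=p$ for every $k$; then $\E|W_{1}^{2}-\E W_{1}^{2}|^{q}\asymp p$, while the claimed bound is $C_{q}p^{q}(p^{q/2}+p)=o(p)$ as $p\to0$. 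The two repairs are exactly the ones you sketch: either rerun the Bai--Silverstein argument directly on $W$, keeping the matched powers of $\sigma_{i}$, which gives the prefactor-free bound $C_{q}\lb(\nu_{4}\tr(BB^{T}))^{q/2}+\nu_{2q}\tr((BB^{T})^{q/2})\rb$ (this form suffices for, and in fact improves, the application), or keep the whitening argument and let the constant depend on $\min_{i}\Var(W_{i})$, which is harmless in Proposition \ref{prop:OLS_casn_moment} since there $\Var(Z_{ij})>\tau^{2}$. So your route matches the paper's, tightens the Schatten step, and correctly isolates the moment-transfer step---but that step is more than bookkeeping: it is the point at which the lemma's statement itself needs to be amended in one of these two ways.
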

Apply Lemma \ref{lem:quad_markov_noniso} with $W = Z_{j}$, $B = A$ and $q = 4 + \delta / 2$, we obtain that 
\[E|Z_{j}^{T}AZ_{j} -\E Z_{j}^{T}AZ_{j}|^{4 + \delta / 2}\le CM^{16 + 2\delta}\lb (\tr(AA^{T}))^{2 + \delta/4} + \tr(AA^{T})^{2 + \delta / 4}\rb\]
for some constant $C$. Since $A$ is idempotent, all eigenvalues of $A$ is either $1$ or $0$ and thus $AA^{T}\preceq I$. This implies that 
\[\tr(AA^{T})\le n, \quad \tr(AA^{T})^{2 + \delta / 4}\le n\]
and hence
\[E|Z_{j}^{T}AZ_{j} -\E Z_{j}^{T}AZ_{j}|^{4 + \delta / 2} \le 2CM^{16 + 2\delta}n^{2 + \delta / 4}\]
for some constant $C_{1}$, which only depends on $M$. By Markov inequality, 
\[P\lb |Z_{j}^{T}AZ_{j}  - \E Z_{j}^{T}AZ_{j}| \ge \frac{\td{\kappa}\tau^{2}n}{2} \rb\le 2CM^{16 + 2\delta}\lb\frac{2}{\td{\kappa}\tau^{2}}\rb^{4 + \delta / 2}\cdot \frac{1}{n^{2 + \delta / 4}}.\]
Combining with \eqref{eq:expect_quad}, we conclude that 
\begin{equation}\label{eq:moment_denominator}
P\lb Z_{j}^{T}AZ_{j} < C_{2}n\rb = O\lb\frac{1}{n^{2 + \delta / 4}}\rb = o\lb\frac{1}{n}\rb
\end{equation}
where $C_{2} = \frac{\td{\kappa}\tau^{2}}{2}$. Notice that both \eqref{eq:moment_numerator} and \eqref{eq:moment_denominator} do not depend on $j$ and $A$.  Therefore, (\ref{eq:moment_idempotent}) is proved and hence the Proposition.
\end{proof}

\section{Additional Numerical Experiments}\label{app:numerical}
In this section, we repeat the experiments in section \ref{sec:numerical} by using $L_{1}$ loss, i.e. $\rho(x) = |x|$. $L_{1}$-loss is not smooth and does not satisfy our technical conditions. The results are displayed below. It is seen that the performance is quite similar to that with the huber loss.


\begin{figure}[H]
  \centering
  \includegraphics[width = 0.47\textwidth]{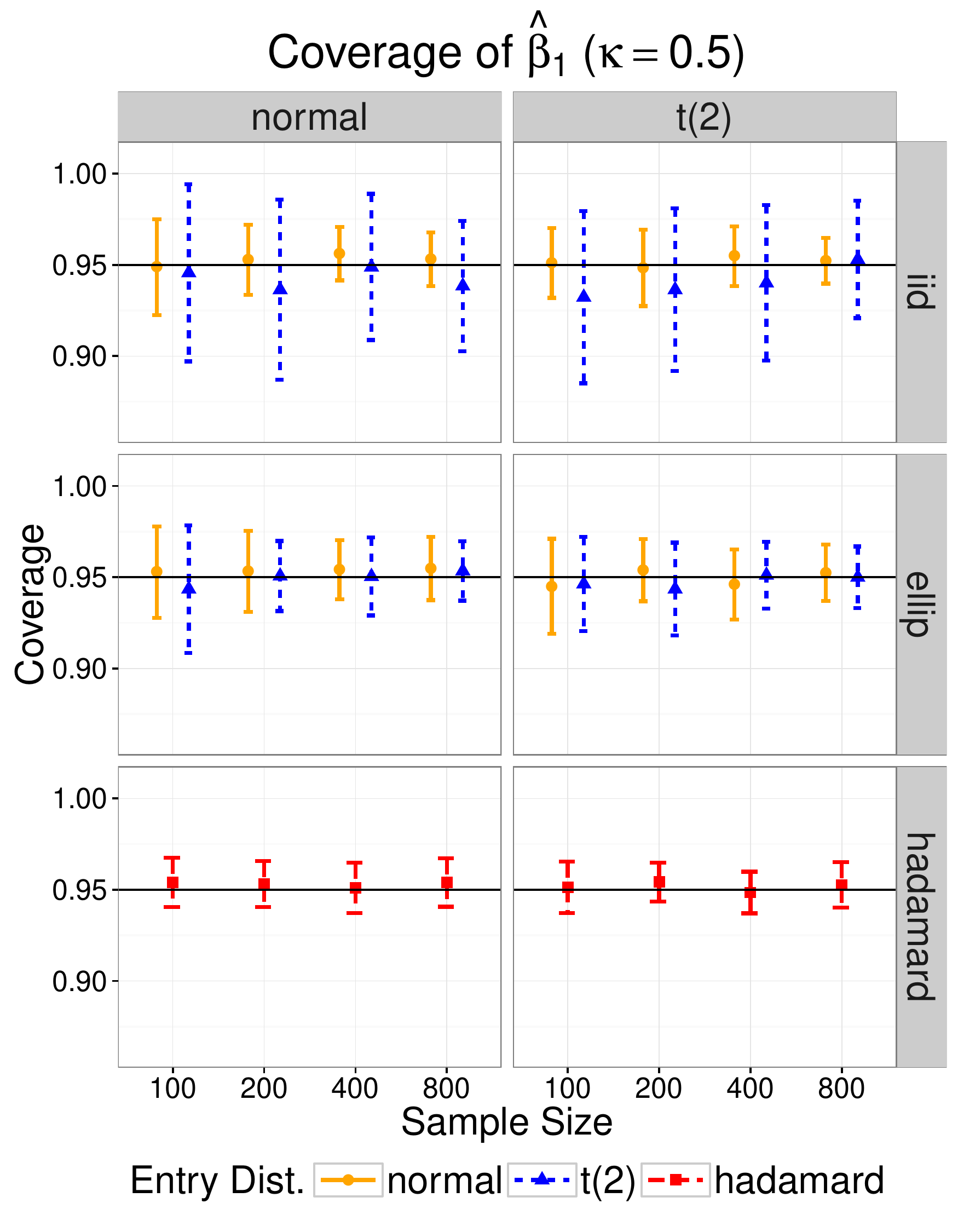}
  \includegraphics[width = 0.47\textwidth]{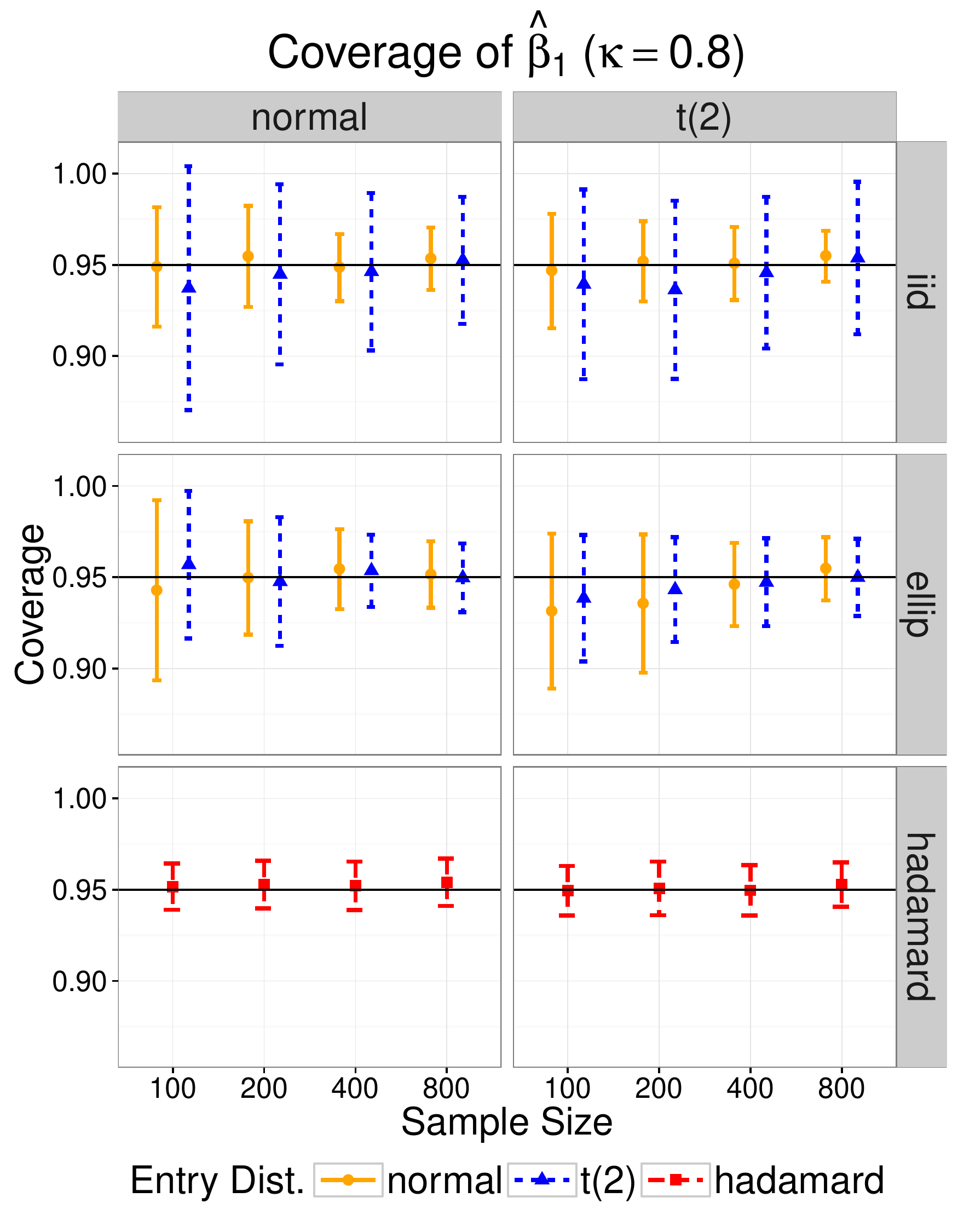}
  \caption{Empirical 95\% coverage of $\hat{\beta}_{1}$ with $\kappa = 0.5$ (left) and $\kappa = 0.8$ (right) using $L_1$ loss. The x-axis corresponds to the sample size, ranging from $100$ to $800$; the y-axis corresponds to the empirical 95\% coverage. Each column represents an error distribution and each row represents a type of design. The orange solid bar corresponds to the case $F = \text{Normal}$; the blue dotted bar corresponds to the case $F = \tdist_2$; the red dashed bar represents the Hadamard design. 
}\label{fig:singleCoord_MAD}

\end{figure}




\begin{figure}[H]
  \centering
  \includegraphics[width = 0.47\textwidth]{expr_mod_kappa_min.pdf}
  \includegraphics[width = 0.47\textwidth]{expr_high_kappa_min.pdf}
  \caption{Mininum empirical 95\% coverage of $\hat{\beta}_{1}\sim\hat{\beta}_{10}$ with $\kappa = 0.5$ (left) and $\kappa = 0.8$ (right) using $L_1$ loss. The x-axis corresponds to the sample size, ranging from $100$ to $800$; the y-axis corresponds to the minimum empirical 95\% coverage. Each column represents an error distribution and each row represents a type of design. The orange solid bar corresponds to the case $F = \text{Normal}$; the blue dotted bar corresponds to the case $F = \tdist_2$; the red dashed bar represents the Hadamard design. 
}\label{fig:multipleCoord_MAD}
\end{figure}

\begin{figure}[H]
  \centering
  \includegraphics[width = 0.47\textwidth]{expr_mod_kappa_bon.pdf}
  \includegraphics[width = 0.47\textwidth]{expr_high_kappa_bon.pdf}
  \caption{Empirical 95\% coverage of $\hat{\beta}_{1}\sim\hat{\beta}_{10}$ after Bonferroni correction with $\kappa = 0.5$ (left) and $\kappa = 0.8$ (right) using $L_1$ loss. The x-axis corresponds to the sample size, ranging from $100$ to $800$; the y-axis corresponds to the empirical uniform 95\% coverage after Bonferroni correction. Each column represents an error distribution and each row represents a type of design. The orange solid bar corresponds to the case $F = \text{Normal}$; the blue dotted bar corresponds to the case $F = \tdist_2$; the red dashed bar represents the Hadamard design. 
}\label{fig:multipleCoordBon_MAD}
\end{figure}

\newpage
\section{Miscellaneous}\label{app:mc}
In this appendix we state several technical results for the sake of completeness. 

\begin{proposition}[\citeNP{horn12}, formula (0.8.5.6)]\label{prop:block_inv}
Let $A\in \R^{p\times p}$ be an invertible matrix and write $A$ as a block matrix 
\[A = \lb
  \begin{array}{cc}
    A_{11} & A_{12}\\
    A_{21} & A_{22}
  \end{array}
\rb\]
with $A_{11}\in \R^{p_{1}\times p_{1}}, A_{22}\in \R^{(p - p_{1})\times (p -p_{1})}$ being invertible matrices. Then 
\[A^{-1} = \lb
  \begin{array}{cc}
    A_{11} + A_{11}^{-1}A_{12}S^{-1}A_{21}A_{11}^{-1} & -A_{11}^{-1}A_{12}S^{-1}\\
    -S^{-1}A_{21}A_{11}^{-1} & S^{-1}
  \end{array}
\rb\]
where $S = A_{22} - A_{21}A_{11}^{-1}A_{12}$ is the Schur's complement.
\end{proposition}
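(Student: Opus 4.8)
The plan is to prove the identity by the block LDU (Schur) factorization of $A$ and then inverting it factor by factor, since each factor is elementary to invert. First I would verify the factorization
\[
A \;=\; \begin{pmatrix} I & 0 \\ A_{21}A_{11}^{-1} & I \end{pmatrix}
\begin{pmatrix} A_{11} & 0 \\ 0 & S \end{pmatrix}
\begin{pmatrix} I & A_{11}^{-1}A_{12} \\ 0 & I \end{pmatrix},
\qquad S = A_{22} - A_{21}A_{11}^{-1}A_{12},
\]
by a direct block multiplication: the $(1,1)$, $(1,2)$, $(2,1)$ blocks reproduce $A_{11}$, $A_{12}$, $A_{21}$ immediately, and the $(2,2)$ block is $A_{21}A_{11}^{-1}A_{12} + S = A_{22}$ by the definition of $S$. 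Taking determinants in this identity gives $\det A = \det A_{11}\cdot\det S$, so the hypotheses that $A$ and $A_{11}$ are invertible force $S$ to be invertible as well — the one genuine point that must be checked before the formula even makes sense.

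Second, I would invert each of the three factors: a unit block-triangular matrix $\begin{pmatrix} I & 0 \\ L & I\end{pmatrix}$ has inverse $\begin{pmatrix} I & 0 \\ -L & I\end{pmatrix}$, and likewise for the upper-triangular factor, while the block-diagonal middle factor inverts to $\diag(A_{11}^{-1}, S^{-1})$. Reversing the order of the factors then yields
\[
A^{-1} = \begin{pmatrix} I & -A_{11}^{-1}A_{12} \\ 0 & I\end{pmatrix}
\begin{pmatrix} A_{11}^{-1} & 0 \\ 0 & S^{-1}\end{pmatrix}
\begin{pmatrix} I & 0 \\ -A_{21}A_{11}^{-1} & I\end{pmatrix}.
\]
Carrying out this last product block by block, keeping the noncommutative orderings intact, produces the four entries in the claimed formula; this is pure bookkeeping. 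I would also flag that the displayed $(1,1)$ block should read $A_{11}^{-1} + A_{11}^{-1}A_{12}S^{-1}A_{21}A_{11}^{-1}$ — the ``$A_{11}$'' printed in the statement is evidently a typo for ``$A_{11}^{-1}$''.

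An equally short alternative, which avoids the factorization entirely, is to take the claimed right-hand side, call it $B$, and verify $AB = I$ directly, block by block, substituting $S = A_{22}-A_{21}A_{11}^{-1}A_{12}$ wherever $S$ appears so that each $SS^{-1}$ collapses to $I$; since $A$ is square, $AB = I$ already implies $B = A^{-1}$. There is no real obstacle in either route: the only care required is the invertibility of the Schur complement $S$ (handled by the determinant identity above) and the discipline of never commuting the block products.
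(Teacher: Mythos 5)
Your proof is correct: the block LDU (Schur) factorization argument, the determinant identity $\det A = \det A_{11}\det S$ guaranteeing invertibility of $S$, and the factor-by-factor inversion are all sound, as is the shortcut of verifying $AB=I$ directly. Note that the paper offers no proof of this proposition at all — it is quoted from \citeA{horn12} (formula 0.8.5.6) in the miscellaneous appendix — so there is nothing to compare against; your argument is simply the standard textbook derivation. You are also right that the displayed $(1,1)$ block contains a typo and should read $A_{11}^{-1} + A_{11}^{-1}A_{12}S^{-1}A_{21}A_{11}^{-1}$ rather than $A_{11} + A_{11}^{-1}A_{12}S^{-1}A_{21}A_{11}^{-1}$.
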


\begin{proposition}[\citeNP{rudelson13}; improved version of the original form by \citeNP{hanson71}]\label{prop:hanson_wright}
  Let $X = (X_{1}, \ldots, X_{n})\in \R^{n}$ be a random vector with independent mean-zero $\sigma^{2}$-sub-gaussian components $X_{i}$. Then, for every $t$,
  \[P\lb|X^{T}AX - \E X^{T}AX| > t\rb\le 2\exps{-c\min \lb \frac{t^{2}}{\sigma^{4}\|A\|_{F}^{2}}, \frac{t}{\sigma^{2}\|A\|_{\mathrm{op}}}\rb}\]
\end{proposition}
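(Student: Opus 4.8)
The statement is quoted verbatim from \citeA{rudelson13}, so in the paper it would be invoked rather than re-derived; nonetheless, here is the route one would follow to establish it. By homogeneity one may rescale so that $\sigma = 1$. The plan is to split
\[X^{T}AX - \E X^{T}AX = \sum_{i}a_{ii}(X_{i}^{2} - \E X_{i}^{2}) + \sum_{i\neq j}a_{ij}X_{i}X_{j}\]
into a diagonal and an off-diagonal piece, bound each tail separately, and combine them by a union bound after writing $t = t/2 + t/2$.

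First I would dispatch the diagonal term: each $X_{i}^{2} - \E X_{i}^{2}$ is centered and sub-exponential with norm $O(1)$ because $X_{i}$ is $1$-sub-gaussian, and the summands are independent, so Bernstein's inequality for sums of independent sub-exponential variables gives a tail of the form $2\exps{-c\min\lb t^{2}/\textstyle\sum_{i}a_{ii}^{2},\ t/\max_{i}|a_{ii}|\rb}$; one then finishes this piece using $\sum_{i}a_{ii}^{2}\le\|A\|_{\mathrm{F}}^{2}$ and $\max_{i}|a_{ii}|\le\|A\|_{\mathrm{op}}$.

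The off-diagonal term is where the work is, and the hard part will be controlling its moment generating function. The plan: (i) use the decoupling inequality (de la Pe\~{n}a--Montgomery-Smith) to dominate $\E\exps{\lambda\sum_{i\neq j}a_{ij}X_{i}X_{j}}$ by $\E\exps{C\lambda\,X^{T}AX'}$, with $X'$ an independent copy of $X$; (ii) condition on $X'$, so that $X^{T}AX' = \sum_{i}X_{i}(AX')_{i}$ is a linear form in the sub-gaussian vector $X$, whence $\E[\exps{\lambda X^{T}AX'}\mid X']\le\exps{C\lambda^{2}\|AX'\|_{2}^{2}}$; (iii) integrate out $X'$, which amounts to bounding the MGF of the nonnegative quadratic form $X'^{T}A^{T}AX'$. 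For step (iii) I would compare with the Gaussian case: with $s_{1},\dots,s_{n}$ the singular values of $A$, $\E\exps{\mu\,g^{T}A^{T}Ag} = \prod_{i}(1-2\mu s_{i}^{2})^{-1/2}\le\exps{C'\mu\|A\|_{\mathrm{F}}^{2}}$ for $|\mu|\le c/\|A\|_{\mathrm{op}}^{2}$, and then invoke a contraction/comparison lemma to pass from Gaussian to general sub-gaussian coordinates. Plugging this back and optimizing the Chernoff exponent over $\lambda\in[0,\,c/\|A\|_{\mathrm{op}}]$ yields the off-diagonal tail in exactly the two-regime form $\exps{-c\min\lb t^{2}/\|A\|_{\mathrm{F}}^{2},\ t/\|A\|_{\mathrm{op}}\rb}$. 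Combining the diagonal and off-diagonal bounds and relabelling the constant $c$ gives the claim. The two steps I expect to require genuine care are the decoupling and the seemingly self-referential control of the quadratic-form MGF in (iii); everything else is routine Bernstein/Chernoff bookkeeping.
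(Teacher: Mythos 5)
The paper does not prove this statement at all: it is quoted in the appendix of miscellaneous technical results and invoked purely by citation to Hanson--Wright and Rudelson--Vershynin, so there is no internal proof to compare against. Your sketch is a correct outline of the standard Rudelson--Vershynin argument (diagonal via Bernstein, off-diagonal via decoupling, conditioning on the independent copy, and Gaussian comparison for the MGF of the resulting quadratic form), and the two points you flag as delicate are indeed where the real work lies; as used in the paper, citing the result as you note at the outset is all that is required.
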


\begin{proposition}[\citeNP{bai93}]\label{prop:rmt_bai93}
  If $\{Z_{ij}: i = 1, \ldots, n, j = 1, \ldots, p\}$ are i.i.d. random variables with zero mean, unit variance and finite fourth moment and $p / n\rightarrow \kappa$, then 
\[\lambda_{\mathrm{\max}}\lb\frac{Z^{T}Z}{n}\rb\stackrel{a.s.}{\rightarrow} (1 + \sqrt{\kappa})^{2}, \quad \lambda_{\mathrm{\min}}\lb\frac{Z^{T}Z}{n}\rb\stackrel{a.s.}{\rightarrow} (1 - \sqrt{\kappa})^{2}.\]
\end{proposition}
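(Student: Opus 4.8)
The statement is the Bai--Yin theorem, so the plan is to recover both extreme eigenvalues by combining a ``bulk'' convergence with one-sided ``edge'' estimates. First I would establish Marchenko--Pastur: the empirical spectral distribution of $Z^{T}Z/n$ converges almost surely to the Marchenko--Pastur law with parameter $\kappa$, whose support is exactly $[(1-\sqrt{\kappa})^{2},(1+\sqrt{\kappa})^{2}]$ (assuming $\kappa\le 1$, the only case in which the $\lambda_{\min}$ assertion can hold). This can be done either by the Stieltjes-transform self-consistent-equation method or by the moment method, and needs only that the entries have mean zero and unit variance. Because a nonvanishing fraction of the eigenvalues accumulates near each edge of the support, this step already gives, almost surely,
\[
\liminf_{n}\lambda_{\max}\lb\tfrac{Z^{T}Z}{n}\rb\ge (1+\sqrt{\kappa})^{2},\qquad
\limsup_{n}\lambda_{\min}\lb\tfrac{Z^{T}Z}{n}\rb\le (1-\sqrt{\kappa})^{2}.
\]
It remains to prove the two reverse inequalities, which is where the fourth-moment hypothesis enters.

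For the upper bound on $\lambda_{\max}$ I would use the high-moment (trace) method. Step one is truncation: set $\widehat{Z}_{ij}=Z_{ij}\mathbf{1}(|Z_{ij}|\le\varepsilon_{n}\sqrt{n})$ for a sequence $\varepsilon_{n}\downarrow 0$ chosen so that $\sum_{n}np\,P(|Z_{11}|>\varepsilon_{n}\sqrt{n})<\infty$ --- finiteness of $\E|Z_{11}|^{4}$ is exactly what makes such a choice possible --- and then recenter and rescale so the entries are bounded by $2\varepsilon_{n}\sqrt{n}$, mean zero, unit variance; by Borel--Cantelli the extreme eigenvalues of the truncated and original matrices have the same limit. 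Step two: for an even exponent $k_{n}\to\infty$ growing like a small positive power of $n$, bound $\E\,\tr\bigl((\widehat{Z}^{T}\widehat{Z}/n)^{k_{n}}\bigr)$ by a combinatorial count of closed walks on the bipartite index graph; the dominant contribution is $p\,(1+\sqrt{\kappa})^{2k_{n}}$ up to sub-exponential factors. Markov's inequality applied to $\lambda_{\max}^{k_{n}}\le\tr((\widehat{Z}^{T}\widehat{Z}/n)^{k_{n}})$ together with Borel--Cantelli then yields $\limsup_{n}\lambda_{\max}(Z^{T}Z/n)\le(1+\sqrt{\kappa})^{2}$ almost surely, which closes the $\lambda_{\max}$ case.

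The lower bound $\liminf_{n}\lambda_{\min}(Z^{T}Z/n)\ge(1-\sqrt{\kappa})^{2}$ is the main obstacle, since $\lambda_{\min}$ is invisible to large powers and the naive trace method breaks down. The route I would follow is Bai--Yin's: after the same truncation, relate the smallest singular value of $\widehat{Z}/\sqrt{n}$ to the spectral edge of an auxiliary symmetric matrix (for instance a shifted or doubled matrix whose top eigenvalue encodes the bottom of the $\widehat{Z}^{T}\widehat{Z}$ spectrum), so that a refined moment/combinatorial analysis applies once more --- one now sensitive to the exact cancellations that place the inner edge at $(1-\sqrt{\kappa})^{2}$ rather than at $0$. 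This combinatorial bookkeeping is genuinely delicate and is the part I expect to take the most work. I will also remark that for the way this proposition is actually used in the paper one needs only $\lambda_{\min}(Z^{T}Z/n)=\Omega_{p}(1)$, for which a softer $\varepsilon$-net argument on $\mathbb{S}^{p-1}$ combined with an anti-concentration (small-ball) estimate would suffice; but extracting the sharp constant $(1-\sqrt{\kappa})^{2}$ requires the full Bai--Yin argument.
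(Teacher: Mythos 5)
The paper does not prove this proposition at all: it is quoted verbatim from \citeA{bai93} (the Bai--Yin theorem) and parked in the appendix as an external tool, so the only fair comparison is with the literature proof you are sketching. Your roadmap does match that proof's architecture --- Marchenko--Pastur convergence of the bulk to get $\liminf\lambda_{\max}\ge(1+\sqrt{\kappa})^{2}$ and $\limsup\lambda_{\min}\le(1-\sqrt{\kappa})^{2}$, truncation at level $\varepsilon\sqrt{n}$ using the fourth moment, a high-power trace/combinatorial bound for the upper edge, and the genuinely delicate Bai--Yin argument for the lower edge --- and your closing remark is apt: for the way the proposition is used in this paper only $\lambda_{\max}=O_{p}(1)$ and $\lambda_{\min}=\Omega_{p}(1)$ are needed, which a net-plus-small-ball argument (essentially Proposition \ref{prop:rmt_litvak05}) already gives without the sharp constants.

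As a standalone proof, however, there are two genuine gaps. First, the truncation step as you state it does not go through: under only $\E|Z_{11}|^{4}<\infty$ you cannot choose $\varepsilon_{n}\downarrow 0$ with $\sum_{n}np\,P(|Z_{11}|>\varepsilon_{n}\sqrt{n})<\infty$. For fixed $\varepsilon$ the sum $\sum_{n}n^{2}P(|Z_{11}|>\varepsilon\sqrt{n})$ is comparable to $\E|Z_{11}|^{6}$, so it may diverge, and shrinking $\varepsilon_{n}$ only makes it worse. The correct argument (Yin--Bai--Krishnaiah, or Bai--Silverstein's book) applies Borel--Cantelli to the doubly indexed array itself, truncating entry $(i,j)$ at $\varepsilon\sqrt{\max(i,j/\kappa)}$ and using dyadic blocks, where $\sum_{m}mP(|Z_{11}|>\varepsilon\sqrt{m})<\infty$ is exactly equivalent to the finite fourth moment; this is what the fourth-moment hypothesis actually buys. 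Second, the lower bound $\liminf\lambda_{\min}\ge(1-\sqrt{\kappa})^{2}$ --- the real content of the theorem and the reason the paper cites Bai (1993) rather than reproving it --- is only named in your sketch, not executed; deferring it to ``Bai--Yin's route'' leaves the heart of the statement unproven. So the proposal is the right plan but not yet a proof; either carry out the edge analysis (e.g., via the $\tr\bigl((aI - Z^{T}Z/n)^{k_n}\bigr)$ moment computation Bai and Yin perform) or, for the purposes of this paper, state and prove only the weaker $O_{p}(1)/\Omega_{p}(1)$ version you identify at the end.
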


\begin{proposition}[\citeNP{latala05}]\label{prop:rmt_latala05}
  Suppose $\{Z_{ij}: i = 1, \ldots, n, j = 1, \ldots, p\}$ are independent mean-zero random variables with finite fourth moment, then 
\[\E \sqrt{\lambda_{\mathrm{\max}}\lb Z^{T}Z\rb}\le C\lb\max_{i}\sqrt{\sum_{j}\E Z_{ij}^{2}} + \max_{j}\sqrt{\sum_{i}\E Z_{ij}^{2}} + \sqrt[4]{\sum_{i, j}\E Z_{ij}^{4}}\rb\]
for some universal constant $C$. In particular, if $\E Z_{ij}^{4}$ are uniformly bounded, then 
\[\lambda_{\mathrm{\max}}\lb\frac{Z^{T}Z}{n}\rb = O_{p}\lb 1 + \sqrt{\frac{p}{n}}\rb.\]
\end{proposition}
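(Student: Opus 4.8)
The plan is to take the first inequality as given: it is precisely the main theorem of \citeA{latala05}, whose proof is a delicate chaining/entropy argument (one decomposes $Z$ into blocks according to the magnitude of the entries and bounds the resulting suprema of empirical processes), and reproducing it here would be both long and tangential to the paper. So I would invoke it as a black box and put the actual work into the ``in particular'' clause.

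For the corollary, suppose $\sup_{i,j}\E Z_{ij}^{4}\le M<\infty$. First I would note that $\E Z_{ij}^{2}=\E(Z_{ij}^{2}\cdot 1)\le(\E Z_{ij}^{4})^{1/2}\le M^{1/2}$ by Cauchy--Schwarz, for every $i,j$. Substituting this into the three terms on the right-hand side of the Lata\l{}a bound gives
\[
\max_{i}\sqrt{\textstyle\sum_{j}\E Z_{ij}^{2}}\le M^{1/4}\sqrt{p},\qquad
\max_{j}\sqrt{\textstyle\sum_{i}\E Z_{ij}^{2}}\le M^{1/4}\sqrt{n},
\]
\[
\sqrt[4]{\textstyle\sum_{i,j}\E Z_{ij}^{4}}\le (Mnp)^{1/4}=M^{1/4}\bigl(\sqrt{n}\,\sqrt{p}\bigr)^{1/2}\le \tfrac{1}{\sqrt 2}M^{1/4}\bigl(\sqrt{n}+\sqrt{p}\bigr),
\]
the last step being AM--GM ($\sqrt{ab}\le(a+b)/2$ with $a=\sqrt n$, $b=\sqrt p$). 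Hence $\E\sqrt{\lambda_{\max}(Z^{T}Z)}\le C' M^{1/4}(\sqrt{n}+\sqrt{p})$ for a universal constant $C'$, so dividing inside the square root,
\[
\E\sqrt{\lambda_{\max}\!\lb\tfrac{Z^{T}Z}{n}\rb}\le C' M^{1/4}\lb 1+\sqrt{\tfrac pn}\rb .
\]
Markov's inequality then gives $\sqrt{\lambda_{\max}(Z^{T}Z/n)}=O_{p}(1+\sqrt{p/n})$, and squaring (noting that in the regime $p/n\to\kappa$ of interest $(1+\sqrt{p/n})^{2}$ is of the same order as $1+\sqrt{p/n}$) yields $\lambda_{\max}(Z^{T}Z/n)=O_{p}(1+\sqrt{p/n})$.

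The main obstacle is, frankly, that essentially all the mathematical content sits inside the cited inequality; the corollary is a one-line moment computation followed by Markov. Were one to attempt the first inequality from scratch, the genuinely hard point would be extracting the sharp $(\sum_{i,j}\E Z_{ij}^{4})^{1/4}$ term — a naive union bound over an $\epsilon$-net produces only a cruder $(\sum_{i,j}\E Z_{ij}^{4})^{1/2}$-type contribution — and this is exactly where Lata\l{}a's entropy estimates are needed, which is why we rely on the literature here.
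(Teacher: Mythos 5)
Your proposal is correct and matches the paper's treatment: the paper states this proposition in its miscellaneous appendix purely as a citation of Lata\l{}a (2005) with no proof, exactly as you do, and your derivation of the ``in particular'' clause (Cauchy--Schwarz on the second moments, AM--GM on $(np)^{1/4}$, then Markov on $\sqrt{\lambda_{\max}(Z^{T}Z/n)}$, with the squaring step justified because $p/n$ stays bounded in the regime the paper uses) is the routine and correct way to fill in that step.
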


\begin{proposition}[\citeNP{rudelson10}]\label{prop:rmt_rudelson10}
Suppose $\{Z_{ij}: i = 1, \ldots, n, j = 1, \ldots, p\}$ are independent mean-zero $\sigma^{2}$-sub-gaussian random variables. Then there exists a universal constant $C_{1}, C_{2} > 0$ such that 
\[P\lb \sqrt{\lambda_{\mathrm{\max}}\lb\frac{Z^{T}Z}{n}\rb} > C\sigma\lb 1 + \sqrt{\frac{p}{n}} + t\rb\rb\le 2e^{-C_{2}nt^{2}}.\]
\end{proposition}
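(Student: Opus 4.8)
The statement is a classical non-asymptotic bound on the largest singular value of a matrix with independent sub-gaussian entries (Rudelson--Vershynin), so the plan is to reproduce the standard $\epsilon$-net argument and then rescale by $\sqrt n$. Write $\|Z\|_{\mathrm{op}}$ for the largest singular value of $Z$, so that $\sqrt{\lambda_{\max}(Z^{T}Z/n)} = \|Z\|_{\mathrm{op}}/\sqrt n$, and exploit the variational identity $\|Z\|_{\mathrm{op}} = \sup_{x\in S^{p-1},\, y\in S^{n-1}} y^{T}Zx$.

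First I would record the basic sub-gaussian fact: for \emph{fixed} unit vectors $x\in\R^{p}$, $y\in\R^{n}$, the bilinear form $y^{T}Zx = \sum_{i,j} y_{i}x_{j}Z_{ij}$ is a linear combination of the independent mean-zero $\sigma^{2}$-sub-gaussian variables $Z_{ij}$ with coefficient vector of Euclidean norm $\|x\|_{2}\|y\|_{2}=1$; hence $y^{T}Zx$ is itself $\sigma^{2}$-sub-gaussian and
\[
P\lb |y^{T}Zx| > s\rb \le 2\exp\lb -\frac{s^{2}}{2\sigma^{2}}\rb, \qquad s>0.
\]
Next I would discretize: fix $1/4$-nets $\mathcal{N}_{p}\subset S^{p-1}$ and $\mathcal{N}_{n}\subset S^{n-1}$ with $|\mathcal{N}_{p}|\le 9^{p}$, $|\mathcal{N}_{n}|\le 9^{n}$ (volumetric bound), and invoke the standard approximation lemma $\|Z\|_{\mathrm{op}}\le 2\max_{x\in\mathcal{N}_{p},\,y\in\mathcal{N}_{n}} y^{T}Zx$, obtained by comparing $y^{T}Zx$ with the nearest net point. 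A union bound over the at most $9^{n+p}$ pairs then gives
\[
P\lb \|Z\|_{\mathrm{op}} > 2s\rb \le 2\cdot 9^{n+p}\exp\lb -\frac{s^{2}}{2\sigma^{2}}\rb.
\]

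To finish, I would choose $s = C'\sigma(\sqrt n + \sqrt p + \sqrt n\, t)$ with $C'$ a large enough universal constant so that $s^{2}/(2\sigma^{2}) \ge (n+p)\log 9 + C_{2}nt^{2}$; this is possible because $s^{2}$ is, up to an absolute constant, at least $\sigma^{2}(n+p+nt^{2})$. The right-hand side above is then $\le 2e^{-C_{2}nt^{2}}$, and dividing by $\sqrt n$ while writing $\sqrt n + \sqrt p = \sqrt n\,(1+\sqrt{p/n})$ yields $\sqrt{\lambda_{\max}(Z^{T}Z/n)} = \|Z\|_{\mathrm{op}}/\sqrt n \le C_{1}\sigma(1+\sqrt{p/n}+t)$ off an event of probability $\le 2e^{-C_{2}nt^{2}}$, with $C_{1}=2C'$, which is the claim.

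The only mildly delicate step is the approximation lemma: bounding $y^{T}Zx - \tilde y^{T}Z\tilde x$ for net points $\tilde x,\tilde y$ near $x,y$ produces a bound that again involves $\|Z\|_{\mathrm{op}}$, so one rearranges an inequality of the form $\|Z\|_{\mathrm{op}}\le M + 2\epsilon\|Z\|_{\mathrm{op}}$ — routine, but this is where the constant $2$ (for a $1/4$-net) enters. Otherwise everything is bookkeeping, and there is no genuine obstacle since the result is classical. The structural point to note is that the entries are only assumed independent, not identically distributed, so limiting-spectrum results such as Proposition~\ref{prop:rmt_bai93} are unavailable; the entire argument is carried by the net bound, for which sub-gaussianity of the $Z_{ij}$ is all that is needed.
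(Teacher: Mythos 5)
Your proof is correct: the proposal reproduces the standard $\epsilon$-net argument (sub-gaussian tail for $y^{T}Zx$ at fixed unit vectors, $1/4$-nets of size $9^{p}$ and $9^{n}$, the rearrangement $\|Z\|_{\mathrm{op}}\le 2\max_{\mathrm{net}}$, a union bound, and the choice $s \asymp \sigma(\sqrt{n}+\sqrt{p}+\sqrt{n}\,t)$), and it correctly uses only independence, mean zero, and sub-gaussianity, not identical distribution. The paper itself gives no proof of this proposition --- it is quoted verbatim from Rudelson and Vershynin --- and your argument is essentially the proof given in that cited reference, so there is nothing to reconcile beyond noting that the constant written as $C$ in the display is the $C_{1}$ of the statement.
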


\begin{proposition}[\citeNP{rudelson09}]\label{prop:rmt_rudelson09}
  Suppose $\{Z_{ij}: i = 1, \ldots, n, j = 1, \ldots, p\}$ are i.i.d. $\sigma^{2}$-sub-gaussian random variables with zero mean and unit variance, then for $\eps \ge 0$
\[P\lb \sqrt{\lambda_{\mathrm{\min}}\lb\frac{Z^{T}Z}{n}\rb}\le \eps(1 - \sqrt{\frac{p - 1}{n}})\rb\le (C\eps)^{n - p + 1} + e^{-cn}\]
for some universal constants $C$ and $c$.
\end{proposition}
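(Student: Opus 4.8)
The plan is to follow the by-now standard Rudelson--Vershynin strategy for the smallest singular value of a tall random matrix, translated into the normalization of the statement. Writing $s_{\min}(Z) = \inf_{x \in S^{p-1}}\|Zx\|_2 = \sqrt{\lambda_{\min}(Z^{T}Z)}$, the claim is equivalent (multiply through by $\sqrt{n}$) to
\[
P\!\left(s_{\min}(Z) \le \epsilon(\sqrt{n} - \sqrt{p-1})\right) \le (C\epsilon)^{n-p+1} + e^{-cn}.
\]
The first step is to split the sphere $S^{p-1}\subset\R^{p}$ into the \emph{compressible} vectors $\mathrm{Comp}(\delta,\rho)$ --- those within Euclidean distance $\rho$ of some $\lceil\delta p\rceil$-sparse unit vector --- and their complement, the \emph{incompressible} vectors $\mathrm{Incomp}(\delta,\rho)$, for small absolute constants $\delta,\rho$ chosen along the way. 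It then suffices to bound $\inf\|Zx\|_2$ separately over the two sets.

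On compressible vectors I would run an $\epsilon$-net argument. A $\delta p$-sparse unit vector lies on one of at most $\binom{p}{\lceil\delta p\rceil}$ coordinate subspaces, each unit sphere of which carries a $\rho$-net of size $(C/\rho)^{\delta p}$; since $\delta p$ can be made $\le c_{0}n$ with $c_{0}$ as small as we like, the combined net $\mathcal N$ has cardinality at most $e^{c_{1}n}$ with $c_{1}$ small. For a fixed unit $u$, $\|Zu\|_{2}^{2} = \sum_{i=1}^{n}\langle z_{i},u\rangle^{2}$ is a sum of $n$ independent copies of the centered subexponential variable $\langle z_{1},u\rangle^{2}-1$ (the square of a subgaussian), so Bernstein's inequality gives $P(\|Zu\|_{2} \le \tfrac12\sqrt{n}) \le e^{-c_{2}n}$ with $c_{2}$ depending only on $\sigma$. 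A union bound over $\mathcal N$ (whose size we arranged to be $\le e^{(c_{2}/2)n}$), combined with the operator-norm bound $\|Z\|_{\mathrm{op}} \le C_{3}\sqrt{n}$ with probability $\ge 1-e^{-c_{3}n}$ (Proposition \ref{prop:rmt_rudelson10}) to pass from $\mathcal N$ to all of $\mathrm{Comp}$, yields $P(\inf_{x\in\mathrm{Comp}}\|Zx\|_{2} \le c_{4}\sqrt{n}) \le e^{-c_{5}n}$. This is the origin of the additive $e^{-cn}$ term, and it dominates the (weaker, for $\epsilon$ small) requirement from compressible directions.

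On incompressible vectors I would use the ``invertibility via distance'' principle. Writing $Z_{1},\dots,Z_{p}$ for the columns of $Z$ and $H_{k}=\mathrm{span}\{Z_{j}:j\ne k\}$, decomposing $Zx = x_{k}Z_{k}+\sum_{j\ne k}x_{j}Z_{j}$ gives $\|Zx\|_{2}\ge \mathrm{dist}(Zx,H_{k}) = |x_{k}|\,\mathrm{dist}(Z_{k},H_{k})$ for every $k$, while incompressibility forces $|x_{k}|\ge \rho/\sqrt{p}$ on a set of at least $\delta p$ indices. A first-moment/averaging argument over $k$ then gives
\[
P\lb \inf_{x\in\mathrm{Incomp}(\delta,\rho)}\|Zx\|_{2} < \frac{\epsilon\rho}{\sqrt{p}}\rb \;\le\; \frac{1}{\delta p}\sum_{k=1}^{p} P\!\left(\mathrm{dist}(Z_{k},H_{k}) < \epsilon\right).
\]
Now $\mathrm{dist}(Z_{k},H_{k}) = \|P_{E_{k}}Z_{k}\|_{2}$, where $E_{k}=H_{k}^{\perp}$ has dimension $\ge n-p+1$ and is independent of $Z_{k}$. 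The key input is the subgaussian small-ball estimate $P(\|P_{E}Z_{k}\|_{2} < \epsilon\sqrt{\dim E}) \le (C\epsilon)^{\dim E}$ for a fixed subspace $E$, which one proves by tensorizing the one-dimensional L\'evy concentration bound for linear combinations of the $Z_{ij}$ --- the only nondegeneracy needed being that unit variance plus the subgaussian tail force $P(|\langle z_{i},v\rangle|\ge\delta_{0})\ge c_{0}$ uniformly over unit $v$. Inserting $\dim E_{k}\ge n-p+1$, summing, and absorbing the harmless $(\delta p)^{-1}\cdot p$ prefactor into the constant, and finally rescaling $\epsilon$ by $\rho$, produces the $(C\epsilon)^{n-p+1}$ term.

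The main obstacle --- and the reason this is a genuine theorem and not a routine net computation --- is upgrading the crude scale $c\sqrt{n}$ to the \emph{sharp} $\sqrt{n}-\sqrt{p-1}$ in the conclusion. For that the incompressible step must be run not against $H_{k}$ but against an optimally shifted affine version (equivalently, after conditioning on the other columns one tracks $\mathrm{dist}(Z_{k}-w,\,H_{k})$ and optimizes over the deterministic shift $w$), so that the codimension $n-p+1$ enters with the correct multiplicative constant, in tandem with a tight operator-norm control on the compressible piece. The secondary difficulty is keeping the small-ball constant $C$ and the exponential rate $c$ dependent only on $\sigma$ rather than on finer features of the entry law, which the L\'evy-concentration route above is designed to handle. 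These are precisely the ingredients assembled in Rudelson--Vershynin, so in the present paper the statement is quoted rather than reproved.
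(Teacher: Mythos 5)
The paper does not prove this proposition at all: it appears in Appendix E as a quoted auxiliary result, credited to Rudelson and Vershynin (2009), and is used only through the crude consequence $\lambda_{\min}(Z^TZ/n)=\Omega_p(1)$. So there is no internal proof to compare against; the only question is whether your sketch would stand on its own, and as written it would not, for two concrete reasons.

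First, your key input for the incompressible directions, namely $P\lb\|P_{E}Z_{j}\|_{2}<\eps\sqrt{\dim E}\rb\le (C\eps)^{\dim E}$ \emph{for a fixed subspace} $E$, is false for general mean-zero, unit-variance sub-gaussian entries: take Rademacher entries and $E=\mathrm{span}\{(e_{1}+e_{2})/\sqrt{2}\}$, so that $P(\|P_{E}Z_{j}\|_{2}=0)\ge 1/2$ while $(C\eps)^{1}\to 0$. Tensorized L\'evy concentration only gives a one-dimensional anti-concentration bound along \emph{spread} directions; to use it one must exploit that $E_{k}=H_{k}^{\perp}$ is random and independent of $Z_{k}$ and show it is unstructured with probability $1-e^{-cn}$ (this is precisely where the additive $e^{-cn}$ in the distance estimate originates in Rudelson--Vershynin). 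Second, the scale produced by your ``invertibility via distance'' step does not match the scale in the statement. Your displayed reduction controls $\inf_{\mathrm{Incomp}}\|Zx\|_{2}$ at scale $\eps\rho/\sqrt{p}$ when $\mathrm{dist}(Z_{k},H_{k})<\eps$, i.e.\ at scale $\eps\rho\sqrt{d}/\sqrt{p}$ with $d=n-p+1$ after inserting the distance bound at its natural scale $\eps\sqrt{d}$. The target scale is $\eps(\sqrt{n}-\sqrt{p-1})\asymp \eps\, d/\sqrt{n}$, which exceeds yours by a factor of order $\sqrt{d}\,\sqrt{p}/\sqrt{n}$; re-parametrizing $\eps$ to close this mismatch turns the bound into $(C\eps\sqrt{d})^{d}$, which is vacuous for constant $\eps$ exactly in the moderate-dimensional regime $p/n\to\kappa\in(0,1)$, $d\asymp n$, where this paper needs the proposition. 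Your proposed fix (optimizing over an affine shift $w$) is not the mechanism that yields the sharp factor $\sqrt{n}-\sqrt{p-1}$; the cited proof instead splits according to the aspect ratio, treating genuinely rectangular matrices by tensorized small-ball bounds over all $n$ rows combined with a net whose cardinality is beaten by the $(C\eps)^{cn}$ exponent, and nearly-square matrices by the distance argument. Given that the paper itself only cites the result, quoting Rudelson--Vershynin, as you ultimately do, is the right course; but the sketch as given should not be presented as a proof.
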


\begin{proposition}[\citeNP{litvak05}]\label{prop:rmt_litvak05}
  Suppose $\{Z_{ij}: i = 1, \ldots, n, j = 1, \ldots, p\}$ are independent $\sigma^{2}$-sub-gaussian random variables such that 
\[Z_{ij}\stackrel{d}{=}-Z_{ij}, \quad \Var(Z_{ij}) > \tau^{2}\]
for some $\sigma, \tau > 0$, and $p / n\rightarrow \kappa \in (0, 1)$, then there exists constants $c_{1}, c_{2} > 0$, which only depends on $\sigma$ and $\tau$,  such that 
\[P\lb \lambda_{\mathrm{\min}}\lb\frac{Z^{T}Z}{n}\rb < c_{1}\rb\le e^{-c_{2}n}.\]
\end{proposition}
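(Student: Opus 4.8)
The plan is to rewrite the smallest eigenvalue as a minimum over the sphere, $\lambda_{\mathrm{\min}}\lb\frac{Z^{T}Z}{n}\rb = \frac{1}{n}\min_{x\in S^{p-1}}\|Zx\|_{2}^{2}$, and to bound this minimum below by combining a small-ball estimate for a fixed direction, an $\epsilon$-net covering of the sphere, and an operator-norm bound that lets us pass from the net to the full sphere.

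First I would establish the small-ball estimate in a fixed unit direction $x$. Writing $z_{i}^{T}$ for the $i$-th row of $Z$, we have $\|Zx\|_{2}^{2} = \sum_{i=1}^{n}\langle z_{i}, x\rangle^{2}$, a sum of independent non-negative random variables. Independence of the entries together with $\Var(Z_{ij}) > \tau^{2}$ gives $\E \langle z_{i}, x\rangle^{2} = \sum_{j}x_{j}^{2}\Var(Z_{ij}) > \tau^{2}$, while sub-gaussianity bounds the fourth moment, $\E\langle z_{i},x\rangle^{4}\le C\sigma^{4}$. A Laplace-transform argument then yields $\E\exp(-\lambda\langle z_{i},x\rangle^{2})\le 1-\lambda\tau^{2}/2$ for $\lambda$ small (depending on $\sigma,\tau$), and tensorizing over $i$ gives, for a suitable constant $c_{1}'$,
\[P\lb \|Zx\|_{2}^{2}\le c_{1}' n\rb\le e^{-c_{2}' n},\]
with $c_{2}'$ depending only on $\sigma,\tau$. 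The symmetry assumption $Z_{ij}\stackrel{d}{=}-Z_{ij}$ is what makes this anti-concentration robust even for discrete entries (via symmetrization), ensuring $\langle z_{i},x\rangle$ cannot concentrate on a tiny set.

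Next I would cover $S^{p-1}$ by an $\epsilon$-net $\mathcal{N}$ with $|\mathcal{N}|\le (3/\epsilon)^{p}$, and control the operator norm: by Proposition \ref{prop:rmt_rudelson10}, taking the deviation parameter to be a constant gives $\|Z\|_{\mathrm{op}}\le C\sqrt{n}$ off an event of probability at most $e^{-cn}$. On this event, every $x\in S^{p-1}$ has a net point $x_{0}$ with $\|x-x_{0}\|\le\epsilon$, whence $\|Zx\|_{2} \ge \|Zx_{0}\|_{2} - \epsilon\|Z\|_{\mathrm{op}} \ge \|Zx_{0}\|_{2} - C\epsilon\sqrt{n}$. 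Applying the small-ball bound to each $x_{0}\in\mathcal{N}$ and a union bound,
\[P\lb \min_{x_{0}\in\mathcal{N}}\|Zx_{0}\|_{2}\le \sqrt{c_{1}'n}\rb \le (3/\epsilon)^{p}e^{-c_{2}'n} = e^{(\kappa\log(3/\epsilon) - c_{2}' + o(1))n}.\]
Choosing $\epsilon$ a small constant (depending on $\sigma,\tau$) then yields $\min_{x\in S^{p-1}}\|Zx\|_{2} \ge (\sqrt{c_{1}'} - C\epsilon)\sqrt{n}$, so that $c_{1} = (\sqrt{c_{1}'}-C\epsilon)^{2}$ and $c_{2}$ is the resulting positive exponent.

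The hard part will be ensuring that the small-ball exponent $c_{2}'$ actually dominates the net entropy $\kappa\log(3/\epsilon)$: $c_{2}'$ is fixed by the variance and moment bounds, whereas making the operator-norm correction $C\epsilon\sqrt{n}$ negligible forces $\epsilon$ small and inflates $\log(3/\epsilon)$. This balance is precisely where the assumption $\kappa\in(0,1)$ bounded away from $1$ and the symmetry of the entries are indispensable. When the crude tensorized small-ball estimate is too weak for the naive net to close, the remedy—following Litvak, Pajor, Rudelson and Tomczak-Jaegermann—is to split the sphere into compressible and incompressible vectors: compressible (nearly sparse) directions carry far smaller covering entropy and are handled by the net above, while incompressible directions are controlled through the distance of a column of $Z$ to the span of the others combined with a symmetry-based small-ball bound, which delivers the invertibility estimate without requiring $c_{2}'$ to beat the entropy of the entire sphere.
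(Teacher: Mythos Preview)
The paper does not prove this proposition; it is stated in Appendix~\ref{app:mc} (``Miscellaneous'') as a citation of \citeA{litvak05} ``for the sake of completeness'', with no proof given. There is therefore nothing in the paper to compare your argument against.

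That said, your sketch follows the architecture of the cited source: a small-ball estimate in a fixed direction, an $\epsilon$-net on $S^{p-1}$, an operator-norm bound to pass from the net to the sphere, and---when the crude balance of small-ball exponent versus net entropy fails---the compressible/incompressible splitting of the sphere, which is precisely the innovation of Litvak, Pajor, Rudelson and Tomczak-Jaegermann. You also correctly flag where the symmetry hypothesis $Z_{ij}\stackrel{d}{=}-Z_{ij}$ enters (the anti-concentration step) and where $\kappa<1$ is needed (so that the small-ball exponent can dominate the entropy). As a roadmap for the cited result, your proposal is accurate; the details you would still need to fill in are the quantitative small-ball bound for incompressible vectors and the verification that the constants from Proposition~\ref{prop:rmt_rudelson10} interact favorably with the net parameter $\epsilon$, both of which are carried out in the reference.
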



\end{document}